\providecommand{\tabularnewline}{\\}
\numberwithin{equation}{section}
\numberwithin{table}{section}
\numberwithin{figure}{section}
\theoremstyle{plain}
\newtheorem{assumption}{\protect\assumptionname}
\theoremstyle{plain}
\newtheorem{prop}{\protect\propositionname}[section]
\theoremstyle{definition}
\newtheorem{condition}{\protect\conditionname}
\theoremstyle{plain}
\newtheorem{thm}{\protect\theoremname}[section]
\theoremstyle{plain}
\newtheorem{cor}{\protect\corollaryname}[section]
\theoremstyle{definition}
\newtheorem{defn}{\protect\definitionname}[section]
\theoremstyle{plain}
\newtheorem{lyxalgorithm}{\protect\algorithmname}
\theoremstyle{plain}
\newtheorem{lem}{\protect\lemmaname}[section]
\definecolor{hellgelb}{rgb}{1,1,0.8}
\definecolor{colKeys}{rgb}{0,0,1}
\definecolor{colIdentifier}{rgb}{0,0,0}
\definecolor{colComments}{rgb}{1,0,0}
\definecolor{colString}{rgb}{0,0.5,0}
\numberwithin{equation}{section}
\numberwithin{table}{section}
\numberwithin{assumption}{section}
  \providecommand{\algorithmname}{Algorithm}
  \providecommand{\assumptionname}{Assumption}
  \providecommand{\corollaryname}{Corollary}
  \providecommand{\definitionname}{Definition}
  \providecommand{\lemmaname}{Lemma}
  \providecommand{\propositionname}{Proposition}
  \providecommand{\theoremname}{Theorem}
 \providecommand{\corollaryname}{Corollary}
 \providecommand{\theoremname}{Theorem}
\newtheoremstyle{MyTheoremstyle}
  {\topsep} 
  {\topsep} 
  {} 
  {} 
  {\bfseries} 
  {.} 
  {.90em} 
  {} 
\theoremstyle{MyTheoremstyle} 
\theoremstyle{MyTheoremstyle} 
\theoremstyle{MyTheoremstyle} 
\theoremstyle{MyTheoremstyle} 
\theoremstyle{MyTheoremstyle}
\declaretheoremstyle[
    headfont=\bfseries,
    notefont=\normalfont,
    bodyfont=\itshape,
    headpunct=\newline,
    headformat={%
        \makebox{\NAME\ \NUMBER\ }{\NOTE}%
    },
]{theorem}
\newlength{\spacelength}
\declaretheoremstyle[
    headfont=\bfseries,
    notefont=\normalfont,
    bodyfont=\itshape,
    headpunct=\newline,
    headformat={%
        \makebox[0pt][l]{\NAME\ \NUMBER\ }\hskip-\spacelength{\NOTE}%
    },
]{theore}
\newcommand{\simpleheading}{
\newgeometry{verbose,tmargin=2cm,bmargin=2cm,lmargin=2cm,rmargin=2cm,headheight=1cm,headsep=1cm,footskip=1cm}
\setlength{\headheight}{25pt}
\fancyhead{{\normalsize \textsc{}}}}
\title{\bf Continuous Record Asymptotic Framework for Inference in Strucutral Change Models}
\author{
\textsc{\textcolor{MyBlue}{Alessandro Casini}}\thanks{Department of Economics and Finance, University of Rome Tor Vergata, Via Columbia 2, Rome, 00133, IT. 
Email: 
\texttt{\textcolor{MyBlue}{{alessandro.casini@uniroma2.it}}}.} 
\\
\small{\text{University of Rome Tor Vergata}}
\and
\textsc{\textcolor{MyBlue}{Pierre Perron}}\thanks{Department of Economics, Boston University, 270 Bay State Road, Boston, MA 02215, US. 
Email: 
\texttt{\textcolor{MyBlue}{\mbox{perron@bu.edu}}}.} 
\\
\small{\text{Boston University}}
}
\date{\small{\today} \\} 
\numberwithin{equation}{section}
\definecolor{MyRed}{rgb}{0.8,0,0}
\definecolor{MyBlue}{rgb}{0,0,0.7}
\definecolor{Green}{rgb}{0,0.5,0}
\definecolor{hellgelb}{rgb}{1,1,0.8}
\definecolor{colKeys}{rgb}{0,0,1}
\definecolor{colIdentifier}{rgb}{0,0,0}
\definecolor{colComments}{rgb}{1,0,0}
\definecolor{colString}{rgb}{0,0.5,0}
\definecolor{MyLightRed}{rgb}{2.2,0.2,0.4} 
\definecolor{MyLightRed2}{rgb}{0.6,0.2,0.3} 
\definecolor{MyLightRed2temp}{rgb}{0.6,0.2,0.3}
\definecolor{MyLightRed3}{rgb}{0.8,0.1,0.1} 
\definecolor{MyRed}{rgb}{0.7,0.0,0}
\definecolor{MyLigthBlue13}{rgb}{0,0.2,0.7}
 \definecolor{MyLigthBlack}{rgb}{0.2,0.25,0.3} 
\providecommand{\algorithmname}{Algorithm}
\providecommand{\assumptionname}{Assumption}
\providecommand{\conditionname}{Condition}
\providecommand{\corollaryname}{Corollary}
\providecommand{\definitionname}{Definition}
\providecommand{\lemmaname}{Lemma}
\providecommand{\propositionname}{Proposition}
\providecommand{\theoremname}{Theorem}
\begin{document}
\setcounter{page}{0}
\title{\textbf{Generalized Laplace Inference in Multiple Change-Points Models}\thanks{This paper is based on the fourth chapter of the first author's doctoral
dissertation at Boston University. We thank the Editor and a Co-Editor
for guiding the review, and three anonymous referees for constructive
comments. We also thank Zhongjun Qu for useful comments. }\textbf{ }}
\maketitle
\begin{abstract}
{\footnotesize{}Under the classical long-span asymptotic framework
we develop a class of Generalized Laplace (GL) inference methods for
the change-point dates in a linear time series regression model with
multiple structural changes analyzed in, e.g., \citet{bai/perron:98}.
The GL estimator is defined by an integration rather than optimization-based
method and relies on the least-squares criterion function. It is interpreted
as a classical (non-Bayesian) estimator and the inference methods
proposed retain a frequentist interpretation. This approach provides
a better approximation about the uncertainty in the data of the change-points
relative to existing methods. On the theoretical side, depending on
some input (smoothing) parameter, the class of GL estimators exhibits
a dual limiting distribution; namely, the classical shrinkage asymptotic
distribution, or a Bayes-type asymptotic distribution. We propose
an inference method based on Highest Density Regions using the latter
distribution. We show that it has attractive theoretical properties
not shared by the other popular alternatives, i.e., it is bet-proof.
Simulations confirm that these theoretical properties translate to
good finite-sample performance.}{\footnotesize\par}
\end{abstract}
\indent {\bf{JEL Classification}}: C12, C13, C22\\ 
\noindent {\bf{Keywords}}: Asymptotic Distribution, Bet-Proof, Break Date, Change-point,   Generalized Laplace Inference, Highest Density Region, Quasi-Bayes.  

\onehalfspacing
\thispagestyle{empty}
\allowdisplaybreaks

\vfill{}
\pagebreak{}

\section{\label{Section Introduction Lap_BP}Introduction}

In the context of the multiple change-points model analyzed in \citet{bai/perron:98},
we develop inference methods for the change-point dates for a class
of Generalized Laplace (GL) estimators using a classical long-span
asymptotic framework. They are defined by an integration rather than
an optimization-based method, the latter typically characterizing
classical extremum estimators. The idea traces back to \citet{laplace:74},
who first suggested to interpret transformations of a least-squares
criterion function as a statistical belief over a parameter of interest.
Hence, a Laplace estimator is defined similarly to a Bayesian estimator
although the former relies on a statistical criterion function rather
than a parametric likelihood function. As a consequence, the GL estimator
is interpreted as a classical (non-Bayesian) estimator and the inference
methods proposed retain a frequentist interpretation such that the
GL estimators are constructed as a function of integral transformations
of the least-squares criterion. In a first step, we use the approach
of \citet{bai/perron:98} to evaluate the least-squares criterion
function at all candidate break dates. We then apply a transformation
to obtain a proper distribution over the parameters of interest,
referred to as the Quasi-posterior. For a given choice of a loss function
and (possibly) a prior density, the estimator is then defined either
explicitly as, for example, the mean or median of the (weighted) Quasi-posterior
or implicitly as the minimizer of a smooth convex optimization problem. 

The underlying asymptotic framework considered  is the long-span
shrinkage asymptotics of \citet{bai:97RES}, \citet{bai/perron:98}
and also \citet{perron/qu:06} who considerably relaxed some conditions,
where the magnitude of the parameter shift is sample-size dependent
and approaches zero as the sample size increases. Early contributions
to this approach are \citet{hinkley:71}, \citet{bhattacharya:87},
and \citet{yao:87} for estimating break points. For testing for structural
breaks, see \citet{hawkins:77}, \citet{picard:85}, \citet{kim/siegmund:89},
\citet{andrews:93}, \citet{horvath:93} and \citet{andrews/ploberger:94}.
See also the reviews of \citet{csorgo/horvath:97}, \citet{perron:06},
\citet{casini/perron_Oxford_Survey} and references therein. 

One of our goals is to develop GL estimates with better small-sample
properties compared to least-squares estimates, namely lower Mean
Absolute and Root-Mean Squared Errors, and confidence sets with accurate
coverage probabilities and relatively short lengths for a wide range
of break sizes, whether small or large; existing methods work well
for either small or large breaks, but not for both. A second goal
is to establish theoretical results that support the reported finite-sample
properties about inference.

The asymptotic distribution of the GL estimator is derived via a local
parameter related to a normalized deviation from the true fractional
break date. The normalization factor corresponds to the rate of convergence
of the original (extremum) least-squares estimator as established
by \citet{bai/perron:98}.  The asymptotic distribution of the GL
estimator then depends on a sample-size dependent smoothing parameter
sequence applied to the least-squares criterion function. We derive
two distinct limiting distributions corresponding to different smoothing
sequences of the criterion function {[}cf. \citet{jun/pinkse/wan:15}
for a related application in the context of the cube-root asymptotics
of \citet{kim/pollard:90}{]}. In one case, the estimator displays
the same limit law as the asymptotic distribution of the least-squares
estimator derived in \citet{bai/perron:98} {[}see also \citet{hinkley:71},
\citet{picard:85} and \citet{yao:87}{]}. In a second case, the
limiting distribution is characterized by a ratio of integrals over
functions of Gaussian processes and resembles the limiting distribution
of Bayesian change-point estimators. The latter is exploited for
the purpose of constructing confidence sets for the break dates.
We use the concept of highest density regions (HDR) introduced by
\citet{casini/perron_CR_Single_Break} for structural change problems,
which best summarizes the properties of the probability distribution
of interest. The HDR are common in Bayesian analysis where they are
applied to a posterior distribution {[}see, e.g., \citeauthor{box/tiao:1973}{]}.
\citeauthor{kendall/stuart:1961} discussed the difference between
frequentist confidence intervals and Bayesian approaches in relation
to the existence of a sufficient statistic. Our procedure yields confidence
sets for the break date which, in finite samples, better account for
the uncertainty over the parameter space in finite-samples because
it effectively incorporates a statistical measure of the uncertainty
in the least-squares criterion function. As noted in the literature
on likelihood-based inference in some classes of nongranular problems
{[}see e.g., \citet{chernozhukov/hong:03}, \citet{ghosal/ghosh/samanta:95},
\citet{hirano/porter:03} and \citet{ibragimov/has:81}{]}, the Maximum
Likelihood Estimator (MLE) is generally not an asymptotically sufficient
statistic in these models and so the likelihood contains more information
asymptotically than the MLE. Hence, likelihood-based procedures are
generally not functions of the MLE even asymptotically. This incompleteness
property motivated the study of the entire likelihood rather than
just the MLE. Likewise, our method exploits the entire behavior of
the objective function.

Laplace's seminal insight has been applied successfully in many disciplines.
In econometrics, \citet{chernozhukov/hong:03} introduced Laplace-type
estimators as an alternative to classical (regular) extremum estimators
in several problems such as censored median regression and  nonlinear
instrumental variable; see also \citet{forneron/ng:17} for a review
and comparisons. Their main motivation was to solve the curse of dimensionality
inherent to the computation of such estimators. In contrast, the class
of GL estimators in structural change models serves distinct multiple
purposes. First,  inference about the break dates  presents several
challenges, in particular to provide methods with a satisfactory performance
uniformly over different data-generating mechanisms and break magnitudes.
The GL inference proves to be reliable and accurate in finite-samples.
Second, it leads to inference methods that have both frequentist and
credibility properties which is not shared by the other popular methods. 

Turning to the problem of constructing confidence sets for a single
break date, the standard asymptotic method for the linear regression
model was proposed in \citet{bai:97RES}, while \citet{elliott/mueller:07}
proposed to invert the locally best invariant test of \citet{nyblom:89},
and \citet{eo/morley:15} suggested to invert the likelihood-ratio
statistic of \citet{qu/perron:07}. The latter were mainly motivated
by finite-sample results indicating that the exact coverage rates
of the confidence intervals obtained from Bai's (1997) method are
often below the nominal level when the magnitude of the break is small.
It has been shown that the method of \citet{elliott/mueller:07}  delivers
the most accurate coverage rates but the average length of the confidence
sets is significantly larger than with other methods. The confidence
sets for the break dates constructed from the GL inference that we
develop result in exact coverage rates close to the nominal level
and short length of the confidence sets. This holds true whether the
magnitude of the break is small or large. In fact, we show that
GL inference is bet-proof, a measure of ``reasonableness'' of frequentist
inference in non-regular problems {[}see, e.g., \citet{buehler:59}{]}.

The GL inference developed in this paper has been applied by \citet{casini/perron_Lap_CR_Single_Inf}
to achieve finite-sample improvements under the continuous record
asymptotic framework of \citet{casini/perron_CR_Single_Break}. The
latter proposed an alternative asymptotic framework to explain the
non-standard features of the finite-sample distribution of the least-squares
estimator. 

The paper is organized as follows. We first focus on the single change-point
case. Section \ref{Section, Model and its Assumptions, Bai Lap} presents
the statistical setting. We develop the asymptotic theory in Section
\ref{Section Asymptotic Results LapBai97} and the inference methods
in Section \ref{Section Inference Methods}. Results for multiple
change-points models are given in Section \ref{Section Models with Multiple Breaks}
while Section \ref{Section Theoretical-Properties-of GL Inference}
discusses some theoretical properties of GL inference. Section \ref{Section Monte-Carlo-Simulation}
presents simulation results about the finite-sample performance. Section
\ref{Section Conclusions} concludes. All proofs are included in an
online supplement {[}\citet{casini/perron_SC_BP_Lap_Supp}{]}.

\section{\label{Section, Model and its Assumptions, Bai Lap}The Model and
the Assumptions}

This section introduces the structural change model with a single
break, reviews the least-squares estimation method for the break date,
and presents the relevant assumptions. We start with introducing the
formal setup for our analysis. The following notation is used throughout.
We denote the transpose of a matrix $A$ by $A'$.  We use $\left\Vert \cdot\right\Vert $
to denote the Euclidean norm of a linear space, i.e., $\left\Vert x\right\Vert =\left(\sum_{i=1}^{p}x_{i}^{2}\right)^{1/2}$
for $x\in\mathbb{R}^{p}.$ For a matrix $A$, we use the vector-induced
norm, i.e., $\left\Vert A\right\Vert =\sup_{x\neq0}\left\Vert Ax\right\Vert /\left\Vert x\right\Vert .$
All vectors are column vectors. For two vectors $a$ and $b$, we
write $a\leq b$ if the inequality holds component-wise. We use $\left\lfloor \cdot\right\rfloor $
to denote the largest smaller integer function. We use $\overset{\mathbb{P}}{\rightarrow}$
and $\overset{d}{\rightarrow}$ to denote convergence in probability
and convergence in distribution, respectively. $\mathbb{C}_{b}\left(\mathbf{E}\right)$
{[}$\mathbb{D}_{b}\left(\mathbf{E}\right)${]} is the collection of
bounded continuous {[}càdlàg{]} functions from some specified set
$\mathbf{E}$ to $\mathbb{R}$. Weak convergence on either $\mathbb{C}_{b}\left(\mathbf{E}\right)$
or $\mathbb{D}_{b}\left(\mathbf{E}\right)$ is denoted by $\Rightarrow$.
The symbol ``$\triangleq$'' stands for definitional equivalence.

We consider a sample of observations $\left\{ \left(y_{t},\,w_{t},\,z_{t}\right):\,t=1,\ldots,\,T\right\} ,$
defined on a filtered probability space $\left(\Omega,\,\mathscr{F},\,\mathbb{P}\right)$,
on which all of the random elements introduced in what follows are
defined. The model is 
\begin{align}
\begin{split}y_{t} & =w_{t}'\phi^{0}+z_{t}'\delta_{1}^{0}+e_{t},\quad\left(t=1,\ldots,\,T_{b}^{0}\right)\qquad y_{t}=w_{t}'\phi^{0}+z_{t}'\delta_{2}^{0}+e_{t},\quad\left(t=T_{b}^{0}+1,\ldots,\,T\right)\end{split}
\label{Eq. the Single Break Model}
\end{align}
 where $y_{t}$ is a scalar dependent variable, $w_{t}$ and $z_{t}$
are regressors of dimensions, $p$ and $q,$ respectively, and $e_{t}$
is an unobserved error term. The true parameter vectors $\phi^{0},\,\delta_{1}^{0}$
and $\delta_{2}^{0}$ are unknown and we define $\delta^{0}\triangleq\delta_{2}^{0}-\delta_{1}^{0},$
with $\delta^{0}\neq0$ so that a structural change occurs at date
$T_{b}^{0}$. It is useful to re-parametrize the model. Letting $x_{t}\triangleq\left(w'_{t},\,z'_{t}\right)'$
and $\beta^{0}\triangleq\left(\left(\phi^{0}\right)',\,\left(\delta_{1}^{0}\right)'\right)',$
we have
\begin{align}
\begin{split}y_{t} & =x_{t}'\beta^{0}+e_{t},\quad\left(t=1,\ldots,\,T_{b}^{0}\right)\qquad y_{t}=x_{t}'\beta^{0}+z_{t}'\delta^{0}+e_{t},\quad\left(t=T_{b}^{0}+1,\ldots,\,T\right).\end{split}
\label{Eq. Reparameterize SC model}
\end{align}
 More generally, we can define $z_{t}\triangleq D'x_{t}$, where $D$
is a $\left(p+q\right)\times q$ matrix with full column rank. A pure
structural change model in which all regression parameters are subject
to change corresponds to $D=I_{\left(p+q\right)\times\left(p+q\right)}$,
whereas a partial structural change model arises when $D=\left(0_{q\times p},\,I_{q\times q}\right)'.$
In order to facilitate the derivations, we reformulate model \eqref{Eq. Reparameterize SC model}
in matrix format. Let $Y=\left(y_{1},\,\ldots,\,y_{T}\right)',\,X=\left(x_{1},\,\ldots,\,x_{T}\right)'$,
$e=\left(e_{1},\,\ldots,\,e_{T}\right)',$ $X_{1}=\left(x_{1},\,\ldots,\,x_{T_{b}},\,0,\,\ldots,\,0\right)'$,
$X_{2}=(0,\,\ldots,\,0,\,$ $x_{T_{b}+1},\ldots,\,x_{T})'$ and $X_{0}=(0,\,\ldots,\,0,\,x_{T_{b}^{0}+1},\ldots,\,x_{T})'$.
Further, define $Z_{1},\,Z_{2}$ and $Z_{0}$ in a similar way: $Z_{1}=X_{1}D,\,Z_{2}=X_{2}D$
and $Z_{0}=X_{0}D$. We omit the dependence of the matrices $X_{i}$
and $Z_{i}$ ($i=1,\,2$) on $T_{b}$. Then, \eqref{Eq. Reparameterize SC model}
is equivalent to
\begin{align}
Y & =X\beta+Z_{0}\delta+e.\label{Eq Matrix Format of the model}
\end{align}

Let $\theta^{0}\triangleq\left(\left(\phi^{0}\right)',\,\left(\delta_{1}^{0}\right)',\,\left(\delta^{0}\right)'\right)'$
denote the true value of the parameter vector $\theta\triangleq\left(\phi,\,\delta_{1},\,\delta\right).$
The break date least-squares (LS) estimator $\widehat{T}_{b}^{\mathrm{LS}}$
is the minimizer of the sum of squared residuals {[}denoted $S_{T}\left(\theta,\,T_{b}\right)${]}
from \eqref{Eq Matrix Format of the model}. The parameter $\theta$
can be concentrated out resulting in a criterion function depending
only on $T_{b}=T\lambda_{b}$, i.e.,~$\widehat{T}_{b}^{\mathrm{LS}}=\arg\min_{1\leq T_{b}\leq T}S_{T}(\widehat{\theta}^{\mathrm{LS}}(T_{b}),\,T_{b})$
where $\widehat{\theta}^{\mathrm{LS}}(T_{b})=\arg\min_{\theta}S_{T}(\theta,\,T_{b})$
with $S_{T}(\theta,\,T_{b})=\sum_{t=1}^{T_{b}}\left(y_{t}-\phi'w_{t}-\delta'_{1}z_{t}\right)^{2}+\sum_{t=T_{b}+1}^{T}\left(y_{t}-\phi'w_{t}-\delta'z_{t}\right)^{2}$.
Also,
\begin{align}
\arg\min_{1\leq T_{b}\leq T}S_{T}(\widehat{\theta}^{\mathrm{LS}}\left(T_{b}\right),\,T_{b}) & =\arg\max_{T_{b}}\widehat{\delta}^{\mathrm{LS\prime}}(T_{b})(Z_{2}'M_{X}Z_{2})\widehat{\delta}^{\mathrm{LS}}(T_{b})\label{Eq. Relationship Sup Wald and SSR}\\
 & \triangleq\arg\max_{\lambda_{b}}Q_{T}(\widehat{\delta}^{\mathrm{LS}}(\lambda_{b}),\,\lambda_{b}),\nonumber 
\end{align}
 where $M_{X}\triangleq I-X\left(X'X\right)^{-1}X'$, $\widehat{\delta}^{\mathrm{LS}}\left(\lambda_{b}\right)$
is the least-squares estimator of $\delta^{0}$ obtained by regressing
$Y$ on $X$ and $Z_{2}$ and the statistic $Q_{T}\left(\widehat{\delta}^{\mathrm{LS}}\left(\lambda_{b}\right),\,\lambda_{b}\right)$
is the numerator of the sup-Wald statistic. The Laplace-type inference
builds on the least-squares criterion function $Q_{T}\left(\delta\left(\lambda_{b}\right),\,\lambda_{b}\right),$
where $\delta\left(\lambda_{b}\right)$ stands for $\widehat{\delta}^{\mathrm{LS}}\left(\lambda_{b}\right)$
to minimize notational burden.
\begin{assumption}
\label{Assumption A Bai97}$T_{b}^{0}=\left\lfloor T\lambda_{b}^{0}\right\rfloor ,$
where $\lambda_{b}^{0}\in\varGamma^{0}\subset\left(0,\,1\right).$
\end{assumption}

\begin{assumption}
\label{Assumption A5 Bai97 and A4 BP98}With $\left\{ \mathscr{F}_{t},\,t=1,\,2,\ldots\right\} $
a sequence of increasing $\sigma$-fields, $\left\{ z_{t}e_{t},\,\mathscr{F}_{t}\right\} $
forms an $L^{r}$-mixingale sequence with $r=2+\nu$ for some $\nu>0$.
That is, there exist nonnegative constants $\left\{ \varrho_{1,t}\right\} _{t\geq1}$
and $\left\{ \varrho_{2,j}\right\} _{j\geq0}$ such that $\varrho_{2,j}\rightarrow0$
as $j\rightarrow\infty$, and for all $t\geq1$, $j\geq0$ and $r\geq1$,
(i) $\left\Vert \mathbb{E}\left(z_{t}e_{t}|\,\mathscr{F}_{t-j}\right)\right\Vert _{r}\leq\varrho_{1,t}\varrho_{2,j}$,
(ii) $\left\Vert z_{t}e_{t}-\mathbb{E}\left(z_{t}e_{t}|\,\mathscr{F}_{t+j}\right)\right\Vert _{r}\leq\varrho_{1,t}\varrho_{2,j+1}$.
In addition, (iii) $\max_{t}\varrho_{1,t}<C_{1}<\infty$ and (iv)
$\sum_{j=0}^{\infty}j^{1+\nu}\varrho_{2,j}<\infty$ for some $\nu>0$,
(v) $\left\Vert z_{t}\right\Vert _{2r}<C_{2}<\infty$ and $\left\Vert e_{t}\right\Vert _{2r}<C_{3}<\infty$
for some $C_{1},\,C_{2},\,C_{3}>0$. 
\end{assumption}

\begin{assumption}
\label{Assumption A3 Bai97}There exists an $l_{0}>0$ such that for
all $l>l_{0},$ the minimum eigenvalues of $H_{l}^{*}=\left(1/l\right)\sum_{T_{b}^{0}-l+1}^{T_{b}^{0}}x_{t}x'_{t}$
and $H_{l}^{**}=\left(1/l\right)\sum_{T_{b}^{0}+1}^{T_{b}^{0}+l}x_{t}x'_{t}$
are bounded away from zero. These matrices are invertible when $l\geq p+q$
and have stochastically bounded norms uniformly in $l$.
\end{assumption}

\begin{assumption}
\label{Assumption A4 Bai97}$T^{-1}X'X\overset{\mathbb{P}}{\rightarrow}\Sigma_{XX},$
where $\Sigma_{XX}$, a positive definite matrix.
\end{assumption}
These assumptions are standard and similar to those in \citet{perron/qu:06}.
It is well-known that only the fractional break date $\lambda_{b}^{0}$
(not $T_{b}^{0}$) can be consistently estimated, with $\widehat{\lambda}_{b}^{\mathrm{LS}}$
having a $T$-rate of convergence. The corresponding result for
the break date estimator $\widehat{T}_{b}^{\mathrm{LS}}$ states
that, as $T$ increases, $\widehat{T}_{b}^{\mathrm{LS}}$ remains
within a bounded distance from $T_{b}^{0}$. However, this does not
affect the estimation problem of the regression coefficients $\theta^{0}$,
for which $\widehat{\theta}^{\mathrm{LS}}$ is a regular estimator;
i.e., $\sqrt{T}$-consistent and asymptotically normally distributed,
since the estimation of the regression parameters is asymptotically
independent from the estimation of the change-point. Hence, the regression
parameters are essentially estimated as if the change-point was known.
More complex is the derivation of the asymptotic distribution of $\widehat{\lambda}_{b}^{\mathrm{LS}}$;
e.g., \citet{hinkley:71} for an i.i.d. Gaussian process with a mean
change. Therefore, to make progress it is necessary to consider a
shrinkage asymptotic setting in which the size of the shift converges
to zero as $T\rightarrow\infty$; see \citet{picard:85} and \citet{yao:87}
and extended by \citet{bai:97RES} to general linear models. 

\section{\label{Section Asymptotic Results LapBai97}Generalized Laplace Estimation}

We define the GL estimator in Section \ref{subsec:The-Setting-and}
and discuss its usefulness in Section \ref{subsection Discussion-on-GL}.
Section \ref{subsec:Normalized-Version-of} describes the asymptotic
framework under which we derive the limiting distribution with the
results presented in Section \ref{Sub section Theorems Laplace in SC}.

\subsection{\label{subsec:The-Setting-and}The Class of Laplace Estimators}

The class of GL estimators relies on the original least-squares criterion
function $Q_{T}\left(\delta\left(\lambda_{b}\right),\,\lambda_{b}\right)$,
with the parameter of interest being $\lambda_{b}^{0}=T_{b}^{0}/T$.
The Quasi-posterior $p_{T}\left(\lambda_{b}\right)$ is defined by
the exponential transformation,
\begin{align}
p_{T}\left(\lambda_{b}\right) & \triangleq\frac{\exp\left(Q_{T}\left(\delta\left(\lambda_{b}\right),\,\lambda_{b}\right)\right)\pi\left(\lambda_{b}\right)}{\int_{\varGamma^{0}}\exp\left(Q_{T}\left(\delta\left(\lambda_{b}\right),\,\lambda_{b}\right)\right)\pi\left(\lambda_{b}\right)d\lambda_{b}},\label{Eq. Quasi-posterior-1}
\end{align}
where $\pi\left(\cdot\right)$ is a density function. Note that $p_{T}\left(\lambda_{b}\right)$
defines a proper distribution over the parameter space $\varGamma^{0}$.
The $\mathscr{\mathscr{L}}\left(\theta,\,T_{b}\right)$-class of estimators
are the solutions of smooth convex optimization problems for a given
loss function, restricting attention to convex loss functions $l_{T}\left(\cdot\right)$.
Examples include (a) $l_{T}\left(r\right)=a_{T}^{m}\left|r\right|^{m},$
the polynomial loss function (the squared loss function is obtained
when $m=2$ and the absolute deviation loss function when $m=1$);
(b) $l_{T}\left(r\right)=a_{T}\left(\tau-\mathbf{1}\left(r\leq0\right)\right)r,$
the check loss function; where $a_{T}$ is a divergent sequence. We
define the Expected Risk function, under the density $p_{T}\left(\cdot\right)$
and the loss $l_{T}\left(\cdot\right)$ as $\mathcal{R}_{l,T}\left(s\right)\triangleq\mathbb{E}_{p_{T}}\left[l_{T}\left(s-\widetilde{\lambda}_{b}\right)\right],$
where $\widetilde{\lambda}_{b}$ is a random variable with distribution
$p_{T}$ and $\mathbb{E}_{p_{T}}$ denotes expectation taken under
$p_{T}.$ Using \eqref{Eq. Quasi-posterior-1} we have,
\begin{align}
\mathcal{R}_{l,T}\left(s\right) & \triangleq\int_{\varGamma^{0}}l_{T}\left(s-\lambda_{b}\right)p_{T}\left(\lambda_{b}\right)d\lambda_{b}.\label{Eq. Expected Risk function-1}
\end{align}
 The Laplace-type estimator $\widehat{\lambda}_{b}^{\mathrm{GL}}$
shall be interpreted as a decision rule that, given the information
contained in the Quasi-posterior $p_{T}$, is least unfavorable according
to the loss function $l_{T}$ and the prior density $\pi$. Then $\widehat{\lambda}_{b}^{\mathrm{GL}}$
is the minimizer of the expected risk function \eqref{Eq. Expected Risk function-1},
i.e., $\widehat{\lambda}_{b}^{\mathrm{GL}}\triangleq\arg\min_{s\in\varGamma^{0}}\left[\mathcal{R}_{l,T}\left(s\right)\right].$
Observe that the GL estimator $\widehat{\lambda}_{b}^{\mathrm{GL}}$
results in the mean (median) of the Quasi-posterior upon choosing
 the squared (absolute deviation) loss function. The choice of the
loss and of the prior density functions hinges on the statistical
problem addressed. In the structural change problem, a natural choice
for the Quasi-prior $\pi$ is the density of the asymptotic distribution
of $\widehat{\lambda}_{b}^{\mathrm{LS}}$. This requires to replace
the population quantities appearing in that distribution by consistent
plug-in estimates\textemdash cf. \citet{bai/perron:98}\textemdash and
derive its density via simulations as in \citet{casini/perron_CR_Single_Break}.
The attractiveness of the Quasi-posterior \eqref{Eq. Quasi-posterior-1}
is that it provides additional information about the parameter of
interest $\lambda_{b}^{0}$ beyond what is already included in the
point estimate $\widehat{\lambda}_{b}^{\mathrm{LS}}$ and its distribution
(see Section \ref{subsection Discussion-on-GL}). This approach will
result in more accurate inference in finite-samples even in cases
with high uncertainty in the data as we shall document in Section
\ref{Section Monte-Carlo-Simulation}. This is supported in Section
\ref{Section Theoretical-Properties-of GL Inference} showing that
the GL inference is bet-proof which is a desirable theoretical property
in non-regular problems.
\begin{assumption}
\label{Assumption The-loss-function LapBai97}Let $l_{T}\left(r\right)\triangleq l\left(a_{T}r\right)$,
with $a_{T}$ a positive divergent sequence. $\boldsymbol{L}$ denotes
the set of functions $l:\,\mathbb{R}\rightarrow\mathbb{R}_{+}$ that
satisfy (i) $l\left(r\right)$ is defined on $\mathbb{R}$, with $l\left(r\right)\geq0$
and $l\left(r\right)=0$ if and only if $r=0$; (ii) $l\left(r\right)$
is continuous at $r=0$; (iii) $l\left(\cdot\right)$ is convex and
$l\left(r\right)\leq1+\left|r\right|^{m}$ for some $m>0$. 
\end{assumption}

\begin{assumption}
\label{Assumption Prior LapBai97}$\pi:\,\mathbb{R}\rightarrow\mathbb{R}_{+}$
is a continuous, uniformly positive density function satisfying $\pi^{0}\triangleq\pi\left(\lambda_{b}^{0}\right)>0,$
and for some finite $C_{\pi}<\infty,$ $\pi^{0}<C_{\pi}$. Also, $\pi\left(\lambda_{b}\right)=0$
for all $\lambda_{b}\notin\varGamma^{0}$, and $\pi$ is twice continuously
differentiable with respect to $\lambda_{b}$ at $\lambda_{b}^{0}$. 
\end{assumption}

Assumption \ref{Assumption The-loss-function LapBai97} is similar
to those in \citet{bickel/yahav:69}, \citet{ibragimov/has:81} and
\citet{chernozhukov/hong:03}. The convexity assumption on $l_{T}\left(\cdot\right)$
is guided by practical considerations. The dominant restriction in
part (iii) is conventional and implicitly assumes that the loss function
has been scaled by some constant. What is important is that the growth
of the function $l_{T}\left(r\right)$ as $\left|r\right|\rightarrow\infty$
is slower than $\exp\left(\epsilon\left|r\right|\right)$ for any
$\epsilon>0$. Assumption \ref{Assumption Prior LapBai97} on the
prior is satisfied for any reasonable choice. For priors that have
a peak at $\lambda_{b}^{0}$ one can apply some basic smoothing techniques
to make it differentiable locally {[}e.g., mean smoothing, Gaussian
smoothing and Savitzky-Golay filter{]}. We did not find any particular
difference in the empirical results and so we used the mean smoothing.
The assumption on the differentiability of the kernel can be relaxed
at the expense of one more step in the proof. \citet{chernozhukov/hong:03}
assumed differentiability of the prior; we also keep the same assumption
and applied the smoothing. The large-sample properties of the $\mathscr{\mathscr{L}}\left(\theta,\,T_{b}\right)$-class
are studied under the shrinkage asymptotic setting of \citet{bai:97RES}
and \citet{bai/perron:98}. Thus, we need the following assumption.
\begin{assumption}
\label{Assumption Small Shift BP}Let $\delta_{T}\triangleq\delta_{T}^{0}\triangleq v_{T}\delta^{0}$
where $v_{T}>0$ is a scalar satisfying $v_{T}\rightarrow0$ as $T\rightarrow\infty$
and $T^{1/2-\vartheta}v_{T}\rightarrow\infty$ for some $\vartheta\in\left(0,\,1/4\right)$. 
\end{assumption}
We omit the superscript 0 from $\delta_{T}^{0}$ for notational convenience
since it should not cause any confusion. Assumption \ref{Assumption Small Shift BP}
requires the magnitude of the break to shrink to zero at any slower
rate than $T^{-1/2}$. The specific rates allowed differ from those
in \citet{bai:97RES} and \citet{bai/perron:98}, since they require
$\vartheta\in\left(0,\,1/2\right)$. The reason is merely technical;
the asymptotics of the Laplace-type estimator involve smoothing the
criterion function, and thus one needs to guarantee that $\widehat{\lambda}_{b}$
approaches $\lambda_{b}^{0}$ at a sufficiently fast rate. Under the
shrinkage asymptotics, Proposition 1 and Corollary 1 in \citet{bai:97RES}
state that $T\left\Vert \delta_{T}\right\Vert ^{2}\left(\widehat{\lambda}_{b}^{\mathrm{LS}}-\lambda_{b}^{0}\right)=O_{\mathbb{P}}\left(1\right)$
and $\widehat{\delta}_{T}^{\mathrm{LS}}-\delta_{T}=o_{\mathbb{P}}\left(1\right)$. 

\subsection{\label{subsection Discussion-on-GL}Discussion about the GL Approach}

We use Figure \ref{Fig1}-\ref{Fig2} to illustrate the main idea
behind the usefulness of the GL method. They present plots of the
density of the distribution of $\widehat{T}_{b}^{\mathrm{LS}}$ and
$\widehat{T}_{b}^{\mathrm{GL}}$ for the simple model $y_{t}=\phi^{0}+z_{t}\left(\delta_{1}^{0}+\delta^{0}\mathbf{1}\left\{ t>T_{b}^{0}\right\} \right)+e_{t}$
where $\left\{ z_{t}\right\} $ follows an ARMA(1,1) process and $e_{t}\sim i.i.d.\,\mathscr{N}\left(0,\,1\right)$.
The distributions presented are the exact finite-sample distributions
of the LS and GL estimators, \citeauthor{bai:97RES}\textquoteright s
(1997) classical large-$N$ limit distribution, and the asymptotic
distribution of the GL estimator. Noteworthy are the non-standard
features of the finite-sample distribution of the LS estimator when
the break magnitude is small, which include multi-modality, fat tails
and asymmetry. The central mode is near $\widehat{T}_{b}^{\mathrm{LS}}$
while the other two modes are in the tails near the start and end
of the sample period; when the break magnitude is small $\widehat{T}_{b}^{\mathrm{LS}}$
tends to locate the break in the tails since the evidence of a break
is weak. It is evident that the classical large-$N$ asymptotic distribution
provides a poor approximation especially for small break sizes. Some
of these features have been found in other works {[}see, e.g., \citet{perron/zhu:05},
\citet{deng/perron:06}, Jiang, et al. \citeyearpar{jiang/wang/yu:16,jiang/wang/yu:17},
and \citet{casini/perron_CR_Single_Break}{]}. Turning to the densities
of the GL estimators, some of the nonstandard features appear also
for the GL estimator although to a much lesser extent. In particular,
the densities of the GL estimators are less spread out than the corresponding
densities for the LS estimator. For small breaks, the finite-sample
distributions of the LS and GL estimators are quite different, which
suggests that standard measures of accuracy (e.g., MAE and RMSE) can
be expected to differ substantially. For $\lambda_{0}=0.5$ the GL
estimator exhibits much less variability and more precision. The figures
also show that the asymptotic distribution of the GL estimator provides
an accurate approximation for large breaks while for small breaks
the approximation is less accurate. However, it captures the fat-tails
of the finite-sample distribution which suggests that it does not
underestimate uncertainty about the break location unlike \citeauthor{bai:97RES}\textquoteright s
(1997) distribution. 

The GL method is useful because it weights the information from the
least-squares criterion function with the information from the prior
density\textemdash which, here, is the density of the asymptotic distribution
of $\widehat{T}_{b}^{\mathrm{LS}}$. Note that the least-squares objective
function is quite flat when the magnitude of the break is small and
so $\widehat{T}_{b}^{\mathrm{LS}}$ is imprecise. The resulting Quasi-posterior,
or, e.g., its  median, is likely to lead to better estimates in finite-samples,
because it takes into account the overall shape of the objective function
which weighted by the prior becomes more informative about the uncertainty
of the break date. 

\subsection{\label{subsec:Normalized-Version-of}Normalized Version of $\mathcal{R}_{l,T}\left(s\right)$}

In order to develop the asymptotic results, we introduce a smoothing
sequence $\left\{ \gamma_{T}\right\} $ whose properties are specified
below and work with a normalized version of $\mathcal{R}_{l,T}\left(s\right)$
in order to be able to derive the relevant limit results. We assume
that $\lambda_{b}^{0}\in\varGamma^{0}\subset\left(0,\,1\right)$ is
the unknown extremum of $\widetilde{Q}\left(\theta^{0},\,\lambda_{b}\right)=\mathbb{E}\left[Q_{T}\left(\theta^{0},\,\lambda_{b}\right)\right]$
and that $\theta^{0}\triangleq\left(\left(\phi^{0}\right)',\,\left(\delta_{1}^{0}\right)',\,\left(\delta^{0}\right)'\right)'\in\mathbf{S}\subset\mathbb{R}^{p}\times\mathbb{R}^{q}\times\mathbb{R}^{q}$.
Our analysis is within a vanishing neighborhood of $\theta^{0}$.
For any $\theta\in\mathbf{S}$, let $\lambda_{b}^{0}\left(\theta\right)$
be an arbitrary element of $\varGamma^{0}\left(\theta\right)\triangleq\left\{ \lambda_{b}\in\varGamma^{0}:\,\widetilde{Q}\left(\theta,\,\lambda_{b}\right)=\sup_{\widetilde{\lambda}_{b}\in\mathcal{\varGamma}^{0}}\widetilde{Q}\left(\theta,\,\widetilde{\lambda}_{b}\right)\right\} $.
Provided a uniqueness condition is assumed (see Assumption \ref{Assumption Gaussian Process for Lap LapBai97}),
$\varGamma^{0}\left(\theta\right)$ contains a single element, $\lambda_{b}^{0}$.
Further, let $\overline{Q}_{T}\left(\theta,\,\lambda_{b}\right)\triangleq Q_{T}\left(\theta,\,\lambda_{b}\right)-Q_{T}\left(\theta,\,\lambda_{b}^{0}\right),$
$Q_{T}^{0}\left(\theta,\,\lambda_{b}\right)\triangleq\mathbb{E}\left[Q_{T}\left(\theta,\,\lambda_{b}\right)-Q_{T}\left(\theta,\,\lambda_{b}^{0}\right)|\,X\right],$
and $G_{T}\left(\theta,\,\lambda_{b}\right)\triangleq\overline{Q}_{T}\left(\theta,\,\lambda_{b}\right)-Q_{T}^{0}\left(\theta,\,\lambda_{b}\right).$
These expressions are given by $G_{T}\left(\theta,\,\lambda_{b}\right)=g_{e}\left(\theta,\,\lambda_{b}\right)$,
$Q_{T}^{0}=g_{d}\left(\theta,\,\lambda_{b}\right)$ and $\overline{Q}_{T}=g_{d}\left(\theta,\,\lambda_{b}\right)+g_{e}\left(\theta,\,\lambda_{b}\right)$,
where

\begin{align}
g_{d}\left(\theta,\,\lambda_{b}\right) & =\delta'_{T}\left\{ \left(Z'_{0}MZ_{2}\right)\left(Z'_{2}MZ_{2}\right)^{-1}\left(Z'_{2}MZ_{0}\right)-Z_{0}'MZ_{0}\right\} \delta_{T},\label{eq. A.2.1-1}
\end{align}
and
\begin{align}
g_{e} & \left(\theta,\,\lambda_{b}\right)=2\delta_{T}'\left(Z'_{0}MZ_{2}\right)\left(Z'_{2}MZ_{2}\right)^{-1}Z_{2}Me-2\delta_{T}'\left(Z'_{0}Me\right)\label{eq. A.2.3-1}\\
 & \quad+e'MZ_{2}\left(Z'_{2}MZ_{2}\right)^{-1}Z_{2}Me-e'MZ_{0}\left(Z'_{0}MZ_{0}\right)^{-1}Z'_{0}Me.\nonumber 
\end{align}
They are derived in Section \ref{subsection: Preliminary Lemmas}.
For the purpose of developing the asymptotic theory, the GL estimator
$\widehat{\lambda}_{b}^{\mathrm{GL}}\left(\theta\right)$ is defined
as the minimizer of a normalized version of $\mathcal{R}_{l,T}\left(s\right)$:
\vfill{}

\begin{align}
\Psi_{l,T}\left(s;\,\theta\right) & =\int_{\varGamma^{0}}l\left(s-\lambda_{b}\right)\frac{\exp\left(\left(\gamma_{T}/\left(T\left\Vert \delta_{T}\right\Vert ^{2}\right)\right)\overline{Q}_{T}\left(\theta,\,\lambda_{b}\right)\right)\pi\left(\lambda_{b}\right)}{\int_{\varGamma^{0}}\exp\left(\left(\gamma_{T}/\left(T\left\Vert \delta_{T}\right\Vert ^{2}\right)\right)\overline{Q}_{T}\left(\theta,\,\lambda_{b}\right)\right)\pi\left(\lambda_{b}\right)d\lambda_{b}}d\lambda_{b}\label{eq. Definition GLE, Psi(s, theta) eq. 5}\\
 & =\int_{\varGamma^{0}}l\left(s-\lambda_{b}\right)\frac{\exp\left(\left(\gamma_{T}/\left(T\left\Vert \delta_{T}\right\Vert ^{2}\right)\right)\left(G_{T}\left(\theta,\,\lambda_{b}\right)+Q_{T}^{0}\left(\theta,\,\lambda_{b}\right)\right)\right)\pi\left(\lambda_{b}\right)}{\int_{\varGamma^{0}}\exp\left(\left(\gamma_{T}/\left(T\left\Vert \delta_{T}\right\Vert ^{2}\right)\right)\left(G_{T}\left(\theta,\,\lambda_{b}\right)+Q_{T}^{0}\left(\theta,\,\lambda_{b}\right)\right)\right)\pi\left(\lambda_{b}\right)d\lambda_{b}}d\lambda_{b}.\nonumber 
\end{align}
Note that, under Condition \ref{Condition 1 LapBai97} below, this
is equivalent to the minimizer of $\mathcal{R}_{l,T}\left(s\right)$
since $\overline{Q}_{T}\left(\theta,\,\lambda_{b}\right)$ can always
be normalized without affecting its maximization. Different choices
of $\left\{ \gamma_{T}\right\} $ give rise to GL estimators with
different limiting distributions. Using $\delta_{T}$ or any consistent
estimate (e.g., $\widehat{\delta}_{T}^{\mathrm{LS}}$) in the factor
$\gamma_{T}/\left(T\left\Vert \delta_{T}\right\Vert ^{2}\right)$
is irrelevant because they are asymptotically equivalent. Our analysis
is local in nature and thus we write $\widehat{\lambda}_{b}^{\mathrm{GL}}(\widehat{\theta})\triangleq\widehat{\lambda}_{b}^{\mathrm{GL,}*}(r_{T}(\widehat{\theta}-\theta^{0}),\,r_{T}(\widehat{\theta}-\theta^{0})),$
where $r_{T}$ is the convergence rate of $\widehat{\theta}-\theta^{0}$.
Note that $G_{T}\left(\cdot,\,\cdot\right)$ and $Q_{T}^{0}\left(\cdot,\,\cdot\right)$
constitute the stochastic and the deterministic part of the objective
function, respectively. Both depend on $r_{T}(\widehat{\theta}-\theta^{0})$
and our proof proceeds in conditioning first on the effect of $r_{T}(\widehat{\theta}-\theta^{0})$
on the deterministic part to obtain weak convergence of the stochastic
part to a limit process that does not depend on this conditioning.
See below for more details. Hence, it is required to introduce two
indices $\widetilde{v}$ and $v$, such that we define $\widehat{\lambda}_{b}^{\mathrm{GL}}(\widehat{\theta})=\widehat{\lambda}_{b}^{\mathrm{GL,*}}(\widetilde{v},\,v)$
as the minimizer of
\begin{align}
\Psi_{l,T}(s;\,\widetilde{v},\,v) & \triangleq\int_{\varGamma^{0}}l(s-\lambda_{b})\times\label{Eq. (14) - Definition of Psi(s,v,v)}\\
 & \quad\frac{\exp\left(\left(\gamma_{T}/\left(T\left\Vert \delta_{T}\right\Vert ^{2}\right)\right)\left(G_{T}\left(\theta^{0}+\widetilde{v}/r_{T},\,\lambda_{b}\right)+Q_{T}^{0}\left(\theta^{0}+v/r_{T},\,\lambda_{b}\right)\right)\right)\pi\left(\lambda_{b}\right)}{\int_{\varGamma^{0}}\exp\left(\left(\gamma_{T}/\left(T\left\Vert \delta_{T}\right\Vert ^{2}\right)\right)\left(G_{T}\left(\theta^{0}+\widetilde{v}/r_{T},\,\lambda_{b}\right)+Q_{T}^{0}\left(\theta^{0}+v/r_{T},\,\lambda_{b}\right)\right)\right)\pi\left(\lambda_{b}\right)d\lambda_{b}}d\lambda_{b}.\nonumber 
\end{align}
 For each $v,$ we show weak convergence as a function of $\widetilde{v}$
to a limit process that does not depend on $v.$ In a second step,
we use the monotonicity in $v$ of $Q_{T}^{0}$ which, relying on
the argument in \citet{jureckova:77}, allows us to achieve weak convergence
uniformly in $v$. We first show the consistency and rate of convergence
of $\widehat{\lambda}_{b}^{\mathrm{GL}}$. These results imply that
$\theta^{0}$ is estimated as if $T_{b}^{0}$ were known. Thus, $\widehat{\theta}$
is $\sqrt{T}$-consistent and asymptotically normal so that we set
$r_{T}=\sqrt{T}$ hereafter. We first show, for each pair $\left(v,\,\widetilde{v}\right)$
with $v,\,\widetilde{v}\in\mathbf{V}$, the convergence of the marginal
distributions of the sample function $\Psi_{l,T}\left(s;\,v,\,\widetilde{v}\right)$
to the marginal distributions of the random function
\begin{align*}
\Psi_{l}^{0}\left(s\right) & =\int_{\mathbb{R}}l\left(s-u\right)\left(\mathscr{V}\left(u\right)/\int_{\mathbb{R}}\mathscr{V}\left(v\right)dv\right)du,
\end{align*}
 where 
\begin{align}
\mathscr{V}\left(s\right)\triangleq\mathscr{W}\left(s\right)-\varLambda^{0}\left(s\right) & \triangleq\begin{cases}
2\left(\left(\delta^{0}\right)'\Sigma_{1}\delta^{0}\right)^{1/2}W_{1}\left(-s\right)-\left|s\right|\left(\delta^{0}\right)'V_{1}\delta^{0}, & \textrm{if }s\leq0\\
2\left(\left(\delta^{0}\right)'\Sigma_{2}\delta^{0}\right)^{1/2}W_{2}\left(s\right)-s\left(\delta^{0}\right)'V_{2}\delta^{0}, & \textrm{if }s>0,
\end{cases}\label{eq. V(s) Limit Process Theorem Post Mean LapBai97}
\end{align}
 and $W_{1},\,W_{2}$ are independent standard Wiener processes defined
on $[0,\,\infty)$. The limit process $\Psi_{l}^{0}\left(s\right)$
does not depend on $v$ nor $\widetilde{v}$. Next, we show that
the family of probability measures in $\mathbb{C}_{b}\left(\mathbf{K}\right)$,
with $\mathbf{K}\triangleq\left\{ s\in\mathbb{R}:\,\left|s\right|\leq K\textrm{ and }K<\infty\right\} $,
generated by the contractions of $\Psi_{l,T}\left(s;\,\widetilde{v},\,v\right)$
on $\mathbf{K}$ is dense uniformly in $\left(v,\,\widetilde{v}\right)$.
Finally, we examine the oscillations of the minimizers of the sample
criterion $\Psi_{l,T}\left(s;\,v,\,\widetilde{v}\right)$. 

It is important to note that the results derived in this section are
more general than what is required for the structural change model.
The reason is that the change-point model is recovered as a special
case corresponding to $\Psi_{l,T}\left(s\right)=\Psi_{l,T}\left(s;\,0,\,0\right)$.
That is, defining the GL estimator in a $1/r_{T}$-neighborhood of
the slope parameter vector $\theta^{0}$ is not strictly necessary
and one can essentially develop the same analysis with $\theta$ fixed
at its true value $\theta^{0}$. This relies on the properties of
(orthogonal) least-squares projections and would not apply, for example,
to the least absolute deviation (LAD) estimator of the break date
{[}cf. \citet{baiLAD:95}{]} for which $\Psi_{l,T}\left(s;\,\widetilde{v},\,v\right)$
should instead be considered. The same issue is present when estimating
structural changes in the quantile regression model {[}cf. \citet{oka/qu:10}{]}
and in using instrumental variables models {[}cf. \citet{hall/han/boldea:10}
and Perron and Yamamoto \citeyearpar{perron/yamamoto:14,perron/yanamoto:15}{]}.
We establish theoretical results under this more general setting
since they may be useful for future work.

Let $\lambda_{b,T}^{0}\left(v\right)=\lambda_{b,T}^{0}\left(\theta^{0}+v/r_{T}\right)$.
 Introduce the local parameter $u=\psi_{T}\left(\lambda_{b}-\lambda_{b,T}^{0}\left(v\right)\right)$
and let $\pi_{T,v}\left(u\right)\triangleq\pi\left(\lambda_{b,T}^{0}\left(v\right)+u/\psi_{T}\right)$,
$Q_{T,v}\left(u\right)\triangleq Q_{T}^{0}(\theta^{0}+v/r_{T},\,\lambda_{b,T}^{0}$
$\left(v\right)+u/\psi_{T}$), and $\widetilde{G}_{T,v}\left(u,\,\widetilde{v}\right)\triangleq G_{T}\left(\theta^{0}+\widetilde{v}/r_{T},\,\lambda_{b,T}^{0}\left(v\right)+u/\psi_{T}\right)$,
where the sequence $\left\{ \psi_{T}\right\} $ depends on the results
on consistency and rate of convergence of $\widehat{\lambda}_{b}^{\mathrm{GL}}$
in Proposition \ref{Proposition: Consistency and Rate of Convergence}.
Apply a simple substitution in \eqref{Eq. (14) - Definition of Psi(s,v,v)}
to yield,
\begin{align}
\Psi_{l,T}\left(s;\,\widetilde{v},\,v\right) & =\int_{\Gamma_{T}}l\left(s-u\right)\frac{\exp\left(\left(\gamma_{T}/T\left\Vert \delta_{T}\right\Vert ^{2}\right)\left(\widetilde{G}_{T,v}\left(u,\,\widetilde{v}\right)+Q_{T,v}\left(u\right)\right)\right)\pi_{T,v}\left(u\right)du}{\int_{\Gamma_{T}}\exp\left(\left(\gamma_{T}/T\left\Vert \delta_{T}\right\Vert ^{2}\right)\left(\widetilde{G}_{T,v}\left(w,\,\widetilde{v}\right)+Q_{T,v}\left(w\right)\right)\right)\pi_{T,v}\left(w\right)dw},\label{eq. (17)-1-1}
\end{align}
 where $\Gamma_{T}\triangleq\left\{ u\in\mathbb{R}:\,\lambda_{b}^{0}+u/\psi_{T}\in\varGamma^{0}\right\} $.
  
\begin{assumption}
\label{A.9a Bai 97}$\left\{ \left(z_{t},\,e_{t}\right)\right\} $
is second-order stationary within each regime such that $\mathbb{E}\left(z_{t}z'_{t}\right)=V_{1}$
and $\mathbb{E}\left(e_{t}^{2}\right)=\sigma_{1}^{2}$ for $t\leq T_{b}^{0}$
and $\mathbb{E}\left(z_{t}z'_{t}\right)=V_{2}$ and $\mathbb{E}\left(e_{t}^{2}\right)=\sigma_{2}^{2}$
for $t>T_{b}^{0}$. 
\end{assumption}

\begin{assumption}
\label{Assumption A.9b Bai 97, LapBai97}For $r\in\left[0,\,1\right],$
$\left(T_{b}^{0}\right)^{-1/2}\sum_{t=1}^{\left\lfloor rT_{b}^{0}\right\rfloor }z_{t}e_{t}\Rightarrow\mathscr{G}_{1}\left(r\right)$
and $\left(T-T_{b}^{0}\right)^{-1/2}\sum_{t=T_{b}^{0}+1}^{T_{b}^{0}+\left\lfloor r\left(T-T_{b}^{0}\right)\right\rfloor }$
$z_{t}e_{t}\Rightarrow\mathscr{G}_{2}\left(r\right)$, where $\mathscr{G}_{i}\left(\cdot\right)$
is a multivariate Gaussian process on $\left[0,\,1\right]$ with zero
mean and covariance $\mathbb{E}\left[\mathscr{G}_{i}\left(u\right),\,\mathscr{G}_{i}\left(s\right)\right]=\min\left\{ u,\,s\right\} \Sigma_{i}$
$\left(i=1,\,2\right)$, and $\Sigma_{1}\triangleq\lim_{T\rightarrow\infty}\mathbb{E}\left[\left(T_{b}^{0}\right)^{-1/2}\sum_{t=1}^{T_{b}^{0}}z_{t}e_{t}\right]^{2}$,
$\Sigma_{2}\triangleq\lim_{T\rightarrow\infty}\mathbb{E}\left[\left(T-T_{b}^{0}\right)^{-1/2}\sum_{t=T_{b}^{0}+1}^{T}z_{t}e_{t}\right]^{2}$.
Furthermore, for any $0<r_{0}<1$ with $r_{0}<\lambda_{0}$, $T^{-1}\sum_{t=\left\lfloor r_{0}T\right\rfloor +1}^{\left\lfloor \lambda_{0}T\right\rfloor }z_{t}z'_{t}\overset{\mathbb{P}}{\rightarrow}\left(\lambda_{0}-r_{0}\right)V_{1},$
and with $\lambda_{0}<r_{0}$ $T^{-1}\sum_{t=\left\lfloor \lambda_{0}T\right\rfloor +1}^{\left\lfloor r_{0}T\right\rfloor }$
$z_{t}z'_{t}\overset{\mathbb{P}}{\rightarrow}\left(r_{0}-\lambda_{0}\right)V_{2}$
so that $\lambda_{-}$ and $\lambda_{+}$ (the minimum and maximum
of the eigenvalues of the last two matrices) satisfy $0<\lambda_{-}\leq\lambda_{+}<\infty.$ 
\end{assumption}
Assumptions \ref{A.9a Bai 97}-\ref{Assumption A.9b Bai 97, LapBai97}
are equivalent to A9 in \citet{bai:97RES} and A7 in \citet{bai/perron:98}.
More specifically, Assumption \ref{Assumption A.9b Bai 97, LapBai97}
requires that, within each regime, an Invariance Principle holds for
$\left\{ z_{t}e_{t}\right\} .$ Let $\zeta_{t}\triangleq z_{t}e_{t}$.
For $u\leq0$ let $g\left(\zeta_{t};\,u\right)\triangleq\left(\delta^{0}\right)'\sum_{t=T_{b}^{0}+\left\lfloor u/v_{T}^{2}\right\rfloor }^{T_{b}^{0}}\zeta_{t}$
and $\widetilde{g}\left(\zeta_{t};\,u,\,\widetilde{v},\,v;\,\psi_{T},\,r_{T}\right)\triangleq\sqrt{\psi_{T}}\left(\delta^{0}+\widetilde{v}/r_{T}\right)'\sum_{t=T\lambda_{b}^{0}\left(\theta^{0}+v/r_{T}\right)+\left\lfloor u/\psi_{T}\right\rfloor }^{T\lambda_{b}^{0}\left(\theta^{0}+v/r_{T}\right)}\zeta_{t}.$
Define analogously $g\left(\zeta_{t};\,u\right)$ and $\widetilde{g}\left(\zeta_{t};\,u,\,\widetilde{v},\,v;\,\psi_{T},\,r_{T}\right)$
for the case $T_{b}>T_{b}^{0}$. We now present some technical assumptions
that are necessary for the derivation of the asymptotic results for
the GL estimate. 
\begin{assumption}
\label{Assumption Gaussian Process for Lap LapBai97}For some neighborhood
 $\Theta^{0}\subset\mathbf{S}$ of $\theta^{0},$ (i) for all $\lambda_{b}\neq\lambda_{b}^{0},\,\widetilde{Q}\left(\theta^{0},\,\lambda_{b}\right)<\widetilde{Q}\left(\theta^{0},\,\lambda_{b}^{0}\right)$;
(ii) for any $v,\,\widetilde{v}_{1},\,\widetilde{v}_{2}\in\mathbf{V}$
and $u,\,s\in\mathbb{R},$ 
\[
\varSigma\left(u,\,s\right)\triangleq\lim_{T\rightarrow\infty}\mathbb{E}\left[\widetilde{g}\left(\zeta_{t};\,u,\,\widetilde{v}_{1},\,v;\,\psi_{T},\,r_{T}\right)\widetilde{g}\left(\zeta_{t};\,s,\,\widetilde{v}_{2},\,v;\,\psi_{T},\,r_{T}\right)'\right],
\]
does not depend on $v,\,\widetilde{v}_{1},\,\widetilde{v}_{2}\in\mathbf{V}$.
\end{assumption}
Part (i) of Assumption \ref{Assumption Gaussian Process for Lap LapBai97}
is an identification condition. Assumption \ref{Assumption Gaussian Process for Lap LapBai97}-(ii)
holds whenever $\widehat{\lambda}_{b}$ is consistent. With Assumption
\ref{Assumption Gaussian Process for Lap LapBai97}-(ii) we fully
characterize the Gaussian component of the limit process $\mathscr{V}\left(\cdot\right);$
it implies that $\varSigma\left(\cdot,\,\cdot\right)$ is strictly
positive and that 
\begin{align}
\forall u,\,s\in\mathbb{R}: & \begin{cases}
\forall c>0:\,\varSigma\left(cu,\,cs\right)=c\varSigma\left(u,\,s\right),\\
\varSigma\left(u,\,u\right)+\varSigma\left(s,\,s\right)-2\varSigma\left(u,\,s\right)=\varSigma\left(u-s,\,u-s\right),
\end{cases}\label{Eq. (18) Covariance of Limit Gaussina process}
\end{align}
 where the second implication requires some simple but tedious manipulations.
Finally, the following assumption is automatically satisfied if $l\left(\cdot\right)$
is a convex function with a unique minimum. 
\begin{assumption}
\label{Assumption Uniquess loss Fucntion LapBai97}$\xi_{l}^{0}\triangleq\xi\left(\lambda_{b}^{0}\right)$
is uniquely defined by 
\begin{align*}
\Psi_{l}\left(\xi_{l}^{0}\right)\triangleq\inf_{s}\Psi_{l}\left(s\right)=\inf_{s}\int_{\mathbb{R}}l\left(s-u\right)\left(\exp\left(\mathscr{V}\left(u\right)\right)/\left(\int_{\mathbb{R}}\exp\left(\mathscr{V}\left(w\right)\right)dw\right)\right)du & .
\end{align*}
 
\end{assumption}

\subsection{\label{Sub section Theorems Laplace in SC}Asymptotic Results for
the GL Estimate}

We first show the consistency and rate of convergence of the GL estimator.
The latter allows us to characterize the rate of $\psi_{T}$ and proceed
with the asymptotic analysis in a neighborhood of $\lambda_{b}^{0}$.
In practice, the squared loss function is often employed. Hence, it
is useful to first present in Theorem \ref{Theorem Posterior Mean Lap Estimation in Bai 97}
the theoretical results for this case for which the GL estimator is
$\widehat{\lambda}_{b}^{\mathrm{GL}}=\int_{\varGamma^{0}}\lambda_{b}p_{T}\left(\lambda_{b}\right)d\lambda_{b},$
i.e., the Quasi-posterior mean. This allows us to keep the theoretical
results tractable and provide the main intuition without the need
of  complex notation. This case is also instructive since we can
compare our results with corresponding ones for the least-squares
and Bayesian change-point estimators. Corresponding results for general
loss functions  are given in Theorem \ref{Theorem Geneal Laplace Estimator LapBai97}.

\subsubsection{Consistency and Rate of Convergence}

The rate of convergence is similar to that of the LS estimator; the
difference being that $\psi_{T}=T^{1-2\vartheta}$ with $\vartheta\in\left(0,\,1/2\right)$
for the LS estimator and $\vartheta\in\left(0,\,1/4\right)$ for the
GL estimator. 
\begin{prop}
\label{Proposition: Consistency and Rate of Convergence}Under Assumptions
\ref{Assumption A Bai97}-\ref{Assumption A4 Bai97}, \ref{Assumption The-loss-function LapBai97}-\ref{Assumption Small Shift BP}
and \ref{Assumption Gaussian Process for Lap LapBai97}-(i): (i) $\widehat{\lambda}_{b}^{\mathrm{GL}}=\lambda_{b}^{0}+o_{\mathbb{P}}\left(1\right)$;
(ii) $\widehat{\lambda}_{b}^{\mathrm{GL}}=\lambda_{b}^{0}+O_{\mathbb{P}}\left(\left(T\left\Vert \delta_{T}\right\Vert ^{2}\right)^{-1}\right)$.
\end{prop}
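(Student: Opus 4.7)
The plan is to adapt the general Laplace-type argument of \citet{chernozhukov/hong:03} to the non-regular shrinkage asymptotics of the break-point problem, exploiting the explicit representation $\overline{Q}_{T}(\theta,\lambda_{b}) = g_{d}(\theta,\lambda_{b}) + g_{e}(\theta,\lambda_{b})$ from \eqref{eq. A.2.1-1}--\eqref{eq. A.2.3-1}. Throughout I will work with the Quasi-posterior density $p_{T}(\lambda_{b})$ with tilt parameter $\gamma_{T}/(T\|\delta_{T}\|^{2})$ and rely on Assumption \ref{Assumption A.9b Bai 97, LapBai97} to control the stochastic component $g_{e}$.

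\emph{Consistency (part (i)).} First I would establish the uniform identification of $\lambda_{b}^{0}$: combining Assumption \ref{Assumption Gaussian Process for Lap LapBai97}(i) with the uniform convergence (on compact subsets bounded away from $\lambda_{b}^{0}$) of $(T\|\delta_{T}\|^{2})^{-1}\overline{Q}_{T}(\theta,\lambda_{b})$ to a non-positive limit that vanishes only at $\lambda_{b}^{0}$. The deterministic term $(T\|\delta_{T}\|^{2})^{-1} g_{d}$ is bounded above by a strictly negative quantity on any set $\{|\lambda_{b}-\lambda_{b}^{0}|>\eta\}$, by the algebraic computation in \citet{bai:97RES}; the stochastic part $(T\|\delta_{T}\|^{2})^{-1} g_{e}$ is $o_{\mathbb{P}}(1)$ uniformly in $\lambda_{b}\in\varGamma^{0}$ using Assumption \ref{Assumption A5 Bai97 and A4 BP98} and the mixingale maximal inequality, together with Assumption \ref{Assumption Small Shift BP} which ensures $T^{1/2}\|\delta_{T}\|\to\infty$. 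Hence on $\{|\lambda_{b}-\lambda_{b}^{0}|>\eta\}$ the exponent $(\gamma_{T}/(T\|\delta_{T}\|^{2}))\overline{Q}_{T}$ diverges to $-\infty$, while on a shrinking neighborhood of $\lambda_{b}^{0}$ it stays bounded; by the standard posterior-mass argument the Quasi-posterior mass outside any fixed neighborhood vanishes in probability. Coupled with convexity of $l$ and Assumption \ref{Assumption Prior LapBai97}, this gives $\widehat{\lambda}_{b}^{\mathrm{GL}}\overset{\mathbb{P}}{\to}\lambda_{b}^{0}$.

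\emph{Rate of convergence (part (ii)).} Having localized to a neighborhood of $\lambda_{b}^{0}$, I would rescale by the natural rate $\psi_{T}=T\|\delta_{T}\|^{2}$, setting $u=\psi_{T}(\lambda_{b}-\lambda_{b}^{0})$ as in \eqref{eq. (17)-1-1}. In a $1/\psi_{T}$-neighborhood the deterministic part satisfies $Q_{T}^{0}(\theta^{0},\lambda_{b}^{0}+u/\psi_{T})\leq -c_{1}|u|$ for some $c_{1}>0$ (this follows from differentiating $g_{d}$, using Assumption \ref{A.9a Bai 97} and the positive definiteness of $V_{1},V_{2}$), while the stochastic part $G_{T}(\theta^{0},\lambda_{b}^{0}+u/\psi_{T})$, properly normalized, is tight by Assumption \ref{Assumption A.9b Bai 97, LapBai97}: within each regime an invariance principle for $\{z_{t}e_{t}\}$ yields $|G_{T}|=O_{\mathbb{P}}(|u|^{1/2}\vee 1)$ on compacts. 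Consequently, for $|u|$ large the deterministic drift dominates and the rescaled Quasi-posterior is tight on $\mathbb{R}$. A chaining/peeling argument over shells $\{2^{k}<|u|\leq 2^{k+1}\}$ then shows
\begin{equation*}
\mathbb{P}\!\left(\psi_{T}|\widehat{\lambda}_{b}^{\mathrm{GL}}-\lambda_{b}^{0}|>M\right) \;\leq\; \sum_{k:\,2^{k}>M} \mathbb{P}\!\left(\sup_{2^{k}<|u|\leq 2^{k+1}} \overline{Q}_{T} \text{ dominates}\right)\;\longrightarrow\;0
\end{equation*}
as $M\to\infty$, uniformly in $T$. Convexity of $l$ (Assumption \ref{Assumption The-loss-function LapBai97}) transfers concentration of the Quasi-posterior to the argmin, yielding $\widehat{\lambda}_{b}^{\mathrm{GL}}-\lambda_{b}^{0}=O_{\mathbb{P}}(\psi_{T}^{-1})$.

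\emph{Main obstacle.} The principal technical difficulty is controlling the tails of the Quasi-posterior: the normalizing denominator in \eqref{eq. Definition GLE, Psi(s, theta) eq. 5} couples the behavior of the exponent near and far from $\lambda_{b}^{0}$, and I must rule out the possibility that the growth bound $l(r)\leq 1+|r|^{m}$ together with fat tails of the exponentiated criterion causes the expected risk integral to be dominated by distant $\lambda_{b}$. This forces a careful tradeoff: the tilt $\gamma_{T}/(T\|\delta_{T}\|^{2})$ must grow fast enough that the exponent decays rapidly outside a $1/\psi_{T}$ window, yet not so fast that the invariance principle for $g_{e}$ fails to provide a tight Gaussian limit on this window. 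This is precisely why Assumption \ref{Assumption Small Shift BP} restricts $\vartheta\in(0,1/4)$ rather than $(0,1/2)$ as in \citet{bai:97RES}, giving enough slack to absorb the additional smoothing. I would manage this by splitting the integral over the shells above, bounding the stochastic fluctuation on each shell by an exponential moment inequality for the Gaussian-type process $g_{e}$, and summing geometrically.
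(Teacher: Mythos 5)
Your plan is correct and follows essentially the same route as the paper: consistency is obtained by comparing the localized risk at the origin with its infimum on the far set, using Bai's bounds on the deterministic part ($\liminf\overline{g}_{d}>0$) and the stochastic part ($\sup|g_{e}|=O_{\mathbb{P}}(\|\delta_{T}\|T^{1/2}\log T)$) so that the exponentiated criterion decays fast enough to beat the polynomial growth of $l$, and the rate is obtained by the Ibragimov--Has'minskii shell argument you describe (drift dominates fluctuation on each shell $\{M\leq|u|<M+1\}$, summed geometrically), which is exactly how the paper bounds $\mathbb{E}[J_{1,M}/J_{2}]$. The one ingredient you leave implicit is the lower bound on the quasi-posterior mass in a neighborhood of $u=0$ (needed to control the normalizing denominator that you correctly identify as the coupling point); the paper supplies it via the $L^{1}$-continuity of the increments of $\zeta_{T,v}^{1/2}$.
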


\subsubsection{The Asymptotic Distribution of the Quasi-posterior Mean}

For the squared loss function $\widehat{\lambda}_{b}^{\mathrm{GL}}\left(\widehat{\theta}\right)\triangleq\widehat{\lambda}_{b}^{\mathrm{GL,}*}\left(\widetilde{v},\,v\right),$
where 
\begin{align}
\widehat{\lambda}_{b}^{\mathrm{GL},*}\left(\widetilde{v},\,v\right) & \triangleq\frac{\int_{\varGamma^{0}}\lambda_{b}\exp\left(\left(\gamma_{T}/\left(T\left\Vert \delta_{T}\right\Vert ^{2}\right)\right)\left(G_{T}\left(\theta^{0}+\widetilde{v}/r_{T},\,\lambda_{b}\right)+Q_{T}^{0}\left(\theta^{0}+v/r_{T},\,\lambda_{b}\right)\right)\right)\pi\left(\lambda_{b}\right)d\lambda_{b}}{\int_{\varGamma^{0}}\exp\left(\left(\gamma_{T}/\left(T\left\Vert \delta_{T}\right\Vert ^{2}\right)\right)\left(G_{T}\left(\theta^{0}+\widetilde{v}/r_{T},\,\lambda_{b}\right)+Q_{T}^{0}\left(\theta^{0}+v/r_{T},\,\lambda_{b}\right)\right)\right)\pi\left(\lambda_{b}\right)d\lambda_{b}},\label{Eq. (14) - Definition of lambda*(v,v)}
\end{align}
 and $v,$ $\widetilde{v}$ each belong to some compact set $\mathbf{V}\subset\mathbb{R}^{p+2q}$.
For each $v\in\mathbf{V},$ we consider $\widehat{\lambda}_{b}^{\mathrm{GL,*}}\left(\cdot,\,v\right)$
as a random process with paths in $\mathbb{D}_{b}\left(\mathbf{V}\right)$.
We focus on the weak convergence of $\widehat{\lambda}_{b}^{\mathrm{GL},*}\left(\cdot,\,v\right)$
for fixed $v$ since the limit process is independent of $v$ and
constant as a function of $\widetilde{v}$; we then exploit monotonicity
in $v.$  More precisely, we will show that for $\lambda_{b,T}^{0}\left(v\right)=\lambda_{b,T}^{0}\left(\theta^{0}+v/r_{T}\right)$
and diverging sequences $\left\{ \gamma_{T}\right\} $ and $\left\{ r_{T}\right\} $,
the sequence $a_{T}\left(\widehat{\lambda}_{b}^{\mathrm{GL},*}\left(\widetilde{v},\,v\right)-\lambda_{b,T}^{0}\left(v\right)\right)$
converges in distribution in $\mathbb{D}_{b}\left(\mathbf{V}\right)$
for each $v$ to a limit process not depending on $v$ nor $\widetilde{v}$.
Since it is monotonic in $v$, we do not need to show uniform convergence
directly.  Introduce the local parameter $u=\psi_{T}\left(\lambda_{b}-\lambda_{b,T}^{0}\left(v\right)\right)$;
a simple substitution in \eqref{Eq. (14) - Definition of lambda*(v,v)}
yields, 
\begin{align}
\psi_{T}\left(\widehat{\lambda}_{b}^{\mathrm{GL},*}\left(\widetilde{v},\,v\right)-\lambda_{b,T}^{0}\left(v\right)\right) & =\frac{\int_{\mathbb{R}}u\exp\left(\left(\gamma_{T}/\left(T\left\Vert \delta_{T}\right\Vert ^{2}\right)\right)\left(\widetilde{G}_{T,v}\left(u,\,\widetilde{v}\right)+Q_{T,v}\left(u\right)\right)\right)\pi_{T,v}\left(u\right)du}{\int_{\mathbb{R}}\exp\left(\left(\gamma_{T}/\left(T\left\Vert \delta_{T}\right\Vert ^{2}\right)\right)\left(\widetilde{G}_{T,v}\left(u,\,\widetilde{v}\right)+Q_{T,v}\left(u\right)\right)\right)\pi_{T,v}\left(u\right)du},\label{eq. (17)-1}
\end{align}
where again we have used the notation $\pi_{T,v}\left(u\right)=\pi\left(\lambda_{b,T}^{0}\left(v\right)+u/\psi_{T}\right),$
$Q_{T,v}\left(u\right)=Q_{T}^{0}\left(\theta^{0}+v/r_{T}\right.$
$\left.\lambda_{b,T}^{0}\left(v\right)+u/\psi_{T}\right)$ and $\widetilde{G}_{T,v}\left(u,\,\widetilde{v}\right)=G_{T}\left(\theta^{0}+\widetilde{v}/r_{T},\,\lambda_{b,T}^{0}\left(v\right)+u/\psi_{T}\right)$.
The limit of the GL estimator depends on the limit of the process
$\left(\gamma_{T}/\left(T\left\Vert \delta_{T}\right\Vert ^{2}\right)\right)\left(\widetilde{G}_{T,v}\left(u,\,\widetilde{v}\right)+Q_{T,v}\left(u\right)\right)$.
As part of the proof of Theorem \ref{Theorem Posterior Mean Lap Estimation in Bai 97},
we show that the sequence of processes $\left\{ \widetilde{G}_{T,v}\left(u,\,\widetilde{v}\right),\,T\geq1\right\} $
converges weakly in $\mathbb{D}_{b}\left(\mathbb{R}\times\mathbf{V}\right)$
to a Gaussian process $\mathscr{W}$ not varying with $v$, whereas
$Q_{T,v}\left(\cdot\right)$ is approximated by a (deterministic)
drift process taking negative values, and is monotonic in $v$ and
flat in $\widetilde{v}$.  We show that this implies that $\widehat{\lambda}_{b}^{\mathrm{GL},*}\left(\widetilde{v},\,v\right)-\lambda_{b,T}^{0}\left(v\right)$
is monotonic in $v$ which then leads to uniform convergence in $v$
following the argument of \citet{jureckova:77}.

In anticipation of the results, we make a few comments about the notation
for the weak convergence of processes on the space of bounded càdlàg
functions $\mathbb{D}_{b}$. Let $\mathbf{V}\subset\mathbb{R}^{p+2q}$
be a compact set. Let $W_{T}\left(u,\,\widetilde{v},\,v\right)$ denote
an arbitrary sample process with bounded càdlàg paths evaluated at
the local parameters $u\in\mathbb{R},$ and $v,\,\widetilde{v}\in\mathbf{V}$.
For each fixed $v\in\mathbf{V}$, we shall write $W_{T}\left(u,\,\widetilde{v},\,v\right)\Rightarrow\mathscr{W}\left(u,\,\widetilde{v},\,v\right)$
in $\mathbb{D}_{b}\left(\mathbb{R}\times\mathbf{V}\right)$ whenever
the process $W_{T}\left(\cdot,\,\cdot,\,v\right)$ converges weakly
to $\mathscr{W}\left(\cdot,\,\cdot,\,v\right)$, where $\mathscr{W}\left(\cdot,\,\cdot,\,v\right)$
also belongs to $\mathbb{D}_{b}\left(\mathbb{R}\times\mathbf{V}\right)$.
As a shorthand, we shall omit the argument $u\,\left(\widetilde{v}\right)$
if the limit process does not depend on $u\,\left(\widetilde{v}\right)$.
The same notational conventions are used for the case when $W_{T}$
is only a function of $\left(\widetilde{v},\,v\right).$ In Theorem
\ref{Theorem Posterior Mean Lap Estimation in Bai 97} the convergence
holds for every $v\in\mathbf{V}$, stated as convergence in $\mathbb{D}_{b}$. 
\begin{condition}
\label{Condition 1 LapBai97}As $T\rightarrow\infty$ there exist
a positive finite number $\kappa_{\gamma}$ such that $\gamma_{T}/T\left\Vert \delta_{T}\right\Vert ^{2}\rightarrow\kappa_{\gamma}$.
\end{condition}
\begin{thm}
\label{Theorem Posterior Mean Lap Estimation in Bai 97}Assume $l\left(\cdot\right)$
is the squared loss function. Under Assumptions \ref{Assumption A Bai97}-\ref{Assumption A4 Bai97}
and \ref{Assumption The-loss-function LapBai97}-\ref{Assumption Uniquess loss Fucntion LapBai97},
and Condition \ref{Condition 1 LapBai97}, then in $\mathbb{D}_{b}$,
\begin{align}
T\left\Vert \delta_{T}\right\Vert ^{2}\left(\widehat{\lambda}_{b}^{\mathrm{GL}}-\lambda_{b}^{0}\right) & \Rightarrow\frac{\int u\exp\left(\mathscr{W}\left(u\right)-\varLambda^{0}\left(u\right)\right)du}{\int\exp\left(\mathscr{W}\left(u\right)-\varLambda^{0}\left(u\right)\right)du}\triangleq\int up_{0}^{*}\left(u\right)du,\label{eq. Asy Dist Laplace Bai97 part (ii)}
\end{align}
where $\mathscr{W}\left(\cdot\right)$ and $\varLambda^{0}\left(\cdot\right)$
are defined in \eqref{eq. V(s) Limit Process Theorem Post Mean LapBai97}.
\end{thm}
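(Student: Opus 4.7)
My plan is to carry out the argument along the localization already prepared in the text. After the change of variables $u=\psi_T(\lambda_b-\lambda_{b,T}^0(v))$ with $\psi_T=T\|\delta_T\|^2$ (which is the rate identified in Proposition \ref{Proposition: Consistency and Rate of Convergence}), the quantity of interest equals the ratio in \eqref{eq. (17)-1}. Under Condition \ref{Condition 1 LapBai97} the scaling factor $\gamma_T/(T\|\delta_T\|^2)$ converges to $\kappa_\gamma$, so the exponent in the Quasi-posterior reduces to a bounded multiple of $\widetilde{G}_{T,v}(u,\widetilde v)+Q_{T,v}(u)$. I would first concentrate on pointwise (in $v$) weak convergence of the stochastic ingredient: using the explicit formula \eqref{eq. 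A.2.3-1} for $g_e$, Assumptions \ref{A.9a Bai 97}--\ref{Assumption A.9b Bai 97, LapBai97}, and the $L^r$-mixingale structure from Assumption \ref{Assumption A5 Bai97 and A4 BP98}, show that $\widetilde G_{T,v}(\cdot,\widetilde v)\Rightarrow \mathscr{W}(\cdot)$ in $\mathbb{D}_b(\mathbb{R}\times\mathbf V)$, where $\mathscr{W}$ is the two-sided compound Wiener process in \eqref{eq. V(s) Limit Process Theorem Post Mean LapBai97}; Assumption \ref{Assumption Gaussian Process for Lap LapBai97}(ii) ensures that the covariance is free of $v$ and $\widetilde v$. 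In parallel, using \eqref{eq. A.2.1-1} and Assumption \ref{A.9a Bai 97}, a direct calculation on either side of $\lambda_b^0$ yields $Q_{T,v}(u)\to -\Lambda^0(u)$ uniformly on compacts, with the drift depending only on $u$ via $(\delta^0)'V_i\delta^0$.

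Next I would pass to the limit in the ratio. By Assumption \ref{Assumption Prior LapBai97} the localized prior $\pi_{T,v}(u)\to\pi(\lambda_b^0)$ uniformly on compacts, so it cancels from numerator and denominator. The main analytic obstacle is that the integrals range over a diverging domain $\Gamma_T$, so pointwise weak convergence of the integrands is not enough; one needs a uniform tail bound to apply the continuous mapping theorem to the ratio of integrals. I would establish this via the quadratic drift in $\Lambda^0(u)$: since for $|u|$ large $-\Lambda^0(u)\asymp -|u|(\delta^0)'V_i\delta^0$ dominates the linear-in-$\sqrt{|u|}$ fluctuations of $\mathscr{W}(u)$, a standard maximal inequality for the Gaussian/mixingale partial sums of $\{z_te_t\}$ (obtained from Assumption \ref{Assumption A5 Bai97 and A4 BP98}) shows that the exponential is, with high probability, bounded by an integrable envelope $\exp(-c|u|)$ outside any large compact. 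Combined with the rate result $\widehat\lambda_b^{\mathrm{GL}}-\lambda_b^0=O_\mathbb{P}((T\|\delta_T\|^2)^{-1})$ already in Proposition \ref{Proposition: Consistency and Rate of Convergence}, this truncation argument lets me restrict both integrals to a large but fixed compact at the cost of an $o_\mathbb{P}(1)$ error, and then apply continuous mapping.

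At this stage, for each fixed $v\in\mathbf V$ the continuous mapping theorem and the independence of the limit $\mathscr{V}(u)=\mathscr{W}(u)-\Lambda^0(u)$ from $v,\widetilde v$ give
\begin{equation*}
\psi_T\bigl(\widehat\lambda_b^{\mathrm{GL},*}(\widetilde v,v)-\lambda_{b,T}^0(v)\bigr)\Rightarrow\frac{\int u\exp(\mathscr{V}(u))\,du}{\int\exp(\mathscr{V}(u))\,du}=\int u\,p_0^*(u)\,du,
\end{equation*}
which is exactly the right-hand side of \eqref{eq. Asy Dist Laplace Bai97 part (ii)}. The uniqueness piece of Assumption \ref{Assumption Uniquess loss Fucntion LapBai97} guarantees the limit functional is well-defined. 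Finally, to replace $\lambda_{b,T}^0(v)$ by $\lambda_b^0$ and to convert convergence for each $v$ into the joint statement in $\mathbb{D}_b$, I would invoke the monotonicity of $Q_{T,v}(u)$ in $v$ sketched in the text: $Q_{T,v}$ is monotone in $v$ because it is a quadratic form with direction controlled by $v$, and hence so is $\widehat\lambda_b^{\mathrm{GL},*}(\widetilde v,v)-\lambda_{b,T}^0(v)$; the argument of Jure\v ckov\'a (1977) then upgrades pointwise-in-$v$ convergence to uniform convergence on the compact $\mathbf V$, and since $\widehat\theta-\theta^0=O_\mathbb{P}(r_T^{-1})$ one may evaluate at $v=r_T(\widehat\theta-\theta^0)$ to obtain the claim. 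The most delicate obstacle I expect is the uniform-integrability/tail argument that lets me pass limits inside the ratio of integrals over $\Gamma_T$; everything else follows in a fairly standard fashion from the invariance principle in Assumption \ref{Assumption A.9b Bai 97, LapBai97} and the preliminary expansions \eqref{eq. A.2.1-1}--\eqref{eq. A.2.3-1}.
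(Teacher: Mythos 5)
Your proposal is correct and follows essentially the same route as the paper's proof: localization at rate $\psi_T=T\left\Vert \delta_T\right\Vert ^2$, weak convergence of $\widetilde{G}_{T,v}$ to $\mathscr{W}$ via the invariance principle, uniform convergence of the drift $Q_{T,v}$ to $-\varLambda^0$, cancellation of the localized prior, a tail-domination argument (the linear drift $\left|u\right|\left(\delta^0\right)'V_i\delta^0$ beating the $\sqrt{\left|u\right|}$-scale Gaussian fluctuations) to truncate the integrals to a compact set, the continuous mapping theorem for each fixed $v$, and the Jure\v{c}kov\'a monotonicity argument for uniformity in $v$. The only slip is calling the drift ``quadratic''---it is linear in $\left|u\right|$, which is exactly what your own display uses and what the integrable-envelope bound requires.
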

Theorem \ref{Theorem Posterior Mean Lap Estimation in Bai 97} states
that the asymptotic distribution of the GL estimate is a ratio of
integrals of functions of tight Gaussian processes. We shall compare
this result with the limiting distribution of the Bayesian change-point
estimator of \citet{ibragimov/has:81}. They considered a simple diffusion
process with a change-point in the deterministic drift {[}see their
eq. (2.17) on pp. 338{]}. The limiting distribution of the GL estimate
from Theorem \ref{Theorem Posterior Mean Lap Estimation in Bai 97}
for the case of a break in the mean for model \eqref{Eq. the Single Break Model}
is essentially the same (and exactly so in the i.i.d. case with stationary
regimes) as theirs. Hence, while the GL estimator has a classical
(frequentist) interpretation, it is first-order equivalent in law
to a corresponding Bayes-type estimator. 

We now present a result about the dual nature of the limiting distribution
of the GL estimator. The following proposition shows that, under different
conditions on the smoothing sequence parameter $\left\{ \gamma_{T}\right\} $,
the GL estimator achieves different limiting distributions.
\begin{condition}
\label{Condition 2 LapBai97}As $T\rightarrow\infty$, $T\left\Vert \delta_{T}\right\Vert ^{2}/\gamma_{T}=o\left(1\right)$.
\end{condition}
\begin{prop}
\label{Proposition}Assume $l\left(\cdot\right)$ is the squared loss
function. Under Assumptions \ref{Assumption A Bai97}-\ref{Assumption A4 Bai97}
and \ref{Assumption The-loss-function LapBai97}-\ref{Assumption Uniquess loss Fucntion LapBai97},
and Condition \ref{Condition 2 LapBai97}, $T\left\Vert \delta_{T}\right\Vert ^{2}\left(\widehat{\lambda}_{b}^{\mathrm{GL}}-\lambda_{b}^{0}\right)\Rightarrow\arg\max_{s\in\mathbb{R}}\mathscr{V}\left(s\right)$
in $\mathbb{D}_{b}$, with $\mathscr{V}\left(\cdot\right)$ defined
in \eqref{eq. V(s) Limit Process Theorem Post Mean LapBai97}.
\end{prop}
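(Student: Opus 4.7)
The plan is to reuse the weak-convergence apparatus developed for Theorem \ref{Theorem Posterior Mean Lap Estimation in Bai 97} but to exploit the sharper exponential smoothing implied by Condition \ref{Condition 2 LapBai97}. Setting $\psi_{T}=T\|\delta_{T}\|^{2}$ and using the local-parameter substitution that leads to \eqref{eq. (17)-1}, I write
\begin{align*}
T\|\delta_{T}\|^{2}\bigl(\widehat{\lambda}_{b}^{\mathrm{GL},*}(\widetilde{v},v)-\lambda_{b,T}^{0}(v)\bigr) = \frac{\int_{\mathbb{R}} u\,\exp\bigl(c_{T}H_{T,v}(u,\widetilde{v})\bigr)\pi_{T,v}(u)\,du}{\int_{\mathbb{R}} \exp\bigl(c_{T}H_{T,v}(u,\widetilde{v})\bigr)\pi_{T,v}(u)\,du},
\end{align*}
where $c_{T}\triangleq\gamma_{T}/(T\|\delta_{T}\|^{2})$ and $H_{T,v}(u,\widetilde{v})\triangleq \widetilde{G}_{T,v}(u,\widetilde{v})+Q_{T,v}(u)$. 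Under Condition \ref{Condition 2 LapBai97} one has $c_{T}\rightarrow\infty$ rather than converging to a finite constant $\kappa_{\gamma}$, so the Gibbs-type probability measure $\mu_{T}(du)\propto \exp(c_{T}H_{T,v}(u,\widetilde{v}))\pi_{T,v}(u)\,du$ concentrates asymptotically on the set of maximizers of $H_{T,v}(\cdot,\widetilde{v})$.

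The first step is to recall from the proof of Theorem \ref{Theorem Posterior Mean Lap Estimation in Bai 97} that $H_{T,v}(u,\widetilde{v})\Rightarrow \mathscr{V}(u)$ in $\mathbb{D}_{b}(\mathbb{R}\times\mathbf{V})$, that $\pi_{T,v}(u)\rightarrow \pi(\lambda_{b}^{0})>0$ uniformly on compact subsets of $u$, and that the drift $-\varLambda^{0}(s)$ in \eqref{eq. V(s) Limit Process Theorem Post Mean LapBai97} grows linearly in $|s|$ whereas the Gaussian component only grows as $\sqrt{|s|}$. Consequently, both integrals in the displayed ratio are a.s.\ finite and the argmax of $\mathscr{V}$ is a.s.\ a singleton (by the standard Kim--Pollard type argument applied separately on the two half-lines, which rules out ties between the two independent Wiener pieces).

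The second step is a Laplace-type concentration argument. Let $s^{*}\triangleq\arg\max_{s\in\mathbb{R}}\mathscr{V}(s)$. For any $\varepsilon>0$, split the integration region into $\mathbf{A}_{\varepsilon}=\{u:|u-s^{*}|\leq\varepsilon\}$ and its complement. Using the strict concavity-on-average provided by the drift $-\varLambda^{0}$, together with the uniform weak convergence $H_{T,v}\Rightarrow \mathscr{V}$ and the bound $\pi_{T,v}\leq C_{\pi}$, one obtains $\sup_{u\notin\mathbf{A}_{\varepsilon}}H_{T,v}(u,\widetilde{v})\leq \sup_{u\in\mathbf{A}_{\varepsilon}}H_{T,v}(u,\widetilde{v})-\eta+o_{\mathbb{P}}(1)$ for some $\eta=\eta(\varepsilon)>0$. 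Multiplying by $c_{T}\rightarrow\infty$ and exponentiating gives that the mass of $\mu_{T}$ outside $\mathbf{A}_{\varepsilon}$ is $O_{\mathbb{P}}(\exp(-c_{T}\eta))\rightarrow 0$, while on $\mathbf{A}_{\varepsilon}$ the posterior mean lies within $\varepsilon$ of $s^{*}$. Letting $\varepsilon\downarrow 0$, the continuous-mapping theorem for the argmax functional together with the a.s.\ uniqueness of $s^{*}$ yields $T\|\delta_{T}\|^{2}(\widehat{\lambda}_{b}^{\mathrm{GL},*}(\widetilde{v},v)-\lambda_{b,T}^{0}(v))\Rightarrow s^{*}=\arg\max_{s}\mathscr{V}(s)$ for each fixed $v\in\mathbf{V}$. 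Uniformity in $v$ and replacement of $\lambda_{b,T}^{0}(v)$ by $\lambda_{b}^{0}$ are then obtained by the monotonicity of $Q_{T,v}(\cdot)$ in $v$ and the argument of \citet{jureckova:77}, exactly as in the proof of Theorem \ref{Theorem Posterior Mean Lap Estimation in Bai 97}.

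The main obstacle is making the Laplace concentration rigorous given that $\mathscr{V}$ has nowhere-differentiable sample paths, which rules out a classical Laplace expansion around $s^{*}$. The critical ingredients are (i) the linear-in-$|s|$ drift of $\varLambda^{0}$, which tames the tails uniformly in $T$ and supplies the required gap $\eta(\varepsilon)$ between the interior and exterior supremum of $H_{T,v}$; (ii) the tightness of $\arg\max H_{T,v}$, which follows from $\mathbb{P}(|\arg\max \mathscr{V}|>M)\rightarrow 0$ as $M\rightarrow\infty$; and (iii) the joint control of the two suprema, which must be transported from the limit back to finite $T$ via the uniform weak convergence on compacts established earlier. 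Once these are in place, the argument reduces to a clean asymptotic Laplace-method statement that upgrades the continuous weak limit of $H_{T,v}$ to convergence of the associated posterior mean to the argmax.
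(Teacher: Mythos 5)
Your overall strategy is the same as the paper's: under Condition \ref{Condition 2 LapBai97} the temperature $\widetilde{\gamma}_{T}=\gamma_{T}/(T\left\Vert \delta_{T}\right\Vert ^{2})$ diverges, so the Gibbs measure built from $\widetilde{G}_{T,v}+Q_{T,v}$ collapses onto the maximizer of the limit process $\mathscr{V}$, with the linear drift $\varLambda^{0}$ supplying tail integrability and the Kim--Pollard argument supplying a.s.\ uniqueness of the argmax. The decomposition into a compact part and a tail, and the final appeal to monotonicity in $v$ via Jure\v{c}kov\'a, also match the paper (cf.\ the decomposition \eqref{eq. J1/J0} and Lemmas \ref{Lemma C.2 LapBai97}--\ref{Lemma C.1. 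LapBai97}).

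However, the central concentration step is asserted rather than proved, and as stated it does not quite go through. You define $\mathbf{A}_{\varepsilon}=\{u:\,|u-s^{*}|\leq\varepsilon\}$ with $s^{*}=\arg\max\mathscr{V}$, but $s^{*}$ is a random element of the \emph{limit} probability space, so the region of integration for the finite-$T$ Gibbs measure is not well defined without a coupling; moreover the claimed gap $\sup_{u\notin\mathbf{A}_{\varepsilon}}H_{T,v}\leq\sup_{u\in\mathbf{A}_{\varepsilon}}H_{T,v}-\eta+o_{\mathbb{P}}(1)$ holds only for a path-dependent (random) $\eta$, which you treat as a constant when writing the bound $O_{\mathbb{P}}(\exp(-c_{T}\eta))$. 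You flag this as ``the main obstacle'' but do not resolve it. The paper's resolution is worth noting because it sidesteps both difficulties at once: instead of $\varepsilon$-neighborhoods of the argmax it works with level sets $\mathbf{S}(\widetilde{Q}_{T,v}^{\omega},c)=\{u:\,|\widetilde{Q}_{T,v}^{\omega}(u)-\widetilde{\mathscr{V}}_{\mathrm{M}}|\leq c\}$, so that on the complement the exponent is automatically at least $c\widetilde{\gamma}_{T}$ below the maximum --- a \emph{deterministic} gap --- and the coupling problem is handled by the Skorokhod representation \eqref{eq (32), Conv Sko}, after which one shows the Lebesgue measure of the symmetric difference between the level sets of $\widetilde{Q}_{T,v}^{\omega}$ and of $\widetilde{\mathscr{V}}^{\omega}$ vanishes and squeezes the posterior mean between $\mathrm{ess\,inf}$ and $\mathrm{ess\,sup}$ of the level set, both of which converge to the unique argmax as $c\downarrow0$. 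Separately, the tail region $\Gamma_{T,\overline{u}}^{c}$ must be shown negligible relative to the full denominator (Lemma \ref{Lemma C.2 LapBai97}); this requires a lower bound on the denominator of the form $\mathbb{P}(J_{2}\leq\exp(-\overline{a}\widetilde{\gamma}_{T}))\rightarrow0$, obtained from the small-ball behavior of $\widetilde{G}_{T,v}$ near $u=0$, a step your sketch omits entirely. With these two repairs your argument becomes the paper's proof.
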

\begin{cor}
\label{Corollary part (i) of Theorem LapBai97}Define $\Xi_{e}\triangleq\left(\delta^{0}\right)'\Sigma_{2}\delta^{0}/\left(\delta^{0}\right)'\Sigma_{1}\delta^{0}$
and $\Xi_{Z}\triangleq\left(\delta^{0}\right)'V_{2}\delta^{0}/\left(\delta^{0}\right)'V_{1}\delta^{0}$.
Under Assumptions \ref{Assumption A Bai97}-\ref{Assumption A4 Bai97}
and \ref{Assumption The-loss-function LapBai97}-\ref{Assumption Uniquess loss Fucntion LapBai97},
and Condition \ref{Condition 2 LapBai97}, $\left(\left(\delta'_{T}V_{1}\delta{}_{T}\right)^{2}/\delta'_{T}\Sigma_{1}\delta_{T}\right)\left(\widehat{T}_{b}^{\mathrm{GL}}-T_{b,T}^{0}\right)\overset{d}{\rightarrow}\arg\max_{s\in\mathbb{R}}\mathscr{V}^{*}\left(s\right)$
in $\mathbb{D}_{b}$ where 
\begin{align*}
\mathscr{V}^{*}\left(s\right)=W_{1}\left(-s\right)-\left|s\right|/2\,\mathrm{\,if\,}\,s\leq0 & ;\,\,\mathscr{V}^{*}\left(s\right)=\Xi_{e}^{1/2}W_{2}\left(s\right)-\Xi_{Z}s/2\,\,\mathrm{if}\,\,s>0.
\end{align*}
\end{cor}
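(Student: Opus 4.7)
The plan is to deduce the corollary from Proposition \ref{Proposition} by a simple rescaling of the argmax via Brownian self-similarity. Proposition \ref{Proposition} already delivers
\[
\|\delta_T\|^2\bigl(\widehat{T}_b^{\mathrm{GL}}-T_{b,T}^0\bigr)\ \Rightarrow\ \arg\max_{s\in\mathbb{R}}\mathscr{V}(s),
\]
and the scaling factor appearing in the corollary, $(\delta_T'V_1\delta_T)^2/(\delta_T'\Sigma_1\delta_T)$, differs from $\|\delta_T\|^2$ only by the deterministic constant $c\triangleq((\delta^0)'V_1\delta^0)^2/((\delta^0)'\Sigma_1\delta^0)$ (under the normalization of $\delta^0$ implicit in Assumption \ref{Assumption Small Shift BP}, i.e.\ absorbing $\|\delta^0\|^2$ into $v_T$). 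By the continuous mapping theorem applied to multiplication by this deterministic factor, the whole statement reduces to the distributional identity
\[
c\cdot\arg\max_{s\in\mathbb{R}}\mathscr{V}(s)\ \stackrel{d}{=}\ \arg\max_{\tilde s\in\mathbb{R}}\mathscr{V}^*(\tilde s).
\]

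The main computation is the change of variable $\tilde s=cs$ inside $\mathscr{V}$, exploiting the Brownian self-similarity $W_i(t/c)\stackrel{d}{=}c^{-1/2}W_i(t)$. Writing $A_i\triangleq(\delta^0)'\Sigma_i\delta^0$ and $B_i\triangleq(\delta^0)'V_i\delta^0$, so that $c=B_1^2/A_1$, $\Xi_e=A_2/A_1$ and $\Xi_Z=B_2/B_1$, I would verify piecewise that
\begin{align*}
\mathscr{V}(\tilde s/c) &\stackrel{d}{=} 2A_1^{1/2}c^{-1/2}W_1(-\tilde s)-|\tilde s|B_1/c = (2A_1/B_1)\bigl[W_1(-\tilde s)-|\tilde s|/2\bigr],\quad \tilde s\le 0,\\
\mathscr{V}(\tilde s/c) &\stackrel{d}{=} 2A_2^{1/2}c^{-1/2}W_2(\tilde s)-\tilde s B_2/c = (2A_1/B_1)\bigl[\Xi_e^{1/2}W_2(\tilde s)-\Xi_Z\tilde s/2\bigr],\quad \tilde s>0.
\end{align*}
The two pieces combine to $\mathscr{V}(\tilde s/c)\stackrel{d}{=}(2A_1/B_1)\,\mathscr{V}^*(\tilde s)$ jointly in $\tilde s\in\mathbb{R}$; since the argmax is invariant under multiplication by a positive constant and $\mathscr{V}^*$ possesses a.s.\ a unique maximizer (by the standard properties of two-sided Brownian motion with negative drift on each side), the displayed distributional identity follows.

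The only substantive point worth isolating is the compatibility of the single rescaling: the constant $c=B_1^2/A_1$ is chosen to normalize the pre-break regime ($\tilde s\le 0$) to the canonical form $W_1(-\tilde s)-|\tilde s|/2$, and one must verify that the \emph{same} $c$ simultaneously produces exactly the coefficients $\Xi_e^{1/2}$ and $\Xi_Z/2$ on the post-break side. This succeeds because the diffusion and drift coefficients of $\mathscr{V}$ are quadratic forms in $\delta^0$ of matching degrees of homogeneity, so that the two ratios involving $c$ cancel in a consistent way; no extra assumption is needed. Once this algebraic identity is in hand, the rest of the proof is a routine application of the continuous mapping theorem to the conclusion of Proposition \ref{Proposition}.
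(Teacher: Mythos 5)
Your argument is correct and is essentially the paper's own proof: the supplement disposes of this corollary with the one-line remark that it follows from a simple change of variable (referring to Proposition 3 of Bai, 1997), and your rescaling $\tilde s=cs$ with $c=((\delta^0)'V_1\delta^0)^2/((\delta^0)'\Sigma_1\delta^0)$, combined with Brownian self-similarity and the invariance of the argmax under multiplication by a positive constant, is exactly that change of variable carried out explicitly. Your parenthetical on the implicit normalization $\|\delta^0\|=1$ is the right reading of Assumption \ref{Assumption Small Shift BP} (it is also what makes Lemma \ref{Lemma ZZ conv prob} dimensionally consistent), so the identification of the corollary's scaling factor with $c\|\delta_T\|^2$ goes through.
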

Corollary \ref{Corollary part (i) of Theorem LapBai97} and Proposition
\ref{Proposition} show that with enough smoothing applied, the GL
estimator is (first-order) asymptotically equivalent to the least-squares
or MLE {[}cf. \citet{bai:97RES} and \citet{yao:87}, respectively{]}.
The intuition is that when the criterion function is sufficiently
smoothed, the Quasi-posterior probability density converges to the
generalized dirac probability measure concentrated at the argmax of
the limit criterion function. This is analogous to a well-known result
{[}cf. Corollary 5.11 in \citet{robert/casella:04}{]}, stating that
in a parametric statistical experiment indexed by a parameter $\theta\in\Theta,$
the MLE $\widehat{\theta}_{T}^{\mathrm{ML}}$ is the limit of a Bayes
estimator as the smoothing parameter $\gamma\rightarrow\infty$, i.e.,
using obvious notation: 
\begin{align*}
\widehat{\theta}_{T}^{\mathrm{ML}} & =\arg\max_{\theta\in\Theta}L_{T}\left(\theta\right)=\lim_{\gamma\rightarrow\infty}\frac{\int_{\Theta}\theta\exp\left(\gamma L_{T}\left(\theta\right)\right)\pi\left(\theta\right)d\theta}{\int_{\Theta}\exp\left(\gamma L_{T}\left(\theta\right)\right)\pi\left(\theta\right)d\theta}.
\end{align*}

\subsubsection{The Asymptotic Distribution for General Loss Functions}

For general loss functions satisfying Assumption \ref{Assumption The-loss-function LapBai97},
Theorem \ref{Theorem Geneal Laplace Estimator LapBai97} shows that
$T\left\Vert \delta_{T}\right\Vert ^{2}\left(\widehat{\lambda}_{b}^{\mathrm{GL}}-\lambda_{b}^{0}\right)$
is (first-order) asymptotically equivalent to $\xi_{l}^{0}$ defined
by 
\begin{align}
\Psi_{l}\left(\xi_{l}^{0}\right) & \triangleq\inf_{r}\Psi_{l}\left(r\right)=\inf_{r\in\mathbb{R}}\left\{ \int_{\mathbb{R}}l\left(r-u\right)p_{0}^{*}\left(u\right)du\right\} .\label{Eq. Distribution of Bayes Estimator-1}
\end{align}

\begin{thm}
\label{Theorem Geneal Laplace Estimator LapBai97}Under Assumptions
\ref{Assumption A Bai97}-\ref{Assumption A4 Bai97} and \ref{Assumption The-loss-function LapBai97}-\ref{Assumption Uniquess loss Fucntion LapBai97},
and Condition \ref{Condition 1 LapBai97}, for $l\in\boldsymbol{L},$
$T\left\Vert \delta_{T}\right\Vert ^{2}(\widehat{\lambda}_{b}^{\mathrm{GL}}-\lambda_{b}^{0})$
$\Rightarrow\xi_{l}^{0}$ as defined by \eqref{Eq. Distribution of Bayes Estimator-1}.
\end{thm}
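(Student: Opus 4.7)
The plan is to reduce Theorem \ref{Theorem Geneal Laplace Estimator LapBai97} to a continuous-mapping argument applied to the weak convergence of the Quasi-posterior already obtained in the course of proving Theorem \ref{Theorem Posterior Mean Lap Estimation in Bai 97}. Concretely, Proposition \ref{Proposition: Consistency and Rate of Convergence}(ii) provides $\psi_T(\widehat{\lambda}_b^{\mathrm{GL}}-\lambda_b^0) = O_{\mathbb{P}}(1)$ with $\psi_T = T\|\delta_T\|^2$, so after the change of variables $u = \psi_T(\lambda_b - \lambda_{b,T}^0(v))$ the rescaled minimizer $\widehat{s}_T \triangleq \psi_T(\widehat{\lambda}_b^{\mathrm{GL},*}(\widetilde{v},v) - \lambda_{b,T}^0(v))$ is tight and I may restrict attention to $s$ lying in a large enough compact set $\mathbf{K}\subset\mathbb{R}$.

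First, I would define the rescaled Quasi-posterior density
\[
p_{T,v}^{*}(u) \triangleq \frac{\exp\bigl((\gamma_T/(T\|\delta_T\|^2))(\widetilde{G}_{T,v}(u,\widetilde{v}) + Q_{T,v}(u))\bigr)\,\pi_{T,v}(u)}{\int_{\Gamma_T}\exp\bigl((\gamma_T/(T\|\delta_T\|^2))(\widetilde{G}_{T,v}(w,\widetilde{v}) + Q_{T,v}(w))\bigr)\,\pi_{T,v}(w)\,dw},
\]
so that $\widehat{s}_T$ minimizes $\Psi_{l,T}(s;\widetilde{v},v) = \int_{\Gamma_T} l(s-u)\,p_{T,v}^{*}(u)\,du$ in display \eqref{eq. (17)-1-1}. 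The proof of Theorem \ref{Theorem Posterior Mean Lap Estimation in Bai 97} already delivers $(\gamma_T/(T\|\delta_T\|^2))(\widetilde{G}_{T,v}(u,\widetilde{v}) + Q_{T,v}(u)) \Rightarrow \mathscr{W}(u) - \varLambda^0(u)$ in $\mathbb{D}_b(\mathbb{R}\times\mathbf{V})$ together with $\pi_{T,v}(u) \to \pi(\lambda_b^0)$ uniformly on compacts by Assumption \ref{Assumption Prior LapBai97}. The continuous mapping theorem then yields $p_{T,v}^{*}(u)\,du \Rightarrow p_0^{*}(u)\,du$ as random sub-probability measures on $\mathbb{R}$.

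Next, I need to lift this to weak convergence of the random functional $s \mapsto \Psi_{l,T}(s;\widetilde{v},v)$ in $\mathbb{C}_b(\mathbf{K})$ to $\Psi_l(s) = \int l(s-u)\,p_0^{*}(u)\,du$. For each fixed $s$, continuity of $u\mapsto l(s-u)$ and convergence of $p_{T,v}^{*}$ give marginal convergence provided the mass at $\{|u|>M\}$ is asymptotically negligible as $M\to\infty$. Here I exploit the bound $l(r)\le 1+|r|^m$ from Assumption \ref{Assumption The-loss-function LapBai97}(iii) combined with the linear drift $\varLambda^0(u) = |u|\,(\delta^0)'V_i\delta^0$, which dominates the $O(|u|^{1/2})$ fluctuations of $\mathscr{W}(u)$ and of the pre-limit $\widetilde{G}_{T,v}(u,\widetilde{v})$. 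A uniform large-deviation bound on the latter yields exponential tails for $p_{T,v}^{*}$, hence uniform integrability of $|u|^m p_{T,v}^{*}(u)$ in $T$. Equicontinuity in $s$ over $\mathbf{K}$ follows from convexity of $l$ and dominated convergence, giving $\Psi_{l,T}(\cdot;\widetilde{v},v) \Rightarrow \Psi_l(\cdot)$ in $\mathbb{C}_b(\mathbf{K})$.

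Finally, Assumption \ref{Assumption Uniquess loss Fucntion LapBai97} guarantees that $\xi_l^0 = \arg\min_s \Psi_l(s)$ is uniquely defined, so the argmin functional is continuous at $\Psi_l$ in the topology of uniform convergence on compacts, and together with tightness of $\widehat{s}_T$ the argmin continuous mapping theorem gives $\widehat{s}_T(\widetilde{v},v) \Rightarrow \xi_l^0$ for each fixed $v$. Monotonicity of $Q_{T,v}(\cdot)$ in $v$ transmits to monotonicity of the minimizer in $v$, and the Jure\v{c}kov\'a (1977) argument already invoked in Theorem \ref{Theorem Posterior Mean Lap Estimation in Bai 97} upgrades pointwise weak convergence in $v$ to weak convergence in $\mathbb{D}_b(\mathbf{V})$; evaluating at $\widetilde{v}=v=r_T(\widehat{\theta}-\theta^0)$ with $r_T=\sqrt{T}$ and using $\sqrt{T}$-consistency of $\widehat{\theta}$ completes the argument. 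The main obstacle is the tail control in the third step: because $l$ is only polynomially bounded while $p_{T,v}^{*}$ is a ratio of exponentials of a process with $|u|^{1/2}$-fluctuations, uniform integrability of $l(s-u)\,p_{T,v}^{*}(u)$ requires establishing uniformly in $T$ that $\mathbb{P}(\sup_{|u|>M} \widetilde{G}_{T,v}(u,\widetilde{v}) > \epsilon\,|Q_{T,v}(u)|) \to 0$ as $M\to\infty$; once this uniform exponential-tail property of the Quasi-posterior is in place the remaining steps are routine extensions of the squared-loss proof.
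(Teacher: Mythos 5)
Your overall architecture coincides with the paper's: restrict to a compact $u$-region via tail bounds on the Quasi-posterior, establish finite-dimensional convergence of $s\mapsto\Psi_{l,T}(s)$ to $s\mapsto\Psi_{l}(s)$, prove tightness of this process on $\mathbb{C}_{b}\left(\mathbf{K}\right)$, and conclude via continuity of the argmin at the unique minimizer guaranteed by Assumption \ref{Assumption Uniquess loss Fucntion LapBai97}, with the Jure\v{c}kov\'a monotonicity argument handling uniformity in $v$. The skeleton is right, but the step you defer at the end is precisely the content of the theorem, and the sufficient condition you propose for discharging it is not the right one. Controlling $\sup_{\left|u\right|>M}\widetilde{G}_{T,v}\left(u,\,\widetilde{v}\right)$ against $\left|Q_{T,v}\left(u\right)\right|$ bounds only the \emph{numerator} of the Quasi-posterior. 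Because $l$ grows polynomially, what you actually need is a bound on $\mathbb{E}\left[\int_{\left|u\right|>M}l\left(u\right)p_{T,v}^{*}\left(u\right)du\right]$ that is uniform in $T$ and decays in $M$, i.e., a bound on the expected \emph{ratio} $J_{1,M}/J_{2}$, where $J_{1,M}$ integrates the unnormalized density over $M\leq\left|u\right|<M+1$ and $J_{2}$ is the normalizing constant. This requires, in addition to the numerator bound, a quantitative lower bound on the denominator of the form $\mathbb{P}\left[\int_{0}^{\epsilon}\zeta_{T,v}\left(u,\,\widetilde{v}\right)du<\epsilon\pi^{0}\right]<C\epsilon^{1/2}$. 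The paper obtains both pieces from a moment inequality for increments of fractional powers of the unnormalized density, $\mathbb{E}\left[\left(\zeta_{T,v}^{1/2r}\left(u_{2}\right)-\zeta_{T,v}^{1/2r}\left(u_{1}\right)\right)^{2r}\right]\leq C_{r}\left|u_{2}-u_{1}\right|^{r}$, which exploits the Gaussian structure of $\mathscr{W}$ and the linearity of $\varLambda^{0}$, combined with a discretization argument to pass to suprema over $J_{1,M}$; the end product is $\mathbb{E}\left[J_{1,M}/J_{2}\right]\leq C\left(1+M^{C}\right)\exp\left(-cf_{T}\left(M\right)\right)$. Note also that this same machinery is what delivers the stochastic boundedness of the set of minimizers and hence Proposition \ref{Proposition: Consistency and Rate of Convergence}-(ii), whose proof in the paper is deferred to exactly this lemma; your appeal to it for tightness of $\widehat{s}_{T}$ is therefore not circular, but it does not spare you from proving these bounds.

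A second, smaller gap: your passage from weak convergence of $p_{T,v}^{*}\left(u\right)du$ as random measures to joint convergence of the integrals $\int l\left(s_{j}-u\right)p_{T,v}^{*}\left(u\right)du$ by continuous mapping is not automatic, since these are unbounded nonlinear functionals of the process. The paper establishes the finite-dimensional convergence via the Cram\'er--Wold device and Theorem I.A.22 of Ibragimov and Has'minskii, whose hypotheses are exactly the increment moment inequality and an exponential tail probability for $\zeta_{T,v}$ -- the same estimates as above. Your equicontinuity step is fine in spirit (the paper's appeal to Lusin's theorem is heavier than needed for a convex, hence continuous, loss), and your final argmin step matches the paper's treatment of the oscillations of the set of minimum points.
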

The existence and uniqueness of $\xi_{l}^{0}$ follow from Assumption
\ref{Assumption Uniquess loss Fucntion LapBai97}. If one interprets
$p_{0}^{*}\left(u\right)$ as a true posterior density function, then
$\xi_{l}^{0}$ would naturally be viewed as a Bayesian estimator for
the loss function $l_{T}\left(\cdot\right).$ In particular, in analogy
to the above comparison with the Bayesian estimator of \citet{ibragimov/has:81},
one can interpret the GL estimator as a Quasi-Bayesian estimator.
While this is by itself a theoretically interesting result, we actually
exploit it to construct more reliable inference methods about the
date of a structural change. Under the least-absolute deviation loss,
the GL estimator converges in distribution to the median of $p_{0}^{*}\left(u\right)$.
We shall use the results in Theorem \ref{Theorem Posterior Mean Lap Estimation in Bai 97}-\ref{Theorem Geneal Laplace Estimator LapBai97}
but not Proposition \ref{Proposition} since the latter implies the
same confidence intervals as in \citet{bai:97RES} and \citet{bai/perron:98}.
GL inference based on the Bayes-type limiting distribution provides
a more accurate description of the uncertainty over the parameter
space than the inference based on the density of $\arg\max_{s\in\mathbb{R}}\mathscr{V}\left(s\right)$
which underestimates uncertainty as shown by confidence intervals
with empirical coverage rates below the nominal level particularly
when the magnitude of the break is small (see Section \ref{Section Monte-Carlo-Simulation}).
After some investigation, we found that both estimation and inference
under the least-absolute loss works well and this is what will be
used in our simulation study. 

\section{\label{Section Inference Methods}Confidence Sets Based on the GL
Estimator}

In this section, we discuss inference procedures for the break date
based on the large-sample results of the previous section. Inference
under general loss functions based on Theorem \ref{Theorem Geneal Laplace Estimator LapBai97}
is what we recommend to use in practice, in particular with an absolute
loss function.

Since the limiting distribution from Theorem \ref{Theorem Geneal Laplace Estimator LapBai97}
involves certain unknown quantities, we begin by assuming that they
can be replaced by consistent estimates. They are easy to construct
{[}cf. \citet{bai:97RES} and \citet{bai/perron:98}; see also Section
\ref{Section Monte-Carlo-Simulation}{]}. 
\begin{assumption}
\label{Assumption Laplace Inference LapBai97}There exist sequences
of estimators $\widehat{\lambda}_{b,T},\,\widehat{\delta}_{T},\,\widehat{\Xi}_{Z,T},$
and $\widehat{\Xi}_{e,T}$ such that $\widehat{\lambda}_{b,T}=\lambda_{0}+o_{\mathbb{P}}\left(1\right)$,
$\widehat{\delta}_{T}=\delta_{T}+o_{\mathbb{P}}\left(1\right)$, $\widehat{\Xi}_{Z,T}=\Xi_{Z}+o_{\mathbb{P}}\left(1\right)$
and $\widehat{\Xi}_{e,T}=\Xi_{e}+o_{\mathbb{P}}\left(1\right)$.
Furthermore, for all $u,\,s\in\mathbb{R}$ and any $c>0$, there exist
covariation processes $\widehat{\varSigma}_{i,T}\left(\cdot\right)$
$\left(i=1,\,2\right)$ that satisfy (i) $\widehat{\varSigma}_{i,T}\left(u,\,s\right)=\varSigma_{i}^{0}\left(u,\,s\right)+o_{\mathbb{P}}\left(1\right)$,
(ii) $\widehat{\varSigma}_{i,T}\left(u-s,\,u-s\right)=\widehat{\varSigma}_{i,T}\left(u,\,u\right)+\widehat{\varSigma}_{i,T}\left(s,\,s\right)-2\widehat{\varSigma}_{i,T}\left(s,\,u\right)$,
(iii) $\widehat{\varSigma}_{i,T}\left(cu,\,cu\right)=c\widehat{\varSigma}_{i,T}\left(u,\,u\right)$,
(iv) $\mathbb{E}\left\{ \sup_{\left\Vert u\right\Vert =1}\widehat{\varSigma}_{i,T}^{2}\left(u,\,u\right)\right\} =O\left(1\right)$.
\end{assumption}
The first part and (i) of the second part follow from consistency
of $\widehat{\lambda}_{b,T}$ and from an Invariance Principle {[}cf.
Assumption \ref{Assumption A.9b Bai 97, LapBai97}{]}. Part (ii)-(iii)
are implied by Assumption \ref{Assumption Gaussian Process for Lap LapBai97}-(ii)
and consistency of $\widehat{\lambda}_{b,T}$. Let $\left\{ \widehat{\mathscr{W}}_{T}\right\} $
be a (sample-size dependent) sequence of two-sided zero-mean Gaussian
processes with covariance $\widehat{\varSigma}_{T}.$ Construct the
process $\widehat{\mathscr{V}}_{T}$ by replacing the population quantities
in $\mathscr{V}$ by their corresponding estimates from the first
part of Assumption \ref{Assumption Laplace Inference LapBai97} and
further, replace $\mathscr{W}$ by $\widehat{\mathscr{W}}_{T}$. Assumption
\ref{Assumption Laplace Inference LapBai97}-(i) basically implies
that the finite-dimensional limit law of $\left\{ \widehat{\mathscr{W}}_{T}\right\} $
is the same as the finite-dimensional laws of $\mathscr{W}$ while
parts (ii)-(iii) are needed for the integrability of the transform
$\exp\left(\widehat{\mathscr{V}}_{T}\left(\cdot\right)\right)$. Part
(iv) is needed for the proof of the asymptotic stochastic equicontinuity
of $\left\{ \widehat{\mathscr{W}}_{T}\right\} $. Let $\widehat{\xi}_{T}$
be defined as the sample analogue of $\xi_{l}^{0}$ that uses $\widehat{\mathscr{V}}_{T}\left(v\right)$
in place of $\mathscr{V}_{T}\left(v\right)$ in \eqref{Eq. Distribution of Bayes Estimator-1}.
The distribution of $\widehat{\xi}_{T}$ can be evaluated numerically.
\begin{prop}
\label{Proposition Inference Limi Distr LapBai97}Let $l\in\boldsymbol{L}$
be continuous. Under Assumption \ref{Assumption Laplace Inference LapBai97},
$\widehat{\xi}_{T}$ converges in distribution to the limiting distribution
in Theorem \ref{Theorem Geneal Laplace Estimator LapBai97}. 
\end{prop}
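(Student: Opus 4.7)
The plan is to apply a continuous mapping argument built on the representation $\widehat{\xi}_T = \arg\min_r \widehat{\Psi}_{l,T}(r)$, with $\widehat{\Psi}_{l,T}(r) = \int_{\mathbb{R}} l(r-u)\, \widehat{p}_{0,T}^*(u)\, du$ and $\widehat{p}_{0,T}^*(u) = \exp(\widehat{\mathscr{V}}_T(u))\big/\int_{\mathbb{R}} \exp(\widehat{\mathscr{V}}_T(w))\, dw$. The target is to verify that $\widehat{\mathscr{V}}_T \Rightarrow \mathscr{V}$ in a path space that supports both the exponential transform and integration against $l$, and that the functional $\mathscr{V} \mapsto \arg\min_r \Psi_l(r)$ is almost-surely continuous at the limit. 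Combined with the uniqueness granted by Assumption \ref{Assumption Uniquess loss Fucntion LapBai97}, the conclusion follows from the argmin continuous mapping theorem.

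First, I would establish convergence of finite-dimensional distributions. Decompose $\widehat{\mathscr{V}}_T(u) = \widehat{\mathscr{W}}_T(u) - \widehat{\varLambda}_T^0(u)$; the drift $\widehat{\varLambda}_T^0$ depends smoothly on $\widehat{\delta}_T$, $\widehat{\Xi}_{Z,T}$, and $\widehat{\lambda}_{b,T}$, all consistent by the first part of Assumption \ref{Assumption Laplace Inference LapBai97}, so $\widehat{\varLambda}_T^0 \to \varLambda^0$ in probability uniformly on compact sets. For the Gaussian component $\widehat{\mathscr{W}}_T$, part (i) yields $\widehat{\varSigma}_{i,T} \to \varSigma_i^0$ in probability, so the finite-dimensional laws of $\widehat{\mathscr{W}}_T$ match those of $\mathscr{W}$ asymptotically by Slutsky combined with the characteristic-function representation of centered Gaussian vectors.

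Second, I would prove tightness of $\{\widehat{\mathscr{W}}_T\}$ on compact subsets $\mathbf{K} \subset \mathbb{R}$. Parts (ii) and (iii) of Assumption \ref{Assumption Laplace Inference LapBai97} imply the identity $\mathbb{E}[(\widehat{\mathscr{W}}_T(u) - \widehat{\mathscr{W}}_T(s))^2] = \widehat{\varSigma}_T(u-s, u-s) \asymp |u-s|$, mirroring the incremental structure of a Brownian motion. Together with the moment bound in part (iv), a standard chaining or Kolmogorov-type criterion delivers asymptotic stochastic equicontinuity, hence tightness in $\mathbb{D}_b(\mathbf{K})$. Combined with the first step, this gives $\widehat{\mathscr{V}}_T \Rightarrow \mathscr{V}$ in $\mathbb{D}_b(\mathbf{K})$.

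Third, I would extend from compact $\mathbf{K}$ to the whole line and transfer weak convergence to the functional $\widehat{\Psi}_{l,T}$. Since $\widehat{\varLambda}_T^0(u)$ grows linearly in $|u|$ with strictly positive slopes (consistently estimated), while $\widehat{\mathscr{W}}_T(u)$ grows only at rate $|u|^{1/2}$ in law, for any $\epsilon > 0$ there exists $K < \infty$ such that both $\int_{|u|>K} \exp(\widehat{\mathscr{V}}_T(u))\, du$ and the analogous tail weighted by $l(r-u)$ are bounded by $\epsilon$ (uniformly over $r$ in a compact neighborhood of $\xi_l^0$) with probability arbitrarily close to one, for all $T$ large enough. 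Here I would use the growth restriction $l(r) \le 1 + |r|^m$ from Assumption \ref{Assumption The-loss-function LapBai97}-(iii), so that the polynomial factor is dominated by $\exp(-\widehat{\varLambda}_T^0(u))$ at infinity. With the tails controlled, the map $\mathscr{V} \mapsto \Psi_l(\cdot)$ is continuous on the support of the limit measure under the topology of uniform convergence on compacta (continuity of $l$ supplies continuity of the inner integrand).

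Finally, having established $\widehat{\Psi}_{l,T}(\cdot) \Rightarrow \Psi_l(\cdot)$ uniformly on compacta and the uniqueness of $\xi_l^0$ via Assumption \ref{Assumption Uniquess loss Fucntion LapBai97}, the argmin continuous mapping theorem yields $\widehat{\xi}_T \Rightarrow \xi_l^0$, matching the limit in Theorem \ref{Theorem Geneal Laplace Estimator LapBai97}. The main obstacle is the tail-control step: since the domain of integration is unbounded and the integrand involves an exponential of a random process, ordinary weak convergence on compacta is not enough. This is handled by exploiting the scaling and moment properties in parts (ii)--(iv) of Assumption \ref{Assumption Laplace Inference LapBai97}, which collectively ensure that the drift dominates the Gaussian fluctuations sufficiently fast to make the Laplace-type integrals uniformly integrable in $T$.
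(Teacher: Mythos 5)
Your proposal is correct and follows essentially the same route as the paper: finite-dimensional convergence plus stochastic equicontinuity of $\widehat{\mathscr{W}}_{T}$ on compacts (the paper's Lemma \ref{Lemma H.3 Lap Bai97}, using parts (i)--(iv) of Assumption \ref{Assumption Laplace Inference LapBai97}), an almost-sure sublinear growth bound on $\widehat{\mathscr{W}}_{T}$ so the linear drift dominates the tails of the Laplace integrals (Lemma \ref{Lemma: H.1. Lap Bai97}), and a continuous mapping step for the integral functionals (Lemma \ref{Lemma H.4 Lap Bai97}) followed by the argmin characterization under the uniqueness in Assumption \ref{Assumption Uniquess loss Fucntion LapBai97}. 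The only cosmetic difference is that the paper first treats the squared loss via its closed form and then extends to general continuous $l$, whereas you argue directly through the argmin continuous mapping theorem.
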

The asymptotic distribution theory of the GL estimator may be exploited
in several ways to conduct inference about the break date. The finite-sample
distribution of the LS nd GL estimate of the break date displays significant
non-standard features (cf. Figure \ref{Fig1}). Hence, a conventional
two-sided confidence interval may not result in a confidence set with
reliable properties across all break magnitudes and break locations.
Thus, as in \citet{casini/perron_CR_Single_Break}, we use the concept
of Highest Quasi-posterior Density (HQPD) regions, defined analogously
to the Highest Density Region (HDR); cf. \citet{hyndman:96}. See
also \citet{samworth/wand:10} and \citeauthor{mason/polonik:09}
(2008, 2009)\nocite{mason/polonik:08} for more recent developments.
For an illustrative example on the properties of the HDR see the discussion
of Figure 11 in \citet{casini/perron_CR_Single_Break}. 
\begin{defn}
\label{def:Highest-Density-Region}Highest Density Region: Let the
density function $f_{Y}\left(y\right)$ of a random variable $Y$
defined on a probability space $\left(\Omega_{Y},\,\mathscr{F}_{Y},\,\mathbb{P}_{Y}\right)$
and taking values on the measurable space $\left(\mathcal{Y},\,\mathscr{Y}\right)$
be continuous and bounded. The $\left(1-\alpha\right)100\%$ Highest
Density Region is a subset $\mathbf{S}\left(\kappa_{\alpha}\right)$
of $\mathcal{Y}$ defined as $\mathbf{S}\left(\kappa_{\alpha}\right)=\left\{ y:\,f_{Y}\left(y\right)\geq\kappa_{\alpha}\right\} $
where $\kappa_{\alpha}$ is the largest constant that satisfies $\mathbb{P}_{Y}\left(Y\in\mathbf{S}\left(\kappa_{\alpha}\right)\right)\geq1-\alpha$. 
\end{defn}
For $s=T\left\Vert \delta_{T}\right\Vert ^{2}\left(\widehat{\lambda}_{b}^{\mathrm{LS}}-\lambda_{b}^{0}\right)$,
the asymptotic distribution theory of \citet{bai:97RES} suggests
a belief $\pi\left(s\right)$ over $s\in\mathbb{R}.$ This belief
function can be used as a Quasi-prior for $\lambda_{b}$ in the definition
of the Quasi-posterior $p_{T}\left(\lambda_{b}\right).$ Let $\mu\left(\lambda_{b}\right)$
denote some density function defined by the Radon-Nikodym equation
$\mu\left(\lambda_{b}\right)=dp_{T}\left(\lambda_{b}\right)/d\lambda_{\mathrm{L}},$
where $\lambda_{\mathrm{L}}$ denotes the Lebesgue measure. The following
algorithm describes how to construct a confidence set for $T_{b}^{0}$.
\begin{lyxalgorithm}
\label{Alg: HQPD}GL HQDR-based Confidence Sets for $T_{b}^{0}$:\\
(1) Estimate by least-squares the break date and the regression coefficients
from model \eqref{Eq Matrix Format of the model};\\
(2) Set the Quasi-prior $\pi\left(\lambda_{b}\right)$ equal to the
probability density of the limiting distribution from Corollary \ref{Corollary part (i) of Theorem LapBai97};
\\
(3) Construct the Quasi-posterior given in \eqref{Eq. Quasi-posterior-1};
\\
(4) Obtain numerically the density $\mu\left(\lambda_{b}\right)$
as explained above and label it by $\widehat{\mu}\left(\lambda_{b}\right)$;\\
(5) Compute the Highest Quasi-Posterior Density (HQPD) region of the
probability distribution $\widehat{p}_{T}\left(\lambda_{b}\right)$
and include the point $T_{b}$ in the level $\left(1-\alpha\right)\%$
confidence set $C_{\mathrm{HQPD}}\left(\mathrm{cv}_{\alpha}\right)$
if $T_{b}$ satisfies Definition \ref{def:Highest-Density-Region}.

If a general Quasi-prior $\pi\left(\lambda_{b}\right)$ is used, one
begins directly with step 3.
\end{lyxalgorithm}
In principle, any Quasi-prior $\pi\left(\lambda_{b}\right)$ satisfying
Assumption \ref{Assumption Prior LapBai97} can be used. Note that
$C_{\mathrm{HQPD}}\left(\mathrm{cv}_{\alpha}\right)$ retains a frequentist
interpretation, since no parametric likelihood function of the data
is required.

\section{\label{Section Models with Multiple Breaks}Models with Multiple
Change-Points}

Following \citet{bai/perron:98}, the multiple linear regression model
with $m$ change-points is
\begin{align*}
y_{t} & =w_{t}'\phi^{0}+z'_{t}\delta_{j}^{0}+e_{t},\qquad\qquad\qquad\left(t=T_{j-1}^{0}+1,\ldots,\,T_{j}^{0}\right)
\end{align*}
for $j=1,\ldots,\,m+1$, where by convention $T_{0}^{0}=0$ and $T_{m+1}^{0}=T$.
There are $m$ unknown break points $\left(T_{1}^{0},\ldots,\,T_{m}^{0}\right)$
and consequently $m+1$ regimes each corresponding to a distinct parameter
value $\delta_{j}^{0}$. The purpose is to estimate the unknown regression
coefficients together with the break points when $T$ observations
on $\left(y_{t},\,w_{t},\,z_{t}\right)$ are available. Many of the
theoretical results follow directly from the single break case; the
break points are asymptotically distinct and thus, given the mixing
conditions, our results for the single break date extend readily to
multiple breaks. More complicated is the computation of the estimates
of the break dates which has been addressed by \citet{bai/perron:03}
who proposed an efficient algorithm based on the principle of dynamic
programming; see also \citet{hawkins:76}.

Let $T_{i}\triangleq\left\lfloor T\lambda_{i}\right\rfloor $ and
$\theta\triangleq\left(\phi',\,\delta'_{1},\,\Delta_{1}^{\prime}\ldots,\,\Delta_{m}^{\prime}\right)'$
where $\Delta_{i}=\delta_{i+1}-\delta_{i}$, $i=1,\ldots,\,m$. The
class $\mathscr{\mathscr{L}}\left(\theta,\,T_{i};\,1\leq i\leq m\right)$
of GL estimators in multiple change-points models relies on the least-squares
criterion function $Q_{T}\left(\delta\left(\boldsymbol{\lambda}_{b}\right),\,\boldsymbol{\lambda}_{b}\right)=\sum_{i=1}^{m+1}\sum_{t=T_{i}-1}^{T_{i}}\left(y_{t}-w'_{t}\phi-z'_{t}\delta_{i}\right)^{2}$,
with $\boldsymbol{\lambda}_{b}\triangleq\left(\lambda_{i};\,1\leq i\leq m\right)$.
In order to state the large-sample properties, we need to introduce
the shrinkage theoretical framework of \citet{bai/perron:98}. 
\begin{assumption}
\label{Assumption A1-A6 BP98}(i) Let $x_{t}=\left(w'_{t},\,z'_{t}\right)'$,
$X=\left(x_{1},\ldots x_{T}\right)'$ and $\overline{X}_{0}=\textrm{diag}\left(X_{1}^{0},\ldots,\,X_{m+1}^{0}\right)$
be the diagonal partition of $X$ at $\left(T_{1}^{0},\ldots,T_{m}^{0}\right)$.
For each $i=1,\ldots,m+1$ $\left(X_{1}^{0}\right)'X_{1}^{0}/\left(T_{i}^{0}-T_{i-1}^{0}\right)$
converges to a non-random positive definite matrix not necessarily
the same for all $i$. (ii) Assumption \ref{Assumption A3 Bai97}
holds. (iii) The matrix $\sum_{t=k}^{l}z_{t}z'_{t}$ is invertible
for $l-k\geq q$. (iv) $T_{i}^{0}=\left\lfloor T\lambda_{i}^{0}\right\rfloor $,
where $0<\lambda_{1}^{0}<\cdots<\lambda_{m}^{0}<1$. (v) Let $\Delta_{T,i}=v_{T}\Delta_{i}^{0}$
where $v_{T}>0$ is a scalar satisfying $v_{T}\rightarrow0$ and $T^{1/2-\vartheta}v_{T}\rightarrow\infty$
for some $\vartheta\in\left(0,\,1/4\right)$, and $\mathbb{E}\left\Vert z_{t}\right\Vert ^{2}<C,\,$
$\mathbb{E}\left\Vert e_{t}\right\Vert ^{2/\vartheta}<C$ for some
$C<\infty$ and all $t$.
\end{assumption}

\begin{assumption}
\label{Ass A7 BP98}Let $\Delta T_{i}^{0}=T_{i}^{0}-T_{i-1}^{0}$.
For $i=1,\ldots,\,m+1$, uniformly in $s\in\left[0,\,1\right]$, (a)
$\left(\Delta T_{i}^{0}\right)^{-1}\sum_{t=T_{i-1}^{0}+1}^{T_{i-1}^{0}+\left\lfloor s\Delta T_{i}^{0}\right\rfloor }z_{t}z'_{t}\overset{\mathbb{P}}{\rightarrow}sV_{i}$,
$\left(\Delta T_{i}^{0}\right)^{-1}\sum_{t=T_{i-1}^{0}+1}^{T_{i-1}^{0}+\left\lfloor s\Delta T_{i}^{0}\right\rfloor }e_{t}^{2}\overset{\mathbb{P}}{\rightarrow}s\sigma_{i}^{2}$,
and 
\begin{align*}
\left(\Delta T_{i}^{0}\right)^{-1}\sum_{t=T_{i-1}^{0}+1}^{T_{i-1}^{0}+\left\lfloor s\Delta T_{i}^{0}\right\rfloor }\sum_{r=T_{i-1}^{0}+1}^{T_{i-1}^{0}+\left\lfloor s\Delta T_{i}^{0}\right\rfloor }\mathbb{E}\left(z_{t}z'_{r}u_{t}u_{r}\right) & \overset{\mathbb{P}}{\rightarrow}s\Sigma_{i};
\end{align*}

(b) $\left(\Delta T_{i}^{0}\right)^{-1/2}\sum_{t=T_{i-1}^{0}+1}^{T_{i-1}^{0}+\left\lfloor s\Delta T_{i}^{0}\right\rfloor }z_{t}u_{t}\overset{\mathbb{P}}{\rightarrow}\mathscr{G}_{i}\left(s\right)$
where $\mathscr{G}_{i}\left(s\right)$ is a multivariate Gaussian
process on $\left[0,\,1\right]$ with mean zero and covariance $\mathbb{E}\left[\mathscr{G}_{i}\left(s\right)\mathscr{G}_{i}\left(u\right)\right]=\min\left\{ s,\,u\right\} \Sigma_{i}$. 
\end{assumption}
Next, for $i=1,\ldots,\,m$, define $\Xi_{Z,i}=\left(\Delta_{i}^{0}\right)'V_{i+1}\Delta_{i}^{0}/\left(\Delta_{i}^{0}\right)'V_{i}\Delta_{i}^{0},$
$\Xi_{e,i}^{2}=\left(\Delta_{i}^{0}\right)'\Sigma_{i+1}\Delta_{i}^{0}/\left(\Delta_{i}^{0}\right)'\Sigma_{i}\Delta_{i}^{0}$,
and let $W_{1}^{\left(i\right)}\left(s\right)$ and $W_{2}^{\left(i\right)}\left(s\right)$
be independent Wiener processes defined on $[0,\,\infty)$, starting
at 0 when $s=0$; $W_{1}^{\left(i\right)}\left(s\right)$ and $W_{2}^{\left(i\right)}\left(s\right)$
are also independent over $i.$ Finally, define
\begin{align}
\mathscr{V}^{\left(i\right)}\left(s\right)\triangleq\mathscr{W}^{\left(i\right)}\left(s\right)-\varLambda_{i}^{0}\left(s\right) & \triangleq\begin{cases}
2\left(\left(\Delta_{i}^{0}\right)'\Sigma_{i}\Delta_{i}\right)^{1/2}W_{1}^{\left(i\right)}\left(-s\right)-\left|s\right|\left(\Delta_{i}^{0}\right)'V_{i}\Delta_{i}, & \textrm{if }s\leq0\\
2\left(\left(\Delta_{i}^{0}\right)'\Sigma_{i+1}\delta^{0}\right)^{1/2}W_{2}^{\left(i\right)}\left(s\right)-s\left(\Delta_{i}^{0}\right)'V_{i+1}\Delta_{i}, & \textrm{if }s>0.
\end{cases}\label{eq. V(s) Limit Process Theorem Post Mean BP98}
\end{align}
 We now extend the notation of Section \ref{Section Asymptotic Results LapBai97}
to the present context. By redefining the Quasi-posterior $p\left(\boldsymbol{\lambda}_{b}\right)$
in terms of $\boldsymbol{\lambda}_{b},$ the GL estimator as the minimizer
of the associated risk function {[}recall \eqref{Eq. Expected Risk function-1}{]},
$\widehat{\boldsymbol{\lambda}}_{b}^{\mathrm{GL}}=\arg\min_{s\in\varGamma^{0}}\left[\mathcal{R}_{l,T}\left(s\right)\right],$
where now $\varGamma^{0}=\mathbf{B}_{1}\times\ldots\times\mathbf{B}_{m}$,
with $\mathbf{B}_{i}$ a compact subset of $\left(0,\,1\right)$.
The sets $\mathbf{B}_{i}$ are disjoint and satisfy $\sup_{\lambda\in\mathbf{B}_{i}}<\inf_{\lambda\in\mathbf{B}_{i+1}}$
for all $i.$ 
\begin{assumption}
\label{Ass Gaussian BP98}Assumptions \ref{Assumption The-loss-function LapBai97}-\ref{Assumption Prior LapBai97}
hold with obvious modifications to allow for the multidimensional
parameter $\boldsymbol{\lambda}_{b}\in\varGamma^{0}.$ Assumption
\ref{Assumption Gaussian Process for Lap LapBai97} holds where now
in part (i) $\boldsymbol{\lambda}_{b}$ replaces $\lambda_{b}$, and
in part (ii) $\varSigma^{\left(i\right)}\left(\cdot,\,\cdot\right)$
($1\leq i\leq m+1$) replaces $\varSigma\left(\cdot,\,\cdot\right)$
and is defined analogously for each regime.
\end{assumption}
Assumption \ref{Assumption Uniquess loss Fucntion LapBai97} implies
that $\xi_{l,i}^{0}\triangleq\xi\left(\lambda_{i}^{0}\right)$ is
uniquely defined by $\Psi_{l}\left(\xi_{l,i}^{0}\right)\triangleq\inf_{s}\Psi_{l,i}\left(s\right)=\inf_{s}\int_{\mathbb{R}}l\left(s-u\right)\left(\exp\left(\mathscr{V}^{\left(i\right)}\left(u\right)\right)/\left(\int_{\mathbb{R}}\exp\left(\mathscr{V}^{\left(i\right)}\left(w\right)\right)dw\right)\right)du$.
 The GL estimator is defined as the minimizer of
\begin{align*}
\mathcal{R}_{l,T} & \triangleq\int_{\varGamma^{0}}l\left(s-\boldsymbol{\lambda}_{b}\right)\frac{\exp\left(-Q_{T}\left(\delta\left(\boldsymbol{\lambda}_{b}\right),\,\boldsymbol{\lambda}_{b}\right)\right)\pi\left(\boldsymbol{\lambda}_{b}\right)}{\int_{\varGamma^{0}}\exp\left(-Q_{T}\left(\delta\left(\boldsymbol{\lambda}_{b}\right),\,\boldsymbol{\lambda}_{b}\right)\right)\pi\left(\boldsymbol{\lambda}_{b}\right)d\boldsymbol{\lambda}_{b}}d\boldsymbol{\lambda}_{b}.
\end{align*}
 The analysis is now in terms of the $m\times1$ local parameter $u$
with components $u_{i}=T\left\Vert \Delta_{T,i}\right\Vert ^{2}(\lambda_{i}-$
$\lambda_{i,T}^{0}\left(v\right))$, with $\lambda_{i,T}^{0}\left(v\right)=\lambda_{i,T}^{0}\left(\theta^{0}+v/r_{T}\right)$. 

Theorem \ref{Theorem Posterior Mean Lap Estimation in BP98}-\ref{Theorem Geneal Laplace Estimator LapBP98}
extend corresponding results from Theorem \ref{Theorem Posterior Mean Lap Estimation in Bai 97}-\ref{Theorem Geneal Laplace Estimator LapBai97},
respectively, to multiple change-points. The fast rate of convergence
implies that asymptotically the behavior of the GL estimator only
matters in a small neighborhood of each $T_{i}^{0}$. Since each such
neighborhood increases at rate $1/v_{T}$ while $T\rightarrow\infty$
at a faster rate, given the mixing conditions, these are asymptotically
distinct and the limiting distribution is then similar to that in
the single break case. This is the same argument underlying the analysis
of \citet{bai/perron:98} and of \citet{ibragimov/has:81}. The same
comments as those in Section \ref{Section Asymptotic Results LapBai97}
apply.
\begin{condition}
\label{Condition 1 LapBP98}For $1\leq i\leq m$ there exist positive
finite numbers $\kappa_{\gamma,i}$ such that $\gamma_{T}/T\left\Vert \Delta_{T,i}\right\Vert ^{2}\rightarrow\kappa_{\gamma,i}$.
\end{condition}
\begin{thm}
\label{Theorem Posterior Mean Lap Estimation in BP98}Assume $l\left(\cdot\right)$
is the squared loss function. Under Assumption \ref{Assumption A1-A6 BP98}-\ref{Ass Gaussian BP98}
and Condition \ref{Condition 1 LapBP98}, we have in $\mathbb{D}_{b}$,
\begin{align}
T\left\Vert \Delta_{T,i}\right\Vert ^{2}\left(\widehat{\lambda}_{i}^{\mathrm{GL}}-\lambda_{i}^{0}\right) & \Rightarrow\frac{\int u\exp\left(\mathscr{W}^{\left(i\right)}\left(u\right)-\varLambda_{i}^{0}\left(u\right)\right)du}{\int\exp\left(\mathscr{W}^{\left(i\right)}\left(u\right)-\varLambda_{i}^{0}\left(u\right)\right)du}.\label{eq. Asy Dist Laplace Bai97 part (ii)-1}
\end{align}
\end{thm}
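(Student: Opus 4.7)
\textbf{Proof plan for Theorem \ref{Theorem Posterior Mean Lap Estimation in BP98}.}
The plan is to reduce the multiple-break problem to $m$ asymptotically independent copies of the single-break problem handled in Theorem \ref{Theorem Posterior Mean Lap Estimation in Bai 97}, and then quote that theorem regime by regime. First, I would establish a multi-dimensional analogue of Proposition \ref{Proposition: Consistency and Rate of Convergence}: under Assumption \ref{Assumption A1-A6 BP98}--\ref{Ass Gaussian BP98} and Condition \ref{Condition 1 LapBP98}, $\widehat{\boldsymbol{\lambda}}_b^{\mathrm{GL}}=\boldsymbol{\lambda}_b^0+o_{\mathbb{P}}(1)$ component-wise with rate $\widehat{\lambda}_i^{\mathrm{GL}}-\lambda_i^0=O_{\mathbb{P}}((T\|\Delta_{T,i}\|^2)^{-1})$. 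This uses the same concentration argument on the normalized Quasi-posterior as in the single-break case, but applied on the product parameter space $\varGamma^0=\mathbf{B}_1\times\cdots\times\mathbf{B}_m$, together with Assumption \ref{Assumption A1-A6 BP98}(iv) which guarantees that the $\mathbf{B}_i$ are disjoint and ordered.

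Second, I would change variables to the local parameters $u_i=T\|\Delta_{T,i}\|^2(\lambda_i-\lambda_{i,T}^0(v))$, one per regime, and write the ratio defining the Quasi-posterior mean as an $m$-fold integral over $u=(u_1,\ldots,u_m)\in\mathbb{R}^m$. The key decomposition step is to show that, on the relevant shrinking neighborhood of $\boldsymbol{\lambda}_b^0$, the deviated criterion $\overline{Q}_T(\theta,\boldsymbol{\lambda}_b)-\overline{Q}_T(\theta,\boldsymbol{\lambda}_b^0)$ decomposes additively as $\sum_{i=1}^{m}[G_T^{(i)}(\theta,\lambda_i)+Q_T^{0,(i)}(\theta,\lambda_i)]+o_{\mathbb{P}}(1)$, with cross terms between regimes vanishing. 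This is where the fast rate of convergence together with the $L^r$-mixingale property of $\{z_te_t\}$ is used: any contribution coming from a shift of two different break dates involves sums of $z_te_t$ over disjoint windows whose separation is of order $T$, so their covariance is negligible; while the deterministic pieces are localized at distinct $T_i^0$ and do not overlap on the asymptotically shrinking neighborhoods of size $O(1/(Tv_T^2))$. Combined with Assumption \ref{Ass Gaussian BP98}, this yields the joint weak convergence
\begin{align*}
\bigl\{G_T^{(i)}(\theta^0+\widetilde{v}/r_T,\,\lambda_{i,T}^0(v)+u_i/\psi_{T,i})+Q_T^{0,(i)}(\theta^0+v/r_T,\,\lambda_{i,T}^0(v)+u_i/\psi_{T,i})\bigr\}_{i=1}^{m}\Rightarrow\bigl\{\mathscr{W}^{(i)}(u_i)-\varLambda_i^0(u_i)\bigr\}_{i=1}^{m},
\end{align*}
with the limit processes across regimes being independent (by Assumption \ref{Ass Gaussian BP98} together with the independence of the $W_1^{(i)},W_2^{(i)}$).

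Third, given the additive decomposition, the Quasi-posterior factorizes (up to $o_{\mathbb{P}}(1)$) as a product $\prod_{i=1}^{m}\widetilde{p}_{T,i}(u_i)$ of marginal Quasi-posteriors on the $u_i$. Therefore the $i$-th marginal mean $\int u_i\,\widetilde{p}_{T,i}(u_i)\,du_i$ governs $T\|\Delta_{T,i}\|^2(\widehat{\lambda}_i^{\mathrm{GL}}-\lambda_i^0)$, and applying Theorem \ref{Theorem Posterior Mean Lap Estimation in Bai 97} to each regime delivers the stated limit. Joint weak convergence in $\mathbb{D}_b$ then follows by the continuous mapping theorem applied to the smooth functional $p\mapsto\int u_i\,p(u)\,du$, once uniform integrability of the integrands against $\exp(\mathscr{W}^{(i)}(u)-\varLambda_i^0(u))$ is verified exactly as in the single-break proof (using the drift term $\varLambda_i^0$ to dominate the Gaussian part, relying on the growth restriction in Assumption \ref{Assumption The-loss-function LapBai97}(iii)).

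The main obstacle is the decoupling step: one must show rigorously that cross-regime terms in $\overline{Q}_T$ are negligible on the relevant local neighborhoods, which requires uniform control of partial sums $\sum z_te_t$ across segments of length of order $1/v_T^2\ll T$ together with a handling of the nuisance slope deviations encoded by $\widetilde{v}$ and $v$. The argument parallels that of \citet{bai/perron:98} and of \citet{ibragimov/has:81} for independence of Bayesian change-point estimators across regimes: the fast rate of convergence combined with the $L^{2+\nu}$-mixingale property ensures that the stochastic oscillations on disjoint $O(1/v_T^2)$-windows are asymptotically independent Gaussian, after which monotonicity in $v$ (via the \citet{jureckova:77} argument used in Section \ref{subsec:Normalized-Version-of}) promotes pointwise-in-$v$ convergence to uniform-in-$v$ convergence, so that the consistent plug-in $\widehat{\theta}$ can be inserted without affecting the limit.
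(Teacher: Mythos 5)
Your plan is correct and follows essentially the same route as the paper: establish consistency and the $O_{\mathbb{P}}((T\|\Delta_{T,i}\|^2)^{-1})$ rate for each $\widehat{\lambda}_i^{\mathrm{GL}}$ in the multi-break setting, then use the asymptotic separation of the $O(1/v_T^2)$-neighborhoods of the distinct $T_i^0$ (via the mixing conditions) to reduce to $m$ copies of the single-break argument of Theorem \ref{Theorem Posterior Mean Lap Estimation in Bai 97}. The only difference is presentational: you spell out the additive decomposition of $\overline{Q}_T$ and the resulting factorization of the Quasi-posterior explicitly, whereas the paper leaves that decoupling implicit after proving the consistency and rate lemmas.
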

Turning to the general case of loss functions satisfying Assumption
\ref{Assumption The-loss-function LapBai97}, Theorem \ref{Theorem Geneal Laplace Estimator LapBP98}
shows that the random quantity $T\left\Vert \delta_{T}\right\Vert ^{2}\left(\widehat{\lambda}_{i}^{\mathrm{GL}}-\lambda_{i}^{0}\right)$
is (first-order) asymptotically equivalent to the random variable
$\xi_{l,i}^{0}$ determined by 
\begin{align}
\Psi_{l}\left(\xi_{l,i}^{0}\right) & \triangleq\inf_{r}\Psi_{l,i}\left(r\right)=\inf_{r\in\mathbb{R}}\left\{ \int_{\mathbb{R}}l\left(r-u\right)\frac{\exp\left(\mathscr{W}^{\left(i\right)}\left(u\right)-\varLambda_{i}^{0}\left(u\right)\right)}{\int\exp\left(\mathscr{W}^{\left(i\right)}\left(u\right)-\varLambda_{i}^{0}\left(u\right)\right)du}du\right\} .\label{Eq. Distribution of Bayes Estimator-1-1}
\end{align}

\begin{thm}
\label{Theorem Geneal Laplace Estimator LapBP98}Under Assumptions
\ref{Assumption A1-A6 BP98}-\ref{Ass Gaussian BP98} and Condition
\ref{Condition 1 LapBP98}, for $l\in\boldsymbol{L},$ $T\left\Vert \Delta_{T,i}\right\Vert ^{2}\left(\widehat{\lambda}_{i}^{\mathrm{GL}}-\lambda_{i}^{0}\right)\Rightarrow\xi_{l,i}^{0},$
as defined by \eqref{Eq. Distribution of Bayes Estimator-1-1}.
\end{thm}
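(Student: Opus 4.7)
The plan is to adapt the single-break argument developed for Theorem \ref{Theorem Geneal Laplace Estimator LapBai97} and exploit the asymptotic decoupling across change points that is already emphasized in the discussion preceding the statement. I would proceed in three stages: (i) consistency and rate of convergence for each $\widehat{\lambda}_{i}^{\mathrm{GL}}$, (ii) localization and factorization of the Quasi-posterior, and (iii) reduction to $m$ independent single-break problems each of the form handled by Theorem \ref{Theorem Geneal Laplace Estimator LapBai97}.

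First, I would obtain a multiple-change-point analogue of Proposition \ref{Proposition: Consistency and Rate of Convergence}: under Assumption \ref{Assumption A1-A6 BP98} and Assumption \ref{Ass Gaussian BP98}, each coordinate $\widehat{\lambda}_{i}^{\mathrm{GL}}$ is consistent and satisfies $\widehat{\lambda}_{i}^{\mathrm{GL}}=\lambda_{i}^{0}+O_{\mathbb{P}}((T\|\Delta_{T,i}\|^{2})^{-1})$. The argument parallels the single-break case, using the standard identification via the deterministic part $Q_{T}^{0}(\boldsymbol{\lambda}_{b})$ together with Assumption \ref{Ass Gaussian BP98}(i), noting that on the full parameter space $\varGamma^{0}=\mathbf{B}_{1}\times\cdots\times\mathbf{B}_{m}$ the $m$ coordinates are separated by positive distances by construction (the sets $\mathbf{B}_{i}$ are disjoint with $\sup\mathbf{B}_{i}<\inf\mathbf{B}_{i+1}$).

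Second, I would change variables to the local parameters $u_{i}=T\|\Delta_{T,i}\|^{2}(\lambda_{i}-\lambda_{i,T}^{0}(v))$ and restrict the integration in $\mathcal{R}_{l,T}(s)$ to a shrinking product neighborhood $\prod_{i=1}^{m}\mathbf{U}_{i,T}$ around $(\lambda_{1}^{0},\dots,\lambda_{m}^{0})$. The contribution from outside this neighborhood is negligible by the rate of convergence, uniformly in $s$, by the same tail argument used in Theorem \ref{Theorem Geneal Laplace Estimator LapBai97} (bounding $\exp(\gamma_{T}\overline{Q}_{T}/T\|\Delta_{T,i}\|^{2})$ via the monotone drift in each $Q_{T,v}^{(i)}$). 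The central step is to show the key factorization
\begin{equation*}
\overline{Q}_{T}(\theta,\boldsymbol{\lambda}_{b}) \;=\; \sum_{i=1}^{m}\overline{Q}_{T}^{(i)}(\theta,\lambda_{i}) + o_{\mathbb{P}}(1)
\end{equation*}
uniformly in the product neighborhood, where each summand depends only on $\lambda_{i}$ through $u_{i}$. This follows from the block-diagonal structure of the design in Assumption \ref{Assumption A1-A6 BP98}(i), together with the fact that the localized regions $\{|\lambda_{i}-\lambda_{i}^{0}|\le M/(T\|\Delta_{T,i}\|^{2})\}$ shrink at a rate faster than $v_{T}^{-2}/T$, hence become disjoint for large $T$, while the mixingale property in Assumption \ref{Ass Gaussian BP98} (inherited via Assumption \ref{Assumption A.9b Bai 97, LapBai97} applied regime-by-regime) delivers asymptotic independence of the Gaussian components $\mathscr{W}^{(i)}(\cdot)$ across $i$.

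Third, given the factorization, the exponentiated Quasi-posterior splits multiplicatively. Combined with Condition \ref{Condition 1 LapBP98} and the continuity and smoothness of the prior at each $\lambda_{i}^{0}$ (Assumption \ref{Ass Gaussian BP98} extending Assumption \ref{Assumption Prior LapBai97}), the normalizing integral factorizes in the limit into $\prod_{i=1}^{m}\int\exp(\mathscr{W}^{(i)}(u)-\varLambda_{i}^{0}(u))du$. For any continuous loss $l\in\boldsymbol{L}$ and for the $i$-th coordinate of the risk minimizer, all other coordinates integrate out and one recovers exactly the one-dimensional variational problem defining $\xi_{l,i}^{0}$ in \eqref{Eq. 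Distribution of Bayes Estimator-1-1}. Uniform convergence in the nuisance directions is handled as in Theorem \ref{Theorem Geneal Laplace Estimator LapBai97} via the Jure\v{c}kov\'a monotonicity argument applied to each coordinate separately, and continuity of the argmin functional (guaranteed by Assumption \ref{Assumption Uniquess loss Fucntion LapBai97} extended coordinate-wise) together with the continuous mapping theorem finishes the proof.

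The main obstacle I anticipate is the uniform factorization step in stage two: one must control the cross terms in $\overline{Q}_{T}$ arising from pairs $(\lambda_{i},\lambda_{j})$ with $i\neq j$ and show they are asymptotically negligible uniformly over the product of local neighborhoods. This requires a careful sup-norm bound on cross-regime inner products of partial sums $\sum z_{t}e_{t}$ computed across different regimes, which is precisely where the mixingale rate assumption (Assumption \ref{Assumption A5 Bai97 and A4 BP98}, lifted to the multiple-break setting) is needed to ensure the off-diagonal terms decay fast enough. Once this is in place, the rest of the proof reduces mechanically to $m$ applications of the single-break theorem.
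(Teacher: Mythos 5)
Your proposal follows essentially the same route as the paper: establish consistency and the $T\left\Vert \Delta_{T,i}\right\Vert ^{2}$-rate for each $\widehat{\lambda}_{i}^{\mathrm{GL}}$ (the paper does this by contradiction via the Bai--Perron sum-of-squared-residuals decomposition), then use the resulting asymptotic separation of the localized neighborhoods around each $T_{i}^{0}$ to repeat the single-break argument of Theorem \ref{Theorem Geneal Laplace Estimator LapBai97} coordinate by coordinate. Your explicit factorization of $\overline{Q}_{T}$ into per-break contributions with negligible cross terms is exactly the step the paper asserts informally when it states that the single-break proof "can be repeated for each $i$ separately," so your write-up is, if anything, more careful on that point.
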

A direct consequence of the results of this section is that statistical
inference for the break dates $T_{i}^{0}$ $\left(i=1,\ldots,\,m\right)$
can be carried out using the same methods for the single break case.

\section{\label{Section Theoretical-Properties-of GL Inference}Theoretical
Properties of GL Inference}

This section shows that the GL-HPDR confidence sets are bet-proof.
The betting framework and the notion of bet-proofness are useful to
study the properties of frequentist inference in non-regular problems.
The literature concluded that frequentist confidence sets may exhibit
undesirable properties in non-regular problems {[}e.g., \citet{buehler:59},
\citet{cornfield:69}, \citet{cox:58}, \citet{mueller/norest:16}
\citet{pierce:73}, \citet{robinson:77} and \citet{wallace:59}{]}.
For example, the confidence sets can be too short or empty with positive
probability. This arises because frequentist procedures often have
the property that, conditional on a sample point lying in some subset
of the sample space, the conditional confidence level  is less than
the unconditional confidence level uniformly in the parameters.

We use the same betting framework as in \citet{buehler:59}. Let $\mathbb{P}\left(\cdot|\,\lambda_{b}\right)$
denote the likelihood of the data $Y\in\mathcal{Y}$ conditional on
$\lambda_{b}\in\varGamma^{0}$. Assume $\mathbb{P}\left(\cdot|\,\lambda_{b}\right)$
has density $p\left(\cdot|\,\lambda_{b}\right)$ with respect to a
finite measure $\zeta.$ We define a $1-\alpha$ confidence set by
a rejection probability rule $\varphi:\,\varGamma^{0}\times\mathcal{Y}\mapsto\left[0,\,1\right]$
satisfying $\int\left[1-\varphi\left(\lambda_{b},\,y\right)\right]p\left(y|\,\lambda_{b}\right)d\zeta\left(y\right)\geq1-\alpha$,
with $\varphi\left(\lambda_{b},\,y\right)$ the probability that $\lambda_{b}$
is not included in the set when $y$ is observed. For any realization
of the data $Y=y$, an inspector can choose to object to the confidence
set $\varphi$. The inspector\textquoteright s objection $\widetilde{b}:\mathcal{Y}\mapsto\left[0,\,1\right]$
takes value 1 if there is an objection. Denote by $\mathbf{B}$ the
set of all measurable strategies $\widetilde{b}$. When $\widetilde{b}=1$
the inspector receives 1 if $\varphi$ does not contain $\lambda_{b}$,
and she loses $\alpha/\left(1-\alpha\right)$ otherwise. For a given
parameter $\lambda_{b}$ and betting strategy $\widetilde{b}$, the
inspector's expected loss is, 
\begin{align*}
L_{\alpha}\left(\varphi,\,\widetilde{b},\,\lambda_{b}\right) & =\frac{1}{1-\alpha}\int\left[\alpha-\varphi\left(\lambda_{b},\,y\right)\right]\widetilde{b}\left(y\right)p\left(y|\lambda_{b}\right)d\zeta\left(y\right).
\end{align*}
 A confidence set $\varphi$ is said to be bet-proof at level $1-\alpha$
if for each $\widetilde{b}\in\mathbf{B}$, $L_{\alpha}\left(\varphi,\,\widetilde{b},\,\lambda_{b}\right)\geq0$
for some $\lambda_{b}\in\varGamma^{0}$. If there exists a strategy
$\widetilde{b}$ such that $L_{\alpha}\left(\varphi,\,\widetilde{b},\,\lambda_{b}\right)<0$
for all $\lambda_{b}\in\varGamma^{0}$, then the inspector would be
right on average and would make positive expected profits. Hence,
such $\varphi$ would be an ``unreasonable'' confidence set. 
Without loss of substance, we restrict our attention to a change in
the mean of a sequence of i.i.d. Gaussian variables. Let $y_{t}=\delta_{T}\mathbf{1}\left\{ t>T_{b}^{0}\right\} +e_{t},$
where $e_{t}\sim i.i.d.\,\mathrm{\mathscr{N}\left(0,\,1\right)}.$
The result below can also be shown to hold for fixed shifts $\delta_{T}=\delta^{0}$.
For ease of exposition, we assume $\delta^{0}$ known. The general
case leads to similar results, with more lengthy derivations without
any gain in intuition.

Recall that $\varphi$ is such that the Quasi-posterior probability
$p_{T}\left(\lambda_{b}|\,y\right)=p_{T}\left(\lambda_{b}\right)$
of excluding $\lambda_{b}$ is less than or equal to $\alpha,$ 
\begin{align}
\int\varphi\left(\lambda_{b},\,y\right)p_{T}\left(\lambda_{b}|\,y\right)d\lambda_{b} & \leq\alpha\quad\mathrm{for\,all\,}y\in\mathcal{Y}.\label{eq (def Quasi-credible set)}
\end{align}

\begin{prop}
\label{Proposition Bet Proof}Under Assumptions \ref{Assumption A Bai97}-\ref{Assumption A4 Bai97}
and \ref{Assumption The-loss-function LapBai97}-\ref{Assumption Uniquess loss Fucntion LapBai97},
and Condition \ref{Condition 1 LapBai97}, for $l\in\boldsymbol{L}:$
(i) $\varphi$ is bet-proof at level $1-\alpha$; (ii) If \eqref{eq (def Quasi-credible set)}
holds with equality, then $\varphi$ is the shortest confidence set
in the class of level $1-\alpha$ confidence sets, i.e., there cannot
exist a level $1-\alpha$ confidence set $\varphi'$ with the property
that, for all $y\in\mathcal{Y}$ $\int\varphi'\left(\lambda_{b},\,y\right)d\lambda_{b}\geq\int\varphi\left(\lambda_{b},\,y\right)d\lambda_{b}$,
and for all $y\in\mathcal{Y}_{0}$ with $\zeta\left(\mathcal{Y}_{0}\right)>0,$
$\int\varphi'\left(\lambda_{b},\,y\right)d\lambda_{b}>\int\varphi\left(\lambda_{b},\,y\right)d\lambda_{b}$.
\end{prop}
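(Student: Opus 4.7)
The plan is to prove both parts by a Fubini/Bayes-type argument that exploits the fact that, in the simple Gaussian location change-point model with $\delta^{0}$ known, and under Condition \ref{Condition 1 LapBai97} with the appropriate choice of $\kappa_{\gamma}$, the quasi-likelihood $\exp\bigl((\gamma_{T}/(T\|\delta_{T}\|^{2}))\,Q_{T}(\lambda_{b})\bigr)$ is proportional (in $\lambda_{b}$) to the true likelihood $p(y\mid \lambda_{b})$, so that the Quasi-posterior $p_{T}(\lambda_{b}\mid y)$ is the actual Bayes posterior under the prior $\pi$. I would verify this reduction first, noting that in this Gaussian setup $Q_{T}(\lambda_{b})$ and the negative of the sum of squared residuals differ only by a constant independent of $\lambda_{b}$, so the two exponentials agree as functions of $\lambda_{b}$ up to a $y$-dependent normalization that cancels in the ratio defining $p_{T}$. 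With this identification, the marginal density is $m(y)=\int p(y\mid\lambda_{b})\pi(\lambda_{b})\,d\lambda_{b}$ and the Bayes rule $p(y\mid\lambda_{b})\pi(\lambda_{b})=p_{T}(\lambda_{b}\mid y)\,m(y)$ is available.

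For part (i), I would argue by contradiction. Suppose there exists $\widetilde{b}\in\mathbf{B}$ with $L_{\alpha}(\varphi,\widetilde{b},\lambda_{b})<0$ for every $\lambda_{b}\in\varGamma^{0}$. Since $\pi>0$ on $\varGamma^{0}$, integrating against $\pi$ gives $\int L_{\alpha}(\varphi,\widetilde{b},\lambda_{b})\pi(\lambda_{b})\,d\lambda_{b}<0$. Applying Fubini's theorem and the Bayes identity converts this to
\[
\frac{1}{1-\alpha}\int \widetilde{b}(y)\,m(y)\Bigl[\alpha-\int \varphi(\lambda_{b},y)\,p_{T}(\lambda_{b}\mid y)\,d\lambda_{b}\Bigr]d\zeta(y).
\]
By the Quasi-credibility property \eqref{eq (def Quasi-credible set)}, the bracketed expression is $\geq 0$ pointwise in $y$, so the whole integral is $\geq 0$, contradicting strict negativity. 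Hence such a $\widetilde{b}$ cannot exist, establishing bet-proofness.

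For part (ii), I would again argue by contradiction, now invoking the defining property of the HQPD region. Assume \eqref{eq (def Quasi-credible set)} holds with equality and that a competing level-$(1-\alpha)$ confidence set $\varphi'$ satisfies $\int\varphi'(\lambda_{b},y)\,d\lambda_{b}\geq \int\varphi(\lambda_{b},y)\,d\lambda_{b}$ for all $y$, with strict inequality on a set $\mathcal{Y}_{0}$ with $\zeta(\mathcal{Y}_{0})>0$. Because $\varphi$ defines an HQPD region at level $1-\alpha$ with equality, its acceptance region $A(y)=\{\lambda_{b}:\varphi(\lambda_{b},y)=0\}$ is a super-level set of $p_{T}(\cdot\mid y)$ of Lebesgue measure minimal among sets with Quasi-posterior mass at least $1-\alpha$. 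Therefore the acceptance region of $\varphi'$, being strictly shorter on $\mathcal{Y}_{0}$ and no longer elsewhere, has Quasi-posterior mass strictly less than $1-\alpha$ for $y\in\mathcal{Y}_{0}$ and at most $1-\alpha$ otherwise. Averaging the level-$(1-\alpha)$ coverage condition against $\pi$ and using Fubini yields
\[
1-\alpha \leq \int \pi(\lambda_{b})\int(1-\varphi'(\lambda_{b},y))\,p(y\mid\lambda_{b})\,d\zeta(y)\,d\lambda_{b} = \int m(y)\int(1-\varphi'(\lambda_{b},y))\,p_{T}(\lambda_{b}\mid y)\,d\lambda_{b}\,d\zeta(y),
\]
but the right-hand side is strictly less than $1-\alpha$ by the pointwise bound just derived and $\zeta(\mathcal{Y}_{0})>0$, a contradiction.

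The main obstacle I anticipate is the rigorous justification of the HDR length-vs-mass strict monotonicity used in part (ii): namely, that a strictly shorter set than the HQPD acceptance region necessarily carries strictly less Quasi-posterior mass, which requires either uniqueness of the super-level set at the critical threshold or a careful tie-breaking convention when $p_{T}(\cdot\mid y)$ has flat regions. Verifying this for the GL Quasi-posterior (which by Theorem \ref{Theorem Geneal Laplace Estimator LapBai97} is a smooth functional of a Gaussian process, so its density is almost surely absolutely continuous on $\varGamma^{0}$) should resolve the issue, but it is the only step that goes beyond a routine Fubini/Bayes manipulation.
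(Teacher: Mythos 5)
Your overall strategy is the same as the paper's: exploit the Fubini/Bayes duality between frequentist coverage averaged against the prior $\pi$ and Quasi-posterior credibility, then use the credibility bound \eqref{eq (def Quasi-credible set)} for part (i) and the HQPD length-minimality for part (ii). The logical skeleton of both parts is sound and matches the paper's (the paper runs part (i) directly rather than by contradiction, and inserts an intermediate ``similarity'' step in part (ii), but these are cosmetic differences).

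The genuine gap is in your opening reduction: you claim that $\exp\bigl((\gamma_{T}/(T\|\delta_{T}\|^{2}))Q_{T}(\lambda_{b})\bigr)$ is \emph{exactly} proportional in $\lambda_{b}$ to the true likelihood $p(y\mid\lambda_{b})$, so that $p_{T}(\lambda_{b}\mid y)$ \emph{is} the Bayes posterior. This does not hold in finite samples, for two reasons. First, Condition \ref{Condition 1 LapBai97} only requires $\gamma_{T}/(T\|\delta_{T}\|^{2})\rightarrow\kappa_{\gamma}$, so the smoothing factor is not identically the factor $1/2$ that converts the Gaussian sum of squared residuals into a log-likelihood. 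Second, $Q_{T}(\delta(\lambda_{b}),\lambda_{b})$ is the \emph{profiled} criterion $\widehat{\delta}(\lambda_{b})'(Z_{2}'MZ_{2})\widehat{\delta}(\lambda_{b})$, evaluated at the estimated shift for each candidate date, whereas the likelihood $p(y\mid\lambda_{b})$ in the betting framework conditions on the known $\delta^{0}$; the two agree only up to terms that vanish asymptotically. The paper closes exactly this gap with Lemma \ref{Lemma Quasi Posterior =00003D Posterior}, which proves that $p_{T}^{*}$ and $p^{*}$ are asymptotically equivalent in total variation by matching the local expansions $\exp\bigl(\delta_{T}\sum_{t}e_{T_{b}^{0}-t}-|u|\delta_{0}^{2}/2\bigr)(1+o_{\mathbb{P}}(1))$ of both objects, and then carries the resulting $o_{\mathbb{P}}(1)$ remainders ($\varepsilon_{T}$) through the Fubini manipulations before taking $T\rightarrow\infty$. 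Your proof needs this lemma (or an equivalent asymptotic identification with explicit error control); without it the Bayes identity $p(y\mid\lambda_{b})\pi(\lambda_{b})=p_{T}(\lambda_{b}\mid y)m(y)$ that both of your Fubini steps rest on is simply false. Your concern about the HDR length-versus-mass monotonicity in part (ii) is legitimate but secondary: the paper handles it by noting that the HQPD construction makes $\varphi$ exclude minimal Quasi-posterior mass among sets of the same Lebesgue length, and the asymptotic absolute continuity of the limit density (which you correctly identify) rules out ties.
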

Part of the proof shows that the Quasi-posterior is asymptotically
equivalent (in total variation distance) to the Bayesian posterior.
Given the conservativeness allowed by Definition \ref{def:Highest-Density-Region},
the GL confidence interval is asymptotically a superset of a Bayesian
credible interval. Bet-proofness is a useful criterion in change-point
models where popular inference methods face some difficulties, as
shown in the next section. Proposition \ref{Proposition Bet Proof}
suggests that GL inference should not suffer from these issues; the
simulations in the next section will confirm that this is indeed the
case.

\section{\label{Section Monte-Carlo-Simulation}Finite-Sample Evaluations}

The purpose of this section is twofold. Section \ref{subsec:Precision-of-the}
assesses the accuracy of the GL estimate of the change-point while
Section \ref{subsec:Properties-of-the} evaluates the small-sample
properties of the proposed method to construct confidence sets. We
consider DGPs that take the form:
\begin{align}
y_{t}=D{}_{t}\alpha^{0}+Z{}_{t}\beta^{0}+Z{}_{t}\delta^{0}\boldsymbol{1}\left\{ t>T_{b}^{0}\right\} +e_{t}, & \qquad\qquad t=1,\ldots,\,T,\label{Eq. DGP Simulation Study}
\end{align}
with a sample size $T=100.$ Three versions of \eqref{Eq. DGP Simulation Study}
are investigated: M1 involves a break in mean: $Z_{t}=1$, $D_{t}$
absent, and $e_{t}\sim i.i.d.\,\mathscr{N}\left(0,\,1\right)$; M2
is similar to M1 but with $e_{t}=0.3e_{t-1}+u_{t}$, $u_{t}\sim\mathscr{N}\left(0,\,1\right)$;
M3 is a dynamic model with $D_{t}=y_{t-1}$, $Z_{t}=1$, $e_{t}\sim i.i.d.\,\mathscr{N}\left(0,\,0.5\right)$
and $\alpha^{0}=0.6$. We set $\beta^{0}=1$ in M1-M2 and $\beta^{0}=0$
in M3. We consider $\lambda_{0}=0.3$ and 0.5, and break magnitudes
$\delta^{0}=0.3,\,0.4,\,0.6$ and $1$. Additional simulations are
presented in the supplement. 

\subsection{\label{subsec:Precision-of-the}Precision of the Change-point Estimate}

We consider the following estimators of $T_{b}^{0}$: the least-squares
estimator (OLS), the GL estimator under a least-absolute loss function
 (GL-LN); the GL estimator under a least-absolute loss function
with a uniform prior (GL-Uni). We compare the mean absolute error
(MAE), standard deviation (Std), root-mean-squared error (RMSE), and
the 25\% and 75\% quantiles. We set the trimming parameter $\epsilon$
equal to 0.05. As explained in \citet{casini/perron_Lap_CR_Single_Inf},
the trimming $\epsilon$ should not be chosen too high because otherwise
the estimate might tend to overestimate (resp. underestimate) the
break date if it is in the first (resp. second) half of the sample.
They found that $\epsilon=0.05$ performs well for different locations
of the break date and this is also confirmed in the simulations in
this section. See Section 1 and 5 in \citet{casini/perron_Lap_CR_Single_Inf}
for more discussion. 

Tables \ref{Table M0 Bias}-\ref{Table M5 Bias} present the results.
When the magnitude of the break is small, the OLS estimator displays
quite large MAE, which increases as the change-point point moves
toward the tails. In contrast, the GL estimator shows substantially
lower MAE uniformly over break magnitudes and break locations. 
In addition, the GL estimator has smaller variance as well as lower
RMSE compared to the OLS estimator. Notably, the distribution of
GL-LN concentrates a higher fraction of the mass around the mid-sample
relative to the finite-sample distribution of the OLS estimate. This
is mainly due to the fact that the Quasi-posterior essentially does
not share the marked trimodality of the finite-sample distribution
{[}cf. \citet{casini/perron_CR_Single_Break}{]}. When the break magnitude
is small, the objective function is quite flat with a small peak at
the OLS estimate. The Quasi-posterior has higher mass close to the
OLS estimate\textemdash which corresponds to the middle mode\textemdash and
accordingly lower mass in the tails. The GL estimator that uses the
uniform prior (GL-Uni) is also more precise than the OLS estimator,
though the margin is smaller. The latter is due to the fact that
the GL estimate uses information only from the OLS objective function.
We have not reported the bias. However, here is a summary of its behavior
which can also be learned from Figures \ref{Fig1}-\ref{Fig2}. When
$\lambda_{b}^{0}=0.5$, the bias is small and close to zero because
the finite-sample distributions of the estimators are symmetric. When
$\lambda_{b}^{0}<0.5$, the bias is positive which means that the
break date estimators tend to be on the right of $\lambda_{b}^{0}.$
The opposite hold for $\lambda_{b}^{0}>0.5$.\textbf{ }

\subsection{\label{subsec:Properties-of-the}Properties of the GL Confidence
Sets}

We now assess the performance of the suggested inference procedure
for the break date. We compare it with the following existing methods:
Bai's (1997) approach, \citeauthor{elliott/mueller:07}\textquoteright s
(2007) approach based on\textcolor{red}{{} }inverting a sequence of
locally best invariant tests using \citeauthor{nyblom:89}'s (1989)
statistic, the inverted likelihood-ratio (ILR) method of \citet{eo/morley:15}
which inverts the likelihood-ratio test of \citet{qu/perron:07} and
the HDR method proposed in \citet{casini/perron_CR_Single_Break}
based on continuous record asymptotics, labelled OLS-CR. These methods
have been discussed in detail in \citet{casini/perron_CR_Single_Break}
and in \citet{chang/perron:18}. We can summarize their properties
as follows. The confidence intervals obtained from Bai's (1997) method
display empirical coverage rates often below the nominal level when
the size of the break is small. In general, \citeauthor{elliott/mueller:07}\textquoteright s
(2007) approach achieves the most accurate coverage rates but the
average length of the confidence sets is always substantially larger
relative to other methods.\footnote{This problem is more severe when the errors are serially correlated
or the model includes lagged dependent variables.\nocite{casini_hac}
Regarding the former, this in part may be due to issues with \citeauthor{newey/west:87}
HAC-type estimators when there are breaks {[}see \citeauthor{casini_CR_Test_Inst_Forecast}
(2018, 2019), \nocite{casini_hac} \citet{casini/perron_Low_Frequency_Contam_Nonstat:2020},
\citet{casini/perron_Oxford_Survey}, \citet{chang/perron:18}, \citet{crainiceanu/vogelsang:07},
\citet{deng/perron:06}, \citet{fossati:17}, \citet{juhl/xiao:09},
\citet{kim/perron:09}, \citet{martins/perron:16}, \citet{perron/yamamoto:18}
and \citet{vogeslang:99}{]}.} In addition, this approach breaks down in models with serially correlated
errors or lagged dependent variables, whereby the length of the confidence
set approaches the whole sample as the magnitude of the break increases.
The ILR has coverage rates often above the nominal level and an average
length significantly longer than with the OLS-CR method when the magnitude
of the shift is small. Here, we shall show that the GL inference performs
well in terms of coverage probability compared with the other methods
and is characterized by shorter lengths of the confidence sets. 

When the errors are uncorrelated (i.e., M1 and M3) we simply estimate
variances rather than long-run variances. The least-squares estimation
method is employed with a trimming parameter $\epsilon=0.15$ and
we use the required degrees of freedom adjustment for the statistic
$\widehat{\textrm{U}}_{T}$ of \citet{elliott/mueller:07}. To construct
the OLS-CR method, we follow the steps outlined in \citet{casini/perron_CR_Single_Break}.
To implement Bai's (1997) method we use the usual steps described
in \citet{bai:97RES} and \citet{bai/perron:98}. We implement the
GL estimator using a least-absolute loss with the prior from Corollary
\ref{Corollary part (i) of Theorem LapBai97}. For model M2, the
estimate of the long-run variance is the pre-whitened heteroskedasticity
and autocorrelation (HAC) estimator of \citet{andrews/monahan:92}.
We consider the version $\widehat{\textrm{U}}_{T}$ proposed by \citet{elliott/mueller:07}
that allows for heterogeneity across regimes; using the restricted
version when applicable leads to similar results. Finally, the last
row of each panel includes the rejection probability of the 5\%-level
sup-Wald test using the asymptotic critical value of \citet{andrews:93};
it serves as a statistical measure of the magnitude of the break.

Overall, the results in Table \ref{Table M0}-\ref{Table M2} confirm
previous findings about the performance of existing methods. Bai's
(1997) method has a coverage rate below the nominal level when the
size of the break is small. For example, in model M2, with $\lambda_{0}=0.5$
and $\delta^{0}=0.8$, it has a coverage probability below 82\% even
though the Sup-Wald test rejects roughly for 70\% of the samples.
With smaller break sizes, it systematically fails to cover the true
break date with correct probability. In contrast, the method of \citet{elliott/mueller:07}
yields very accurate empirical coverage rates. However, the average
length of the confidence intervals obtained is systematically much
larger than those from all other methods across all DGPs, break sizes
and break locations. For large break sizes, Bai's (1997) method delivers
good coverage rates and the shortest average length among all methods. 

The GL method displays good coverage rates across different break
magnitudes and tends to have the shortest lengths among all methods
for all break magnitudes, except for $\delta^{0}=1.6$ in model M2
for which Bai's (1997) confidence interval is slightly shorter.
In Model M3, the coverage rates of OLS-CR are more accurate than those
with the GL method although the difference is not large. Thus the
GL method strikes a good balance between adequate coverage probability
and short average lengths, thus confirming the theoretical results
on bet-proofness. This is also consistent with Figures \ref{Fig1}-\ref{Fig2}
which show that the asymptotic distribution of the GL estimator does
not underestimate uncertainty about the break location even when the
break magnitude is small thereby yielding good coverage rates also
in this case. In model M1 the GL method leads to shorter lengths than
Bai's even for large breaks. This is not in contradiction with Figure
\ref{Fig2} because model M1 is a simpler model than that reported
in the figure which shows that the density of the asymptotic distribution
of the GL estimator is more spread out than that from \citet{bai:97RES}. 

Non-reported simulations show that the GL method is robust to heteroskedastic
errors $e_{t}=\left|z_{t}\right|u_{t}$ and non-normal errors. The
case of multiple breaks is not considered since they are expected
to be similar as in the single break case by virtue of the assumption
that the break dates are sufficiently separated. Finally, in the supplement
we compare the GL method above with its continuous record counterparts
developed in \citet{casini/perron_Lap_CR_Single_Inf}. Overall, we
find that both estimation and confidence intervals based on GL-LN
perform well relative to the continuous record counterparts, where
significant gains appear to occur when there is high serial correlation
in the errors. See the additional results reported in the supplement. 

\section{\label{Section Conclusions}Conclusions}

We developed large-sample results for a class of Generalized Laplace
estimators in multiple change-points models where popular methods
face some challenges due to the non-regularities of the problem. The
GL method exploits the insight of Laplace who proposed to generate
a density from taking an exponential transformation of a least-squares
criterion. The class of GL estimators exhibits a dual limiting distribution;
namely, the classical shrinkage asymptotic distribution of \citet{bai/perron:98},
or a Bayes-type asymptotic distribution {[}cf. \citet{ibragimov/has:81}{]}.
Simulations show that the GL estimator is more accurate than OLS.
Similarly, inference has superior finite-sample properties relative
to popular methods and these properties are shown to be supported
by theoretical results. Since the issues about the finite-sample performance
of OLS especially for small breaks continue to hold in more complex
structural change models, we believe that our method can be usefully
extended to those models. For example, the GL approach can be immediately
applied to nonlinear models (e.g., instrumental variable models, linear
model with restrictions, nonlinear regression models, etc.) even though
particular attention to the appropriate choice of the prior should
be given in each context. We believe that our approach can also be
relevant for high-dimensional regression with structural changes although
this would require a careful consideration of additional aspects related
to the growing number of regressors. 

\newpage{}

\section{Supplementary Material}

Casini, A. and P. Perron (2020c). Supplement to \textquotedblleft Generalized
Laplace Inference in Multiple Change-points Models\textquotedblright ,
Econometric Theory Supplementary Material. To view, please visit:
{[}{[}doi will be inserted here by typesetter{]}{]}\nocite{casini/perron_CR_Single_Break,casini/perron_Lap_CR_Single_Inf,casini/perron_Low_Frequency_Contam_Nonstat:2020,casini/perron_PrewhitedHAC,casini/perron_SC_BP_Lap,casini_diss,casini_hac}

\begin{singlespace}

\bibliographystyle{chicago}
\bibliography{References_JoE}

\addcontentsline{toc}{section}{References}

\end{singlespace}

\newpage{}

\setcounter{page}{1} \renewcommand{\thepage}{F-\arabic{page}}

\begin{singlespace}

\begin{center}
\begin{figure}[h]
\includegraphics[width=18cm,height=6.5cm]{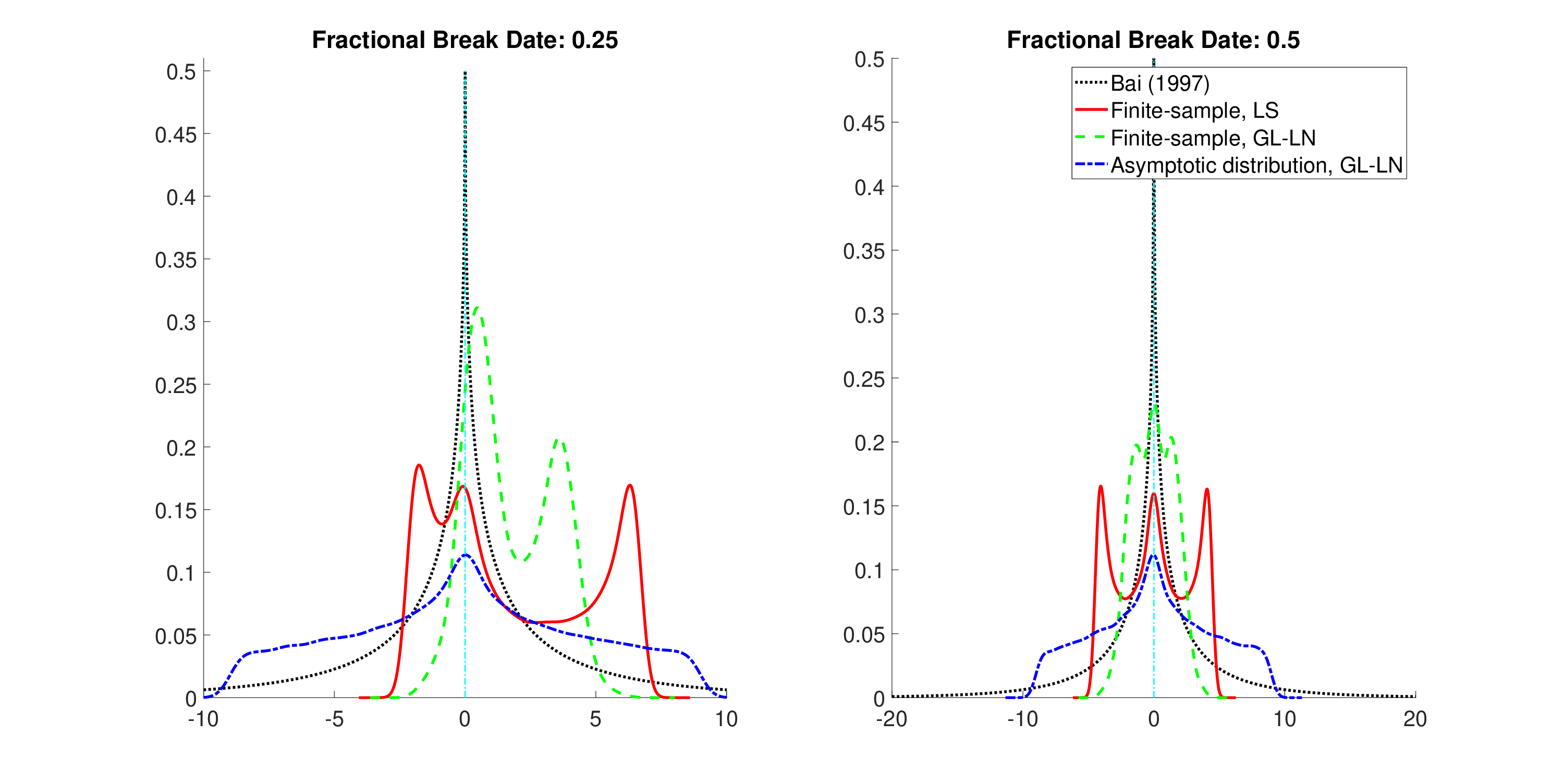}

{\footnotesize{}\caption{{\scriptsize{}\label{Fig1}The probability density of the LS estimator
for the model $y_{t}=\mu^{0}+Z_{t}\delta_{1}^{0}+Z_{t}\delta_{2}^{0}\mathbf{1}\left\{ t>\left\lfloor T\lambda_{0}\right\rfloor \right\} +e_{t},\,Z_{t}=0.3Z_{t-1}+u_{t}-0.1u_{t-1},\,u_{t}\sim\textrm{i.i.d.}\mathscr{N}\left(0,\,1\right),\,e_{t}\sim\textrm{i.i.d.}\mathscr{N}\left(0,\,1\right),$
$\left\{ u_{t}\right\} $ independent from $\left\{ e_{t}\right\} ,$
$T=100$ with $\delta^{0}=0.3$ and $\lambda_{0}=0.25$ and $0.5$
(the left and right panel, respectively). The black dotted line is
the density of the asymptotic distribution from Bai (1997), the red
broken line break is the density of the finite-sample distribution
of the LS estimator, the green broken line is the density of the finite-sample
distribution of the GL estimator, and the blue broken line is the
density of the asymptotic distribution of the GL estimator. }}
}{\footnotesize\par}
\end{figure}
\end{center}

\setlength{\belowcaptionskip}{-0pt}

\begin{center}
\begin{figure}[h]
\includegraphics[width=18cm,height=6.5cm]{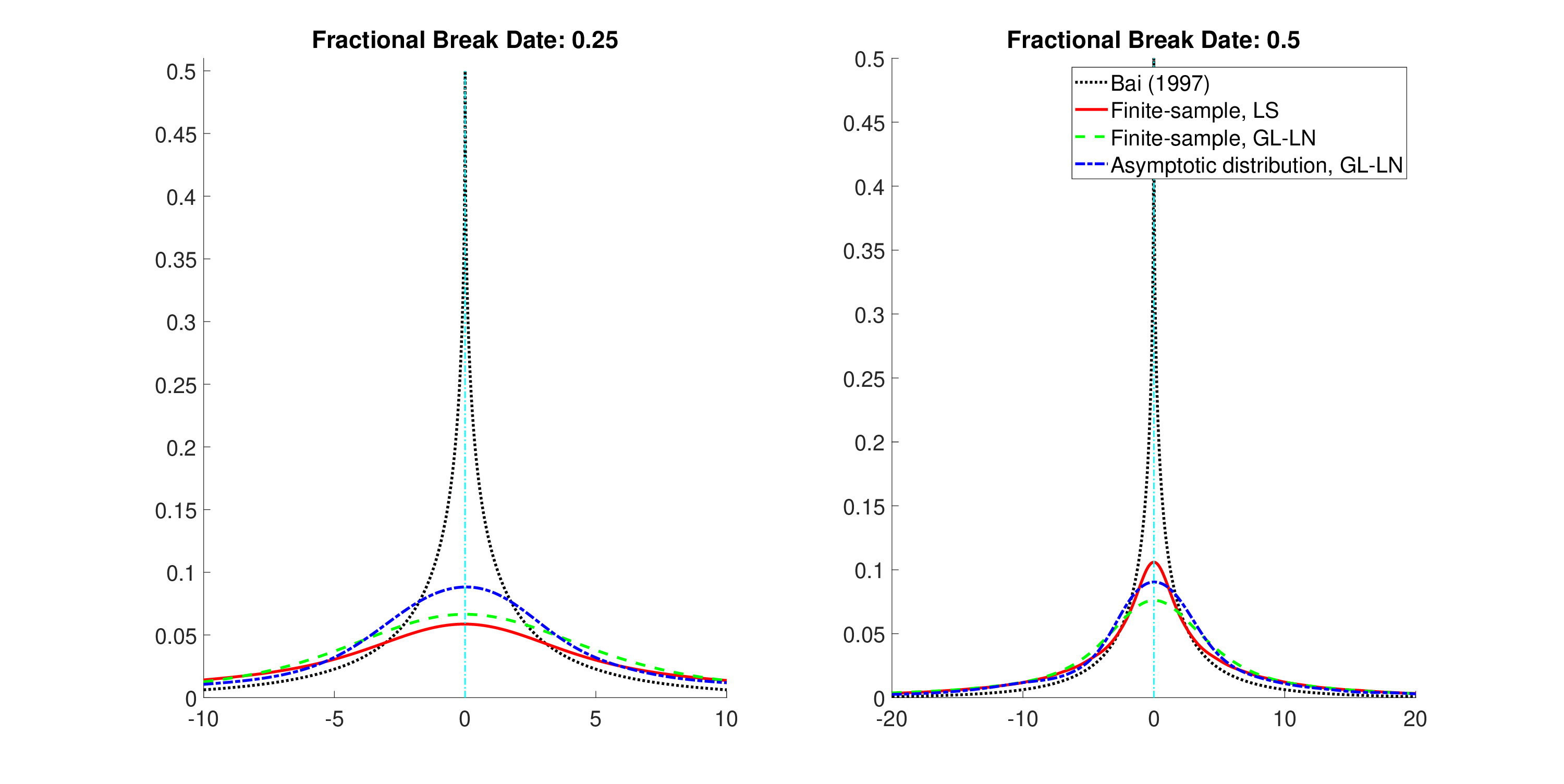}

{\footnotesize{}\caption{{\scriptsize{}\label{Fig2}The descriptions and comments given in
Figure \ref{Fig1} apply but with a break magnitude $\delta^{0}=1.5$. }}
}{\footnotesize\par}
\end{figure}
\end{center}

\end{singlespace}

\newpage{}

\setcounter{page}{1} \renewcommand{\thepage}{T-\arabic{page}}
\begin{center}
\begin{table}[H]
\caption{\label{Table M0 Bias}Small-sample accuracy of the estimate of the
break point $T_{b}^{0}$ for model M1}

\begin{singlespace}
\begin{centering}
{\footnotesize{}}%
\begin{tabular}{ccccccc|ccccc}
\hline 
 &  & {\footnotesize{}MAE} & {\footnotesize{}Std} & {\footnotesize{}$\textrm{RMSE}$} & {\footnotesize{}$Q_{0.25}$} & \multicolumn{1}{c}{{\footnotesize{}$Q_{0.75}$}} & {\footnotesize{}MAE} & {\footnotesize{}Std} & {\footnotesize{}$\textrm{RMSE}$} & {\footnotesize{}$Q_{0.25}$} & {\footnotesize{}$Q_{0.75}$}\tabularnewline
\cline{3-12} \cline{4-12} \cline{5-12} \cline{6-12} \cline{7-12} \cline{8-12} \cline{9-12} \cline{10-12} \cline{11-12} \cline{12-12} 
 &  & \multicolumn{5}{c|}{{\footnotesize{}$\lambda_{0}=0.3$}} & \multicolumn{5}{c}{{\footnotesize{}$\lambda_{0}=0.5$}}\tabularnewline
\cline{3-12} \cline{4-12} \cline{5-12} \cline{6-12} \cline{7-12} \cline{8-12} \cline{9-12} \cline{10-12} \cline{11-12} \cline{12-12} 
{\footnotesize{}$\delta^{0}=0.3$} & {\footnotesize{}OLS} & 21.99 & 27.51 & 30.53 & 24 & 66 & 21.51 & 26.85 & 26.79 & 34 & 71\tabularnewline
 & {\footnotesize{}GL-LN} & 13.44 & 15.03 & 18.99 & 28 & 54 & 11.85 & 14.51 & 14.93 & 38 & 60\tabularnewline
 & {\footnotesize{}GL-Uni} & 17.56 & 22.88 & 25.51 & 26 & 56 & 16.90 & 22.03 & 22.13 & 38 & 61\tabularnewline
\cline{3-12} \cline{4-12} \cline{5-12} \cline{6-12} \cline{7-12} \cline{8-12} \cline{9-12} \cline{10-12} \cline{11-12} \cline{12-12} 
{\footnotesize{}$\delta^{0}=0.4$} & {\footnotesize{}OLS} & 20.48 & 26.30 & 28.51 & 23 & 57 & 15.64 & 21.79 & 21.23 & 40 & 61\tabularnewline
 & {\footnotesize{}GL-LN} & 13.02 & 15.52 & 18.29 & 29 & 51 & 9.46 & 11.84 & 12.30 & 44 & 56\tabularnewline
 & {\footnotesize{}GL-Uni} & 17.68 & 22.30 & 24.64 & 27 & 54 & 12.38 & 17.69 & 17.15 & 42 & 57\tabularnewline
\cline{3-12} \cline{4-12} \cline{5-12} \cline{6-12} \cline{7-12} \cline{8-12} \cline{9-12} \cline{10-12} \cline{11-12} \cline{12-12} 
{\footnotesize{}$\delta^{0}=0.6$} & {\footnotesize{}OLS} & 13.04 & 20.82 & 15.92 & 28 & 41 & 11.06 & 16.05 & 16.89 & 45 & 55\tabularnewline
 & {\footnotesize{}GL-LN} & 9.20 & 13.67 & 13.67 & 28 & 40 & 7.04 & 9.92 & 10.46 & 47 & 53\tabularnewline
 & {\footnotesize{}GL-Uni} & 11.49 & 18.59 & 14.23 & 27 & 39 & 9.11 & 13.92 & 13.48 & 45 & 55\tabularnewline
\cline{3-12} \cline{4-12} \cline{5-12} \cline{6-12} \cline{7-12} \cline{8-12} \cline{9-12} \cline{10-12} \cline{11-12} \cline{12-12} 
{\footnotesize{}$\delta^{0}=1$} & {\footnotesize{}OLS} & 3.49 & 4.61 & 4.61 & 28 & 32 & 2.92 & 5.24 & 5.23 & 48 & 52\tabularnewline
 & {\footnotesize{}GL-LN} & 3.41 & 4.53 & 4.52 & 28 & 32 & 2.89 & 5.44 & 5.20 & 49 & 51\tabularnewline
 & {\footnotesize{}GL-Uni} & 3.63 & 4.56 & 4.61 & 28 & 32 & 2.90 & 5.21 & 5.22 & 48 & 52\tabularnewline
\hline 
\end{tabular}{\footnotesize\par}
\par\end{centering}
\end{singlespace}
\noindent\begin{minipage}[t]{1\columnwidth}%
{\scriptsize{}The model is $y_{t}=\delta^{0}\mathbf{1}\left\{ t>\left\lfloor T\lambda_{0}\right\rfloor \right\} +e_{t},\,e_{t}\sim i.i.d.\,\mathscr{N}\left(0,\,1\right),\,T=100$.
The columns refer to Mean Absolute Error (MAE), standard deviation
(Std), Root Mean Squared Error (RMSE) and the 25\% and 75\% empirical
quantiles. OLS is the least-squares estimator; GL-LN is the GL estimator
under a least-absolute loss function with the density of the long-span
asymptotic distribution as the prior; GL-Uni is the GL estimator under
a least-absolute loss function with a uniform prior. The number of
simulations is 3,000.}%
\end{minipage}
\end{table}
\par\end{center}

\begin{center}

\par\end{center}\begin{center}
\begin{table}[H]
\caption{\label{Table M1 Bias}Small-sample accuracy of the estimates of the
break point $T_{b}^{0}$ for model M2}

\begin{singlespace}
\begin{centering}
{\footnotesize{}}%
\begin{tabular}{ccccccc|ccccc}
\hline 
 &  & {\footnotesize{}MAE} & {\footnotesize{}Std} & {\footnotesize{}$\textrm{RMSE}$} & {\footnotesize{}$Q_{0.25}$} & \multicolumn{1}{c}{{\footnotesize{}$Q_{0.75}$}} & {\footnotesize{}MAE} & {\footnotesize{}Std} & {\footnotesize{}$\textrm{RMSE}$} & {\footnotesize{}$Q_{0.25}$} & {\footnotesize{}$Q_{0.75}$}\tabularnewline
\cline{3-12} \cline{4-12} \cline{5-12} \cline{6-12} \cline{7-12} \cline{8-12} \cline{9-12} \cline{10-12} \cline{11-12} \cline{12-12} 
 &  & \multicolumn{5}{c|}{{\footnotesize{}$\lambda_{0}=0.3$}} & \multicolumn{5}{c}{{\footnotesize{}$\lambda_{0}=0.5$}}\tabularnewline
\cline{3-12} \cline{4-12} \cline{5-12} \cline{6-12} \cline{7-12} \cline{8-12} \cline{9-12} \cline{10-12} \cline{11-12} \cline{12-12} 
{\footnotesize{}$\delta^{0}=0.3$} & {\footnotesize{}OLS} & 26.61 & 22.85 & 33.03 & 23 & 76 & 24.09 & 28.29 & 28.08 & 23 & 73\tabularnewline
 & {\footnotesize{}GL-LN} & 19.33 & 10.17 & 24.87 & 29 & 61 & 16.01 & 18.78 & 19.81 & 29 & 62\tabularnewline
 & {\footnotesize{}GL-Uni} & 24.76 & 21.05 & 31.34 & 26 & 70 & 20.93 & 25.37 & 25.39 & 28 & 65\tabularnewline
\cline{3-12} \cline{4-12} \cline{5-12} \cline{6-12} \cline{7-12} \cline{8-12} \cline{9-12} \cline{10-12} \cline{11-12} \cline{12-12} 
{\footnotesize{}$\delta^{0}=0.4$} & {\footnotesize{}OLS} & 23.10 & 27.99 & 30.85 & 21 & 68 & 20.47 & 25.55 & 25.54 & 33 & 70\tabularnewline
 & {\footnotesize{}GL-LN} & 16.59 & 18.59 & 22.75 & 29 & 60 & 13.68 & 17.06 & 17.12 & 38 & 61\tabularnewline
 & {\footnotesize{}GL-Uni} & 21.51 & 25.87 & 28.83 & 24 & 61 & 17.91 & 22.94 & 22.91 & 37 & 62\tabularnewline
\cline{3-12} \cline{4-12} \cline{5-12} \cline{6-12} \cline{7-12} \cline{8-12} \cline{9-12} \cline{10-12} \cline{11-12} \cline{12-12} 
{\footnotesize{}$\delta^{0}=0.6$} & {\footnotesize{}OLS} & 17.64 & 23.51 & 25.01 & 24 & 50 & 15.51 & 20.93 & 20.91 & 41 & 59\tabularnewline
 & {\footnotesize{}GL-LN} & 13.42 & 16.63 & 18.63 & 28 & 47 & 11.06 & 14.90 & 14.38 & 46 & 54\tabularnewline
 & {\footnotesize{}GL-Uni} & 16.01 & 21.54 & 22.75 & 25 & 47 & 13.92 & 19.11 & 19.91 & 40 & 58\tabularnewline
\cline{3-12} \cline{4-12} \cline{5-12} \cline{6-12} \cline{7-12} \cline{8-12} \cline{9-12} \cline{10-12} \cline{11-12} \cline{12-12} 
{\footnotesize{}$\delta^{0}=1$} & {\footnotesize{}OLS} & 8.71 & 15.87 & 15.79 & 27 & 34 & 7.24 & 10.73 & 10.72 & 47 & 54\tabularnewline
 & {\footnotesize{}GL-LN} & 8.25 & 15.27 & 15.61 & 27 & 34 & 6.88 & 9.21 & 9.19 & 47 & 52\tabularnewline
 & {\footnotesize{}GL-Uni} & 8.65 & 14.96 & 15.21 & 27 & 33 & 6.96 & 10.44 & 10.45 & 46 & 53\tabularnewline
\hline 
\end{tabular}{\footnotesize\par}
\par\end{centering}
\end{singlespace}
\noindent\begin{minipage}[t]{1\columnwidth}%
{\scriptsize{}The model is $y_{t}=\delta^{0}\mathbf{1}\left\{ t>\left\lfloor T\lambda_{0}\right\rfloor \right\} +e_{t},\,e_{t}=0.3e_{t-1}+u_{t},\,u_{t}\sim i.i.d.\,\mathscr{N}\left(0,\,1\right),\,T=100$.
The notes of Table \ref{Table M0 Bias} apply.}%
\end{minipage}
\end{table}
\par\end{center}

\begin{center}

\par\end{center}\begin{center}
\begin{table}[H]
\caption{\label{Table M5 Bias}Small-sample accuracy of the estimates of the
break point $T_{b}^{0}$ for model M3}

\begin{centering}
{\footnotesize{}}%
\begin{tabular}{ccccccc|ccccc}
\hline 
 &  & {\footnotesize{}MAE} & {\footnotesize{}Std} & {\footnotesize{}$\textrm{RMSE}$} & {\footnotesize{}$Q_{0.25}$} & \multicolumn{1}{c}{{\footnotesize{}$Q_{0.75}$}} & {\footnotesize{}MAE} & {\footnotesize{}Std} & {\footnotesize{}$\textrm{RMSE}$} & {\footnotesize{}$Q_{0.25}$} & {\footnotesize{}$Q_{0.75}$}\tabularnewline
\cline{3-12} \cline{4-12} \cline{5-12} \cline{6-12} \cline{7-12} \cline{8-12} \cline{9-12} \cline{10-12} \cline{11-12} \cline{12-12} 
 &  & \multicolumn{5}{c|}{{\footnotesize{}$\lambda_{0}=0.3$}} & \multicolumn{5}{c}{{\footnotesize{}$\lambda_{0}=0.5$}}\tabularnewline
\cline{3-12} \cline{4-12} \cline{5-12} \cline{6-12} \cline{7-12} \cline{8-12} \cline{9-12} \cline{10-12} \cline{11-12} \cline{12-12} 
{\footnotesize{}$\delta^{0}=0.3$} & {\footnotesize{}OLS} & 23.66 & 28.14 & 31.32 & 22 & 69 & 22.01 & 26.61 & 26.59 & 33 & 72\tabularnewline
 & {\footnotesize{}GL-LN} & 19.31 & 19.22 & 26.28 & 30 & 57 & 14.89 & 18.18 & 19.08 & 39 & 61\tabularnewline
 & {\footnotesize{}GL-Uni} & 21.38 & 24.12 & 28.08 & 24 & 64 & 18.76 & 22.88 & 22.01 & 31 & 66\tabularnewline
\cline{3-12} \cline{4-12} \cline{5-12} \cline{6-12} \cline{7-12} \cline{8-12} \cline{9-12} \cline{10-12} \cline{11-12} \cline{12-12} 
{\footnotesize{}$\delta^{0}=0.4$} & {\footnotesize{}OLS} & 19.31 & 25.76 & 27.71 & 23 & 57 & 18.14 & 23.43 & 23.44 & 38 & 60\tabularnewline
 & {\footnotesize{}GL-LN} & 15.04 & 17.64 & 21.29 & 29 & 51 & 12.36 & 16.43 & 16.52 & 40 & 60\tabularnewline
 & {\footnotesize{}GL-Uni} & 18.46 & 22.74 & 25.18 & 25 & 58 & 15.91 & 20.42 & 20.42 & 37 & 62\tabularnewline
\cline{3-12} \cline{4-12} \cline{5-12} \cline{6-12} \cline{7-12} \cline{8-12} \cline{9-12} \cline{10-12} \cline{11-12} \cline{12-12} 
{\footnotesize{}$\delta^{0}=0.6$} & {\footnotesize{}OLS} & 12.02 & 19.02 & 19.82 & 25 & 37 & 10.28 & 15.51 & 15.58 & 45 & 55\tabularnewline
 & {\footnotesize{}GL-LN} & 9.29 & 12.86 & 14.61 & 29 & 40 & 8.46 & 11.84 & 11.86 & 45 & 55\tabularnewline
 & {\footnotesize{}GL-Uni} & 12.33 & 18.43 & 19.54 & 27 & 41 & 8.90 & 14.54 & 14.53 & 45 & 55\tabularnewline
\cline{3-12} \cline{4-12} \cline{5-12} \cline{6-12} \cline{7-12} \cline{8-12} \cline{9-12} \cline{10-12} \cline{11-12} \cline{12-12} 
{\footnotesize{}$\delta^{0}=1$} & {\footnotesize{}OLS} & 3.72 & 6.88 & 6.89 & 28 & 32 & 3.85 & 6.98 & 6.98 & 48 & 52\tabularnewline
 & {\footnotesize{}GL-LN} & 3.49 & 6.44 & 6.57 & 28 & 32 & 3.45 & 6.09 & 6.10 & 48 & 52\tabularnewline
 & {\footnotesize{}GL-Uni} & 4.37 & 8.12 & 8.24 & 28 & 32 & 3.86 & 6.97 & 6.96 & 48 & 52\tabularnewline
\hline 
\end{tabular}{\footnotesize\par}
\par\end{centering}
\noindent\begin{minipage}[t]{1\columnwidth}%
{\scriptsize{}The model is $y_{t}=\delta^{0}\mathbf{1}\left\{ t>\left\lfloor T\lambda_{0}\right\rfloor \right\} +\alpha^{0}y_{t-1}+e_{t},\,e_{t}\sim i.i.d.\,\mathscr{N}\left(0,\,0.5\right),\,\alpha^{0}=0.6,\,T=100$.
The notes of Table \ref{Table M0 Bias} apply.}%
\end{minipage}
\end{table}
\par\end{center}

\begin{center}

\par\end{center}\begin{center}
\begin{table}[H]
\caption{\label{Table M0}Small-sample coverage rates and lengths of the confidence
sets for model M1}

\begin{centering}
{\footnotesize{}}%
\begin{tabular}{cccccccccc}
\hline 
 &  & \multicolumn{2}{c}{{\footnotesize{}$\delta^{0}=0.4$}} & \multicolumn{2}{c}{{\footnotesize{}$\delta^{0}=0.8$}} & \multicolumn{2}{c}{{\footnotesize{}$\delta^{0}=1.2$}} & \multicolumn{2}{c}{{\footnotesize{}$\delta^{0}=1.6$}}\tabularnewline
 &  & {\footnotesize{}$\textrm{Cov.}$} & {\footnotesize{}$\textrm{Lgth.}$} & {\footnotesize{}$\textrm{Cov.}$} & {\footnotesize{}$\textrm{Lgth.}$} & {\footnotesize{}$\textrm{Cov.}$} & {\footnotesize{}$\textrm{Lgth.}$} & {\footnotesize{}$\textrm{Cov.}$} & {\footnotesize{}$\textrm{Lgth.}$}\tabularnewline
\cline{3-10} \cline{4-10} \cline{5-10} \cline{6-10} \cline{7-10} \cline{8-10} \cline{9-10} \cline{10-10} 
{\footnotesize{}$\lambda_{0}=0.5$} & {\footnotesize{}OLS-CR} & 0.922 & 77.52 & 0.934 & 49.46 & 0.946 & 22.51 & 0.938 & 10.48\tabularnewline
 & {\footnotesize{}Bai (1997)} & 0.812 & 58.12 & 0.862 & 28.75 & 0.928 & 13.78 & 0.928 & 8.16\tabularnewline
 & {\footnotesize{}$\widehat{U}_{T}\left(T_{\textrm{m}}\right).\textrm{neq}$} & 0.950 & 75.45 & 0.950 & 41.68 & 0.950 & 21.78 & 0.950 & 14.79\tabularnewline
 & {\footnotesize{}ILR} & 0.959 & 76.14 & 0.973 & 35.79 & 0.976 & 14.44 & 0.977 & 7.15\tabularnewline
 & {\footnotesize{}GL-LN} & 0.942 & 49.76 & 0.948 & 22.45 & 0.958 & 10.47 & 0.965 & 5.15\tabularnewline
 & {\footnotesize{}sup-W} & \multicolumn{2}{c}{0.384} & \multicolumn{2}{c}{0.916} & \multicolumn{2}{c}{1.000} & \multicolumn{2}{c}{1.000}\tabularnewline
\cline{3-10} \cline{4-10} \cline{5-10} \cline{6-10} \cline{7-10} \cline{8-10} \cline{9-10} \cline{10-10} 
{\footnotesize{}$\lambda_{0}=0.3$} & {\footnotesize{}OLS-CR} & 0.928 & 74.95 & 0.928 & 46.68 & 0.930 & 21.47 & 0.958 & 10.22\tabularnewline
 & {\footnotesize{}Bai (1997)} & 0.830 & 56.64 & 0.870 & 28.72 & 0.904 & 13.89 & 0.962 & 8.27\tabularnewline
 & {\footnotesize{}$\widehat{U}_{T}\left(T_{\textrm{m}}\right).\textrm{neq}$} & 0.952 & 77.51 & 0.952 & 44.72 & 0.952 & 22.51 & 0.952 & 14.21\tabularnewline
 & {\footnotesize{}ILR} & 0.952 & 78.28 & 0.966 & 39.78 & 0.969 & 31.29 & 0.968 & 18.23\tabularnewline
 & {\footnotesize{}GL-LN} & 0.942 & 49.60 & 0.948 & 23.89 & 0.958 & 11.14 & 0.980 & 5.60\tabularnewline
 & {\footnotesize{}sup-W} & \multicolumn{2}{c}{0.316} & \multicolumn{2}{c}{0.866} & \multicolumn{2}{c}{0.992} & \multicolumn{2}{c}{1.000}\tabularnewline
\hline 
\end{tabular}{\footnotesize\par}
\par\end{centering}
\noindent\begin{minipage}[t]{1\columnwidth}%
{\scriptsize{}The model is $y_{t}=\delta^{0}\mathbf{1}_{\left\{ t>\left\lfloor T\lambda_{0}\right\rfloor \right\} }+e_{t},\,e_{t}\sim i.i.d.\,\mathscr{N}\left(0,\,1\right),\,T=100$.
Cov. and Lgth. refer to the coverage probability and the average length
of the confidence set (i.e., the average number of dates in the confidence
set). sup-W refers to the rejection probability of the sup-Wald test
using a 5\% asymptotic critical value. The number of simulations is
3,000.}%
\end{minipage}
\end{table}
\par\end{center}

\begin{center}

\par\end{center}\begin{center}
\begin{table}[H]
\caption{\label{Table M1}Small-sample coverage rates and lengths of the confidence
sets for model M2}

\begin{centering}
{\footnotesize{}}%
\begin{tabular}{cccccccccc}
\hline 
 &  & \multicolumn{2}{c}{{\footnotesize{}$\delta^{0}=0.4$}} & \multicolumn{2}{c}{{\footnotesize{}$\delta^{0}=0.8$}} & \multicolumn{2}{c}{{\footnotesize{}$\delta^{0}=1.2$}} & \multicolumn{2}{c}{{\footnotesize{}$\delta^{0}=1.6$}}\tabularnewline
 &  & {\footnotesize{}$\textrm{Cov.}$} & {\footnotesize{}$\textrm{Lgth.}$} & {\footnotesize{}$\textrm{Cov.}$} & {\footnotesize{}$\textrm{Lgth.}$} & {\footnotesize{}$\textrm{Cov.}$} & {\footnotesize{}$\textrm{Lgth.}$} & {\footnotesize{}$\textrm{Cov.}$} & {\footnotesize{}$\textrm{Lgth.}$}\tabularnewline
\cline{3-10} \cline{4-10} \cline{5-10} \cline{6-10} \cline{7-10} \cline{8-10} \cline{9-10} \cline{10-10} 
{\footnotesize{}$\lambda_{0}=0.5$} & {\footnotesize{}OLS-CR} & 0.952 & 80.29 & 0.954 & 57.70 & 0.957 & 30.04 & 0.963 & 15.10\tabularnewline
 & {\footnotesize{}Bai (1997)} & 0.804 & 64.64 & 0.824 & 43.53 & 0.907 & 13.03 & 0.930 & 7.81\tabularnewline
 & {\footnotesize{}$\widehat{U}_{T}\left(T_{\textrm{m}}\right).\textrm{neq}$} & 0.967 & 87.30 & 0.967 & 72.70 & 0.957 & 36.70 & 0.957 & 30.20\tabularnewline
 & {\footnotesize{}ILR} & 0.937 & 81.88 & 0.945 & 57.43 & 0.972 & 21.99 & 0.972 & 18.96\tabularnewline
 & {\footnotesize{}GL-LN} & 0.933 & 55.13 & 0.912 & 32.97 & 0.935 & 20.03 & 0.961 & 10.62\tabularnewline
 & {\footnotesize{}sup-W} & \multicolumn{2}{c}{0.316} & \multicolumn{2}{c}{0.699} & \multicolumn{2}{c}{1.000} & \multicolumn{2}{c}{1.000}\tabularnewline
\cline{3-10} \cline{4-10} \cline{5-10} \cline{6-10} \cline{7-10} \cline{8-10} \cline{9-10} \cline{10-10} 
{\footnotesize{}$\lambda_{0}=0.3$} & {\footnotesize{}OLS-CR} & 0.945 & 79.25 & 0.957 & 54.93 & 0.962 & 29.91 & 0.970 & 15.37\tabularnewline
 & {\footnotesize{}Bai (1997)} & 0.823 & 63.79 & 0.851 & 26.33 & 0895 & 13.07 & 0.946 & 7.87\tabularnewline
 & {\footnotesize{}$\widehat{U}_{T}\left(T_{\textrm{m}}\right).\textrm{neq}$} & 0.966 & 88.23 & 0.953 & 59.66 & 0.950 & 39.65 & 0.951 & 32.39\tabularnewline
 & {\footnotesize{}ILR} & 0.945 & 84.37 & 0.945 & 62.97 & 0.971 & 33.74 & 0.987 & 17.92\tabularnewline
 & {\footnotesize{}GL-LN} & 0.945 & 53.79 & 0.923 & 34.75 & 0.934 & 19.92 & 0.944 & 10.04\tabularnewline
 & {\footnotesize{}sup-W} & \multicolumn{2}{c}{0.314} & \multicolumn{2}{c}{{\footnotesize{}0.881}} & \multicolumn{2}{c}{0.999} & \multicolumn{2}{c}{1.000}\tabularnewline
\hline 
\end{tabular}{\footnotesize\par}
\par\end{centering}
\noindent\begin{minipage}[t]{1\columnwidth}%
{\scriptsize{}The model is $y_{t}=\delta^{0}\mathbf{1}\left\{ t>\left\lfloor T\lambda_{0}\right\rfloor \right\} +e_{t},\,e_{t}=0.3e_{t-1}+u_{t},\,u_{t}\sim i.i.d.\,\mathscr{N}\left(0,\,1\right),\,T=100$.
The notes of Table \ref{Table M0} apply.}%
\end{minipage}
\end{table}
\par\end{center}

\begin{center}

\par\end{center}\begin{center}
\begin{table}[H]
\caption{\label{Table M2}Small-sample coverage rates and lengths of the confidence
sets for model M3}

\begin{centering}
{\footnotesize{}}%
\begin{tabular}{cccccccccc}
\hline 
 &  & \multicolumn{2}{c}{{\footnotesize{}$\delta^{0}=0.4$}} & \multicolumn{2}{c}{{\footnotesize{}$\delta^{0}=0.8$}} & \multicolumn{2}{c}{{\footnotesize{}$\delta^{0}=1.2$}} & \multicolumn{2}{c}{{\footnotesize{}$\delta^{0}=1.6$}}\tabularnewline
 &  & {\footnotesize{}$\textrm{Cov.}$} & {\footnotesize{}$\textrm{Lgth.}$} & {\footnotesize{}$\textrm{Cov.}$} & {\footnotesize{}$\textrm{Lgth.}$} & {\footnotesize{}$\textrm{Cov.}$} & {\footnotesize{}$\textrm{Lgth.}$} & {\footnotesize{}$\textrm{Cov.}$} & {\footnotesize{}$\textrm{Lgth.}$}\tabularnewline
\cline{3-10} \cline{4-10} \cline{5-10} \cline{6-10} \cline{7-10} \cline{8-10} \cline{9-10} \cline{10-10} 
{\footnotesize{}$\lambda_{0}=0.5$} & {\footnotesize{}OLS-CR} & 0.954 & 80.29 & 0.952 & 57.23 & 0.957 & 30.21 & 0.963 & 15.20\tabularnewline
 & {\footnotesize{}Bai (1997)} & 0.781 & 55.85 & 0.845 & 26.23 & 0.902 & 13.03 & 0.932 & 7.81\tabularnewline
 & {\footnotesize{}$\widehat{U}_{T}\left(T_{\textrm{m}}\right).\textrm{neq}$} & 0.958 & 81.28 & 0.959 & 55.34 & 0.957 & 36.71 & 0.957 & 30.20\tabularnewline
 & {\footnotesize{}ILR} & 0.934 & 65.96 & 0.956 & 33.73 & 0.975 & 21.96 & 0.984 & 17.45\tabularnewline
 & {\footnotesize{}GL-LN} & 0.912 & 60.90 & 0.925 & 32.93 & 0.964 & 19.23 & 0.971 & 9.23\tabularnewline
 & {\footnotesize{}sup-W} & \multicolumn{2}{c}{0.407} & \multicolumn{2}{c}{0.931} & \multicolumn{2}{c}{1.000} & \multicolumn{2}{c}{1.000}\tabularnewline
\cline{3-10} \cline{4-10} \cline{5-10} \cline{6-10} \cline{7-10} \cline{8-10} \cline{9-10} \cline{10-10} 
{\footnotesize{}$\lambda_{0}=0.3$} & {\footnotesize{}OLS-CR} & 0.968 & 83.69 & 0.951 & 54.13 & 0.962 & 29.31 & 0.970 & 15.37\tabularnewline
 & {\footnotesize{}Bai (1997)} & 0.795 & 64.06 & 0.853 & 26.33 & 0.896 & 13.07 & 0.946 & 7.85\tabularnewline
 & {\footnotesize{}$\widehat{U}_{T}\left(T_{\textrm{m}}\right).\textrm{neq}$} & 0.960 & 86.42 & 0.953 & 59.13 & 0.950 & 39.65 & 0.951 & 32.28\tabularnewline
 & {\footnotesize{}ILR} & 0.934 & 67.73 & 0.964 & 35.30 & 0.971 & 33.74 & 0.987 & 17.92\tabularnewline
 & {\footnotesize{}GL-LN} & 0.912 & 60.28 & 0.945 & 36.08 & 0.974 & 22.72 & 0.975 & 12.71\tabularnewline
 & {\footnotesize{}sup-W} & \multicolumn{2}{c}{0.232} & \multicolumn{2}{c}{0.884} & \multicolumn{2}{c}{0.999} & \multicolumn{2}{c}{1.000}\tabularnewline
\hline 
\end{tabular}{\footnotesize\par}
\par\end{centering}
\noindent\begin{minipage}[t]{1\columnwidth}%
{\scriptsize{}The model is $y_{t}=\delta^{0}\mathbf{1}\left\{ t>\left\lfloor T\lambda_{0}\right\rfloor \right\} +\alpha^{0}y_{t-1}+e_{t},\,e_{t}\sim i.i.d.\,\mathscr{N}\left(0,\,0.5\right),\,\alpha^{0}=0.6,\,T=100$.
The notes of Table \ref{Table M0} apply.}%
\end{minipage}
\end{table}
\par\end{center}

\pagebreak{}

\section*{}
\addcontentsline{toc}{part}{Supplemental Material}
\begin{center}
\Large{\uline{Supplemental Material} to} 
\end{center}

\begin{center}
\title{\textbf{\Large{Generalized Laplace Inference in Multiple Change-Points Models}}} 
\maketitle
\end{center}
\medskip{} 
\medskip{} 
\medskip{} 
\thispagestyle{empty}

\begin{center}
$\qquad$ \textsc{\textcolor{MyBlue}{Alessandro Casini}} $\qquad$ \textsc{\textcolor{MyBlue}{Pierre Perron}}\\
\small{University of Rome Tor Vergata} $\quad$ \small{Boston University} 
\\
\medskip{}
\medskip{} 
\medskip{} 
\medskip{} 
\date{\small{\today} \\
}
\medskip{} 
\medskip{} 
\medskip{} 
\end{center}
\begin{abstract}
{\footnotesize{}This supplemental material is structured as follows.
Section \ref{Supp Math App} contains the Mathematical Appendix which
includes all proofs of the results in the paper. Section \ref{Section Comparison-to}
includes further simulation results comparing the GL-LN method to
the GL estimators proposed in \citet{casini/perron_Lap_CR_Single_Inf}.}{\footnotesize\par}
\end{abstract}
\setcounter{page}{1}
\setcounter{section}{1}
\renewcommand*{\theHsection}{\the\value{section}}

\newpage{}

\begin{singlespace} 
\noindent 
\small

\appendix

\allowdisplaybreaks

\simpleheading

\setcounter{page}{1} \renewcommand{\thepage}{S-\arabic{page}}

\section{\label{Supp Math App}Mathematical Appendix }

The mathematical appendix is structured as follows. Section \ref{subsection: Preliminary Lemmas}
presents some preliminary lemmas which will be used in the sequel.
The proofs of the theoretical results in the paper are in Section
\ref{subsection Proofs-of-Section 3}-\ref{subsec:Proofs-Multi}.

\subsection{Additional Notation}

The $\left(i,\,j\right)$ element of $A$ is denoted by $A^{\left(i,j\right)}$.
For a matrix $A$, the orthogonal projection matrices $P_{A},\,M_{A}$
are defined as $P_{A}=A\left(A'A\right)^{-1}A'$ and $M_{A}=I-P_{A}$,
respectively. Also, for a projection matrix $P$, $\left\Vert PA\right\Vert \leq\left\Vert A\right\Vert .$
We denote the $d$-dimensional identity matrix by $I_{d}.$ When the
context is clear we omit the subscript notation in the projection
matrices. We denote the $i\times j$ upper-left (resp., lower-right)
sub-block of $A$ as $\left[A\right]_{\left\{ i\times j,\cdot\right\} }$
(resp., $\left[A\right]_{\left\{ \cdot,i\times j\right\} }$). Note
that the norm of $A$ is equal to the square root of the maximum eigenvalue
of $A'A,$ and thus, $\left\Vert A\right\Vert \leq\left[\textrm{tr}\left(A'A\right)\right]^{1/2}.$
 For a sequence of matrices $\left\{ A_{T}\right\} ,$ we write $A_{T}=o_{\mathbb{P}}\left(1\right)$
if each of its elements is $o_{\mathbb{P}}\left(1\right)$ and likewise
for $O_{\mathbb{P}}\left(1\right).$ For a random variable $\xi$
and a number $r\geq1,$ $\left\Vert \xi\right\Vert _{r}=\left(\mathbb{E}\left\Vert \xi\right\Vert ^{r}\right)^{1/r}.$
$K$ is a generic constant that may vary from line to line; we may
sometime write $K_{r}$ to emphasize the dependence of $K$ on a number
$r.$ For two scalars $a$ and $b$,  $a\wedge b=\inf\left\{ a,\,b\right\} $.
We may use $\sum_{k}$ when the limits of the summation are clear
from the context. Unless otherwise sated $\mathbf{A}^{c}$ denotes
the complementary set of $\mathbf{A}$.

\subsection{Preliminary Lemmas \label{subsection: Preliminary Lemmas}}

We first present results related to the extremum criterion function
$Q_{T}\left(\delta\left(T_{b}\right),\,T_{b}\right)$ under the following
assumption (Assumptions \ref{Assumption The-loss-function LapBai97}-\ref{Assumption Prior LapBai97}
are not needed in this section). 
\begin{assumption}
\label{Ass Summary LapBai97}We consider model \eqref{Eq Matrix Format of the model}
with Assumptions \ref{Assumption A Bai97}-\ref{Assumption A4 Bai97}
and \ref{Assumption Small Shift BP}-\ref{Assumption A.9b Bai 97, LapBai97}.
\end{assumption}
\begin{lem}
\label{Lemma A1, Bai (1997)} The following inequalities hold $\mathbb{P}$-a.s.:
\begin{align}
\left(Z_{0}'MZ_{0}\right)-\left(Z_{0}'MZ_{2}\right)\left(Z'_{2}MZ_{2}\right)^{-1}\left(Z'_{2}MZ_{0}\right) & \geq D'\left(X'_{\Delta}X_{\Delta}\right)\left(X'_{2}X_{2}\right)^{-1}\left(X'_{0}X_{0}\right)D,\qquad T_{b}<T_{b}^{0}\label{Equation (36)}\\
\left(Z_{0}'MZ_{0}\right)-\left(Z_{0}'MZ_{2}\right)\left(Z'_{2}MZ_{2}\right)^{-1}\left(Z'_{2}MZ_{0}\right) & \geq D'\left(X'_{\Delta}X_{\Delta}\right)\left(X'X-X'_{2}X_{2}\right)^{-1}\left(X'X-X'_{0}X_{0}\right)D,\qquad T_{b}\geq T_{b}^{0}\label{Eq. 37}
\end{align}
\end{lem}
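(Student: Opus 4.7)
\textit{Proof sketch.} The plan is to reduce both inequalities to a single projection statement via the Frisch--Waugh--Lovell identity, establish the lower bound through a subspace inclusion, and then identify the lower bound with the explicit right-hand sides by tracking the disjoint row supports of the zero-padded regressor matrices.

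First I would rewrite the left-hand side of both \eqref{Equation (36)} and \eqref{Eq. 37} as $Z_0' M_{(X, Z_2)} Z_0$, where $M_{(X, Z_2)} = I - P_{(X, Z_2)}$; this is the standard identity $M - M Z_2(Z_2' M Z_2)^{-1} Z_2' M = M_{(X, Z_2)}$. The crucial subspace inclusion is
\begin{align*}
\mathrm{col.sp}(X, Z_2) \subset \mathrm{col.sp}(X_1, X_2),
\end{align*}
which holds because $X = X_1 + X_2$ gives $\mathrm{col.sp}(X) \subset \mathrm{col.sp}(X_1, X_2)$, while $Z_2 = X_2 D$ places $\mathrm{col.sp}(Z_2)$ inside $\mathrm{col.sp}(X_2)$. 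Monotonicity of complementary projections in the positive semidefinite order then yields $Z_0' M_{(X, Z_2)} Z_0 \succeq Z_0' M_{(X_1, X_2)} Z_0$. Since $X_1$ and $X_2$ have disjoint non-zero row supports, $X_1' X_2 = 0$ and $P_{(X_1, X_2)} = P_{X_1} + P_{X_2}$, so the lower bound becomes
\begin{align*}
Z_0' M_{(X_1, X_2)} Z_0 = D'\bigl[X_0'X_0 - X_0'X_1(X_1'X_1)^{-1}X_1'X_0 - X_0'X_2(X_2'X_2)^{-1}X_2'X_0\bigr]D.
\end{align*}

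Finally I would split into the two cases. When $T_b < T_b^0$, the rows of $X_1$ and $X_0$ are disjoint, so $X_1' X_0 = 0$; and $X_2 = X_\Delta + X_0$ with disjoint supports gives $X_2' X_0 = X_0'X_0$ and $X_2'X_2 = X_\Delta'X_\Delta + X_0'X_0$. Applying the identity $A - A(A+B)^{-1}A = B(A+B)^{-1}A$ (with $A = X_0'X_0$, $B = X_\Delta'X_\Delta$) collapses the expression to $D' X_\Delta'X_\Delta (X_2'X_2)^{-1} X_0'X_0 D$, which is the right-hand side of \eqref{Equation (36)}. When $T_b \geq T_b^0$, the decomposition flips: $X_0 = X_\Delta + X_2$ yields $X_2' X_0 = X_2'X_2$ (so the last term becomes $D' X_2'X_2 D$), while $X_\Delta$ now sits inside $X_1$, giving $X_1' X_0 = X_\Delta' X_\Delta$. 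Substituting and using the identities $X_1'X_1 = X'X - X_2'X_2$ and $X_1'X_1 - X_\Delta'X_\Delta = X'X - X_0'X_0$ recovers the right-hand side of \eqref{Eq. 37}.

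The main obstacle is purely combinatorial: the two cases differ in which of $X_1$ or $X_2$ contains $X_\Delta$ as a sub-block, so one must carefully track which projection produces the $X_\Delta'X_\Delta$ term. Beyond this bookkeeping, the argument uses only additivity of orthogonal projections onto row-orthogonal subspaces together with the symmetry identity $A(A+B)^{-1}B = B(A+B)^{-1}A$, which also ensures both right-hand sides are symmetric matrices so that the $\succeq$ comparison is meaningful.
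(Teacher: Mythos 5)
Your argument is correct, and it is worth noting that the paper itself offers no proof of this lemma: it simply defers to Lemma A.1 of Bai (1997). Your sketch therefore supplies a self-contained derivation. The route you take---rewrite the left-hand side as $Z_0' M_{(X,Z_2)} Z_0$ via Frisch--Waugh--Lovell, lower-bound it by $Z_0' M_{(X_1,X_2)} Z_0$ using the inclusion $\mathrm{col}(X,Z_2)\subset\mathrm{col}(X_1,X_2)$ and monotonicity of projections in the Loewner order, then exploit $X_1'X_2=0$ to split $P_{(X_1,X_2)}=P_{X_1}+P_{X_2}$ and finish with the identity $A-A(A+B)^{-1}A=B(A+B)^{-1}A$---is cleaner than Bai's original computation, which manipulates the partitioned expressions for $Z_0'MZ_2$ and $Z_2'MZ_2$ more directly. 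I verified the two case analyses: for $T_b<T_b^0$ one indeed has $X_1'X_0=0$, $X_2'X_0=X_0'X_0$ and $X_2'X_2=X_\Delta'X_\Delta+X_0'X_0$, giving the right-hand side of \eqref{Equation (36)}; for $T_b\geq T_b^0$ one has $X_2'X_0=X_2'X_2$, $X_1'X_0=X_\Delta'X_\Delta$, $X_0'X_0-X_2'X_2=X_\Delta'X_\Delta$ and $X_1'X_1-X_\Delta'X_\Delta=X'X-X_0'X_0$, and the symmetry identity $B(A+B)^{-1}A=A(A+B)^{-1}B$ recovers \eqref{Eq. 37} exactly. The only things left implicit are the invertibility of $X_0'X_0$, $X_2'X_2$ and $X'X-X_2'X_2$ (guaranteed by Assumption \ref{Assumption A3 Bai97} for admissible $T_b$) and the observation that the comparison is in the positive semidefinite ordering, both of which are already implicit in the statement of the lemma itself.
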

\begin{proof}
See Lemma A.1 in \citet{bai:97RES}. 
\end{proof}
Recall that $Q_{T}\left(\delta\left(\lambda_{b}\right),\,\lambda_{b}\right)=\delta\left(T_{b}\right)\left(Z_{2}'MZ_{2}\right)\delta\left(T_{b}\right)$.
We decompose $Q_{T}\left(\delta\left(\lambda_{b}\right),\,\lambda_{b}\right)-Q_{T}\left(\delta\left(\lambda_{b}^{0}\right),\,\lambda_{b}^{0}\right)$
into a ``deterministic'' and a ``stochastic'' component. It follows
by definition that,
\begin{align*}
\delta\left(\lambda_{b}\right) & =\left(Z'_{2}MZ_{2}\right)^{-1}\left(Z'_{2}MY\right)=\left(Z'_{2}MZ_{2}\right)^{-1}\left(Z'_{2}MZ_{0}\right)\delta_{T}+\left(Z'_{2}MZ_{2}\right)^{-1}Z_{2}Me,
\end{align*}
 and 
\begin{align*}
\delta\left(\lambda_{b}^{0}\right) & =\left(Z'_{0}MZ_{0}\right)^{-1}\left(Z'_{0}MY\right)=\delta_{T}+\left(Z'_{0}MZ_{0}\right)^{-1}\left(Z'_{0}Me\right).
\end{align*}
 Therefore
\begin{align}
Q_{T}\left(\delta\left(\lambda_{b}\right),\,\lambda_{b}\right)-Q_{T}\left(\delta\left(\lambda_{b}^{0}\right),\,\lambda_{b}^{0}\right) & =\delta\left(\lambda_{b}\right)'\left(Z_{2}'MZ_{2}\right)\delta\left(\lambda_{b}\right)-\delta\left(\lambda_{b}^{0}\right)'\left(Z_{0}'MZ_{0}\right)\delta\left(\lambda_{b}^{0}\right)\label{eq. A.2.0}\\
 & \triangleq g_{d}\left(\delta_{T},\,\lambda_{b}\right)+g_{e}\left(\delta_{T},\,\lambda_{b}\right),
\end{align}
 where 
\begin{align}
g_{d}\left(\delta_{T},\,\lambda_{b}\right) & =\delta'_{T}\left\{ \left(Z'_{0}MZ_{2}\right)\left(Z'_{2}MZ_{2}\right)^{-1}\left(Z'_{2}MZ_{0}\right)-Z_{0}'MZ_{0}\right\} \delta_{T},\label{eq. A.2.1}
\end{align}
and
\begin{align}
g_{e}\left(\delta_{T},\,\lambda_{b}\right) & =2\delta_{T}'\left(Z'_{0}MZ_{2}\right)\left(Z'_{2}MZ_{2}\right)^{-1}Z_{2}Me-2\delta_{T}'\left(Z'_{0}Me\right)\label{eq. A.2.3}\\
 & \quad+e'MZ_{2}\left(Z'_{2}MZ_{2}\right)^{-1}Z_{2}Me-e'MZ_{0}\left(Z'_{0}MZ_{0}\right)^{-1}Z'_{0}Me.\label{eq. A.2.4}
\end{align}
 \eqref{eq. A.2.1} constitutes the deterministic component and $g_{e}\left(\delta_{T},\,\lambda_{b}\right)$
the stochastic one. Denote 
\begin{flalign*}
 &  & X_{\Delta} & \triangleq X_{2}-X_{0}=\left(0,\,\ldots,\,0,\,x_{T_{b}+1},\ldots,\,x_{T_{b}^{0}},\,0,\ldots,\,\right)', & \textrm{for }T_{b}<T_{b}^{0}\\
 &  & X_{\Delta} & \triangleq-\left(X_{2}-X_{0}\right)=\left(0,\,\ldots,\,0,\,x_{T_{b}^{0}+1},\ldots,\,x_{T_{b}},\,0,\ldots,\,\right)', & \textrm{for }T_{b}>T_{b}^{0}
\end{flalign*}
whereas $X_{\Delta}\triangleq0$ when $T_{b}=T_{b}^{0}$. Observe
that $X_{2}=X_{0}+X_{\Delta}\textrm{sign}\left(T_{b}^{0}-T_{b}\right)$.
When the sign is immaterial, we simply write $X_{2}=X_{0}+X_{\Delta}$.
Next, let $Z_{\Delta}=X_{\Delta}D$, and define
\begin{align}
\overline{g}_{d}\left(\delta_{T},\,\lambda_{b}\right) & \triangleq-\frac{g_{d}\left(\delta_{T},\,\lambda_{b}\right)}{\left|T_{b}-T_{b}^{0}\right|}.\label{eq. A.2.5}
\end{align}
We arbitrarily define $\overline{g}_{d}\left(\delta^{0},\,\lambda_{b}\right)=\delta'_{T}\delta_{T}$
when $\lambda_{b}=\lambda_{b}^{0}$. Observe that $\overline{g}_{d}\left(\delta_{T},\,\lambda_{b}\right)$
is non-negative because the matrix inside the braces in \eqref{eq. A.2.1}
is negative semidefinite. \eqref{eq. A.2.0} can be written as
\begin{align}
Q_{T}\left(\delta\left(\lambda_{b}\right),\,\lambda_{b}\right)-Q_{T}\left(\delta\left(\lambda_{b}^{0}\right),\,\lambda_{b}^{0}\right) & =-\left|T_{b}-T_{b}^{0}\right|\overline{g}_{d}\left(\delta_{T},\,\lambda_{b}\right)+g_{e}\left(\delta_{T},\,\lambda_{b}\right),\qquad\textrm{for all }\lambda_{b}.\label{eq. A.2.6 LapBai97}
\end{align}
We use the notation $u=T\left\Vert \delta_{T}\right\Vert ^{2}\left(\lambda_{b}-\lambda_{b}^{0}\right)$.
For $\eta>0,$ let $B_{T,\eta}\triangleq\left\{ T_{b}:\,\left|T_{b}-T_{b}^{0}\right|\leq T\eta\right\} ,$
$B_{T,K}\triangleq\left\{ T_{b}:\,\left|T_{b}-T_{b}^{0}\right|\leq K/\left\Vert \delta_{T}\right\Vert ^{2}\right\} $
and $B_{T,K}^{c}\triangleq\left\{ T_{b}:\,T\eta\geq\left|T_{b}-T_{b}^{0}\right|>K/\left\Vert \delta_{T}\right\Vert ^{2}\right\} ,$
with $K>0.$ Note that $B_{T,\eta}=B_{T,K}\cup B_{T,K}^{c}$. Further,
let $B_{T,\eta}^{c}\triangleq\left\{ T_{b}:\,\left|T_{b}-T_{b}^{0}\right|>T\eta\right\} $.
\begin{lem}
\label{Lemma, Area (ia), Bai A.5}Under Assumption \ref{Ass Summary LapBai97},
$Q_{T}\left(\delta\left(\lambda_{b}\right),\,\lambda_{b}\right)-Q_{T}\left(\delta\left(\lambda_{b}^{0}\right),\,\lambda_{b}^{0}\right)=-\delta'_{T}Z'_{\Delta}Z_{\Delta}\delta{}_{T}+2\mathrm{sgn}\left(T_{b}^{0}-T_{b}\right)\delta'_{T}Z'_{\Delta}e+o_{\mathbb{P}}\left(1\right),$
uniformly on $B_{T,K}$ for $K$ large enough. 
\end{lem}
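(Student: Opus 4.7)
The plan is to start from the decomposition \eqref{eq. A.2.6 LapBai97}, i.e.\ $Q_T(\delta(T_b),T_b)-Q_T(\delta(T_b^{0}),T_b^{0})=g_d(\delta_T,T_b)+g_e(\delta_T,T_b)$, and show separately that, uniformly on $B_{T,K}$,
\[
g_d(\delta_T,T_b)=-\delta_T'Z_\Delta'Z_\Delta\delta_T+o_{\mathbb{P}}(1),\qquad g_e(\delta_T,T_b)=2\,\mathrm{sgn}(T_b^{0}-T_b)\,\delta_T'Z_\Delta'e+o_{\mathbb{P}}(1).
\]
Throughout, let $s\triangleq\mathrm{sgn}(T_b^{0}-T_b)$ so that $Z_2=Z_0+sZ_\Delta$, and introduce the projection $P\triangleq MZ_2(Z_2'MZ_2)^{-1}Z_2'M$ and the residual maker $M^{*}\triangleq M-P$, which annihilates the columns of $[X,Z_2]$.

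First I would handle $g_d$. A direct algebraic substitution of $Z_0=Z_2-sZ_\Delta$ into \eqref{eq. A.2.1} (using $s^{2}=1$) collapses $g_d$ to $-\delta_T'Z_\Delta'M^{*}Z_\Delta\delta_T$. Writing $M^{*}=I-P_X-P$ and using $P_XZ_\Delta=X(X'X)^{-1}X'Z_\Delta$ it suffices to show that $\delta_T'Z_\Delta'P_XZ_\Delta\delta_T$ and $\delta_T'Z_\Delta'PZ_\Delta\delta_T$ are both $o_{\mathbb{P}}(1)$ uniformly on $B_{T,K}$. These follow from the building blocks: by Assumption \ref{Assumption A3 Bai97}--\ref{Assumption A4 Bai97} one has $\|(X'X)^{-1}\|=O_{\mathbb{P}}(T^{-1})$ and $\|(Z_2'MZ_2)^{-1}\|=O_{\mathbb{P}}(T^{-1})$, while $\|X'Z_\Delta\|$ and $\|Z_2'Z_\Delta\|$ are $O_{\mathbb{P}}(|T_b-T_b^{0}|)=O_{\mathbb{P}}(K/\|\delta_T\|^{2})$. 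Combining these rates yields $|\delta_T'Z_\Delta'P_XZ_\Delta\delta_T|=O_{\mathbb{P}}(K^{2}/(T\|\delta_T\|^{2}))$ and similarly for the $P$ term, both $o_{\mathbb{P}}(1)$ since Assumption \ref{Assumption Small Shift BP} gives $T\|\delta_T\|^{2}\to\infty$.

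Next I would treat $g_e$. After inserting $Z_0=Z_2-sZ_\Delta$ into \eqref{eq. A.2.3}, the linear-in-$e$ terms telescope to $2s\,\delta_T'Z_\Delta'M^{*}e$ (this is the main computation to check), while the quadratic-in-$e$ terms become $e'(P-P_0)e$ with $P_0=MZ_0(Z_0'MZ_0)^{-1}Z_0'M$. For the linear piece, I would decompose $\delta_T'Z_\Delta'M^{*}e=\delta_T'Z_\Delta'e-\delta_T'Z_\Delta'P_Xe-\delta_T'Z_\Delta'Pe$ and bound the last two using $\|X'e\|=O_{\mathbb{P}}(\sqrt{T})$ (Assumption \ref{Assumption A5 Bai97 and A4 BP98}) together with the matrix bounds above; each is $O_{\mathbb{P}}(K/(\|\delta_T\|\sqrt{T}))=o_{\mathbb{P}}(1)$. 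For the quadratic piece, write it as $b_2'A_2^{-1}b_2-b_0'A_0^{-1}b_0$ with $b_i=Z_i'Me$, $A_i=Z_i'MZ_i$, and use the identity
\[
b_2'A_2^{-1}b_2-b_0'A_0^{-1}b_0=(b_2+b_0)'A_2^{-1}(b_2-b_0)+b_0'A_0^{-1}(A_0-A_2)A_2^{-1}b_0.
\]
Since $\|b_2-b_0\|=\|sZ_\Delta'Me\|=O_{\mathbb{P}}(\sqrt{K}/\|\delta_T\|)$, $\|A_2-A_0\|=O_{\mathbb{P}}(K/\|\delta_T\|^{2})$ and $\|b_i\|=O_{\mathbb{P}}(\sqrt{T})$, both pieces are $o_{\mathbb{P}}(1)$.

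The main obstacle is promoting these pointwise $o_{\mathbb{P}}(1)$ estimates to hold \emph{uniformly} over the $O(1/\|\delta_T\|^{2})$ candidate break dates in $B_{T,K}$. For this I would invoke maximal inequalities for partial sums of the $L^{r}$-mixingale $\{z_te_t\}$ and for $\{x_tx_t'\}$ under Assumption \ref{Assumption A5 Bai97 and A4 BP98}--\ref{Assumption A3 Bai97}, of Hájek--Rényi / Doob type as employed in \citet{bai:97RES} and \citet{bai/perron:98}, to obtain
\[
\sup_{T_b\in B_{T,K}}\|Z_\Delta'e\|=O_{\mathbb{P}}\!\bigl(\sqrt{K}/\|\delta_T\|\bigr),\qquad \sup_{T_b\in B_{T,K}}\|Z_\Delta'X\|=O_{\mathbb{P}}(K/\|\delta_T\|^{2}),
\]
and analogous uniform bounds for the other building blocks, after which the componentwise arguments above go through uniformly. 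Substituting the two displays for $g_d$ and $g_e$ back into \eqref{eq. A.2.6 LapBai97} delivers the claimed expansion.
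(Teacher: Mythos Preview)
Your proposal is correct and follows exactly the route of Lemma A.5 in \citet{bai:97RES}, which is all the paper invokes here: its own proof is a one-line citation, ``It follows from Lemma A.5 in \citet{bai:97RES}.'' Your algebraic reductions $g_d=-\delta_T'Z_\Delta'M^{*}Z_\Delta\delta_T$ and the collapse of the linear-in-$e$ part of $g_e$ to $2s\,\delta_T'Z_\Delta'M^{*}e$ are the standard ones, and the rate bounds you list (together with the maximal/H\'ajek--R\'enyi inequalities for uniformity over $B_{T,K}$) are precisely the tools used in Bai's original proof; so there is no substantive difference in approach, only that you have spelled out what the paper delegates to the reference.
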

\begin{proof}
It follows from Lemma A.5 in \citet{bai:97RES}.
\end{proof}
\begin{lem}
\label{Lemma ZZ conv prob}Under Assumption \ref{Ass Summary LapBai97},
for $T_{b}=T_{b}^{0}+\left\lfloor u/\left\Vert \delta_{T}\right\Vert ^{2}\right\rfloor ,$
we have $\delta'_{T}Z'_{\Delta}Z_{\Delta}\delta{}_{T}=\delta'_{T}\sum_{t=T_{b}+1}^{T_{b}^{0}}z_{t}z'_{t}\delta{}_{T}=\left|u\right|\left(\delta^{0}\right)'\overline{V}\delta^{0}+o_{\mathbb{P}}\left(1\right)$,
where $\overline{V}=V_{1}$ if $u\leq0$ and $\overline{V}=V_{2}$
if $u>0$. 
\end{lem}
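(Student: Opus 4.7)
My plan is to reduce the claim to a short-window weak law of large numbers for the second-moment matrix of $\{z_t\}$ within a single regime.

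\emph{Step 1 (the middle equality).} By the definition of $X_\Delta$ given just before Lemma \ref{Lemma, Area (ia), Bai A.5} and $Z_\Delta = X_\Delta D$, the matrix $Z_\Delta$ has only $|T_b - T_b^0|$ nonzero rows, namely the $z'_t$ for $t$ in the interval $\{T_b+1,\ldots,T_b^0\}$ when $T_b \le T_b^0$ and in $\{T_b^0+1,\ldots,T_b\}$ when $T_b > T_b^0$. Hence $Z'_\Delta Z_\Delta = \sum_{t\in I_T} z_t z'_t$, where $I_T$ denotes this index set. With $T_b - T_b^0 = \lfloor u/\|\delta_T\|^2\rfloor$, we have $|I_T| = m_T \equiv \lfloor |u|/\|\delta_T\|^2\rfloor = \lfloor |u|/(v_T^2\|\delta^0\|^2)\rfloor$, and importantly $I_T$ is contained in a single regime: the first one when $u\le 0$, the second when $u>0$. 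The middle equality in the lemma is then immediate.

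\emph{Step 2 (short-window WLLN).} I would next establish $m_T^{-1}\sum_{t\in I_T}(z_t z'_t - \bar V) = o_{\mathbb{P}}(1)$, with $\bar V = V_1$ for $u\le 0$ and $\bar V = V_2$ for $u>0$. Under Assumption \ref{Assumption Small Shift BP}, $v_T \to 0$ while $T v_T^2 \to \infty$, hence $m_T\to\infty$ and $m_T/T\to 0$. Within each regime $\{z_t z'_t\}$ is second-order stationary with mean $\bar V$ by Assumption \ref{A.9a Bai 97}, and the $L^{2+\nu}$-mixingale condition with summability of $\{\varrho_{2,j}\}$ in Assumption \ref{Assumption A5 Bai97 and A4 BP98}, combined with the $L^{2r}$ moment bound on $z_t$ and Cauchy--Schwarz, transfers to summability of the lag covariances of $z_t z'_t - \bar V$. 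A direct variance bound $\mathbb{E}\|\sum_{t\in I_T}(z_t z'_t - \bar V)\|^2 = O(m_T)$ then delivers the WLLN via Chebyshev.

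\emph{Step 3 (assemble).} Substituting $\delta_T = v_T \delta^0$ and Step 2 into Step 1 yields
\begin{align*}
\delta_T'\Bigl(\sum_{t\in I_T} z_t z'_t\Bigr)\delta_T = v_T^2 m_T (\delta^0)'\bar V \delta^0 + o_{\mathbb{P}}(v_T^2 m_T),
\end{align*}
and since $v_T^2 m_T = |u|/\|\delta^0\|^2 + o(1)$, we obtain the claimed limit (with the paper's normalization of the local parameter $u$ absorbing the $\|\delta^0\|^2$ factor). The main obstacle is Step 2: Assumption \ref{Assumption A.9b Bai 97, LapBai97} only supplies WLLN for $T^{-1}\sum_{t=1}^{\lfloor sT\rfloor} z_t z'_t$ over windows of length proportional to $T$, whereas here the window has length $m_T = o(T)$. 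The direct variance/Chebyshev argument above works because summability of the $\varrho_{2,j}$ gives absolute summability of the within-regime covariance coefficients of $z_t z'_t$; if a uniform-in-$u$ version were needed downstream, one would couple this with a maximal inequality on a grid of $u$-values and exploit monotonicity of the partial sums.
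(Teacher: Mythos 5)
Your overall architecture is the right one and is essentially what the paper's one-line proof (``follows from basic arguments, cf.\ Assumptions \ref{A.9a Bai 97}--\ref{Assumption A.9b Bai 97, LapBai97}'') gestures at: identify $Z'_{\Delta}Z_{\Delta}$ with a within-regime block sum of $z_{t}z'_{t}$ of length $m_{T}\asymp|u|/\left\Vert \delta_{T}\right\Vert ^{2}$, note that $m_{T}\rightarrow\infty$ while $m_{T}/T\rightarrow0$, and invoke a law of large numbers over that shrinking-fraction window. Your Steps 1 and 3 are fine, and you are right to flag that Assumption \ref{Assumption A.9b Bai 97, LapBai97} only supplies convergence over windows of length proportional to $T$, so a separate short-window LLN is needed. (The $\left\Vert \delta^{0}\right\Vert ^{2}$ factor you ``absorb'' is a normalization issue in the paper's statement, not in your argument.)

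The genuine gap is in your justification of Step 2. Assumption \ref{Assumption A5 Bai97 and A4 BP98} is a mixingale condition on $\left\{ z_{t}e_{t}\right\} $, not on $\left\{ z_{t}\right\} $ or $\left\{ z_{t}z'_{t}\right\} $, and it cannot be transferred to summability of the autocovariances of $z_{t}z'_{t}-\bar{V}$ by Cauchy--Schwarz: the mixingale bounds control $\mathbb{E}\left(z_{t}e_{t}|\,\mathscr{F}_{t-j}\right)$, which says nothing about the serial dependence of $z_{t}$ alone. Concretely, if $e_{t}$ is i.i.d.\ and independent of a highly persistent (even long-memory) stationary $\left\{ z_{t}\right\} $, then $z_{t}e_{t}$ is a martingale difference sequence and the assumption holds trivially, while the autocovariances of $z_{t}z'_{t}$ need not be summable and $\mathbb{E}\Vert\sum_{t\in I_{T}}(z_{t}z'_{t}-\bar{V})\Vert^{2}$ can be of larger order than $m_{T}$; the $O\left(m_{T}\right)$ variance bound is therefore unjustified as written. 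To close the argument you need one of the following: (a) an explicit dependence condition on $\left\{ z_{t}\right\} $ itself (mixing or $L^{2}$-NED, which delivers your variance bound); (b) the weaker requirement that the averaged autocovariances of $z_{t}z'_{t}$ vanish (mean-square ergodicity), which still gives $\mathrm{Var}=o\left(m_{T}^{2}\right)$ and hence the WLLN by Chebyshev; or (c) the short-window convergence $m_{T}^{-1}\sum_{t\in I_{T}}z_{t}z'_{t}\overset{\mathbb{P}}{\rightarrow}\bar{V}$ stated as a high-level condition---which is, in effect, what the paper does by leaving the lemma to ``basic arguments.''
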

\begin{proof}
It follows from basic arguments (cf. Assumptions \ref{A.9a Bai 97}-\ref{Assumption A.9b Bai 97, LapBai97}).
\end{proof}
\begin{lem}
\label{Lemma, Area (ib) Q}Under Assumption \ref{Ass Summary LapBai97},
for any $\epsilon>0$ there exists a $C<\infty$ and a positive sequence
$\left\{ \nu_{T}\right\} $, with $\nu_{T}\rightarrow\infty$ as $T\rightarrow\infty$,
such that 
\begin{align*}
\liminf_{T\rightarrow\infty}\,\mathbb{P}\left[\sup_{K\leq\left|u\right|\leq\eta T\left\Vert \delta_{T}\right\Vert ^{2}}Q_{T}\left(\delta\left(\lambda_{b}\right),\,\lambda_{b}\right)-Q_{T}\left(\delta\left(\lambda_{b}^{0}\right),\,\lambda_{b}^{0}\right)<-C\nu_{T}\right] & \geq1-\epsilon,
\end{align*}
for all sufficiently large $K$ and a sufficiently small $\eta>0.$ 
\end{lem}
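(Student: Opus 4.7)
The plan is to split the difference $Q_T(\delta(T_b),T_b)-Q_T(\delta(T_b^0),T_b^0)$ via the decomposition \eqref{eq. A.2.6 LapBai97} into its negative deterministic drift $-|T_b-T_b^0|\overline{g}_d(\delta_T,T_b)$ and its mean-zero stochastic component $g_e(\delta_T,T_b)$, and to argue that the drift dominates the fluctuation uniformly on $B_{T,K}^c$. Throughout I work in the rescaled coordinate $u=(T_b-T_b^0)\|\delta_T\|^2$, so the supremum in the statement is taken over $K\le|u|\le\eta T\|\delta_T\|^2$.

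For the deterministic part, I would first invoke Lemma~\ref{Lemma A1, Bai (1997)} to bound $-g_d(\delta_T,T_b)$ from below by a quadratic form in $\delta_T$ whose middle matrix involves $D'(X_\Delta'X_\Delta)(X_2'X_2)^{-1}(X_0'X_0)D$ (with the obvious analogue when $T_b>T_b^0$). On $B_{T,K}^c$ with $K/\|\delta_T\|^2\ge l_0$, Assumption~\ref{Assumption A3 Bai97} ensures that the minimum eigenvalue of $X_\Delta'X_\Delta/|T_b-T_b^0|$ is bounded away from zero uniformly in $T_b$, while Assumption~\ref{Assumption A4 Bai97} keeps $(X_2'X_2)^{-1}(X_0'X_0)$ bounded. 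Combined with $\delta_T=v_T\delta^0$, this yields $\overline{g}_d(\delta_T,T_b)\ge c\|\delta_T\|^2$ on an event of probability at least $1-\epsilon/2$, uniformly on the region, so that $-|T_b-T_b^0|\overline{g}_d(\delta_T,T_b)\le -cu$.

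For the stochastic part, I would single out the dominant term $2\,\mathrm{sgn}(T_b^0-T_b)\delta_T'\sum_{t\in I_{T_b}}z_t e_t$ in \eqref{eq. A.2.3}--\eqref{eq. A.2.4}, where $I_{T_b}$ denotes the index set between $T_b$ and $T_b^0$; the remaining quadratic-in-$e$ summands are of smaller order by standard projection arguments relying on Assumption~\ref{Assumption A4 Bai97} and the fast convergence of $T^{-1}Z_0'Me$. Since $\{z_t e_t\}$ is an $L^r$-mixingale under Assumption~\ref{Assumption A5 Bai97 and A4 BP98}, a H\'ajek--R\'enyi type maximal inequality (whose application relies on the summability $\sum_j j^{1+\nu}\varrho_{2,j}<\infty$) provides, for any $\epsilon'>0$, a constant $L=L(\epsilon')$ such that $\sup_{u\ge K}|g_e(\delta_T,T_b)|/\sqrt{u}\le L$ on an event of probability at least $1-\epsilon/2$.

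Combining the two bounds, on an event of probability at least $1-\epsilon$,
\begin{align*}
\sup_{K\le|u|\le\eta T\|\delta_T\|^2}\bigl[Q_T(\delta(T_b),T_b)-Q_T(\delta(T_b^0),T_b^0)\bigr]\le\sup_{u\ge K}\{-cu+L\sqrt{u}\}\le-\tfrac{c}{2}K
\end{align*}
for every $K\ge 4L^2/c^2$. A divergent sequence $\{\nu_T\}$ as in the statement is then obtained by allowing $K=K_T\to\infty$ slowly with $T$ (for instance $K_T=\log T$), which remains compatible with the mixingale maximal inequality, and setting $\nu_T=K_T/2$. The principal technical difficulty is the uniform maximal inequality for the mixingale sum $\{z_t e_t\}$ across window lengths $|T_b-T_b^0|$ that range up to $T\eta$; this is precisely what necessitates the strengthened summability condition in Assumption~\ref{Assumption A5 Bai97 and A4 BP98}.
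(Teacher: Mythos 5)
Your overall strategy---the decomposition \eqref{eq. A.2.6 LapBai97} into a negative drift $-|T_b-T_b^0|\overline{g}_d$ and a fluctuation $g_e$, a uniform lower bound on $\overline{g}_d$ via Lemma~\ref{Lemma A1, Bai (1997)} and Assumptions~\ref{Assumption A3 Bai97}--\ref{Assumption A4 Bai97}, and a maximal inequality for the fluctuation---is the same as the paper's (the drift bound is the paper's Lemma~\ref{Lemma A.2, LapBai97}, and the fluctuation bound is handled term by term from \eqref{eq. A.2.3}--\eqref{eq. A.2.4}). But there are two concrete gaps. First, your normalization of the noise by $\sqrt{u}$ is too weak: in unrescaled time the dominant term is $2v_T(\delta^0)'\sum_{t\in I_{T_b}}z_te_t$ with window length $k=|T_b-T_b^0|$ ranging up to $\eta T$, and $\sup_{K/\|\delta_T\|^2\le k\le\eta T}|S_k|/\sqrt{k}$ diverges (at rate $\sqrt{\log\log T}$, by the law of the iterated logarithm); no finite $L=L(\epsilon)$ works uniformly in $T$, and a H\'ajek--R\'enyi bound with weights $c_k=\sqrt{k}$ produces the divergent tail sum $\sum_{k\ge m}k^{-1}$. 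This is exactly why the paper (following Bai, 1997) compares $g_e$ against $|T_b-T_b^0|^{\kappa}$ with $\kappa\in(1/2,1)$: the tail sum $\sum_k k^{-2\kappa}$ then converges. The fix is cosmetic for your final optimization, since $\sup_{u\ge K}\{-cu+Lu^{\kappa}\}\le -cK/2$ still holds for $K$ large, but as written the maximal-inequality step fails.

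Second, and more importantly, your device of letting $K=K_T\to\infty$ does not deliver the lemma as stated. The statement fixes a (large) constant $K$ and requires the supremum over the \emph{entire} region $K\le|u|\le\eta T\|\delta_T\|^2$ to be below $-C\nu_T$ with $\nu_T\to\infty$; replacing $K$ by $K_T\to\infty$ proves a bound over a strictly smaller region and leaves the annulus $K\le|u|<K_T$ controlled only by your constant bound $-cK/2$. With your (correct) drift bound $\overline{g}_d\ge c\|\delta_T\|^2$, the value of the criterion at the inner edge $|u|=K$ is of order $-cK+O_{\mathbb{P}}(\sqrt{K})$, which is tight in $T$, so no argument of your form can produce a $T$-divergent bound on the fixed-$K$ region. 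The paper's route to the divergent rate is different: it bounds $g_e$ by $|T_b-T_b^0|^{\kappa}\overline{g}_d$ and invokes its Lemma~\ref{Lemma A.2, LapBai97}-(ii) (a lower bound on $\inf_{B_{T,K}^c}\overline{g}_d$ by a constant, not by $C\|\delta_T\|^2$), which makes the inner-edge drift $-CK/\|\delta_T\|^2\to-\infty$ and yields $\nu_T=(K/\|\delta_T\|^2)^{1-\kappa}$; the divergence thus comes from the shrinking-shift normalization $\|\delta_T\|^2\to0$, not from growing $K$. If you believe (as your step 2 asserts) that $\overline{g}_d$ is only of order $\|\delta_T\|^2$, you should confront this discrepancy explicitly rather than switch to a moving $K_T$; as it stands your proof establishes a constant bound where a divergent one is claimed.
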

\begin{proof}
Note that on $\left\{ K\leq\left|u\right|\leq\eta T\left\Vert \delta_{T}\right\Vert ^{2}\right\} $
we have $K/\left\Vert \delta_{T}\right\Vert ^{2}\leq\left|T_{b}-T_{b}^{0}\right|\leq\eta T$.
In view of \eqref{eq. A.2.5}, the statement $Q_{T}\left(\delta\left(\lambda_{b}\right),\,\lambda_{b}\right)-Q_{T}\left(\delta\left(\lambda_{b}^{0}\right),\,\lambda_{b}^{0}\right)<-C\nu_{T}$
follows from showing that as $T\rightarrow\infty,$ 
\begin{align*}
\mathbb{P}\left(\sup_{T_{b}\in B_{K,T}^{c}}g_{e}\left(\delta_{T},\,\lambda_{b}\right)\geq\inf_{T_{b}\in B_{K,T}^{c}}\left|T_{b}-T_{b}^{0}\right|^{\kappa}\overline{g}_{d}\left(\delta_{T},\,\lambda_{b}\right)\right)<\epsilon & ,
\end{align*}
 where $\kappa\in\left(1/2,\,1\right).$ Suppose $T_{b}<T_{b}^{0}$.
We show that
\begin{align}
\mathbb{P}\left(\sup_{T\lambda_{b}\in B_{K,T}^{c}}\frac{\left\Vert \delta_{T}\right\Vert }{K}g_{e}\left(\delta_{T},\,\lambda_{b}\right)\geq\frac{1}{\left\Vert \delta_{T}\right\Vert ^{2\kappa-1}}\left(\frac{1}{K}\right)^{1-\kappa}\inf_{T\lambda_{b}\in B_{K,T}^{c}}\overline{g}_{d}\left(\delta_{T},\,\lambda_{b}\right)\right) & <\epsilon.\label{eq. AA1}
\end{align}
 Lemma \ref{Lemma A.2, LapBai97}-(ii) stated below implies that $\inf_{T_{b}\in B_{T,K}^{c}}\overline{g}_{d}\left(\delta_{T},\,\lambda_{b}\right)$
is bounded away from zero as $T\rightarrow\infty$ for large $K$
and small $\eta.$ Next, we show that 
\begin{align}
\sup_{T\lambda_{b}\in B_{K,T}^{c}}K^{-1}\left\Vert \delta_{T}\right\Vert g_{e}\left(\delta_{T},\,\lambda_{b}\right) & =o_{\mathbb{P}}\left(1\right).\label{eq AA2}
\end{align}
 Consider the first term of \eqref{eq. A.2.3},
\begin{align*}
2\delta_{T}'\left(Z'_{0}MZ_{2}\right)\left(Z'_{2}MZ_{2}\right)^{-1}Z_{2}Me & =2\delta_{T}'\left(Z'_{0}MZ_{2}/T\right)\left(Z'_{2}MZ_{2}/T\right)^{-1}Z_{2}Me\\
 & =2C\left\Vert \delta_{T}\right\Vert O_{\mathbb{P}}\left(1\right)O_{\mathbb{P}}\left(1\right)O_{\mathbb{P}}\left(T^{1/2}\right)=CO_{\mathbb{P}}\left(\left\Vert \delta_{T}\right\Vert T^{1/2}\right).
\end{align*}
When multiplied by $\left\Vert \delta_{T}\right\Vert /K,$ this term
is $O_{\mathbb{P}}\left(\left\Vert \delta_{T}\right\Vert ^{2}T^{1/2}/K\right)$
which goes to zero for large $K$.. The second term in \eqref{eq. A.2.3},
when multiplied by $\left\Vert \delta_{T}\right\Vert /K,$ is
\begin{align*}
2K^{-1}\left\Vert \delta_{T}\right\Vert \delta_{T}'\left(Z'_{0}Me\right) & =K^{-1}\left\Vert \delta_{T}\right\Vert O_{\mathbb{P}}\left(\left\Vert \delta_{T}\right\Vert T^{1/2}\right)=K^{-1}O_{\mathbb{P}}\left(\left\Vert \delta_{T}\right\Vert ^{2}T^{1/2}\right),
\end{align*}
 which converges to zero using the same argument as for the first
term. Consider now the first term of \eqref{eq. A.2.4}, $T^{-1/2}e'MZ_{2}\left(Z'_{2}MZ_{2}/T\right)^{-1}T^{-1/2}Z_{2}Me=O_{\mathbb{P}}\left(1\right).$
A similar argument can be used for the second term which is also $O_{\mathbb{P}}\left(1\right)$.
The latter two terms multiplied by $\left\Vert \delta_{T}\right\Vert /K$
is $O_{\mathbb{P}}\left(\left\Vert \delta_{T}\right\Vert /K\right)=o_{\mathbb{P}}\left(1\right).$
This proves \eqref{eq AA2} and thus \eqref{eq. AA1}. To conclude
the proof, note that $\kappa\in\left(1/2,\,1\right)$ implies $\left\Vert \delta_{T}\right\Vert ^{-\left(2\kappa-1\right)}\rightarrow\infty$,
so that we can choose $\nu_{T}=\left(\left\Vert \delta_{T}\right\Vert ^{2}/K\right)^{-\left(1-\kappa\right)}$. 
\end{proof}
\begin{lem}
\label{Lemma A.2, LapBai97}Let $\widetilde{g}_{d}\triangleq\inf_{T\left|\lambda_{b}-\lambda_{b}^{0}\right|>K\left\Vert \delta_{T}\right\Vert ^{-2}}\overline{g}_{d}\left(\delta_{T},\,\lambda_{b}\right).$
Under Assumption \ref{Ass Summary LapBai97}, \\
(i) for any $\epsilon>0$ there exists some $C>0$ such that $\liminf_{T\rightarrow\infty}\mathbb{P}\left(\widetilde{g}_{d}>C\left\Vert \delta_{T}\right\Vert ^{2}\right)\leq1-\epsilon$;\\
(ii) with $B_{T,K}^{c}=\left\{ T_{b}:\,T\eta\geq\left|T_{b}-T_{b}^{0}\right|\geq K/\left\Vert \delta_{T}\right\Vert ^{2}\right\} ,$
for any $\epsilon>0$ there exists a $C>0$ such that $\liminf_{T\rightarrow\infty}\mathbb{P}\left(\inf_{T\lambda_{b}\in B_{T,K}^{c}}\overline{g}_{d}\left(\delta_{T},\,\lambda_{b}\right)>C\right)\leq1-\epsilon.$ 
\end{lem}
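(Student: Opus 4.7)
The strategy is to derive a lower bound of order $\|\delta_T\|^2$ on $\overline{g}_d(\delta_T,T_b)$, uniformly in $T_b$, by combining the algebraic inequalities in Lemma \ref{Lemma A1, Bai (1997)} with the eigenvalue bounds of Assumptions \ref{Assumption A3 Bai97}, \ref{Assumption A4 Bai97} and \ref{A.9a Bai 97}. The pivotal observation is that the right-hand side of Lemma \ref{Lemma A1, Bai (1997)} rewrites as a \emph{parallel-sum} matrix, whose smallest eigenvalue can be controlled by the minimum eigenvalues of the two summands separately.

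First, since $X_\Delta$ and $X_0$ have disjoint time supports when $T_b<T_b^0$, we have $X_2'X_2 = X_0'X_0 + X_\Delta'X_\Delta$, and substituting in \eqref{Equation (36)} gives $-g_d(\delta_T,T_b)\geq \delta_T'D'\bigl[(X_\Delta'X_\Delta)^{-1}+(X_0'X_0)^{-1}\bigr]^{-1}D\,\delta_T$, valid once $|T_b-T_b^0|\geq p+q$ so that both inverses exist; the case $T_b>T_b^0$ is handled identically from \eqref{Eq. 37}. Next, the matrix inequality $(A^{-1}+B^{-1})^{-1}\succeq (ab/(a+b))I$, immediate from inverting $A^{-1}+B^{-1}\preceq (a^{-1}+b^{-1})I$ for $A\succeq aI$ and $B\succeq bI$ symmetric positive definite, reduces the problem to controlling the smallest eigenvalues of $X_\Delta'X_\Delta$ and $X_0'X_0$. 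Assumption \ref{Assumption A3 Bai97} combined with the regime-wise stationarity in Assumption \ref{A.9a Bai 97} gives $\lambda_{\min}(X_\Delta'X_\Delta)\geq \underline{\lambda}\,l$ with $l=|T_b-T_b^0|\geq l_0$, uniformly in $T_b$, with probability approaching one, while Assumption \ref{Assumption A4 Bai97} together with $\lambda_b^0\in(0,1)$ gives $\lambda_{\min}(X_0'X_0)\geq c\,T$. Because $K/\|\delta_T\|^2\to\infty$ under Assumption \ref{Assumption Small Shift BP}, the constraint $l>K/\|\delta_T\|^2$ eventually enforces $l\geq l_0$; and since $\lambda_b\in\varGamma^0$ is compactly contained in $(0,1)$, $l\leq (1-c_0)T$ for some $c_0>0$. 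Assembling these pieces, $l^{-1}\bigl[(X_\Delta'X_\Delta)^{-1}+(X_0'X_0)^{-1}\bigr]^{-1}\succeq \underline{\lambda}\,c\,T/(\underline{\lambda}\,l+c\,T)\,I\succeq c_1 I$ for some $c_1>0$, uniformly on an event of probability at least $1-\epsilon$. Hence $\overline{g}_d(\delta_T,T_b)\geq c_1\,\sigma_{\min}(D)^2\,\|\delta_T\|^2=C\|\delta_T\|^2$, which establishes (i). Part (ii) then follows immediately from the inclusion $B_{T,K}^c\subset\{|T_b-T_b^0|>K/\|\delta_T\|^2\}$, so the infimum over the smaller set $B_{T,K}^c$ is no smaller than $\widetilde{g}_d$ and inherits the same lower bound.

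The main obstacle will be upgrading the pointwise lower eigenvalue bound of Assumption \ref{Assumption A3 Bai97} to one that holds \emph{uniformly} in $T_b$ over an index set of cardinality $O(T)$. A pointwise application of Assumption \ref{Assumption A3 Bai97} is not enough; uniformity requires a maximal-type inequality for the partial-sum process $l^{-1}\sum_{t\in(T_b,T_b^0]}x_tx_t'$ and its smallest-eigenvalue functional, which is available from the $L^r$-mixingale condition in Assumption \ref{Assumption A5 Bai97 and A4 BP98}. Once this uniformity is secured within each regime, the deterministic parallel-sum inequality closes the argument and (i)--(ii) follow at once.
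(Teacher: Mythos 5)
Your reduction of part (i) to the parallel-sum form $D'\bigl[(X_\Delta'X_\Delta)^{-1}+(X_0'X_0)^{-1}\bigr]^{-1}D$ (using $X_2'X_2=X_0'X_0+X_\Delta'X_\Delta$ in \eqref{Equation (36)}, and analogously for \eqref{Eq. 37}) is correct and is a cleaner algebraic route than the paper's: for part (i) the paper simply imports Lemma A.2 of \citet{bai:97RES}, and for part (ii) it instead bounds $\Vert J(T_b)^{-1}\Vert$ by a product of norms with $J(T_b)=\Vert\delta_T\Vert^2(T_b^0-T_b)^{-1}X_\Delta'X_\Delta(X_2'X_2)^{-1}(X_0'X_0)$. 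Your closing worry about uniformity in $T_b$, however, is misplaced, and your proposed fix would not work: Assumption \ref{Assumption A5 Bai97 and A4 BP98} is a mixingale condition on $\{z_te_t\}$, not on $\{x_tx_t'\}$, so it does not yield a maximal inequality for $l^{-1}\sum x_tx_t'$. The paper's (and the correct) resolution is monotonicity of positive semidefinite partial sums: for every $T_b$ with $K/\Vert\delta_T\Vert^2\le T_b^0-T_b\le T\eta$ one has $(T_b^0-T_b)^{-1}X_\Delta'X_\Delta\succeq(T\eta)^{-1}\sum_{t=T_b^0-\lfloor K/\Vert\delta_T\Vert^2\rfloor+1}^{T_b^0}x_tx_t'$, a single random matrix not depending on $T_b$, to which Assumption \ref{Assumption A3 Bai97} applies directly. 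Note also that $X_\Delta'X_\Delta$ is exactly $lH_l^*$ (or $lH_l^{**}$) with $l=|T_b-T_b^0|$, so the uniformity over $l$ you need is already what Assumption \ref{Assumption A3 Bai97} asserts.

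The genuine gap is part (ii). You dispose of it by the inclusion $B_{T,K}^c\subset\{|T_b-T_b^0|>K\Vert\delta_T\Vert^{-2}\}$, so that the infimum over $B_{T,K}^c$ ``inherits the same lower bound'' $C\Vert\delta_T\Vert^2$. But that bound vanishes as $T\rightarrow\infty$, whereas part (ii) asserts a lower bound by a constant $C>0$ not scaled by $\Vert\delta_T\Vert^2$, and it is this non-vanishing form that is used downstream: the proof of Lemma \ref{Lemma, Area (ib) Q} requires $\inf_{T_b\in B_{T,K}^c}\overline{g}_d(\delta_T,T_b)$ to be bounded away from zero so that the right-hand side of \eqref{eq. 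AA1} diverges. Part (ii) is therefore not a corollary of part (i) by set inclusion; the paper proves it by a separate argument on the rescaled eigenvalue $\lambda_{J,T_b}=\lambda_{\min}(D'J(T_b)D)$, which exploits the upper restriction $|T_b-T_b^0|\le T\eta$ defining $B_{T,K}^c$ through the minorization displayed above --- a restriction your argument never uses. As written, your proposal establishes (i) but not (ii).
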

\begin{proof}
Part (i) was proved in Lemma A.2 of \citet{bai:97RES}. As for part
(ii), by Lemma \ref{Lemma A1, Bai (1997)},
\begin{align*}
\overline{g}_{d}\left(\delta^{0},\,\lambda_{b}\right) & \geq\delta_{T}D'\frac{X'_{\Delta}X_{\Delta}}{T_{b}^{0}-T_{b}}\left(X'_{2}X_{2}\right)^{-1}\left(X'_{0}X_{0}\right)D\delta_{T}\geq\lambda_{J,T_{b}},
\end{align*}
 where $\lambda_{J,T_{b}}$ is the minimum eigenvalue of $D'J\left(T_{b}\right)D$,
with $J\left(T_{b}\right)\triangleq\left\Vert \delta_{T}\right\Vert ^{2}\left(T_{b}^{0}-\lambda_{b}\right)^{-1}X'_{\Delta}X_{\Delta}\left(X'_{2}X_{2}\right)^{-1}\left(X'_{0}X_{0}\right).$
It is sufficient to show that, for $T_{b}\in B_{T,K}^{c},$ $\lambda_{J,T_{b}}$
is bounded away from zero with large probability for large $K$ and
small $\eta.$ We have $\left\Vert J\left(T_{b}\right)^{-1}\right\Vert \leq\left\Vert \left[\left\Vert \delta_{T}\right\Vert ^{2}\left(T_{b}^{0}-T_{b}\right)^{-1}X'_{\Delta}X_{\Delta}\right]^{-1}\right\Vert \left\Vert \left(X'_{2}X_{2}\right)\left(X'_{0}X_{0}\right)^{-1}\right\Vert $
and by Assumptions \ref{Assumption A3 Bai97}-\ref{Assumption A4 Bai97}
$\left\Vert \left(X'_{2}X_{2}\right)\left(X'_{0}X_{0}\right)^{-1}\right\Vert \leq\left\Vert X'X\right\Vert \left\Vert \left(X'_{0}X_{0}\right)^{-1}\right\Vert $
is bounded. Next, note that $\left(T_{b}^{0}-T_{b}\right)^{-1}X'_{\Delta}X_{\Delta}=\left(T_{b}^{0}-T_{b}\right)^{-1}\sum_{t=T_{b}+1}^{T_{b}^{0}}x_{t}x'_{t}$
is larger than $\left(T\eta\right)^{-1}\sum_{t=T_{b}^{0}-\left\lfloor K/\left\Vert \delta_{T}\right\Vert ^{2}\right\rfloor }^{T_{b}^{0}}x_{t}x'_{t}$
on $B_{T,K}^{c},$ and for all $K,$ $\left(\left\Vert \delta_{T}\right\Vert ^{2}/K\right)\sum_{t=T_{b}^{0}-\left\lfloor K/\left\Vert \delta_{T}\right\Vert ^{2}\right\rfloor }^{T_{b}^{0}}x_{t}x'_{t}$
is positive definite with large probability as $T\rightarrow\infty$
by Assumption \ref{Assumption A3 Bai97}. Now, $\left(K/T\eta\right)\left(\left\Vert \delta_{T}\right\Vert ^{2}/K\right)\sum_{t=T_{b}^{0}-\left\lfloor K/\left\Vert \delta_{T}\right\Vert ^{2}\right\rfloor }^{T_{b}^{0}}x_{t}x'_{t}=O_{\mathbb{P}}\left(1\right),$
by choosing sufficiently large $K$ and  small $\eta.$ Thus, $\left\Vert \left[\left\Vert \delta_{T}\right\Vert ^{2}\left(T_{b}^{0}-T_{b}\right)^{-1}X'_{\Delta}X_{\Delta}\right]^{-1}\right\Vert $
is bounded with large probability for such large $K$ and small $\eta,$
which in turn implies that $\left\Vert J\left(T_{b}\right){}^{-1}\right\Vert $
is bounded. Since $D$ has full column rank, $\lambda_{J,T_{b}}$
is bounded away from zero for sufficiently large $K$ and small $\eta.$ 
\end{proof}
\begin{lem}
\label{Lemma, Area (ii), Q LapBai97}Under Assumption \ref{Ass Summary LapBai97},
for any $\epsilon>0$ there exists a $C>0$ such that 
\begin{align*}
\liminf_{T\rightarrow\infty}\mathbb{P}\left[\sup_{\left|u\right|\geq T\left\Vert \delta_{T}\right\Vert ^{2}\eta}Q_{T}\left(\delta\left(\lambda_{b}\right),\,\lambda_{b}\right)-Q_{T}\left(\delta\left(\lambda_{b}^{0}\right),\,\lambda_{b}^{0}\right)<-C\nu_{T}\right] & \geq1-\epsilon,
\end{align*}
for every $\eta>0,$ where $\nu_{T}\rightarrow\infty.$ 
\end{lem}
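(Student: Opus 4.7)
The plan is to exploit the decomposition \eqref{eq. A.2.6 LapBai97} of the criterion difference into a deterministic drift $-|T_b - T_b^0| \overline{g}_d(\delta_T, T_b)$ and a stochastic remainder $g_e(\delta_T, T_b)$, and to show that on the ``far'' region $B_{T,\eta}^c = \{T_b : |T_b - T_b^0| > T\eta\}$ the drift dominates at the rate $\nu_T = T\|\delta_T\|^2$. Note that the change of variable $u = \|\delta_T\|^2 (T_b - T_b^0)$ makes $\{|u| \geq T\|\delta_T\|^2 \eta\}$ identical to $B_{T,\eta}^c$, so this is indeed the object we must control.

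First I would observe that under Assumption \ref{Assumption Small Shift BP} we have $T\|\delta_T\|^2 \to \infty$. Hence, for any fixed $K>0$, for $T$ large enough $T\eta > K/\|\delta_T\|^2$, so that $B_{T,\eta}^c \subset \{T_b : |T_b - T_b^0| > K/\|\delta_T\|^2\}$. Applying Lemma \ref{Lemma A.2, LapBai97}-(i) with a sufficiently large $K$, there exists a constant $c>0$ such that
\begin{equation*}
\liminf_{T\to\infty} \mathbb{P}\!\left(\inf_{T_b \in B_{T,\eta}^c} \overline{g}_d(\delta_T, T_b) > c\|\delta_T\|^2\right) \geq 1 - \epsilon/2 .
\end{equation*}
On that event we obtain uniformly over $B_{T,\eta}^c$ the drift bound $-|T_b - T_b^0| \overline{g}_d(\delta_T, T_b) \leq -T\eta \cdot c\|\delta_T\|^2$.

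Next I would bound the stochastic component $g_e(\delta_T, T_b)$ uniformly in $T_b$, reusing the estimates from the proof of Lemma \ref{Lemma, Area (ib) Q}. The normalized Gram matrices $T^{-1}Z_2'MZ_2$, $T^{-1}Z_0'MZ_0$, $T^{-1}Z_0'MZ_2$ are $O_{\mathbb{P}}(1)$ uniformly in $T_b$ by Assumptions \ref{Assumption A3 Bai97}--\ref{Assumption A4 Bai97}, while the mixingale condition in Assumption \ref{Assumption A5 Bai97 and A4 BP98} together with a Hájek--Rényi-type maximal inequality for partial sums of $\{z_t e_t\}$ yields $\sup_{T_b} \|Z_2'Me\| = O_{\mathbb{P}}(T^{1/2})$ and $\sup_{T_b}\|Z_0'Me\| = O_{\mathbb{P}}(T^{1/2})$. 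The cross terms in \eqref{eq. A.2.3} are therefore $O_{\mathbb{P}}(\|\delta_T\| T^{1/2})$ uniformly in $T_b$, and the quadratic-in-$e$ terms in \eqref{eq. A.2.4} are $O_{\mathbb{P}}(1)$ uniformly. Hence $\sup_{T_b \in B_{T,\eta}^c} |g_e(\delta_T, T_b)| = O_{\mathbb{P}}(\|\delta_T\| T^{1/2})$.

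Setting $\nu_T = T\|\delta_T\|^2$, which diverges by Assumption \ref{Assumption Small Shift BP}, the stochastic contribution has order $\|\delta_T\| T^{1/2} = \nu_T / \sqrt{T\|\delta_T\|^2} = o(\nu_T)$, while the drift is at most $-c\eta\,\nu_T$. Taking $C = c\eta/2$, on the intersection of the two high-probability events we get for all $T$ sufficiently large
\begin{equation*}
\sup_{T_b \in B_{T,\eta}^c}\!\big[Q_T(\delta(T_b),T_b) - Q_T(\delta(T_b^0),T_b^0)\big] \leq -c\eta\,\nu_T + o_{\mathbb{P}}(\nu_T) \leq -C\nu_T,
\end{equation*}
with probability at least $1-\epsilon$, as claimed. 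The main technical obstacle is obtaining the uniform-in-$T_b$ bound on $\sup_{T_b}\|Z_2'Me\|$: this is not a pointwise statement but a maximal inequality that must be derived from the $L^{2+\nu}$-mixingale assumption in Assumption \ref{Assumption A5 Bai97 and A4 BP98}, together with uniform invertibility of $T^{-1}Z_2'MZ_2$ for $T_b$ bounded away from $0$ and $T$ (which in turn follows from Assumptions \ref{Assumption A3 Bai97}--\ref{Assumption A4 Bai97} and the trimming implicit in $\varGamma^0$).
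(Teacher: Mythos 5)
Your proof is correct and follows essentially the same route as the paper's: the decomposition \eqref{eq. A.2.6 LapBai97} into drift and noise, a uniform lower bound $\inf_{T_b\in B_{T,\eta}^{c}}\overline{g}_{d}\left(\delta_{T},\,T_{b}\right)\geq c\left\Vert \delta_{T}\right\Vert ^{2}$ via Lemma \ref{Lemma A.2, LapBai97}-(i), and a uniform maximal-inequality bound showing $\sup_{T_b}\left|g_{e}\left(\delta_{T},\,T_{b}\right)\right|=o_{\mathbb{P}}\left(T\left\Vert \delta_{T}\right\Vert ^{2}\right)$ (the paper's Lemma \ref{Lemma B.11, Q LapBai97} obtains $O_{\mathbb{P}}\left(\left\Vert \delta_{T}\right\Vert T^{1/2}\log T\right)$ via the law of the iterated logarithm, and the extra $\log T$ factor, as well as its $O_{\mathbb{P}}\left(\left(\log T\right)^{2}\right)$ versus your $O_{\mathbb{P}}\left(1\right)$ for the quadratic-in-$e$ terms, is immaterial for the conclusion). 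The only cosmetic difference is that you take $\nu_{T}=T\left\Vert \delta_{T}\right\Vert ^{2}$ while the paper settles for the smaller $\nu_{T}=\left(T\left\Vert \delta_{T}\right\Vert ^{2}\right)^{1-\varpi}$; both are valid since the lemma only requires some divergent $\nu_{T}$.
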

\begin{proof}
Fix any $\eta>0.$ Note that on $\left\{ \left|u\right|\geq T\left\Vert \delta_{T}\right\Vert ^{2}\eta\right\} $
we have $\left|T_{b}-T_{b}^{0}\right|\geq T\eta$.  We proceed
in a similar manner to Lemma \ref{Lemma, Area (ib) Q}. Let $B_{T,\eta}^{c}\triangleq\left\{ T_{b}:\,\left|T_{b}-T_{b}^{0}\right|\geq T\eta\right\} $
and recall \eqref{eq. A.2.5}. First,  as in Lemma \ref{Lemma A.2, LapBai97}-(i),
we have $\inf_{T\lambda_{b}\in B_{T,\eta}^{c}}\overline{g}_{d}\left(\delta_{T},\,\lambda_{b}\right)\geq C\left\Vert \delta_{T}\right\Vert ^{2}$
with large probability for some $C>0.$ Noting that $T\eta\inf_{T\lambda_{b}\in B_{T,\eta}^{c}}\overline{g}_{d}\left(\delta_{T},\,\lambda_{b}\right)$
diverges at rate $\tau_{T}=T\left\Vert \delta_{T}\right\Vert ^{2},$
the claim follows if we can show that $g_{e}\left(\delta_{T},\,\lambda_{b}\right)=O_{\mathbb{P}}\left(\tau_{T}^{\varpi}\right)$,
with $0\leq\varpi<1$ uniformly on $B_{T,\eta}^{c}.$ This is shown
in Lemma \ref{Lemma B.11, Q LapBai97} below, which suggests setting
$\varpi\in\left(1/2,\,1\right)$. Then, choose $\nu_{T}=\left(T\left\Vert \delta_{T}\right\Vert ^{2}\right)^{1-\varpi}$.
\end{proof}
\begin{lem}
\label{Lemma B.11, Q LapBai97}Under Assumption \ref{Ass Summary LapBai97},
uniformly on $B_{T,\eta}^{c},$ $\left|g_{e}\left(\delta_{T},\,\lambda_{b}\right)\right|=O_{\mathbb{P}}\left(\left\Vert \delta_{T}\right\Vert T^{1/2}\log T\right).$
\end{lem}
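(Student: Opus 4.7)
The plan is to decompose $g_e(\delta_T, T_b)$ into its four summands in \eqref{eq. A.2.3}-\eqref{eq. A.2.4} and bound each uniformly over $T_b \in B_{T,\eta}^c$. On this set, $|T_b - T_b^0|$ exceeds $T\eta$, so both sub-samples $\{1,\ldots,T_b\}$ and $\{T_b+1,\ldots,T\}$ contain a nonvanishing fraction of observations; hence Assumptions \ref{Assumption A3 Bai97}-\ref{Assumption A4 Bai97} give the uniform estimates $\sup_{T_b\in B_{T,\eta}^c}\|(T^{-1}Z_2'MZ_2)^{-1}\| = O_\mathbb{P}(1)$, $\sup_{T_b\in B_{T,\eta}^c}\|T^{-1}Z_0'MZ_2\| = O_\mathbb{P}(1)$, and their analogues with $Z_0$ in place of $Z_2$, which I would dispatch first.

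The key ingredient is the uniform maximal bound $\sup_{T_b\in B_{T,\eta}^c}\|Z_2'Me\| = O_\mathbb{P}(T^{1/2}\log T)$. Writing $Z_2'Me = D'\sum_{t=T_b+1}^T x_t e_t - D'(T^{-1}X_2'X)(T^{-1}X'X)^{-1}X'e$, the second term is $O_\mathbb{P}(T^{1/2})$ uniformly in $T_b$ by Assumption \ref{Assumption A4 Bai97}, while the first reduces to bounding $\max_{1\leq k\leq T}\bigl\|\sum_{t=k}^T z_t e_t\bigr\|$. Under the $L^{2+\nu}$-mixingale structure of Assumption \ref{Assumption A5 Bai97 and A4 BP98}, a H\'ajek--R\'enyi-type maximal inequality (or a dyadic-peeling argument combined with a Burkholder-type moment bound for mixingales) delivers the $O_\mathbb{P}(T^{1/2}\log T)$ rate.

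With these estimates in hand, I would bound the four pieces separately. Term one, $|2\delta_T'(Z_0'MZ_2)(Z_2'MZ_2)^{-1}Z_2'Me|$, is $O_\mathbb{P}(\|\delta_T\|\cdot 1\cdot 1\cdot T^{1/2}\log T)$ uniformly, matching the target rate. Term two, $|2\delta_T'Z_0'Me|$, does not depend on $T_b$ and is $O_\mathbb{P}(\|\delta_T\|T^{1/2})$ by Assumption \ref{Assumption A.9b Bai 97, LapBai97}. Terms three and four, of the form $|e'MZ_i(Z_i'MZ_i)^{-1}Z_i'Me|$ for $i=0,2$, are each bounded by $\|(T^{-1}Z_i'MZ_i)^{-1}\|\cdot T^{-1}\|Z_i'Me\|^2 = O_\mathbb{P}(\log^2 T)$ uniformly. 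Since Assumption \ref{Assumption Small Shift BP} yields $\|\delta_T\|T^{1/2-\vartheta}\to\infty$ for some $\vartheta\in(0,1/4)$, the product $\|\delta_T\|T^{1/2}\log T$ dominates both $\|\delta_T\|T^{1/2}$ and $\log^2 T$, which gives the claim.

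The principal obstacle is the uniform partial-sum estimate underlying term one: a pointwise CLT or Doob's inequality within a single regime gives only $O_\mathbb{P}(T^{1/2})$, whereas taking a supremum over an index set of cardinality of order $T$ requires a genuine maximal inequality for the mixingale $\{z_t e_t\}$. The logarithmic factor in the final bound is the precise price paid for this uniformity, and verifying that a H\'ajek--R\'enyi or chaining argument goes through under the relatively weak mixingale moment conditions of Assumption \ref{Assumption A5 Bai97 and A4 BP98} is the technically delicate step.
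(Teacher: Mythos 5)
Your four-term decomposition of $g_{e}$ and the termwise strategy mirror the paper's proof, and your bound for the second term, $2\delta_{T}'Z_{0}'Me=O_{\mathbb{P}}\left(\left\Vert \delta_{T}\right\Vert T^{1/2}\right)$ independently of $T_{b}$, is exactly right. The gap is in your opening premise. The set $B_{T,\eta}^{c}=\left\{ T_{b}:\,\left|T_{b}-T_{b}^{0}\right|>T\eta\right\} $ only keeps $T_{b}$ away from $T_{b}^{0}$; it does not keep $T_{b}$ away from the sample endpoints, so when $T_{b}>T_{b}^{0}+T\eta$ the sub-sample $\left\{ T_{b}+1,\ldots,T\right\} $ can contain as few as $q$ observations. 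Hence $\sup_{T_{b}\in B_{T,\eta}^{c}}\Vert(T^{-1}Z_{2}'MZ_{2})^{-1}\Vert$ is not $O_{\mathbb{P}}(1)$ (it is of order $T/(T-T_{b})$ near the right edge), and the bounds you build from it fail there: taking separate suprema of $\Vert(T^{-1}Z_{2}'MZ_{2})^{-1}\Vert$ and of $T^{-1}\Vert Z_{2}'Me\Vert^{2}$ gives $O_{\mathbb{P}}(T)\cdot O_{\mathbb{P}}(\log^{2}T)$ for terms three and one, not $O_{\mathbb{P}}(\log^{2}T)$. Your argument as written would only go through if the candidate dates were trimmed to $\lfloor T\epsilon\rfloor\leq T_{b}\leq\lfloor T(1-\epsilon)\rfloor$, which is neither how $B_{T,\eta}^{c}$ is defined nor what is proved: the paper's own proof enlarges the supremum to $q\leq T_{b}\leq T-q$.

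The paper avoids this by never separating the inverse from the score. It works with the self-normalized quantity $(Z_{2}'MZ_{2})^{-1/2}Z_{2}'Me$, which is $O_{\mathbb{P}}(\log T)$ uniformly over $q\leq T_{b}\leq T-q$ by the law of the iterated logarithm, and pairs it with the projection inequality $(Z_{0}'MZ_{2})(Z_{2}'MZ_{2})^{-1}(Z_{2}'MZ_{0})\leq Z_{0}'MZ_{0}=O_{\mathbb{P}}(T)$, so that $\sup_{T_{b}}\Vert T^{-1/2}(Z_{0}'MZ_{2})(Z_{2}'MZ_{2})^{-1/2}\Vert=O_{\mathbb{P}}(1)$; terms three and four are then $\Vert(Z_{i}'MZ_{i})^{-1/2}Z_{i}'Me\Vert^{2}=O_{\mathbb{P}}(\log^{2}T)$ directly. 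Your H\'{a}jek--R\'{e}nyi bound on $\max_{k}\Vert\sum_{t=k}^{T}z_{t}e_{t}\Vert$ is a legitimate ingredient (it in fact yields $O_{\mathbb{P}}(T^{1/2})$ without the logarithm), but it addresses the wrong obstacle: the difficulty is not the uniformity of the score but the near-singularity of $Z_{2}'MZ_{2}$ at the edges, which only the self-normalized formulation (or an explicit trimming assumption) handles.
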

\begin{proof}
We show that $T^{-1}\left|g_{e}\left(\delta^{0},\,\lambda_{b}\right)\right|=O_{\mathbb{P}}\left(\left\Vert \delta_{T}\right\Vert T^{-1/2}\log T\right)$
uniformly on $B_{T,\eta}^{c}.$ Note that 
\begin{align*}
\sup_{T\lambda_{b}\in B_{T,\eta}^{c}}\left|g_{e}\left(\delta_{T},\,\lambda_{b}\right)\right| & \leq\sup_{q\leq T\lambda_{b}\leq T-q}\left|g_{e}\left(\delta_{T},\,\lambda_{b}\right)\right|,
\end{align*}
 and recall that $q=\mathrm{dim}\left(z_{t}\right)$ is needed for
identification. Observe that 
\begin{align}
\sup_{q\leq T_{b}\leq T-q}\left\Vert \left(Z'_{2}MZ_{2}\right)^{-1/2}Z'_{2}Me\right\Vert  & =O_{\mathbb{P}}\left(\log T\right),\label{eq 15}
\end{align}
 by the law of iterated logarithms {[}cf. \citet{billingsley:95},
Ch. 1, Theorem 9.5{]}. Next, 
\begin{align}
\sup_{q\leq T_{b}\leq T-q}T^{-1/2}\left(Z'_{0}MZ_{2}\right)\left(Z'_{2}MZ_{2}\right)^{-1/2} & =O_{\mathbb{P}}\left(1\right),\label{eq 15b}
\end{align}
 which can be proved using the inequality $\left(Z'_{0}MZ_{2}\right)\left(Z'_{2}MZ_{2}\right)\left(Z'_{0}MZ_{2}\right)\leq Z'_{0}MZ_{0}=O_{\mathbb{P}}\left(T\right)$
(valid for all $T_{b}$). Thus, by \eqref{eq 15} and \eqref{eq 15b},
the first term on the right-hand side of \eqref{eq. A.2.3} multiplied
by $T^{-1}$ is such that 
\begin{align}
\sup_{q\leq T_{b}\leq T-q}2\delta'_{T}T^{-1}\left(Z'_{0}MZ_{2}\right)\left(Z'_{2}MZ_{2}\right)^{-1}Z'_{2}Me & =O_{\mathbb{P}}\left(\left\Vert \delta_{T}\right\Vert T^{-1/2}\log T\right).\label{eq A.2.3- 1}
\end{align}
 The second term on the right-hand side of \eqref{eq. A.2.3} is $2\delta_{T}'Z'_{0}Me=O_{\mathbb{P}}\left(\left\Vert \delta_{T}\right\Vert T^{1/2}\right).$
Using \eqref{eq 15}, and dividing by $T$, the first term of \eqref{eq. A.2.4}
is $O_{\mathbb{P}}\left(\left(\log T\right)^{2}/T\right)$ while the
last term is $O_{\mathbb{P}}\left(T^{-1}\right).$ When divided by
$T$, they are of order $O_{\mathbb{P}}\left(\left(\log T\right)^{2}/T\right)$
and $O_{\mathbb{P}}\left(T^{-1}\right),$ respectively. Therefore,
$\left|g_{e}\left(\delta^{0},\,\lambda_{b}\right)\right|=O_{\mathbb{P}}\left(\left\Vert \delta_{T}\right\Vert T^{1/2}\log T\right),$
uniformly on $B_{T,\eta}^{c}.$ 
\end{proof}

\subsection{\label{subsection Proofs-of-Section 3}Proofs of Results in Section
\ref{Section Asymptotic Results LapBai97}}

We denote by $\boldsymbol{P}$ the class of polynomial functions $p:\,\mathbb{R}\rightarrow\mathbb{R}$.
Let $\mathbf{U}_{T}\triangleq\left\{ u\in\mathbb{R}:\,\lambda_{b}^{0}+u/\psi_{T}\in\varGamma^{0}\right\} $,
$\Gamma_{T,\psi}\triangleq\left\{ u\in\mathbb{R}:\,\left|u\right|\leq\psi_{T}\right\} ,$
$\Gamma_{T,\psi}^{c}\triangleq\mathbb{R}-\Gamma_{T,\psi},$ and $\widetilde{\mathbf{U}}_{T}^{c}\triangleq\mathbf{U}_{T}-\Gamma_{T,\psi}$.
 For $u\in\mathbb{R}$, let $R_{T,v}\left(u\right)\triangleq Q_{T,v}\left(u\right)-\varLambda^{0}\left(u\right)$
and $\overline{G}_{T,v}\left(u\right)\triangleq\sup_{\widetilde{v}\in\mathbf{V}}\widetilde{G}_{T,v}\left(u,\,\widetilde{v}\right)$.
The generic constant $0<C<\infty$ used below may change from line
to line. Finally, let $\widetilde{\gamma}_{T}\triangleq\gamma_{T}/T\left\Vert \delta_{T}\right\Vert ^{2}.$ 

\subsubsection{Proof of Proposition \ref{Proposition: Consistency and Rate of Convergence}}

We begin with the proof for the case of a fixed shift.
\begin{lem}
\label{Lemma Consistency Fixed delta}Under Assumptions \ref{Assumption A Bai97}-\ref{Assumption A4 Bai97},
\ref{Assumption The-loss-function LapBai97}-\ref{Assumption Small Shift BP}
(except that $\delta_{T}=\delta^{0}$) and \ref{Assumption Gaussian Process for Lap LapBai97}-(i),
$\widehat{\lambda}_{b}^{\mathrm{GL}}=\lambda_{b}^{0}+o_{\mathbb{P}}\left(1\right)$.
\end{lem}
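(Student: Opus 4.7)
The plan is to first establish that the Quasi-posterior $p_T(\lambda_b)$ concentrates around $\lambda_b^0$ in probability, and then translate this concentration into convergence of $\widehat{\lambda}_b^{\mathrm{GL}}$ as the minimizer of $\mathcal{R}_{l,T}(s)$.

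First I would show that, for every $\epsilon > 0$,
\begin{align*}
\int_{|\lambda_b - \lambda_b^0| > \epsilon} p_T(\lambda_b)\,d\lambda_b \xrightarrow{\mathbb{P}} 0.
\end{align*}
After subtracting $Q_T(\delta(\lambda_b^0), \lambda_b^0)$ from the exponent (which cancels between the numerator and denominator of the ratio defining $p_T$), the numerator is controlled by a uniform upper bound $\overline{Q}_T(\lambda_b) \leq -C T$ w.p.a.~$1$ that follows under fixed shift from arguments parallel to Lemma \ref{Lemma, Area (ii), Q LapBai97}: Lemma \ref{Lemma A.2, LapBai97}(ii) with $\|\delta^0\|$ bounded away from zero keeps $\inf_{|\lambda_b-\lambda_b^0|>\epsilon} \overline{g}_d$ uniformly positive, while Lemma \ref{Lemma B.11, Q LapBai97} gives $|g_e| = O_{\mathbb{P}}(T^{1/2}\log T)$; the deterministic piece then dominates at rate $T$. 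Hence the numerator is bounded by $K\exp(-C\widetilde{\gamma}_T T)$ for some $K<\infty$, where $\widetilde{\gamma}_T = \gamma_T/(T\|\delta^0\|^2)$.

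For the denominator, I would restrict the integration to $\{\lambda_b: |T\lambda_b - T_b^0| \leq K_0\}$ for a fixed large $K_0$. On this set Lemma \ref{Lemma, Area (ia), Bai A.5} (which carries over to fixed shift without modification) yields $\overline{Q}_T(\lambda_b) = O_{\mathbb{P}}(1)$, so the integrand is bounded below by a positive constant with high probability. Combined with $\pi(\lambda_b^0) > 0$ and the continuity of $\pi$ at $\lambda_b^0$ (Assumption \ref{Assumption Prior LapBai97}), this gives a lower bound of order $1/T$. Taking the ratio produces $O(T\exp(-C\widetilde{\gamma}_T T)) = o(1)$, provided $\widetilde{\gamma}_T$ is eventually bounded away from zero, which holds for instance under Condition \ref{Condition 1 LapBai97}. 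This establishes concentration.

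Next, I would deduce consistency of the argmin from the concentration. For the squared or absolute loss, $\widehat{\lambda}_b^{\mathrm{GL}}$ is the mean or median of $p_T$, and the concentration passes directly to these functionals since $\varGamma^0 \subset (0,1)$ is bounded. For a general loss $l \in \boldsymbol{L}$, I would combine the defining inequality $\mathcal{R}_{l,T}(\widehat{\lambda}_b^{\mathrm{GL}}) \leq \mathcal{R}_{l,T}(\lambda_b^0)$ with the convexity and non-negativity of $l$, together with the polynomial growth bound $l(r) \leq 1 + |r|^m$ (Assumption \ref{Assumption The-loss-function LapBai97}), and argue by contradiction along subsequences: if $\widehat{\lambda}_b^{\mathrm{GL}}$ stayed bounded away from $\lambda_b^0$ with positive probability, then concentration of $p_T$ around $\lambda_b^0$ would force $\mathcal{R}_{l,T}(\widehat{\lambda}_b^{\mathrm{GL}}) > \mathcal{R}_{l,T}(\lambda_b^0)$, contradicting the minimization.

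The main obstacle will be handling the tail contribution of the loss for general $l$ with $a_T \to \infty$. Because $l_T(s - \lambda_b) = l(a_T(s - \lambda_b))$ is not uniformly bounded on $\varGamma^0$, translating concentration of $p_T$ into convergence of the argmin requires a uniform-integrability-style control, which I would obtain by splitting the risk integral at the threshold $|\lambda_b - \lambda_b^0| \leq \epsilon$, using the polynomial growth of $l$ together with the compactness of $\varGamma^0$ to bound the outer part via the exponentially small tail-mass from Step~1, and then exploiting convexity of $l$ to ensure that the inner part is minimized arbitrarily close to $\lambda_b^0$.
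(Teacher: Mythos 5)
Your proposal is correct and follows essentially the same route as the paper's proof: both rest on the deterministic drift $-|T_b-T_b^0|\overline{g}_d$ dominating the stochastic part $g_e=O_{\mathbb{P}}(T^{1/2}\log T)$ so that the criterion difference diverges to $-\infty$ at rate $T$ outside any neighborhood of $\lambda_b^0$, which makes the Quasi-posterior tail mass exponentially negligible, and both then control the argmin via the growth and separation properties of $l\in\boldsymbol{L}$ (your split of the risk integral at $|\lambda_b-\lambda_b^0|\leq\epsilon$ mirrors the paper's comparison of $\mathcal{S}_T(0)$ with $\inf_{|s|>TB}\mathcal{S}_T(s)$ using $\overline{l}_{1,T}<\overline{l}_{2,T}$). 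Your explicit two-step packaging (posterior concentration, then argmin convergence) and your denominator lower bound on an $O(1/T)$-neighborhood are just a reorganization of the same ingredients, and your caveat about $\widetilde{\gamma}_T$ is moot since the paper's consistency argument works with the unnormalized criterion.
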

\begin{proof}
Let $\overline{S}_{T}\left(\delta\left(\lambda_{b}\right),\,\lambda_{b}\right)\triangleq Q_{T}\left(\delta\left(\lambda_{b}\right),\,\lambda_{b}\right)-Q_{T}\left(\delta\left(\lambda_{b}^{0}\right),\,\lambda_{b}^{0}\right)$.
From \eqref{eq. A.2.6 LapBai97}, 
\[
\overline{S}_{T}\left(\widehat{\delta}\left(\lambda_{b}\right),\,\lambda_{b}\right)=-\left|T_{b}-T_{b}^{0}\right|\overline{g}_{d}\left(\delta^{0},\,T_{b}\right)+g_{e}\left(\delta^{0},\,T_{b}\right),
\]
where $g_{e}\left(\delta^{0},\,T_{b}\right)$ and $\overline{g}_{d}\left(\delta^{0},\,T_{b}\right)$
are defined in \eqref{eq. A.2.3}-\eqref{eq. A.2.5}. By Lemma A.24
in \citet{bai:97RES}, $\liminf_{T\rightarrow\infty}\overline{g}_{d}$
$\left(\delta^{0},\,T_{b}\right)>0$ and $T^{-1}\sup_{T_{b}}\left|g_{e}\left(\delta^{0},\,T_{b}\right)\right|=O_{\mathbb{P}}\left(T^{-1/2}\log T\right)$.
Thus, for any $B>0$ if $\left|\widehat{\lambda}_{b}^{\mathrm{GL}}-\lambda_{b}^{0}\right|>B$
we have that,
\begin{align}
-\overline{S}_{T}\left(\widehat{\delta}\left(\lambda_{b}\right),\,\lambda_{b}\right)\rightarrow\infty & \,\mathrm{at\,rate}\,TB.\label{eq" S_bar goe to - inf}
\end{align}
Let $p_{T}\left(u\right)\triangleq p_{1,T}\left(u\right)/\overline{p}_{T}$
with $p_{1,T}\left(u\right)=\exp\left(Q_{T}\left(\delta\left(u\right),\,u\right)\right)$
and $\overline{p}_{T}\triangleq\int_{\mathbf{U}_{T}}p_{1,T}\left(w\right)dw$.
By definition, $\widehat{\lambda}_{b}^{\mathrm{GL}}$ is the minimum
of the function $\int_{\varGamma^{0}}l\left(s-u\right)p_{1,T}\left(u\right)\pi\left(u\right)du$
with $s\in\varGamma^{0}$. Using a change in variables,
\begin{align*}
\int_{\varGamma^{0}} & l\left(s-u\right)p_{1,T}\left(u\right)\pi\left(u\right)du\\
 & =T^{-1}\overline{p}_{T}\int_{\mathbf{U}_{T}}l\left(T\left(s-\lambda_{b}^{0}\right)-u\right)p_{T}\left(\lambda_{b}^{0}+T^{-1}u\right)\pi\left(\lambda_{b}^{0}+T^{-1}u\right)du,
\end{align*}
 where $\mathbf{U}_{T}\triangleq\left\{ u\in\mathbb{R}:\,\lambda_{b}^{0}+T^{-1}u\in\varGamma^{0}\right\} $.
Thus, $\lambda_{\delta,T}\triangleq T\left(\widehat{\lambda}_{b}^{\mathrm{GL}}-\lambda_{b}^{0}\right)$
is the\textbf{ }minimum of the function, 
\begin{align*}
\mathcal{S}_{T}\left(s\right) & \triangleq\int_{\mathbf{U}_{T}}l\left(s-u\right)\frac{p_{T}\left(\lambda_{b}^{0}+T^{-1}u\right)\pi\left(\lambda_{b}^{0}+T^{-1}u\right)}{\int_{\mathbf{U}_{T}}p_{T}\left(\lambda_{b}^{0}+T^{-1}w\right)\pi\left(\lambda_{b}^{0}+T^{-1}w\right)dw}du,
\end{align*}
where the optimization is over $\mathbf{U}_{T}$.  We shall show
that for any $B>0,$ 
\begin{align}
\mathbb{P}\left[\left|\widehat{\lambda}_{b}^{\mathrm{GL}}-\lambda_{b}^{0}\right|>B\right] & \leq\mathbb{P}\left[\inf_{\left|s\right|>TB}\mathcal{S}_{T}\left(s\right)\leq\mathcal{S}_{T}\left(0\right)\right]\rightarrow0.\label{eq. AA5 LapBai97-1-1}
\end{align}
 By assumption the prior is bounded and so we can proceed the proof
for the case $\pi\left(u\right)=1$ for all $u$. By the properties
of the family $\boldsymbol{L}$ of loss functions, we can find $\overline{u}_{1},\,\overline{u}_{2}\in\mathbb{R},$
with $0<\overline{u}_{1}<\overline{u}_{2}$ such that as $T$ increases,
\begin{align*}
\overline{l}_{1,T}\triangleq\sup\left\{ l\left(u\right):\,u\in\Gamma_{1,T}\right\}  & <\overline{l}_{2,T}\triangleq\inf\left\{ l\left(u\right):\,u\in\Gamma_{2,T}\right\} ,
\end{align*}
 where $\Gamma_{1,T}\triangleq\mathbf{U}_{T}\cap\left(\left|u\right|\leq\overline{u}_{1}\right)$
and $\Gamma_{2,T}\triangleq\mathbf{U}_{T}\cap\left(\left|u\right|>\overline{u}_{2}\right)$.
With this notation, 
\begin{align*}
\mathcal{S}_{T}\left(0\right) & \leq\overline{l}_{1,T}\int_{\Gamma_{1,T}}p_{T}\left(u\right)du+\int_{\mathbf{U}_{T}\cap\left(\left|u\right|>\overline{u}_{1}\right)}l\left(u\right)p_{T}\left(u\right)du.
\end{align*}
 If $l\in\boldsymbol{L}$ then for a sufficiently large $T$ the following
relationship holds: $l\left(u\right)-\inf_{\left|v\right|>TB/2}l\left(v\right)\leq0$,
$\left|u\right|\leq\left(TB/2\right)^{\vartheta}$ for some $\vartheta>0$.
It also follows that for large $T$ we have $TB>2\overline{u}_{2}$
and $\left(TB/2\right)^{\vartheta}>\overline{u}_{2}$. Let $\Gamma_{T,B}\triangleq\left\{ u:\,\left(\left|u\right|>TB/2\right)\cap\mathbf{U}_{T}\right\} $.
Then, whenever $\left|s\right|>TB$ and $\left|u\right|\leq TB/2$,
we have, 
\begin{align}
\left|u-s\right|>TB/2>\overline{u}_{2} & \qquad\textrm{and}\qquad\inf_{u\in\Gamma_{T,B}}l\left(u\right)\geq\overline{l}_{2,T}.\label{eq. (5.13) I=000026H LapBai97-1-1}
\end{align}
 With this notation,
\begin{align*}
\inf_{\left|s\right|>TB}\mathcal{S}_{T}\left(s\right) & \geq\inf_{u\in\Gamma_{T,B}}l_{T}\left(u\right)\int_{\left(\left|w\right|\leq TB/2\right)\cap\mathbf{U}_{T}}p_{T}\left(w\right)dw\\
 & \geq\overline{l}_{2,T}\int_{\left(\left|w\right|\leq TB/2\right)\cap\mathbf{U}_{T}}p_{T}\left(w\right)dw,
\end{align*}
 from which it follows that 
\begin{align*}
\mathcal{S}_{T}\left(0\right)-\inf_{\left|s\right|>TB}\mathcal{S}_{T}\left(s\right) & \leq-\varpi\int_{\Gamma_{1,T}}p_{T}\left(u\right)du\\
 & \quad+\int_{\mathbf{U}_{T}\cap\left(\left(TB/2\right)^{\vartheta}\geq\left|u\right|\geq\overline{u}_{1}\right)}\left(l\left(u\right)-\inf_{\left|s\right|>TB/2}l_{T}\left(s\right)\right)p_{T}\left(u\right)du\\
 & \quad+\int_{\mathbf{U}_{T}\cap\left(\left|u\right|>\left(TB/2\right)^{\vartheta}\right)}l\left(u\right)p_{T}\left(u\right)du,
\end{align*}
where $\varpi\triangleq\overline{l}_{2,T}-\overline{l}_{1,T}$. 
The last inequality can be manipulated further using \eqref{eq. (5.13) I=000026H LapBai97-1-1},
\begin{align}
\mathcal{S}_{T}\left(0\right)-\inf_{\left|s\right|>TB}\mathcal{S}_{T}\left(s\right) & \leq-\varpi\int_{\Gamma_{1,T}}p_{T}\left(u\right)du\label{eq. (5.14) I=000026H LapBai97-1-1}\\
 & \quad+\int_{\mathbf{U}_{T}\cap\left(\left|u\right|>\left(TB/2\right)^{\vartheta}\right)}l_{T}\left(u\right)p_{T}\left(u\right)du.\nonumber 
\end{align}
Since $l\in\boldsymbol{L}$, we have $l\left(u\right)\leq\left|u\right|^{a},\,a>0$
when $u$ is large enough. Thus, given \eqref{eq" S_bar goe to - inf},
the second term of \eqref{eq. (5.14) I=000026H LapBai97-1-1} converges
to zero. Since $\int_{\Gamma_{1,T}}p_{T}\left(u\right)du>0$ the
first term of \eqref{eq. (5.14) I=000026H LapBai97-1-1} is negative
which then leads to $\mathcal{S}_{T}\left(0\right)-\inf_{\left|s\right|>TB}\mathcal{S}_{T}\left(s\right)<0$
or $\mathcal{S}_{T}\left(0\right)<\inf_{\left|s\right|>TB}\mathcal{S}_{T}\left(s\right).$
Thus, we have \eqref{eq. AA5 LapBai97-1-1}. 
\end{proof}
\begin{lem}
\label{Lemma Consistency Small Delta}Under Assumptions \ref{Assumption A Bai97}-\ref{Assumption A4 Bai97},
\ref{Assumption The-loss-function LapBai97}-\ref{Assumption Small Shift BP}
and \ref{Assumption Gaussian Process for Lap LapBai97}-(i), for $l\in\boldsymbol{L}$
and any $B>0$ and $\varepsilon>0$, we have for all large $T$, $\mathbb{P}\left[\left|\widehat{\lambda}_{b}^{\mathrm{GL}}-\lambda_{b}^{0}\right|>B\right]<\varepsilon$. 
\end{lem}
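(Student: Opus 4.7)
The plan is to adapt the argument of Lemma \ref{Lemma Consistency Fixed delta} to the shrinking-shift regime, replacing the fixed-$\delta^0$ bounds on $g_{d}$ and $g_{e}$ by the uniform estimates already collected in the preliminary lemmas. As in the fixed case, write $\overline{S}_{T}(\lambda_{b})\triangleq Q_{T}(\delta(\lambda_{b}),\lambda_{b})-Q_{T}(\delta(\lambda_{b}^{0}),\lambda_{b}^{0})$, let $p_{1,T}(u)\triangleq\exp(\widetilde{\gamma}_{T}Q_{T}(\delta(u),u))$, and rescale the event $\{|\widehat{\lambda}_{b}^{\mathrm{GL}}-\lambda_{b}^{0}|>B\}$ by a diverging normalization $\psi_{T}$ (any $\psi_{T}\to\infty$ is fine for mere consistency, so one may take $\psi_{T}=T\|\delta_{T}\|^{2}$). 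After the change of variables $u=\psi_{T}(\lambda_{b}-\lambda_{b}^{0})$, $\widehat{\lambda}_{b}^{\mathrm{GL}}$ becomes the minimizer of a function $\mathcal{S}_{T}(s)$ of the same form as in the previous proof, and it suffices to show $\mathbb{P}[\inf_{|s|>\psi_{T}B}\mathcal{S}_{T}(s)\le\mathcal{S}_{T}(0)]\to0$.

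The crucial replacement of step \eqref{eq" S_bar goe to - inf} is furnished by combining Lemma \ref{Lemma, Area (ib) Q} and Lemma \ref{Lemma, Area (ii), Q LapBai97}: for any $B>0$ and $\varepsilon>0$, with probability at least $1-\varepsilon$, $\sup_{|\lambda_{b}-\lambda_{b}^{0}|>B/2}\overline{S}_{T}(\lambda_{b})<-C\nu_{T}$ with $\nu_{T}\to\infty$. Condition \ref{Condition 1 LapBai97} then gives $\widetilde{\gamma}_{T}\overline{S}_{T}(\lambda_{b})<-\tfrac{1}{2}\kappa_{\gamma}C\nu_{T}\to-\infty$ on that region, so the numerator of the Quasi-posterior outside the $B/2$-neighborhood of $\lambda_{b}^{0}$ is bounded by $\exp(-\tfrac{1}{2}\kappa_{\gamma}C\nu_{T})$ times $\pi^{0}|\varGamma^{0}|$. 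On the other hand, Lemmas \ref{Lemma, Area (ia), Bai A.5} and \ref{Lemma ZZ conv prob} guarantee that inside an $O_{\mathbb{P}}((T\|\delta_{T}\|^{2})^{-1})$ neighborhood of $\lambda_{b}^{0}$, $\widetilde{\gamma}_{T}\overline{S}_{T}(\lambda_{b})=O_{\mathbb{P}}(1)$; since $\pi$ is continuous and strictly positive at $\lambda_{b}^{0}$ (Assumption \ref{Assumption Prior LapBai97}), the denominator of $p_{T}(\lambda_{b})$ is bounded below in probability by a positive quantity of order $(T\|\delta_{T}\|^{2})^{-1}$.

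Combining these two bounds, partition $\mathbf{U}_{T}$ into a shrinking neighborhood $\Gamma_{1,T}$ of $0$ (on which $l(u)\le\overline{l}_{1,T}$), an intermediate set $\{\overline{u}_{1}<|u|\le(\psi_{T}B/2)^{\vartheta}\}$ on which $l(u)-\inf_{|s|>\psi_{T}B/2}l(s)\le0$ eventually by Assumption \ref{Assumption The-loss-function LapBai97}(iii), and a tail $\{|u|>(\psi_{T}B/2)^{\vartheta}\}$ where the quasi-posterior mass decays exponentially at rate $\nu_{T}$. The inequality \eqref{eq. (5.14) I=000026H LapBai97-1-1} then carries over verbatim: the negative term $-\varpi\int_{\Gamma_{1,T}}p_{T}(u)\,du$ is bounded below in absolute value by a positive constant times the denominator lower bound, while the tail contribution is $o_{\mathbb{P}}(1)$ because $\exp(-\tfrac{1}{2}\kappa_{\gamma}C\nu_{T})$ dominates the polynomial growth $|u|^{m}$ allowed by Assumption \ref{Assumption The-loss-function LapBai97}(iii). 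This yields $\mathcal{S}_{T}(0)<\inf_{|s|>\psi_{T}B}\mathcal{S}_{T}(s)$ with probability exceeding $1-\varepsilon$ for all large $T$, and hence $\mathbb{P}[|\widehat{\lambda}_{b}^{\mathrm{GL}}-\lambda_{b}^{0}|>B]<\varepsilon$.

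The main obstacle is the mismatch in rates: under shrinking shifts $\overline{S}_{T}$ diverges only at rate $\nu_{T}$, which is considerably slower than the rate $TB$ available in the fixed-shift proof, and this rate must still win against the normalization $\widetilde{\gamma}_{T}\to\kappa_{\gamma}$ and the polynomial growth of $l$. The argument above works because $\nu_{T}\to\infty$ and $\widetilde{\gamma}_{T}$ is bounded below, so the exponential decay $\exp(-\tfrac{1}{2}\kappa_{\gamma}C\nu_{T})$ suffices; however, one must take some care to keep the denominator lower bound free of $\nu_{T}$ (which is why the neighborhood used there is of the sharper order $(T\|\delta_{T}\|^{2})^{-1}$ rather than $B$), and to verify uniformity in the auxiliary constants $\overline{u}_{1},\overline{u}_{2},\vartheta$ that appear in the partition of $\mathbf{U}_{T}$.
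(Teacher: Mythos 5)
Your proposal is correct and takes essentially the same route as the paper: the paper's own proof simply observes that the divergence of $-\overline{S}_{T}$ outside a fixed neighborhood of $\lambda_{b}^{0}$, which occurred at rate $TB$ in Lemma \ref{Lemma Consistency Fixed delta}, now occurs at rate $T\left\Vert \delta_{T}\right\Vert ^{2}$ (Proposition 1 of Bai (1997), equivalently your Lemmas \ref{Lemma, Area (ib) Q} and \ref{Lemma, Area (ii), Q LapBai97}), and then repeats the partition argument of the fixed-shift lemma verbatim, exactly as you do. One caveat: Condition \ref{Condition 1 LapBai97} is not among the hypotheses of this lemma, so you should not lean on $\widetilde{\gamma}_{T}\rightarrow\kappa_{\gamma}$; the paper's proof works with the unnormalized criterion (effectively $\widetilde{\gamma}_{T}=1$), and your argument only requires $\widetilde{\gamma}_{T}$ bounded away from zero, which is automatic there and under either Condition.
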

\begin{proof}
The structure of the proof is similar to that of Lemma \ref{Lemma Consistency Fixed delta}.
By Proposition 1 in \citet{bai:97RES}, eq. \eqref{eq" S_bar goe to - inf}
holds with $O_{\mathbb{P}}\left(T\left\Vert \delta_{T}\right\Vert ^{2}\right)$
in place of $O_{\mathbb{P}}\left(TB\right),\,B>0.$ One can then follow
the same steps as in the previous lemma to yield the result.
\end{proof}
\begin{lem}
\label{Lemma Rate of Convergence Small Delta}Under Assumptions \ref{Assumption A Bai97}-\ref{Assumption A4 Bai97},
\ref{Assumption The-loss-function LapBai97}-\ref{Assumption Small Shift BP}
and \ref{Assumption Gaussian Process for Lap LapBai97}-(i), for $l\in\boldsymbol{L}$
and for every $\varepsilon>0$ there exists a $B<\infty$ such that
for all large $T$, $\mathbb{P}\left[Tv_{T}^{2}\left|\widehat{\lambda}_{b}^{\mathrm{GL}}-\lambda_{b}^{0}\right|>B\right]<\varepsilon$.
\end{lem}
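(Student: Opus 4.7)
The plan is to leverage the consistency result (Lemma \ref{Lemma Consistency Small Delta}) to localize the analysis and then upgrade the convergence rate to $\psi_T = T\|\delta_T\|^2 = Tv_T^2\|\delta^0\|^2$ by exploiting the preliminary decay estimates for $\overline{Q}_T$ developed in Section \ref{subsection: Preliminary Lemmas}. Introducing the local parameter $u = \psi_T(\lambda_b - \lambda_b^0)$ and using the change of variables as in the proof of Lemma \ref{Lemma Consistency Fixed delta}, the rescaled estimator $\widehat{u}_T \triangleq \psi_T(\widehat{\lambda}_b^{\mathrm{GL}} - \lambda_b^0)$ minimizes
\begin{align*}
\mathcal{S}_T^{*}(s) & \triangleq \int_{\mathbf{U}_T} l(s-u)\, \bar{p}_T(u)\,du,\\
\bar{p}_T(u) & \triangleq \frac{\exp\bigl(\widetilde{\gamma}_T\,\overline{Q}_T(\widehat{\theta},\,\lambda_b^0+u/\psi_T)\bigr)\,\pi(\lambda_b^0+u/\psi_T)}{\int_{\mathbf{U}_T}\exp\bigl(\widetilde{\gamma}_T\,\overline{Q}_T(\widehat{\theta},\,\lambda_b^0+w/\psi_T)\bigr)\,\pi(\lambda_b^0+w/\psi_T)\,dw},
\end{align*}
over $\mathbf{U}_T \triangleq \{u \in \mathbb{R}:\,\lambda_b^0+u/\psi_T\in\varGamma^0\}$, where $\widetilde{\gamma}_T = \gamma_T/\psi_T\to\kappa_\gamma\in(0,\infty)$ by Condition \ref{Condition 1 LapBai97}. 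By consistency and Assumption \ref{Assumption Prior LapBai97}, for any fixed small $\eta>0$ the analysis may be restricted, with probability tending to one, to the event $\{|\widehat{\lambda}_b^{\mathrm{GL}}-\lambda_b^0|\le\eta\}$ on which $\pi$ is bounded above and below by positive constants.

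Next, I combine Lemma \ref{Lemma, Area (ib) Q} and Lemma \ref{Lemma, Area (ii), Q LapBai97}: for any $\epsilon>0$, there exist $K_0<\infty$, $C>0$ and a divergent sequence $\nu_T$ such that with probability at least $1-\epsilon$,
\[
\sup_{K_0\le|u|\le\psi_T\eta}\overline{Q}_T(\widehat{\theta},\,\lambda_b^0+u/\psi_T)\le -C\nu_T,\qquad\sup_{|u|>\psi_T\eta}\overline{Q}_T(\widehat{\theta},\,\lambda_b^0+u/\psi_T)\le -C\nu_T.
\]
Since $\widetilde{\gamma}_T\to\kappa_\gamma>0$, on the same event $\widetilde{\gamma}_T\overline{Q}_T\le -(C\kappa_\gamma/2)\nu_T$ uniformly over $\{|u|\ge K_0\}$, so the numerator contribution to $\bar{p}_T$ from $\{|u|>B\}$ is exponentially small for any $B\ge K_0$. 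On the other hand, Lemma \ref{Lemma ZZ conv prob} combined with \eqref{eq. A.2.6 LapBai97} and the tightness of the stochastic part gives $\overline{Q}_T=O_{\mathbb{P}}(1)$ on the compact set $\{|u|\le K_0\}$, so together with $\pi(\lambda_b^0)>0$ the denominator is bounded below by a positive constant in probability. Hence $\int_{|u|>B}\bar{p}_T(u)\,du=o_{\mathbb{P}}(1)$ for $B$ large enough.

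Finally, I transfer this tail control on $\bar{p}_T$ to tightness of $\widehat{u}_T$ by replicating, at the rescaled scale, the convexity argument used in Lemma \ref{Lemma Consistency Fixed delta}. For $l\in\boldsymbol{L}$, pick $0<\overline{u}_1<\overline{u}_2$ with $\overline{l}_1\triangleq\sup_{|u|\le\overline{u}_1}l(u)<\overline{l}_2\triangleq\inf_{|u|\ge\overline{u}_2}l(u)$; then, for $B>2\overline{u}_2$ and some $\vartheta>0$ dictated by the growth bound $l(r)\le 1+|r|^m$,
\[
\mathcal{S}_T^{*}(0)-\inf_{|s|>B}\mathcal{S}_T^{*}(s)\le -(\overline{l}_2-\overline{l}_1)\!\int_{|u|\le\overline{u}_1}\bar{p}_T(u)\,du+\int_{|u|>(B/2)^{\vartheta}}l(u)\,\bar{p}_T(u)\,du.
\]
The first term is bounded away from zero in probability because $\bar{p}_T$ concentrates near the origin by the preceding step, while the second term is $o_{\mathbb{P}}(1)$ since the exponentially decaying $\bar{p}_T$ dominates the polynomial growth of $l$. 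Therefore $\mathbb{P}(\mathcal{S}_T^{*}(0)<\inf_{|s|>B}\mathcal{S}_T^{*}(s))\to 1$, which forces $|\widehat{u}_T|\le B$ with probability at least $1-\varepsilon$ and delivers $\widehat{u}_T=O_{\mathbb{P}}(1)$, i.e., the stated bound $Tv_T^2|\widehat{\lambda}_b^{\mathrm{GL}}-\lambda_b^0|=O_{\mathbb{P}}(1)$.

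The main obstacle is the intermediate band $K_0\le|u|\le\psi_T\eta$: here the rate $\nu_T=(\psi_T/K_0)^{1-\kappa}$ supplied by Lemma \ref{Lemma, Area (ib) Q} only grows polynomially in $\psi_T$, and one must verify that $\widetilde{\gamma}_T\nu_T$ diverges quickly enough for the exponential decay of the numerator to beat the polynomial growth of $l$ uniformly in $u$. Under Condition \ref{Condition 1 LapBai97} together with the tightened restriction $\vartheta\in(0,1/4)$ in Assumption \ref{Assumption Small Shift BP}, this is exactly what the restricted shrinkage rate delivers; that is where the stronger rate condition on $v_T$ is used relative to \citet{bai:97RES}.
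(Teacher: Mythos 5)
Your overall skeleton---the change of variables to the local scale, the comparison of $\mathcal{S}_{T}^{*}\left(0\right)$ with $\inf_{\left|s\right|>B}\mathcal{S}_{T}^{*}\left(s\right)$ via the loss-function separation $\overline{l}_{1}<\overline{l}_{2}$, and the split of the tail integral at $\left(B/2\right)^{\vartheta}$---is exactly the paper's (the paper proves this lemma by deferring to Lemma \ref{Theorem 5.2 I=000026H LapBai97}, whose proof has precisely this structure). The gap lies in the step you yourself flag as ``the main obstacle,'' and your proposed resolution of it does not work. On the intermediate band $K_{0}\leq\left|u\right|\leq\eta\psi_{T}$ you control the Quasi-posterior only through the supremum bound of Lemma \ref{Lemma, Area (ib) Q}, whose proof supplies $\nu_{T}=\left(K/\left\Vert \delta_{T}\right\Vert ^{2}\right)^{1-\kappa}$, not $\left(\psi_{T}/K_{0}\right)^{1-\kappa}$ as you write. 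Bounding $\int_{K_{0}\leq\left|u\right|\leq\eta\psi_{T}}l\left(u\right)\bar{p}_{T}\left(u\right)du$ by the Lebesgue measure of the band times the polynomial growth of $l$ times $e^{-C\nu_{T}}$ gives an upper bound of order $\psi_{T}^{m+1}e^{-C\nu_{T}}$, and this need not vanish: Assumption \ref{Assumption Small Shift BP} only requires $v_{T}\rightarrow0$ with $T^{1/2-\vartheta}v_{T}\rightarrow\infty$, which is an upper bound on $v_{T}^{-1}$ and places no lower bound on how fast $v_{T}\rightarrow0$. If, say, $v_{T}=\left(\log T\right)^{-1/2}$, then $\nu_{T}\asymp\left(\log T\right)^{1-\kappa}=o\left(\log\psi_{T}\right)$ and $\psi_{T}^{m+1}e^{-C\nu_{T}}\rightarrow\infty$. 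The restriction $\vartheta\in\left(0,\,1/4\right)$ cannot rescue this, contrary to your closing claim, precisely because it constrains $v_{T}$ from below rather than from above.

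The supremum bound is too crude because it discards the linear drift $-\varLambda^{0}\left(u\right)\asymp-\left|u\right|$ of the criterion, which is what makes the Quasi-posterior integrable against polynomial weights. The paper retains it by arguing shell by shell: the H\"older-type moment bounds on increments of $\zeta_{T,v}^{1/2r}$ (Lemma \ref{Lemma III.5.2 I=000026H LapBai97} and Lemma \ref{Lemma VII.Lemma 2.2 I=000026H LapBai97 }) give $\mathbb{E}\left[\zeta_{T,v}^{1/2}\left(u,\,\widetilde{v}\right)\right]\leq\exp\left(-c\left|u\right|\right)$, whence $\mathbb{E}\left[J_{1,M}/J_{2}\right]\leq C\left(1+M^{C}\right)\exp\left(-cf_{T}\left(M\right)\right)$ for each unit shell $M\leq\left|u\right|<M+1$ (Lemmas \ref{Lemma I.5.2.(i) I=000026H LapBai97} and \ref{Lemma 7.5. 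I.5.2.(ii) }); summing $\sum_{M}\left(M+1\right)^{a}\mathbb{E}\left[J_{1,M}/J_{2}\right]$ then dominates any polynomial loss. The companion quantitative lower bound on the denominator (Lemma \ref{Lemma I.5.1 I=000026H LapBai97}, which bounds the failure probability by $C\epsilon^{1/2}$ rather than your qualitative ``bounded below in probability'') is also needed to convert the ratio bounds into the final probability statement. To close the gap you would have to replace your sup-norm step on the intermediate band with this shell-wise moment argument, or with an equivalent chaining bound that preserves the $e^{-c\left|u\right|}$ decay pointwise in $u$.
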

\begin{proof}
See Lemma \ref{Theorem 5.2 I=000026H LapBai97} which proves a stronger
result needed for Theorem \ref{Theorem Geneal Laplace Estimator LapBai97}.
\end{proof}
Parts (i) and (ii) of Proposition \ref{Proposition: Consistency and Rate of Convergence}
follow from Lemma \ref{Lemma Consistency Small Delta} and Lemma \ref{Lemma Rate of Convergence Small Delta},
respectively. 

\subsubsection{Proof of Theorem \ref{Theorem Posterior Mean Lap Estimation in Bai 97}}

We start with the following lemmas.
\begin{lem}
\label{Lemma B.3, LapBai97}For any $a\in\mathbb{R}$, $\left|c\right|\leq1,$
and integer $i\geq0$, $\left|\exp\left(ca\right)-\sum_{j=0}^{i}\left(ca\right)^{j}/j!\right|\leq\left|c\right|^{i+1}\exp\left(\left|a\right|\right).$ 
\end{lem}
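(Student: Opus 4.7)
The plan is to bound the remainder of the Taylor expansion of $\exp(ca)$ at zero of order $i$. Since $\exp(\cdot)$ is entire, its Maclaurin series converges absolutely everywhere, and in particular
\[
\exp(ca)-\sum_{j=0}^{i}\frac{(ca)^{j}}{j!}=\sum_{j=i+1}^{\infty}\frac{(ca)^{j}}{j!}.
\]
Taking absolute values inside the sum (justified by absolute convergence) reduces the problem to bounding $\sum_{j=i+1}^{\infty}|c|^{j}|a|^{j}/j!$.

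Next I would exploit the two hypotheses $|c|\leq 1$ and $j\geq i+1$ to factor out $|c|^{i+1}$. For each $j\geq i+1$ we have $|c|^{j}=|c|^{i+1}\cdot|c|^{j-i-1}\leq |c|^{i+1}$ because $|c|^{j-i-1}\leq 1$. Therefore
\[
\sum_{j=i+1}^{\infty}\frac{|c|^{j}|a|^{j}}{j!}\leq |c|^{i+1}\sum_{j=i+1}^{\infty}\frac{|a|^{j}}{j!}\leq |c|^{i+1}\sum_{j=0}^{\infty}\frac{|a|^{j}}{j!}=|c|^{i+1}\exp(|a|),
\]
where the last inequality uses that the omitted terms $j=0,\dots,i$ are non-negative. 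Combining these steps gives the claimed inequality.

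There is no real obstacle here; the result is a standard bound on the tail of the exponential Taylor series, and the two ingredients (absolute convergence of the exponential series and the elementary inequality $|c|^{j}\leq |c|^{i+1}$ for $j\geq i+1$ when $|c|\leq 1$) suffice. The only thing to be careful about is covering the trivial edge cases $c=0$ (both sides are zero) and $a=0$ (both sides equal $|c|^{i+1}\cdot 0$ on the left and $|c|^{i+1}$ on the right, with the inequality $0\le|c|^{i+1}$ trivially holding), both of which the argument above handles automatically.
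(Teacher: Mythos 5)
Your argument is correct and is essentially identical to the paper's proof: both bound the Taylor remainder $\sum_{j=i+1}^{\infty}(ca)^{j}/j!$ by factoring out $|c|^{i+1}$ using $|c|\leq 1$ and then dominating the remaining tail by the full series $\exp(|a|)$. Your write-up is in fact slightly more careful than the paper's one-line display, since you make the absolute-convergence justification and the inequality $|c|^{j-i-1}\leq 1$ explicit.
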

\begin{proof}
The proof is immediate and the same as the one in \citet{jun/pinkse/wan:15}.
Using simple manipulations,
\begin{align*}
\left|\exp\left(ca\right)-\sum_{j=0}^{i}\left(ca\right)^{j}/j!\right| & \leq\left|\sum_{j=i+1}^{\infty}\frac{\left(ca\right)^{j}}{j!}\right|\leq\left|c\right|^{i+1}\left|\sum_{j=i+1}^{\infty}\frac{\left(a\right)^{j}}{j!}\right|\leq\left|c\right|^{i+1}\exp\left(\left|a\right|\right).
\end{align*}
\end{proof}

\begin{lem}
\label{Lemma B.2, LapBai97}$\widetilde{G}_{T,v}\left(u,\,\widetilde{v}\right)\Rightarrow\mathscr{W}\left(u\right)$
in $\mathbb{D}_{b}\left(\mathbf{C}\times\mathbf{V}\right)$, where
$\mathbf{C}\subset\mathbb{R}$ and $\mathbf{V}\subset\mathbb{R}^{p+2q}$
are both compact sets, and 
\begin{align*}
\mathscr{W}\left(u\right) & \triangleq\begin{cases}
2\left(\left(\delta^{0}\right)'\Sigma_{1}\delta^{0}\right)^{1/2}W_{1}\left(-u\right), & \textrm{if }u<0\\
2\left(\left(\delta^{0}\right)'\Sigma_{2}\delta^{0}\right)^{1/2}W_{2}\left(u\right), & \textrm{if }u\geq0.
\end{cases}
\end{align*}
\end{lem}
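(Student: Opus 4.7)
The strategy is to reduce $\widetilde{G}_{T,v}(u,\widetilde{v})$ to a scaled partial sum of $\{z_te_t\}$ over the window between $T_b$ and $T_b^0$, and then apply the invariance principle of Assumption \ref{Assumption A.9b Bai 97, LapBai97} on a suitably rescaled time axis to obtain the two-sided Wiener limit $\mathscr{W}$.

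First I would exploit the local expansion of Lemma \ref{Lemma, Area (ia), Bai A.5}: since the compact set $\mathbf{C}$ corresponds to the window $\{|T_b-T_b^0|\le K/\|\delta_T\|^2\}$ with $K=\sup_{u\in\mathbf{C}}|u|<\infty$, uniformly on this window
\[
\overline{Q}_T(\theta,\lambda_b)=-\delta_T'Z_\Delta'Z_\Delta\delta_T+2\,\mathrm{sgn}(T_b^0-T_b)\,\delta_T'Z_\Delta'e+o_\mathbb{P}(1).
\]
The first term is $\mathscr{F}_X$-measurable, so conditioning yields $Q_T^0(\theta,\lambda_b)=-\delta_T'Z_\Delta'Z_\Delta\delta_T+o_\mathbb{P}(1)$, and subtracting gives $G_T(\theta,\lambda_b)=2\,\mathrm{sgn}(T_b^0-T_b)\,\delta_T'Z_\Delta'e+o_\mathbb{P}(1)$. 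Substituting $\theta=\theta^0+\widetilde{v}/r_T$ with $r_T=\sqrt{T}$ perturbs $\delta_T$ by $\widetilde{v}_\delta/\sqrt{T}$, contributing at most $(\sqrt{T})^{-1}\widetilde{v}_\delta'Z_\Delta'e=O_\mathbb{P}\bigl(\sqrt{|u|}/(v_T\sqrt{T})\bigr)$. This vanishes uniformly on $\mathbf{C}\times\mathbf{V}$ since $\sqrt{T}v_T=T^{\vartheta}(T^{1/2-\vartheta}v_T)\to\infty$ under Assumption \ref{Assumption Small Shift BP}. Hence, uniformly on $\mathbf{C}\times\mathbf{V}$,
\[
\widetilde{G}_{T,v}(u,\widetilde{v})=2\,\mathrm{sgn}(-u)\,v_T(\delta^0)'Z_\Delta'e+o_\mathbb{P}(1),
\]
which already shows that the limit does not depend on $\widetilde{v}$.

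Next I would establish functional convergence of the leading stochastic term in $u$. For $u\le 0$ the partial sum $v_T(\delta^0)'\sum_{t=T_b+1}^{T_b^0}z_te_t$ lies entirely inside regime~1 (consistency $\lambda_{b,T}^0(v)\to\lambda_b^0$ absorbs the $v$-shift into the remainder). Rescaling the time axis by $v_T^2$ and applying Assumption \ref{Assumption A.9b Bai 97, LapBai97} locally at the right endpoint of regime~1, the process $s\mapsto v_T\sum_{t=T_b^0-\lfloor s/v_T^2\rfloor+1}^{T_b^0}z_te_t$ converges in $\mathbb{D}_b$ to $\Sigma_1^{1/2}B_1(s)$ for a standard vector Brownian motion $B_1$; contracting with $2(\delta^0)'$ and expressing the resulting univariate Gaussian process in its canonical form produces $2((\delta^0)'\Sigma_1\delta^0)^{1/2}W_1(-u)$. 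The symmetric construction on the right half uses regime~2 and yields $2((\delta^0)'\Sigma_2\delta^0)^{1/2}W_2(u)$; independence of $W_1,W_2$ follows from the disjointness of the two summation blocks together with the mixingale condition of Assumption \ref{Assumption A5 Bai97 and A4 BP98}. Joint weak convergence on $\mathbb{D}_b(\mathbf{C}\times\mathbf{V})$ is then immediate because the $\widetilde{v}$-direction is affected only by the uniform $o_\mathbb{P}(1)$ remainder.

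The main obstacle is the local invariance principle in the previous paragraph: Assumption \ref{Assumption A.9b Bai 97, LapBai97} is formulated on the full regime scale $r\in[0,1]$, whereas I need functional convergence on a window of width $O(1/v_T^2)$ around $T_b^0$ that shrinks relative to $T_b^0$. Verifying tightness of the rescaled partial-sum process requires controlling maximal oscillations on this shrinking window, which leverages the uniform moment bound $\|z_te_t\|_r<\infty$ and the summability condition $\sum_j j^{1+\nu}\varrho_{2,j}<\infty$ of the $L^r$-mixingale structure in Assumption \ref{Assumption A5 Bai97 and A4 BP98} to secure the modulus-of-continuity estimates needed for the Brownian limit.
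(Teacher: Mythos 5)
Your argument is correct and follows essentially the same route as the paper's proof: reduce $\widetilde{G}_{T,v}(u,\widetilde{v})$ via the expansion in Lemma \ref{Lemma, Area (ia), Bai A.5} to the leading stochastic term $2\,\mathrm{sgn}(T_b^0-T_b)\,\delta_T'Z_\Delta'e+o_{\mathbb{P}}(1)$, and then invoke the invariance principle of Assumption \ref{Assumption A.9b Bai 97, LapBai97} on the $v_T^2$-rescaled window to obtain the two-sided Wiener limit, with $\Sigma_1$ and $\Sigma_2$ governing the two halves. You actually spell out two points the paper treats implicitly---the negligibility of the $\widetilde{v}/r_T$ perturbation (your $O_{\mathbb{P}}(1/(v_T\sqrt{T}))$ bound is right) and the localization of the regime-scale FCLT to the shrinking window (which the paper absorbs into the definitions of $\widetilde{g}$ and Assumption \ref{Assumption Gaussian Process for Lap LapBai97}-(ii))---so no gap.
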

\begin{proof}
Consider $u<0.$ According to the expansion of the criterion function
given in Lemma \ref{Lemma, Area (ia), Bai A.5}, for any $\left(u,\,\widetilde{v}\right)\in\mathbf{C}\times\mathbf{V},$
$\widetilde{G}_{T,v}\left(u,\,\widetilde{v}\right)$ satisfies $2\mathrm{sgn}\left(T_{b}^{0}-T_{b}\left(u\right)\right)\delta'_{T}Z'_{\Delta}e+o_{\mathbb{P}}\left(1\right).$
Then, $\delta'_{T}Z'_{\Delta}e=\left(\delta^{0}\right)'v_{T}\sum_{t=\left\lfloor u/v_{T}^{2}\right\rfloor }^{T_{b}^{0}}z_{t}e_{t}\Rightarrow\left(\delta^{0}\right)'\mathscr{G}_{1}\left(-u\right),$
where $\mathscr{G}_{1}$ is a multivariate Gaussian process. In particular,
$\left(\delta^{0}\right)'\mathscr{G}_{1}\left(-u\right)$ is equivalent
in law to $\left(\left(\delta^{0}\right)'\Sigma_{1}\delta^{0}\right)^{1/2}W_{1}\left(-u\right)$,
where $W_{1}\left(\cdot\right)$ is a standard Wiener process on $[0,\,\infty).$
Similarly, for $u\geq0,$ $\delta'_{T}Z'_{\Delta}e\Rightarrow\left(\left(\delta^{0}\right)'\Sigma_{2}\delta^{0}\right)^{1/2}W_{2}\left(u\right)$,
where $W_{2}\left(\cdot\right)$ is another standard Wiener process
on $[0,\,\infty)$ which is independent of $W_{1}.$ Hence, $\widetilde{G}_{T,v}\left(u,\,\widetilde{v}\right)\Rightarrow\mathscr{W}\left(u\right)$
in $\mathbb{D}_{b}\left(\mathbf{C}\times\mathbf{V}\right)$.
\end{proof}
\begin{lem}
\label{Lemma B.5, LapBai97}Fix any $a>0$ and let $\varpi\in(1/2,\,1]$.
(i) For any $\nu>0$ and any $\varepsilon>0,$ 
\begin{align*}
\limsup_{T\rightarrow\infty}\mathbb{P}\left[\sup_{u\in\Gamma_{T,\psi}^{c}}\left\{ \overline{G}_{T,v}\left(u\right)-a\left\Vert \delta^{0}\right\Vert ^{2}\left|u\right|^{\varpi}\right\} >\nu\right]<\varepsilon & .
\end{align*}
(ii) For $\widetilde{u}\in\mathbb{R}_{+}$ let $\widetilde{\Gamma}\triangleq\left\{ u\in\mathbb{R}:\,\left|u\right|>\widetilde{u}\right\} $.
Then, for every $\epsilon>0,$ 
\begin{align*}
\lim_{\widetilde{u}\rightarrow\infty}\lim_{T\rightarrow\infty}\mathbb{P}\left[\sup_{u\in\widetilde{\Gamma}}\left\{ \overline{G}_{T,v}\left(u\right)-a\left\Vert \delta^{0}\right\Vert ^{2}\left|u\right|^{\varpi}\right\} >\epsilon\right]=0 & .
\end{align*}
\end{lem}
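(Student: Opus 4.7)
My plan is to reduce both statements to an analysis of partial sums of $\{z_te_t\}$, relying on the decomposition of the stochastic component given by Lemma \ref{Lemma, Area (ia), Bai A.5} (the leading term being $2\,\mathrm{sgn}(T_b^0-T_b)\,\delta_T'Z_\Delta'e$, a scalar linear functional of a martingale-type partial sum), combined with (a) Doob-type maximal inequalities permitted by the $L^r$-mixingale structure in Assumption \ref{Assumption A5 Bai97 and A4 BP98}, and (b) the weak convergence $\widetilde{G}_{T,v}(\cdot,\cdot)\Rightarrow\mathscr{W}(\cdot)$ of Lemma \ref{Lemma B.2, LapBai97}. The essential observation is that the stochastic part behaves like a two-sided Gaussian process of variance of order $|u|$, while we must dominate it by the envelope $a\|\delta^0\|^2|u|^\varpi$ with $\varpi>1/2$; the spare factor $|u|^{\varpi-1/2}$ is what makes the bounds summable on a dyadic partition of the $u$-axis.

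For part (i) I decompose $\Gamma_{T,\psi}^c=\{|u|>\psi_T\}$ into dyadic shells $S_{k,T}\triangleq\{2^k\psi_T<|u|\leq 2^{k+1}\psi_T\}$, $k\geq 0$. On each shell, Doob's inequality applied to the partial-sum process (with second-moment scaling $\mathbb{E}[\widetilde{G}_{T,v}(u,\widetilde{v})^2]\leq C|u|$) yields
\begin{align*}
\mathbb{P}\Big[\sup_{u\in S_{k,T},\,\widetilde{v}\in\mathbf{V}}\widetilde{G}_{T,v}(u,\widetilde{v})>a\|\delta^0\|^2(2^k\psi_T)^\varpi\Big]\leq \frac{C\,2^{k+1}\psi_T}{a^2\|\delta^0\|^4(2^k\psi_T)^{2\varpi}},
\end{align*}
where the sup over $\widetilde{v}$ is absorbed uniformly because the $\widetilde{v}$-dependence enters linearly through $(\delta^0+\widetilde{v}/r_T)$ with $\widetilde{v}/r_T=o(1)$ on $\mathbf{V}$. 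Summing over $k\geq 0$ gives a convergent geometric series in $2^{(1-2\varpi)k}$ with total bounded by a multiple of $\psi_T^{1-2\varpi}$; since $\psi_T\to\infty$ and $1-2\varpi<0$, this upper bound vanishes as $T\to\infty$, proving (i).

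For part (ii) the same philosophy applies but $\widetilde{u}$ is fixed while the order of limits is first $T\to\infty$ then $\widetilde{u}\to\infty$. Split $\{|u|>\widetilde{u}\}$ into a compact window $\{\widetilde{u}<|u|\leq M\}$ and a remote tail $\{|u|>M\}$. On the window, Lemma \ref{Lemma B.2, LapBai97} together with the continuous mapping theorem (applied to $\sup_{\widetilde{v}\in\mathbf{V}}\{\widetilde{G}_{T,v}(u,\widetilde{v})-a\|\delta^0\|^2|u|^\varpi\}$, noting the limit is $\mathscr{W}(u)-a\|\delta^0\|^2|u|^\varpi$ and does not depend on $\widetilde{v}$) yields that the $T\to\infty$ $\limsup$ of the window probability is at most $\mathbb{P}[\sup_{\widetilde{u}<|u|\leq M}\{\mathscr{W}(u)-a\|\delta^0\|^2|u|^\varpi\}>\epsilon]$, which tends to zero as $\widetilde{u}\to\infty$ by the law of the iterated logarithm for Brownian motion ($|\mathscr{W}(u)|=o(|u|^\varpi)$ a.s.\ for $\varpi>1/2$). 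The remote tail is handled by the same dyadic Doob bound as in part (i), with $\psi_T$ replaced by $M$, giving a contribution of order $M^{1-2\varpi}\to 0$ as $M\to\infty$.

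The main obstacle is the joint uniform control over $(u,\widetilde{v})$ on the unbounded region of part (i): weak convergence alone (Lemma \ref{Lemma B.2, LapBai97}) is local and cannot reach $\{|u|>\psi_T\}$, so one must supplement it with quantitative moment bounds coming from the $L^r$-mixingale assumption. The dyadic shell scheme is the natural device, but care is needed to verify that the maximal inequality applies uniformly over $\widetilde{v}\in\mathbf{V}$ and that the prefactor $\psi_T^{1-2\varpi}$ truly vanishes, which is precisely where the restriction $\varpi\in(1/2,\,1]$ is exercised.
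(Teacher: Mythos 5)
Your architecture is workable and is genuinely different from the paper's. The paper first transfers the whole problem to the limit process: since $\psi_{T}\rightarrow\infty$, the set $\Gamma_{T,\psi}^{c}$ is eventually contained in $\left\{ \left|u\right|>\overline{u}\right\} $ for any fixed $\overline{u}$, so Lemma \ref{Lemma B.2, LapBai97} and the continuous mapping theorem reduce part (i) to a tail estimate for $\mathscr{W}$; that tail is then handled on \emph{unit} shells $r-1\leq\left|u\right|<r$ via Brownian scaling and a fourth-moment Markov bound (van der Vaart--Wellner), producing the series $\sum_{r}r^{-\left(4\varpi-2\right)}$. You instead stay with the prelimit process and use a \emph{dyadic} shell partition with second-moment (mixingale/Doob) maximal inequalities, obtaining the prefactor $\psi_{T}^{1-2\varpi}\rightarrow0$. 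Two remarks on the trade-off: (a) your dyadic scheme with second moments genuinely covers the whole range $\varpi\in(1/2,\,1]$, whereas unit shells with fourth moments require $\varpi>3/4$ for the series to converge, so your bookkeeping is in this respect tighter; (b) your part (ii) window-plus-tail split (weak convergence plus the a.s.\ sublinear growth of $\mathscr{W}$ on the window, dyadic moment bounds on the remote tail) is essentially the argument the paper omits as ``similar.''

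There is, however, one step you need to repair. You anchor the reduction to partial sums of $z_{t}e_{t}$ on the expansion in Lemma \ref{Lemma, Area (ia), Bai A.5}, but that expansion holds only uniformly on $B_{T,K}$, i.e.\ for $\left|u\right|\leq K$; it is not available on $\Gamma_{T,\psi}^{c}$ (where $\left|u\right|>\psi_{T}$) nor on the remote tail of part (ii). On those regions $\overline{G}_{T,v}$ still contains the cross term $2\delta_{T}'\left(Z'_{0}MZ_{2}\right)\left(Z'_{2}MZ_{2}\right)^{-1}Z_{2}'Me$ and the two quadratic forms in \eqref{eq. A.2.4}, none of which is a partial-sum (martingale-type) process in $T_{b}$, so Doob's inequality does not apply to them and your claimed bound $\mathbb{E}\left[\widetilde{G}_{T,v}\left(u,\,\widetilde{v}\right)^{2}\right]\leq C\left|u\right|$ is only justified for the leading term. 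The fix is to dispose of these terms with the uniform bounds of Lemma \ref{Lemma B.11, Q LapBai97}: they are $O_{\mathbb{P}}\left(\left\Vert \delta_{T}\right\Vert T^{1/2}\log T\right)$ uniformly in $T_{b}$, which is dominated by the envelope on $\Gamma_{T,\psi}^{c}$ because $a\left\Vert \delta^{0}\right\Vert ^{2}\psi_{T}^{\varpi}=a\left\Vert \delta^{0}\right\Vert ^{2}\left(T\left\Vert \delta_{T}\right\Vert ^{2}\right)^{\varpi}$ and $\left(T\left\Vert \delta_{T}\right\Vert ^{2}\right)^{\varpi-1/2}$ grows polynomially in $T$ under Assumption \ref{Assumption Small Shift BP}, hence faster than $\log T$. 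Once you make that observation, your dyadic Doob argument applies to the genuine partial-sum component and the proof goes through; indeed, for part (i) the crude uniform bound alone already forces $\sup_{u\in\Gamma_{T,\psi}^{c}}\left\{ \overline{G}_{T,v}\left(u\right)-a\left\Vert \delta^{0}\right\Vert ^{2}\left|u\right|^{\varpi}\right\} \rightarrow-\infty$ in probability, so the shell decomposition is really only needed for part (ii), where $\widetilde{u}$ is fixed while $T\rightarrow\infty$ first.
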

\begin{proof}
We begin with part (i). Upon using Lemma \ref{Lemma B.2, LapBai97}
and the continuous mapping theorem, with any nonnegative integer $i,$
\begin{align*}
\limsup_{T\rightarrow\infty}\mathbb{P}\left[\sup_{u\in\Gamma_{T,\psi}^{c}}\left\{ \overline{G}_{T,v}\left(u\right)-a\left\Vert \delta^{0}\right\Vert ^{2}\left|u\right|^{\varpi}\right\} >\nu\right] & \leq\lim_{T\rightarrow\infty}\mathbb{P}\left[\sup_{\left|u\right|>\overline{u}}\left\{ \overline{G}_{T,v}\left(u\right)-a\left\Vert \delta^{0}\right\Vert \left|u\right|^{\varpi}\right\} >\nu\right]\\
 & \leq\lim_{T\rightarrow\infty}\mathbb{P}\left[\sup_{\left|u\right|\geq i}\left\{ \overline{G}_{T,v}\left(u\right)>a\left\Vert \delta^{0}\right\Vert \left|u\right|^{\varpi}\right\} >\nu\right]\\
 & \leq\mathbb{P}\left[\sup_{\left|u\right|\geq i}\left\{ \left|\mathscr{W}\left(u\right)\right|-a\left\Vert \delta^{0}\right\Vert \left|u\right|^{\varpi}\right\} >\nu\right]\\
 & \leq\sum_{r=i+1}^{\infty}\mathbb{P}\left[\sup_{r-1\leq\left|u\right|<r}\left\{ \left|\mathscr{W}\left(u\right)\right|-a\left\Vert \delta^{0}\right\Vert \left|u\right|^{\varpi}\right\} >\nu\right].
\end{align*}
 Then, 
\begin{align}
\sum_{r=i+1}^{\infty} & \mathbb{P}\left[\sup_{r-1\leq\left|u\right|<r}\frac{1}{\sqrt{r}}\left|\mathscr{W}\left(u\right)\right|>\inf_{r-1<\left|u\right|<r}a\frac{1}{\sqrt{r}}\left\Vert \delta^{0}\right\Vert \left|u\right|^{\varpi}\right]\nonumber \\
 & =\sum_{r=i+1}^{\infty}\mathbb{P}\left[\sup_{1-1/r\leq\left|u\right|/r\leq1}\left|\mathscr{W}\left(u/r\right)\right|>\inf_{1-1/r<\left|u\right|/r\leq1}a\left(\frac{r}{r}\right)^{\varpi-1/2}\frac{\left|u\right|^{\varpi}}{\sqrt{r}}\left\Vert \delta^{0}\right\Vert \right]\nonumber \\
 & =\sum_{r=i+1}^{\infty}\mathbb{P}\left[\sup_{1-1/r<s\leq1}\left|\mathscr{W}\left(s\right)\right|>\inf_{c<s\leq1}ar^{\varpi-1/2}s^{\varpi}\left\Vert \delta^{0}\right\Vert \right]\nonumber \\
 & =\sum_{r=i+1}^{\infty}\mathbb{P}\left[\sup_{s\leq1}\left|\mathscr{W}\left(s\right)\right|>r^{\varpi-1/2}c^{\varpi}C\left\Vert \delta^{0}\right\Vert \right],\label{eq (27b)}
\end{align}
where $0<c\leq1$. By Markov's inequality, 
\begin{align}
\sum_{r=i+1}^{\infty}\mathbb{P}\left[\sup_{c<s\leq1}\left|\mathscr{W}\left(s\right)\right|^{4}>C^{4}\left\Vert \delta^{0}\right\Vert ^{4}r^{4\left(\varpi-1/2\right)}c^{4\varpi}\right] & \leq\frac{C}{\left\Vert \delta^{0}\right\Vert ^{4}}\frac{\mathbb{E}\left(\sup_{s\leq1}\left|\mathscr{W}\left(s\right)\right|^{4}\right)}{c^{4\varpi}}\sum_{r=i+1}^{\infty}r^{-\left(4\varpi-2\right)}.\label{Eq. (27)-1}
\end{align}
By Proposition A.2.4 in \citet{vaart/wellner:96}, $\mathbb{E}(\sup_{s\leq1}\left|\mathscr{W}\left(s\right)\right|^{4})\leq C\mathbb{E}\left(\sup_{s\leq1}\left|\mathscr{W}\left(s\right)\right|\right)^{4}$
for some $C<\infty$, which is finite by Corollary 2.2.8 in \citet{vaart/wellner:96}.
Choose $K$ (thus $\overline{u}$) large enough such that the right-hand
side in \eqref{Eq. (27)-1} can be made arbitrarily smaller than $\varepsilon>0.$
The proof of the second part is similar and omitted. 
\end{proof}
\begin{lem}
\label{Lemma B.6 LapBai97}Fix any $a>0$. For any $\varepsilon>0$
there exists a $C<\infty$ such that
\begin{align*}
\mathbb{P}\left[\sup_{u\in\mathbb{R}}\left\{ \overline{G}_{T,v}\left(u\right)-a\left\Vert \delta^{0}\right\Vert ^{2}\left|u\right|\right\} >C\right] & <\varepsilon,\qquad\mathrm{for\,all\,}T.
\end{align*}
\end{lem}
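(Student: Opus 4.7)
The plan is to split the supremum at a large threshold and handle the two resulting regions with different tools. Fix $a>0$ and $\varepsilon>0$.

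First, for the tail region $\{|u|>\widetilde{u}\}$, I would directly invoke Lemma \ref{Lemma B.5, LapBai97}(ii) with $\varpi=1\in(1/2,1]$: that lemma implies that for every $\eta>0$ one can pick $\widetilde{u}$ and $T_{0}$ so that for all $T\geq T_{0}$, $\mathbb{P}[\sup_{|u|>\widetilde{u}}\{\overline{G}_{T,v}(u)-a\|\delta^{0}\|^{2}|u|\}>\eta]<\varepsilon/3$. The underlying computation is exactly the dyadic-block bound carried out in Lemma \ref{Lemma B.5, LapBai97}: on each annulus $r-1\leq|u|<r$ the Gaussian fluctuation $|\mathscr{W}|$ grows only like $\sqrt{r}$ while the drift grows like $r$, so applying Markov's inequality to $\sup_{s\leq 1}|\mathscr{W}(s)|^{4}$ produces a tail-sum proportional to $\sum_{r>\widetilde{u}}r^{-2}$, which is summable and vanishes as $\widetilde{u}\to\infty$.

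Second, for the bounded region $\{|u|\leq\widetilde{u}\}$, I would use Lemma \ref{Lemma B.2, LapBai97} which gives $\widetilde{G}_{T,v}(\cdot,\cdot)\Rightarrow\mathscr{W}(\cdot)$ in $\mathbb{D}_{b}([-\widetilde{u},\widetilde{u}]\times\mathbf{V})$. Since the functional $f\mapsto\sup f$ is continuous and the limit $\mathscr{W}$ is $\mathbb{P}$-a.s.\ bounded on $[-\widetilde{u},\widetilde{u}]$, the continuous mapping theorem together with asymptotic tightness furnishes a constant $C_{0}<\infty$ and an index $T_{1}\geq T_{0}$ such that $\mathbb{P}[\sup_{|u|\leq\widetilde{u}}\overline{G}_{T,v}(u)>C_{0}]<\varepsilon/3$ for all $T\geq T_{1}$. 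Setting $C\triangleq\max\{C_{0},\eta\}$ and combining with the tail bound via a union bound delivers the asserted inequality for all $T\geq T_{1}$. For the finitely many remaining indices $T<T_{1}$, the process $\overline{G}_{T,v}$ is determined by finitely many random variables and $u$ ranges over a finite break-date grid, so $\sup_{u}\overline{G}_{T,v}(u)$ is almost surely finite; enlarging $C$ to absorb these finitely many cases completes the uniform-in-$T$ statement.

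The principal technical difficulty is precisely the uniformity in $T$, which distinguishes this lemma from Lemma \ref{Lemma B.5, LapBai97}. The resolution is that the dyadic-block argument invoked above already supplies a uniform-in-$T$ tail estimate once $\varpi=1$ is chosen, because the resulting summable series $\sum_{r}r^{-2}$ is independent of $T$; the remainder is tightness on a compact region plus a finite-sample patch for small $T$.
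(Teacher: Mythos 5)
Your proposal is correct and follows essentially the same route as the paper's proof: split the supremum at a large threshold $\overline{u}$, control the tail region via Lemma \ref{Lemma B.5, LapBai97}-(ii) with $\varpi=1$, control the compact region via the weak convergence in Lemma \ref{Lemma B.2, LapBai97} and the continuous mapping theorem, and note that for each fixed finite $T$ the process lies in $\mathbb{D}_{b}$ so the finitely many small-$T$ cases are absorbed by enlarging $C$. No gaps.
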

\begin{proof}
For any finite $T,$ $\overline{G}_{T,v}\left(u\right)\in\mathbb{D}_{b}$
by definition. As for the limiting case, fix any $0<\overline{u}<\infty,$
\begin{align*}
\limsup_{T\rightarrow\infty}\,\mathbb{P}\left[\sup_{u\in\mathbb{R}}\left\{ \overline{G}_{T,v}\left(u\right)-a\left\Vert \delta^{0}\right\Vert ^{2}\left|u\right|\right\} >C\right] & \leq\limsup_{T\rightarrow\infty}\mathbb{P}\left[\sup_{\left|u\right|\leq\overline{u}}\overline{G}_{T,v}\left(u\right)>C\right]\\
 & \quad+\limsup_{T\rightarrow\infty}\mathbb{P}\left[\sup_{\left|u\right|>\overline{u}}\overline{G}_{T,v}\left(u\right)>a\left\Vert \delta^{0}\right\Vert ^{2}\overline{u}\right].
\end{align*}
The second term converges to zero letting $\overline{u}\rightarrow\infty$
from Lemma \ref{Lemma B.5, LapBai97}-(ii). For the first term, let
$C\rightarrow\infty$, use the continuous mapping theorem and Lemma
\ref{Lemma B.2, LapBai97} to deduce that it converges to zero by
the properties of $\mathscr{W}\in\mathbb{D}_{b}.$ 
\end{proof}
\begin{lem}
\label{Lemma before B.8 LapBai97}Let
\begin{align}
A_{1}\left(u,\,\widetilde{v}\right) & =u^{m}\pi_{T,v}\left(u\right)\exp\left(\widetilde{\gamma}_{T}\widetilde{G}_{T,v}\left(u,\,\widetilde{v}\right)+Q_{T,v}\left(u\right)\right),\label{Def A1 and A2}\\
A_{2}\left(u,\,\widetilde{v}\right) & =u^{m}\pi^{0}\exp\left(\widetilde{\gamma}_{T}\widetilde{G}_{T,v}\left(u,\,\widetilde{v}\right)-\Lambda_{0}\left(u\right)\right).\nonumber 
\end{align}
For $m\geq0$,  
\begin{align*}
\liminf_{T\rightarrow\infty}\mathbb{P}\left[\sup_{\widetilde{v}\in\mathbf{V}}\left|\int_{\Gamma_{T,\psi}^{c}}\left(A_{1}\left(u,\,\widetilde{v}\right)-A_{2}\left(u,\,\widetilde{v}\right)\right)\right|<\epsilon\right]\geq & 1-\epsilon.
\end{align*}
\end{lem}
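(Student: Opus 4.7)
The plan is to bound $|A_1(u, \widetilde{v})|$ and $|A_2(u, \widetilde{v})|$ separately on the tail region $\Gamma_{T,\psi}^c = \{|u| > \psi_T\}$, showing that each integral is $o_{\mathbb{P}}(1)$ uniformly in $\widetilde{v} \in \mathbf{V}$. Since $\psi_T \to \infty$ and the linear negative drifts carried by $-\Lambda^0(u)$ (for $A_2$) and $Q_{T,v}(u)$ (for $A_1$) dominate the sub-linear growth of the stochastic factor $\widetilde{\gamma}_T \widetilde{G}_{T,v}(u, \widetilde{v})$, both integrands are exponentially small in $|u|$, and the triangle inequality $|A_1 - A_2| \leq |A_1| + |A_2|$ then finishes the job.

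For $A_2$, I would apply Lemma \ref{Lemma B.5, LapBai97}-(i) with some $\varpi \in (1/2, 1)$: on an event of probability at least $1 - \varepsilon$, $\overline{G}_{T,v}(u) \leq a \|\delta^0\|^2 |u|^\varpi + \nu$ uniformly on $\Gamma_{T,\psi}^c$, where $a > 0$ can be chosen as small as desired. Since $\widetilde{G}_{T,v}(u, \widetilde{v}) \leq \overline{G}_{T,v}(u)$ for every $\widetilde{v} \in \mathbf{V}$, $\widetilde{\gamma}_T \to \kappa_\gamma < \infty$ by Condition \ref{Condition 1 LapBai97}, and $\Lambda^0(u) \geq c|u|$ with $c > 0$, one obtains $|A_2(u, \widetilde{v})| \leq C |u|^m \exp\bigl(\kappa_\gamma a \|\delta^0\|^2 |u|^\varpi + \kappa_\gamma \nu - c|u|\bigr)$. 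Choosing $a$ small enough bounds the exponent above by $-c'|u|$ for all $|u|$ above some threshold, and $\int_{|u| > \psi_T} |u|^m e^{-c'|u|} du \to 0$ as $\psi_T \to \infty$, uniformly in $\widetilde{v}$.

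For $A_1$, the exponential decay comes from $Q_{T,v}(u)$, which by \eqref{eq. A.2.5}--\eqref{eq. A.2.6 LapBai97} equals $-(T|u|/\psi_T)\,\overline{g}_d(\delta_T, T_b^0 + Tu/\psi_T)$, and since $\psi_T = T \|\delta_T\|^2$ this simplifies to $-(|u|/\|\delta_T\|^2)\overline{g}_d$. I would then invoke Lemma \ref{Lemma A.2, LapBai97}-(i): for sufficiently large $K$, $\overline{g}_d(\delta_T, T_b) > C \|\delta_T\|^2$ uniformly over $|T_b - T_b^0| > K/\|\delta_T\|^2$ with probability at least $1 - \varepsilon$. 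Since $\psi_T \to \infty$, this region contains $\Gamma_{T,\psi}^c$ for $T$ large, so $Q_{T,v}(u) \leq -C|u|$ there. Combining with the same bound on $\widetilde{G}_{T,v}$ and with $\pi_{T,v}(u)$ bounded by continuity of $\pi$ on the compact set $\varGamma^0$, one gets $|A_1(u, \widetilde{v})| \leq C|u|^m \exp(-c''|u|)$ on the intersection of the two good events, uniformly in $\widetilde{v} \in \mathbf{V}$. Integration then yields $o_{\mathbb{P}}(1)$.

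The main obstacle is coordinating the polynomial-in-$|u|^\varpi$ upper bound on the stochastic part $\widetilde{G}_{T,v}$ with the linear lower bound on $-Q_{T,v}(u)/|u|$, so that the combined exponent stays linearly negative in $|u|$. This is handled by picking $\varpi$ strictly less than $1$ and $a$ strictly smaller than $c/(\kappa_\gamma \|\delta^0\|^2)$ in Lemma \ref{Lemma B.5, LapBai97}-(i); uniformity in $\widetilde{v}$ is automatic because the bound on $\overline{G}_{T,v}$ is already a supremum over $\mathbf{V}$ and $Q_{T,v}$, $\Lambda^0$, and $\pi_{T,v}$ do not depend on $\widetilde{v}$.
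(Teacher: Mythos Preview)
Your argument is correct and, for $A_2$, coincides with the paper's: both invoke Lemma~\ref{Lemma B.5, LapBai97}-(i) to show that the sublinear growth of $\widetilde{G}_{T,v}$ is dominated by the linear drift $\varLambda^0(u)$, making the integrand exponentially small on $\Gamma_{T,\psi}^c$.

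For $A_1$ the paper takes a slightly different route. Rather than splitting the exponent into its stochastic part $\widetilde{G}_{T,v}$ and deterministic part $Q_{T,v}$ and bounding each separately (via Lemma~\ref{Lemma B.5, LapBai97}-(i) and Lemma~\ref{Lemma A.2, LapBai97}-(i), as you do), the paper invokes Lemma~\ref{Lemma, Area (ib) Q} directly, which bounds the \emph{sum} $\widetilde{G}_{T,v}+Q_{T,v}=\overline{Q}_T$ by $-C\nu_T$ uniformly on the relevant region, yielding $A_1\le C_1\exp(-C_2\nu_T)$. Your decomposition is a bit more explicit and has the advantage of producing a pointwise bound of the form $|A_1(u,\widetilde v)|\le C|u|^m e^{-c''|u|}$, which handles the polynomial prefactor $|u|^m$ transparently upon integration; the paper's uniform bound instead relies implicitly on the finiteness of the integration domain (through $\mathbf{U}_T$) to control that factor. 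Both routes are valid and essentially equivalent in spirit.
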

\begin{proof}
We consider each integrand $A_{i}\left(u,\,\widetilde{v}\right)$
$\left(i=1,\,2\right)$ separately on $\Gamma_{T,\psi}^{c}$. Let
us consider $A_{1}$ first. Lemma \ref{Lemma, Area (ib) Q} yields
that whenever $\widetilde{\gamma}_{T}\rightarrow\kappa_{\gamma}<\infty,$
$A_{1}\left(u,\,\widetilde{v}\right)\leq C_{1}\exp\left(-C_{2}\nu_{T}\right)$
where $0<C_{1},\,C_{2}<\infty$ and $\nu_{T}$ is a divergent sequence.
Note that the number $C_{1}$ follows from Assumption \ref{Assumption Prior LapBai97}
(cf. $\pi\left(\cdot\right)<\infty$). The argument for $A_{2}\left(u,\,\widetilde{v}\right)$
relies on Lemma \ref{Lemma B.5, LapBai97}-(i), which shows that $G_{T,v}\left(u,\,\widetilde{v}\right)$
is always less than $C\left|u\right|^{\varpi}$ uniformly on $\Gamma_{T,\psi}^{c}$,
with $C>0$ and $\varpi\in\left(1/2,\,1\right)$. Thus, $A_{2}\left(u,\,\widetilde{v}\right)=o_{\mathbb{P}}\left(1\right)$
uniformly on $\mathbf{V}$. 
\end{proof}
Let $\Gamma_{T,K}\triangleq\left\{ u\in\mathbb{R}:\,\left|u\right|<K,\,K>0\right\} ,$
and $\Gamma_{T,\eta}\triangleq\left\{ u\in\mathbb{R}:\,K\leq\left|u\right|\leq\eta\psi_{T},\,K,\eta>0\right\} .$
\begin{lem}
\label{Lemma B.8 LapBai97}For any polynomial function $p\in\boldsymbol{P}$
and any $C<\infty$, let 
\begin{align*}
D_{T} & \triangleq\sup_{\widetilde{v}\in\mathbf{W}}\int_{\Gamma_{T,K}}\left|p\left(u\right)\right|\exp\left\{ C\widetilde{G}_{T,v}\left(u,\,\widetilde{v}\right)\right\} \left|\exp\left(R_{T,v}\left(u\right)\right)-1\right|\exp\left(-\varLambda^{0}\left(u\right)\right)du=o_{\mathbb{P}}\left(1\right).
\end{align*}
\end{lem}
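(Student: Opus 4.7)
The plan is to exploit the fact that $\Gamma_{T,K}=\{|u|<K\}$ has Lebesgue measure at most $2K<\infty$ and to isolate all of the smallness in the single factor $|\exp(R_{T,v}(u))-1|$, the remaining factors being uniformly stochastically bounded. On $\Gamma_{T,K}$, $|p(u)|$ is bounded by some $M_{p}<\infty$ and $\exp(-\varLambda^{0}(u))\leq 1$ because $\varLambda^{0}(u)\geq 0$. For the $\widetilde{G}_{T,v}$ factor, Lemma \ref{Lemma B.2, LapBai97} gives $\widetilde{G}_{T,v}(u,\widetilde{v})\Rightarrow\mathscr{W}(u)$ in $\mathbb{D}_{b}(\mathbf{C}\times\mathbf{V})$ with $\mathbf{C}=[-K,K]\supset\Gamma_{T,K}$, so the continuous mapping theorem and tightness yield
\begin{align*}
\sup_{u\in\Gamma_{T,K},\,\widetilde{v}\in\mathbf{V}}\exp\bigl(C\widetilde{G}_{T,v}(u,\widetilde{v})\bigr)=O_{\mathbb{P}}(1),
\end{align*}
since $\sup_{u\in[-K,K]}|\mathscr{W}(u)|$ is almost surely finite.

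The core step is to show $\sup_{u\in\Gamma_{T,K}}|R_{T,v}(u)|=o_{\mathbb{P}}(1)$. I would invoke Lemma \ref{Lemma, Area (ia), Bai A.5}: on $B_{T,K}$ (which matches $\Gamma_{T,K}$ via the substitution $T_{b}=T_{b}^{0}+\lfloor u/\|\delta_{T}\|^{2}\rfloor$) the concentrated criterion satisfies $\overline{Q}_{T}=-\delta'_{T}Z'_{\Delta}Z_{\Delta}\delta_{T}+2\,\mathrm{sgn}(T_{b}^{0}-T_{b})\,\delta'_{T}Z'_{\Delta}e+o_{\mathbb{P}}(1)$ uniformly. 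Taking conditional expectation given $X$ eliminates the middle (mean-zero) term, yielding $Q_{T}^{0}=-\delta'_{T}Z'_{\Delta}Z_{\Delta}\delta_{T}+o_{\mathbb{P}}(1)$. Lemma \ref{Lemma ZZ conv prob} then gives $\delta'_{T}Z'_{\Delta}Z_{\Delta}\delta_{T}=|u|(\delta^{0})'\overline{V}\delta^{0}+o_{\mathbb{P}}(1)=\varLambda^{0}(u)+o_{\mathbb{P}}(1)$. The perturbation of the first argument of $Q_{T}^{0}$ from $\theta^{0}$ to $\theta^{0}+v/r_{T}$, with $v$ in a compact set and $r_{T}=\sqrt{T}\to\infty$, contributes only a lower-order term by smoothness of the criterion in $\theta$, so $R_{T,v}(u)=o_{\mathbb{P}}(1)$ for each fixed $u\in\Gamma_{T,K}$.

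The main obstacle is upgrading this pointwise statement to uniform convergence on $\Gamma_{T,K}$. For this I would partition $[-K,K]$ into finitely many subintervals of small width $\eta$; the monotonicity of $u\mapsto\delta'_{T}Z'_{\Delta}Z_{\Delta}\delta_{T}$ in $|u|$ (for fixed sign of $u$) bounds the fluctuation between grid points by differences evaluated at the grid, reducing the task to a union bound over finitely many points, each handled by Lemma \ref{Lemma ZZ conv prob} together with the mixingale conditions of Assumption \ref{Assumption A5 Bai97 and A4 BP98}. Sending $\eta\downarrow 0$ after $T\to\infty$ delivers $\sup_{u\in\Gamma_{T,K}}|R_{T,v}(u)|=o_{\mathbb{P}}(1)$, whence the elementary bound $|\exp(r)-1|\leq e^{|r|}|r|$ gives $\sup_{u\in\Gamma_{T,K}}|\exp(R_{T,v}(u))-1|=o_{\mathbb{P}}(1)$. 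Combining the three controls,
\begin{align*}
D_{T}\leq 2K\cdot M_{p}\cdot\sup_{u,\widetilde{v}}\exp\bigl(C\widetilde{G}_{T,v}(u,\widetilde{v})\bigr)\cdot\sup_{u\in\Gamma_{T,K}}\bigl|\exp(R_{T,v}(u))-1\bigr|=O_{\mathbb{P}}(1)\cdot o_{\mathbb{P}}(1)=o_{\mathbb{P}}(1),
\end{align*}
which closes the argument.
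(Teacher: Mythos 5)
Your proposal is correct and follows essentially the same route as the paper: isolate $\left|\exp\left(R_{T,v}\left(u\right)\right)-1\right|$ as the vanishing factor, control the Gaussian factor via Lemma \ref{Lemma B.2, LapBai97}, and control $R_{T,v}\left(u\right)$ on $\Gamma_{T,K}$ via Lemma \ref{Lemma, Area (ia), Bai A.5} and Lemma \ref{Lemma ZZ conv prob}. The only mechanical difference is at the last step: the paper applies Lemma \ref{Lemma B.3, LapBai97} with $i=0$, $c=\epsilon$, obtaining $D_{T}\leq\epsilon\,O_{\mathbb{P}}\left(1\right)$ uniformly in $\epsilon$ for large $T$ and then letting $\epsilon\downarrow0$, whereas you use the sharper direct bound $\left|e^{r}-1\right|\leq e^{\left|r\right|}\left|r\right|$ together with $\sup_{u\in\Gamma_{T,K}}\left|R_{T,v}\left(u\right)\right|=o_{\mathbb{P}}\left(1\right)$; the two are interchangeable here. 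You are also more explicit than the paper about upgrading the pointwise statement of Lemma \ref{Lemma ZZ conv prob} to uniformity over $\Gamma_{T,K}$ (the paper leans on the uniformity already asserted in Lemma \ref{Lemma, Area (ia), Bai A.5} and states the $O_{\mathbb{P}}\left(\left\Vert \delta_{T}\right\Vert ^{2}\right)$ bound only ``for each $u$''); your monotonicity-plus-grid argument is a legitimate way to make that step airtight and is a genuine, if small, improvement in rigor.
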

\begin{proof}
Let $0<\epsilon<1.$ We shall use Lemma \ref{Lemma B.3, LapBai97}
with $i=0$, $a=R_{T,v}\left(u\right)/c$, and $c=\epsilon$ to deduce
that $D_{T}=O_{\mathbb{P}}\left(\epsilon\right)$ and then let $\epsilon\rightarrow0.$
Note that
\begin{align*}
\epsilon^{-1}D_{T}\leq C\int_{\Gamma_{T,K}}\left|p\left(u\right)\right|\exp\left(C\overline{G}_{T,v}\left(u,\,\widetilde{v}\right)+\left|\epsilon^{-1}R_{T,v}\left(u\right)\right|-\varLambda^{0}\left(u\right)\right)du & .
\end{align*}
By definition, $K\geq u=\left\Vert \delta_{T}\right\Vert ^{2}\left(T_{b}-T_{b}^{0}\right)$
on $\Gamma_{T,K}.$ By Lemma \ref{Lemma, Area (ia), Bai A.5}-\ref{Lemma ZZ conv prob},
on $\Gamma_{T,K}$ we have $R_{T,v}\left(u\right)=O_{\mathbb{P}}\left(\left\Vert \delta_{T}\right\Vert ^{2}\right)$
for each $u.$ Thus, for large enough $T$, the right-hand side above
is $O_{\mathbb{P}}\left(1\right)$ and does not depend on $\epsilon.$
Thus, $D_{T}=\epsilon O_{\mathbb{P}}\left(1\right).$ The claim of
the lemma follows by letting $\epsilon$ approach zero. 
\end{proof}
\begin{lem}
\label{Lemma B.9 LapBai97}For $p\in\boldsymbol{P},$ 
\begin{align*}
D_{2,T} & \triangleq\sup_{\widetilde{v}\in\mathbf{V}}\int_{\Gamma_{T,\eta}}\left|p\left(u\right)\right|\exp\left\{ \widetilde{\gamma}_{T}\widetilde{G}_{T,v}\left(u,\,\widetilde{v}\right)\right\} \exp\left(-\varLambda^{0}\left(u\right)\right)\left|\pi_{T,v}\left(u\right)-\pi^{0}\right|du=o_{\mathbb{P}}\left(1\right).
\end{align*}
\end{lem}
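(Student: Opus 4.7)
The plan is to exploit two small quantities: (a) the distance $|v|/r_T + |u|/\psi_T$ between the argument of $\pi_{T,v}(u)$ and $\lambda_b^0$, both of which vanish on $\Gamma_{T,\eta}$ once $\eta$ is small, and (b) the exponential decay $\exp(-\varLambda^0(u))$, which dominates the stochastic growth of $\exp(\widetilde{\gamma}_T\widetilde{G}_{T,v}(u,\widetilde{v}))$ after we invoke Lemma \ref{Lemma B.6 LapBai97}.

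First, I would use the smoothness of $\pi$ at $\lambda_b^0$ (Assumption \ref{Assumption Prior LapBai97} gives twice continuous differentiability there, hence Lipschitz continuity in a neighborhood of $\lambda_b^0$) to obtain, for $\eta$ sufficiently small and $T$ sufficiently large so that $\lambda_{b,T}^0(v)+u/\psi_T$ stays in that neighborhood uniformly in $v\in\mathbf{V}$ and $u\in\Gamma_{T,\eta}$, the bound
\begin{align*}
|\pi_{T,v}(u)-\pi^0|&\leq L\bigl(|v|/r_T+|u|/\psi_T\bigr)
\end{align*}
for some $L<\infty$. Plugging this into $D_{2,T}$ gives $D_{2,T}\leq L\sup_{v\in\mathbf{V}}(|v|/r_T)\cdot I_{1,T}+(L/\psi_T)\cdot I_{2,T}$, where
\begin{align*}
I_{1,T}&=\sup_{\widetilde{v}\in\mathbf{V}}\int_{\Gamma_{T,\eta}}|p(u)|\exp\bigl(\widetilde{\gamma}_T\widetilde{G}_{T,v}(u,\widetilde{v})-\varLambda^{0}(u)\bigr)du,\\
I_{2,T}&=\sup_{\widetilde{v}\in\mathbf{V}}\int_{\Gamma_{T,\eta}}|u|\,|p(u)|\exp\bigl(\widetilde{\gamma}_T\widetilde{G}_{T,v}(u,\widetilde{v})-\varLambda^{0}(u)\bigr)du.
\end{align*}

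Next, I would show $I_{1,T},I_{2,T}=O_{\mathbb P}(1)$. For any $\widetilde v\in\mathbf V$, $\widetilde G_{T,v}(u,\widetilde v)\leq\overline G_{T,v}(u)$, so Lemma \ref{Lemma B.6 LapBai97} yields: for any $a>0$ and any $\varepsilon>0$ there is $C<\infty$ with $\mathbb P[\overline G_{T,v}(u)\leq C+a\|\delta^0\|^2|u|\text{ for all }u\in\mathbb R]\geq1-\varepsilon$. Under Condition \ref{Condition 1 LapBai97}, $\widetilde\gamma_T\to\kappa_\gamma<\infty$, and because $V_1,V_2$ are positive definite (Assumption \ref{Assumption A.9b Bai 97, LapBai97}) the linear growth coefficients of $\varLambda^0$, namely $(\delta^0)'V_i\delta^0$ ($i=1,2$), are strictly positive. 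Hence I can select $a>0$ small enough that, for all large $T$, $\widetilde\gamma_T a\|\delta^0\|^2<\tfrac12\min_{i=1,2}(\delta^0)'V_i\delta^0$. On the event of Lemma \ref{Lemma B.6 LapBai97},
\begin{align*}
\widetilde\gamma_T\widetilde G_{T,v}(u,\widetilde v)-\varLambda^0(u)\leq\widetilde\gamma_T C-c'|u|
\end{align*}
for some $c'>0$, so $I_{1,T}$ and $I_{2,T}$ are dominated by $e^{\widetilde\gamma_T C}\int(|p(u)|+|u||p(u)|)e^{-c'|u|}du<\infty$ with probability $\geq1-\varepsilon$. Since $\varepsilon$ was arbitrary, $I_{j,T}=O_{\mathbb P}(1)$.

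Finally, because $\mathbf{V}$ is compact and $r_T,\psi_T\to\infty$, both $\sup_{v\in\mathbf V}|v|/r_T$ and $1/\psi_T$ are $o(1)$, so $D_{2,T}=o_{\mathbb P}(1)$. The main obstacle is calibrating the constant $a$ in Lemma \ref{Lemma B.6 LapBai97} against the linear drift in $\varLambda^0$ while keeping $\widetilde{v}$-uniformity through $\overline{G}_{T,v}$; once Condition \ref{Condition 1 LapBai97} pins down the limit of $\widetilde\gamma_T$, the positive definiteness of $V_1,V_2$ ensures such a choice is available, and the rest is standard dominated-convergence type bookkeeping.
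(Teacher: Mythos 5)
Your proposal is correct and follows essentially the same route as the paper: bound $\left|\pi_{T,v}\left(u\right)-\pi^{0}\right|$ by an $o\left(1\right)$ term plus $C\psi_{T}^{-1}\left|u\right|$ via the local smoothness of $\pi$, split $D_{2,T}$ accordingly, and show both integrals are $O_{\mathbb{P}}\left(1\right)$ using Lemma \ref{Lemma B.6 LapBai97} together with $\widetilde{\gamma}_{T}\rightarrow\kappa_{\gamma}<\infty$. Your explicit calibration of the constant $a$ against the drift coefficients $\left(\delta^{0}\right)'V_{i}\delta^{0}$ simply spells out what the paper leaves implicit, and the $L\left|v\right|/r_{T}$ bound on the first term is a harmless strengthening of the paper's $o\left(1\right)$ claim.
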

\begin{proof}
By the differentiability of $\pi\left(\cdot\right)$ at $\lambda_{b}^{0}$
(cf. Assumption \ref{Assumption Prior LapBai97}), for any $u\in\mathbb{R}$
$\left|\pi_{T,v}\left(u\right)-\pi^{0}\right|\leq\left|\pi\left(\lambda_{b,T}^{0}\left(v\right)\right)-\pi^{0}\right|+C\psi_{T}^{-1}\left|u\right|,$
with $C>0$. The first term on the right-hand side is $o\left(1\right)$
and does not depend on $u.$ Recalling that $\overline{G}_{T,v}\left(u,\,\widetilde{v}\right)=\sup_{\widetilde{v}\in\mathbf{V}}\left|\widetilde{G}_{T,v}\left(u,\,\widetilde{v}\right)\right|,$
\begin{align*}
D_{2,T} & \leq K\left[o\left(1\right)\int_{\Gamma_{T,\eta}}d_{T}\left(u\right)du+\psi_{T}^{-1}\int_{\Gamma_{T,\eta}}\left|u\right|d_{T}\left(u\right)du\right]\leq K\left[o\left(1\right)O_{\mathbb{P}}\left(1\right)+\psi_{T}^{-1}O_{\mathbb{P}}\left(1\right)\right],
\end{align*}
 where $d_{T}\left(u\right)\triangleq\left|p\left(u\right)\right|\exp\left\{ \widetilde{\gamma}_{T}\overline{G}_{T,v}\left(u,\,\widetilde{v}\right)\right\} \left|\exp\left(-\varLambda^{0}\left(u\right)\right)\right|$
and the $O_{\mathbb{P}}\left(1\right)$ terms follows from Lemma \ref{Lemma B.6 LapBai97}
and $\widetilde{\gamma}_{T}\rightarrow\kappa_{\gamma}<\infty$. Since
$\psi_{T}\rightarrow\infty,$ we have $D_{2,T}=o_{\mathbb{P}}\left(1\right)$.
\end{proof}
\begin{lem}
\label{Lemma B.10 LapBai97}For any $p\in\boldsymbol{P}$ and constants
$C_{1},\,C_{2}>0$, $\int_{\Gamma_{T,\psi}^{c}}\left|p\left(u\right)\right|\exp\left(C_{1}\overline{G}_{T}\left(u\right)-C_{2}\left|u\right|\right)du=o_{\mathbb{P}}\left(1\right).$ 
\end{lem}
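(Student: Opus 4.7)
The plan is to combine the stochastic upper bound on $\overline{G}_T(u)$ furnished by Lemma \ref{Lemma B.5, LapBai97}-(i) with a deterministic estimate showing that the linear penalty $-C_2|u|$ eventually dominates the sublinear bound of order $|u|^{\varpi}$ with $\varpi\in(1/2,1)$. This produces a uniform Gaussian-type tail bound on the integrand, after which the integral is dispatched by ordinary dominated-convergence reasoning, using that the domain $\Gamma_{T,\psi}^{c}=\{|u|>\psi_{T}\}$ recedes to infinity as $\psi_{T}\to\infty$.

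Concretely, I would fix an arbitrary $\varepsilon>0$, choose any $a>0$ and $\nu>0$ (for instance $a=\nu=1$), and apply Lemma \ref{Lemma B.5, LapBai97}-(i) to obtain events $\Omega_{T}$ with $\liminf_{T\to\infty}\mathbb{P}(\Omega_{T})\geq 1-\varepsilon$ on which
\begin{align*}
\overline{G}_{T}(u) \leq a\|\delta^{0}\|^{2}|u|^{\varpi}+\nu \qquad\text{for every }u\in\Gamma_{T,\psi}^{c}.
\end{align*}
Since $\varpi<1$, the ratio $|u|^{\varpi-1}\to 0$ as $|u|\to\infty$, so there is a constant $M<\infty$ depending only on $C_{1},C_{2},a,\|\delta^{0}\|$ such that $C_{1}a\|\delta^{0}\|^{2}|u|^{\varpi}\leq(C_{2}/2)|u|$ for all $|u|\geq M$. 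Because $\psi_{T}\to\infty$, we have $\psi_{T}\geq M$ for all $T$ large, and then on $\Omega_{T}$ every $u\in\Gamma_{T,\psi}^{c}$ satisfies
\begin{align*}
C_{1}\overline{G}_{T}(u)-C_{2}|u| \;\leq\; C_{1}a\|\delta^{0}\|^{2}|u|^{\varpi}+C_{1}\nu-C_{2}|u| \;\leq\; -\tfrac{C_{2}}{2}|u|+C_{1}\nu.
\end{align*}

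Integrating this pointwise bound yields, on $\Omega_{T}$,
\begin{align*}
\int_{\Gamma_{T,\psi}^{c}}|p(u)|\exp\bigl(C_{1}\overline{G}_{T}(u)-C_{2}|u|\bigr)\,du \;\leq\; e^{C_{1}\nu}\int_{|u|>\psi_{T}}|p(u)|\,e^{-C_{2}|u|/2}\,du,
\end{align*}
and the right-hand side is a deterministic quantity that tends to $0$ as $T\to\infty$, since $|p(u)|\,e^{-C_{2}|u|/2}$ is integrable on $\mathbb{R}$ (polynomial times decaying exponential) and $\psi_{T}\to\infty$. Because $\varepsilon>0$ was arbitrary, the integral in the statement is $o_{\mathbb{P}}(1)$, which is the claim. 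I do not anticipate a real obstacle here: the crucial probabilistic estimate is already supplied by Lemma \ref{Lemma B.5, LapBai97}-(i), and the remaining steps amount to the bookkeeping between the sublinear stochastic bound and the linear deterministic penalty together with a standard tail estimate.
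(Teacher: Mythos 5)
Your proof is correct and follows exactly the route the paper intends: its own proof of this lemma is the one-line citation of Lemma \ref{Lemma B.5, LapBai97}, and you have simply filled in the bookkeeping — using part (i) with $\varpi\in(1/2,1)$ to get the sublinear high-probability bound on $\overline{G}_{T,v}$, noting that the linear penalty dominates once $|u|>\psi_{T}\to\infty$, and finishing with the deterministic tail estimate for a polynomial times a decaying exponential. No gaps.
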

\begin{proof}
It follows from Lemma \ref{Lemma B.5, LapBai97}.
\end{proof}
\begin{lem}
\label{Lemma B.11 LapBai97}For $p\in\boldsymbol{P}$ and constants
$a_{1},\,a_{2},\,a_{3}\geq0$, with $a_{2}+a_{3}>0$, let 
\begin{align*}
D_{3,T} & \triangleq\int_{\widetilde{\mathbf{U}}_{T}^{c}}\left|p\left(u\right)\right|\exp\left(\widetilde{\gamma}_{T}\left\{ a_{1}\overline{G}_{T,v}\left(u\right)+a_{2}Q_{T,v}\left(u\right)-a_{3}\varLambda^{0}\left(u\right)\right\} \right)du=o_{\mathbb{P}}\left(1\right).
\end{align*}
\end{lem}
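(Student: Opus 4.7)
The plan is to show that on the tail region $\widetilde{\mathbf{U}}_{T}^{c}=\{u:\,|u|>\psi_{T}\}$ the exponent is dominated by a strictly negative multiple of $|u|$, so that $D_{3,T}$ is the integral of a rapidly decaying integrand over a domain that is pushed to infinity as $T\rightarrow\infty$. Concretely, I will bound the three ingredients separately and then combine them.

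First, for the stochastic piece $a_{1}\overline{G}_{T,v}(u)$, I invoke Lemma \ref{Lemma B.5, LapBai97}(i): for any $a>0$ and $\varpi\in(1/2,\,1)$, with probability at least $1-\varepsilon$ we have $\overline{G}_{T,v}(u)\leq a\|\delta^{0}\|^{2}|u|^{\varpi}$ uniformly on $\Gamma_{T,\psi}^{c}$. Second, for the drift $Q_{T,v}(u)$, the definition of $Q_{T,v}$ and the identification Lemma \ref{Lemma A.2, LapBai97}(ii), together with Lemma \ref{Lemma ZZ conv prob}, give $Q_{T,v}(u)\leq-c_{Q}|u|$ for some $c_{Q}>0$ with probability at least $1-\varepsilon$, uniformly on $\widetilde{\mathbf{U}}_{T}^{c}$ (using that $\psi_{T}\rightarrow\infty$ so the region lies in the ``large $|u|$'' regime controlled by that lemma). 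Third, by the explicit form \eqref{eq. V(s) Limit Process Theorem Post Mean LapBai97} of $\varLambda^{0}$, $-\varLambda^{0}(u)\leq-c_{\Lambda}|u|$ with $c_{\Lambda}>0$. Since $a_{2}+a_{3}>0$ and $a_{2},a_{3}\geq0$, the combined linear coefficient $a_{2}c_{Q}+a_{3}c_{\Lambda}$ is strictly positive.

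Putting the three bounds together and using Condition \ref{Condition 1 LapBai97} which gives $\widetilde{\gamma}_{T}\rightarrow\kappa_{\gamma}<\infty$, one obtains with probability at least $1-2\varepsilon$
\begin{align*}
D_{3,T} & \leq\int_{\widetilde{\mathbf{U}}_{T}^{c}}|p(u)|\exp\left(\kappa_{\gamma}a_{1}a\|\delta^{0}\|^{2}|u|^{\varpi}-\kappa_{\gamma}(a_{2}c_{Q}+a_{3}c_{\Lambda})|u|\right)du.
\end{align*}
Because $\varpi<1$, the integrand is dominated by $|p(u)|\exp(-c|u|)$ for all sufficiently large $|u|$ and some $c>0$, hence integrable over $\mathbb{R}$ and, restricted to $|u|>\psi_{T}$ with $\psi_{T}\rightarrow\infty$, the integral tends to zero. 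Since $\varepsilon$ was arbitrary, $D_{3,T}=o_{\mathbb{P}}(1)$.

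The main obstacle is to ensure that the three ``uniform in $v,\widetilde{v}$'' bounds for the stochastic part, the drift, and the prior-like term hold jointly on the same high-probability event, and that the constants $c_{Q},c_{\Lambda}$ do not degenerate as $T\rightarrow\infty$. The first is handled by intersecting the high-probability events from Lemma \ref{Lemma B.5, LapBai97}(i) and Lemma \ref{Lemma A.2, LapBai97}(ii); the second follows from Assumptions \ref{A.9a Bai 97}--\ref{Assumption A.9b Bai 97, LapBai97} which keep $V_{i}$ positive definite and $\delta^{0}$ fixed. The case $a_{2}=0$ (respectively $a_{3}=0$) is handled identically since $a_{2}+a_{3}>0$ guarantees the surviving linear term still dominates the sublinear stochastic growth.
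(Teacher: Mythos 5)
Your proof is correct and follows essentially the same route as the paper, whose entire proof is a one-line deferral to Lemma \ref{Lemma, Area (ii), Q LapBai97} (far-region drift domination); your version simply makes explicit the separate sublinear bound on $\overline{G}_{T,v}$ from Lemma \ref{Lemma B.5, LapBai97}-(i) and the linear negative drift, which is in fact needed to cover general $\left(a_{1},\,a_{2},\,a_{3}\right)$ rather than only the case $a_{1}=a_{2}$ handled directly by that lemma. One small correction: the uniform bound $Q_{T,v}\left(u\right)\leq-c_{Q}\left|u\right|$ on $\widetilde{\mathbf{U}}_{T}^{c}$ should be drawn from Lemma \ref{Lemma A.2, LapBai97}-(i), which covers all $\left|T_{b}-T_{b}^{0}\right|>K\left\Vert \delta_{T}\right\Vert ^{-2}$ and gives $\overline{g}_{d}>C\left\Vert \delta_{T}\right\Vert ^{2}$, hence $Q_{T,v}\left(u\right)\leq-C\left|u\right|$; part (ii) is stated only for the intermediate set $B_{T,K}^{c}$, which excludes $\left|T_{b}-T_{b}^{0}\right|>T\eta$ and so does not by itself cover the region of integration.
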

\begin{proof}
It follows from Lemma \ref{Lemma, Area (ii), Q LapBai97}.
\end{proof}
\begin{lem}
\label{Lemma B.12 Posterior Mean}For any integer $m\geq0,$ 
\begin{align*}
\sup_{\widetilde{v}\in\mathbf{V}} & \left|\int_{\mathbb{R}}u^{m}\exp\left(\widetilde{\gamma}_{T}\widetilde{G}_{T,v}\left(u,\,\widetilde{v}\right)\right)\left[\pi_{T,v}\left(u\right)\exp\left(Q_{T,v}\left(u\right)\right)-\pi^{0}\exp\left(-\varLambda^{0}\left(u\right)\right)\right]du\right|\\
 & =\sup_{\widetilde{v}\in\mathbf{V}}\left|\int_{\mathbb{R}}\left(A_{1}\left(u,\,\widetilde{v}\right)-A_{2}\left(u,\,\widetilde{v}\right)\right)du\right|\\
 & =o_{\mathbb{P}}\left(1\right).
\end{align*}
\end{lem}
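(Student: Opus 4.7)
The plan is to reduce the integral to three pieces and invoke the preceding lemmas, almost mechanically, on each piece. Writing
\begin{align*}
\mathbb{R}=\Gamma_{T,K}\cup\Gamma_{T,\eta}\cup\Gamma_{T,\psi}^{c}
\end{align*}
for $K$ large and $\eta>0$ small (with the narrow annulus $\{\eta\psi_{T}<|u|\le\psi_{T}\}$ absorbed into the tail via Lemma~\ref{Lemma B.11 LapBai97}, and noting $\pi_{T,v}\equiv 0$ off $\mathbf{U}_{T}$), the tail contribution is handled directly by Lemma~\ref{Lemma before B.8 LapBai97}, which gives $\sup_{\widetilde{v}\in\mathbf{V}}|\int_{\Gamma_{T,\psi}^{c}}(A_{1}-A_{2})\,du|=o_{\mathbb{P}}(1)$. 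For the remaining two regions I would use the ``add and subtract $\pi^{0}\exp(-\varLambda^{0})$'' decomposition
\begin{align*}
A_{1}-A_{2} &= u^{m}\pi_{T,v}(u)\exp\bigl(\widetilde{\gamma}_{T}\widetilde{G}_{T,v}\bigr)\exp(-\varLambda^{0}(u))\bigl[\exp(R_{T,v}(u))-1\bigr]\\
&\quad + u^{m}\bigl(\pi_{T,v}(u)-\pi^{0}\bigr)\exp\bigl(\widetilde{\gamma}_{T}\widetilde{G}_{T,v}\bigr)\exp(-\varLambda^{0}(u)),
\end{align*}
so that each piece matches the hypotheses of one of Lemmas~\ref{Lemma B.8 LapBai97}--\ref{Lemma B.10 LapBai97}.

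On the central region $\Gamma_{T,K}$, Lemma~\ref{Lemma B.8 LapBai97} bounds the first summand (taking $p(u)=u^{m}$, using $\pi_{T,v}\le C_{\pi}$ from Assumption~\ref{Assumption Prior LapBai97} and $\widetilde{\gamma}_{T}\to\kappa_{\gamma}<\infty$ from Condition~\ref{Condition 1 LapBai97}). For the second summand, continuity of $\pi$ at $\lambda_{b}^{0}$ gives $\sup_{|u|\le K}|\pi_{T,v}(u)-\pi^{0}|=o(1)$, while $\int_{\Gamma_{T,K}}|u|^{m}\exp(\widetilde{\gamma}_{T}\overline{G}_{T,v})\exp(-\varLambda^{0})\,du=O_{\mathbb{P}}(1)$ by Lemma~\ref{Lemma B.6 LapBai97}, so the product is $o_{\mathbb{P}}(1)$. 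On the intermediate region $\Gamma_{T,\eta}$, the second summand is exactly what Lemma~\ref{Lemma B.9 LapBai97} controls.

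The first summand on $\Gamma_{T,\eta}$ is the main obstacle: one cannot Taylor-expand $\exp(R_{T,v})-1$ as in Lemma~\ref{Lemma B.8 LapBai97} because the uniform expansion of the criterion in Lemma~\ref{Lemma, Area (ia), Bai A.5} holds only on $B_{T,K}$, so $R_{T,v}$ is merely $O_{\mathbb{P}}(1)$ here. I would circumvent this by bounding the difference by the sum and treating each exponential separately:
\begin{align*}
\bigl|\exp(Q_{T,v}(u))-\exp(-\varLambda^{0}(u))\bigr| \le \exp(Q_{T,v}(u))+\exp(-\varLambda^{0}(u)).
\end{align*}
The integral of $|u|^{m}\pi_{T,v}\exp(\widetilde{\gamma}_{T}\overline{G}_{T,v})\exp(-\varLambda^{0})$ over $\Gamma_{T,\eta}\supseteq\{|u|\ge K\}$ vanishes as $K\to\infty$ by Lemma~\ref{Lemma B.10 LapBai97}, exploiting that $-\varLambda^{0}(u)$ decays at rate $-c|u|$ while the Gaussian $\overline{G}_{T,v}(u)$ is dominated by $a|u|^{\varpi}$ with $\varpi<1$ (Lemma~\ref{Lemma B.5, LapBai97}). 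The analogous integral with $\exp(Q_{T,v})$ in place of $\exp(-\varLambda^{0})$ is handled by Lemma~\ref{Lemma B.11 LapBai97} applied with $a_{2}>0$, using Lemma~\ref{Lemma, Area (ii), Q LapBai97} to guarantee that $Q_{T,v}(u)$ is sufficiently negative on this region.

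Finally, uniformity in $\widetilde{v}\in\mathbf{V}$ is preserved throughout, because each invoked lemma bounds the $\widetilde{v}$-dependent stochastic factor by the dominating process $\overline{G}_{T,v}(u)=\sup_{\widetilde{v}\in\mathbf{V}}|\widetilde{G}_{T,v}(u,\widetilde{v})|$, and the deterministic estimates for $\pi_{T,v}$, $Q_{T,v}$, and $\varLambda^{0}$ do not involve $\widetilde{v}$ at all. Combining the three regional bounds yields $\sup_{\widetilde{v}\in\mathbf{V}}|\int_{\mathbb{R}}(A_{1}-A_{2})\,du|=o_{\mathbb{P}}(1)$, as claimed.
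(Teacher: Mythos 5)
Your argument is correct and follows the same overall strategy as the paper's proof: partition $\mathbb{R}$ into a central region, an intermediate region and a tail, apply the add-and-subtract decomposition into the $\left[\exp\left(R_{T,v}\right)-1\right]$ and $\left[\pi_{T,v}-\pi^{0}\right]$ pieces, and invoke Lemmas \ref{Lemma before B.8 LapBai97}--\ref{Lemma B.11 LapBai97}. The paper itself writes $\sup|\int_{\mathbb{R}}(A_{1}-A_{2})|\leq\sup|\int_{\Gamma_{T,\psi}}(A_{1}-A_{2})|+\sup|\int_{\Gamma_{T,\psi}^{c}}A_{2}|+\sup|\int_{\widetilde{\mathbf{U}}_{T}^{c}}A_{1}|$ and disposes of the first term by citing Lemmas \ref{Lemma B.8 LapBai97}--\ref{Lemma B.9 LapBai97}; strictly those cover only the $\left[\exp\left(R_{T,v}\right)-1\right]$ piece on $\Gamma_{T,K}$ and the $\left[\pi_{T,v}-\pi^{0}\right]$ piece on $\Gamma_{T,\eta}$, so your explicit treatment of the two remaining combinations---continuity of $\pi$ together with Lemma \ref{Lemma B.6 LapBai97} on the central region, and the bound $|\exp(Q_{T,v}(u))-\exp(-\varLambda^{0}(u))|\leq\exp(Q_{T,v}(u))+\exp(-\varLambda^{0}(u))$ with separate tail estimates on the intermediate region---supplies detail the paper elides, and your observation that the Taylor-type argument of Lemma \ref{Lemma B.8 LapBai97} cannot be extended to $\Gamma_{T,\eta}$ because the expansion of Lemma \ref{Lemma, Area (ia), Bai A.5} holds only on $B_{T,K}$ is exactly the right reason. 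Three minor citation slips, none fatal: Lemma \ref{Lemma B.10 LapBai97} is stated over $\Gamma_{T,\psi}^{c}$ rather than over $\left\{ u:\,\left|u\right|\geq K\right\}$, so for the $\exp(-\varLambda^{0})$ tail you should appeal directly to Lemma \ref{Lemma B.5, LapBai97}-(ii) and let $K\rightarrow\infty$; Lemma \ref{Lemma B.11 LapBai97} is stated over $\widetilde{\mathbf{U}}_{T}^{c}$ rather than over $\Gamma_{T,\eta}$, so for the $\exp(Q_{T,v})$ piece there the operative bound is Lemma \ref{Lemma, Area (ib) Q}; and the inclusion should read $\Gamma_{T,\eta}\subseteq\left\{ u:\,\left|u\right|\geq K\right\}$, not $\supseteq$.
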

\begin{proof}
By Assumption \ref{Assumption Prior LapBai97}, $A_{1}\left(u,\,\widetilde{v}\right)=0$
for $u\in\Gamma_{T,\psi}^{c}-\widetilde{\mathbf{U}}_{T}^{c}$. Then,
omitting arguments, we can write, 
\begin{align}
\sup\left|\int_{\mathbb{R}}\left(A_{1}-A_{2}\right)\right| & \leq\sup\left|\int_{\Gamma_{T,\psi}}\left(A_{1}-A_{2}\right)\right|+\sup\left|\int_{\Gamma_{T,\psi}^{c}}A_{2}\right|+\sup\left|\int_{\widetilde{\mathbf{U}}_{T}^{c}}A_{1}\right|.\label{eq 31}
\end{align}
 The first right-hand side term above converges in probability to
zero by Lemma \ref{Lemma B.8 LapBai97}-\ref{Lemma B.9 LapBai97}.
The second and the last term are each $o_{\mathbb{P}}\left(1\right)$
by, receptively, Lemma \ref{Lemma B.10 LapBai97} and Lemma \ref{Lemma B.11 LapBai97}.
\end{proof}
We are now in a position to conclude the proof of Theorem \ref{Theorem Posterior Mean Lap Estimation in Bai 97}.
\begin{proof}
Let $\mathbf{V}\subset\mathbb{R}^{p+2q}$ be a compact set. From \eqref{eq. (17)-1},
\begin{align*}
\psi_{T}\left(\widehat{\lambda}_{b}^{\mathrm{GL},*}\left(\widetilde{v},\,v\right)-\lambda_{b,T}^{0}\left(v\right)\right) & =\frac{\int_{\mathbb{R}}u\exp\left(\widetilde{\gamma}_{T}\left[\widetilde{G}_{T,v}\left(u,\,\widetilde{v}\right)+Q_{T,v}\left(u\right)\right]\right)\pi_{T,v}\left(u\right)du}{\int_{\mathbb{R}}\exp\left(\widetilde{\gamma}_{T}\left[\widetilde{G}_{T,v}\left(u,\,\widetilde{v}\right)+Q_{T,v}\left(u\right)\right]\right)\pi_{T,v}\left(u\right)du}.
\end{align*}
 For a large enough $T$, by Lemma \ref{Lemma B.12 Posterior Mean}
the right-hand is uniformly in $\widetilde{v}\in\mathbf{V}$ equal
to 
\begin{align*}
\frac{\int_{\mathbb{R}}u\exp\left(\widetilde{\gamma}_{T}\widetilde{G}_{T,v}\left(u,\,\widetilde{v}\right)\right)\exp\left(-\varLambda^{0}\left(u\right)\right)du}{\int_{\mathbb{R}}\exp\left(\widetilde{\gamma}_{T}\widetilde{G}_{T,v}\left(u,\,\widetilde{v}\right)\right)\exp\left(-\varLambda^{0}\left(u\right)\right)du} & +o_{\mathbb{P}}\left(1\right).
\end{align*}
 The first term is integrable with large probability by Lemma \ref{Lemma B.5, LapBai97}-\ref{Lemma B.6 LapBai97}.
Thus, by Lemma \ref{Lemma B.2, LapBai97} and the continuous mapping
theorem, we have for each $v\in\mathbf{V}$,
\begin{align}
T\left\Vert \delta_{T}\right\Vert ^{2}\left(\widehat{\lambda}_{b}^{\mathrm{GL},*}\left(\widetilde{v},\,v\right)-\lambda_{b,T}^{0}\left(v\right)\right) & \Rightarrow\frac{\int_{\mathbb{R}}u\exp\left(\mathscr{W}\left(u\right)\right)\exp\left(-\varLambda^{0}\left(u\right)\right)du}{\int_{\mathbb{R}}\exp\left(\mathscr{W}\left(u\right)\right)\exp\left(-\varLambda^{0}\left(u\right)\right)du}.\label{eq. Finite-dimens conv Th. 3.1}
\end{align}
 Note that $\partial_{\theta}Q_{T}^{0}\left(\theta,\,\cdot\right)$
is monotonic and bounded for all $\theta\in\mathbf{S}$. The argument
of Theorem 4.1 in \citet{jureckova:77} can be used in \eqref{eq. Finite-dimens conv Th. 3.1}
to achieve uniformity in $v$.
\end{proof}

\subsubsection{\label{Subsection Proof-of-Theorem 1}Proof of Proposition \ref{Proposition}}

We first need to introduce further notation. For a scalar $\overline{u}>0$
define $\Gamma_{\overline{u}}\triangleq\left\{ u:\in\mathbb{R}:\,\left|u\right|\leq\overline{u}\right\} .$
Note that $\widetilde{\gamma}_{T}^{-1}=o\left(1\right)$.  We shall
be concerned with the asymptotic properties of the following statistic:
\begin{align*}
\xi_{T}\left(\widetilde{v}\right) & =\frac{\int_{\Gamma_{\overline{u}}}u\exp\left(\widetilde{\gamma}_{T}\left(\widetilde{G}_{T,v}\left(u,\,\widetilde{v}\right)+Q_{T,v}\left(u\right)\right)\right)\pi_{T,v}\left(u\right)du}{\int_{\Gamma_{\overline{u}}}\exp\left(\widetilde{\gamma}_{T}\left(\widetilde{G}_{T,v}\left(u,\,\widetilde{v}\right)+Q_{T,v}\left(u\right)\right)\right)\pi_{T,v}\left(u\right)du}.
\end{align*}
 Furthermore, for every $\widetilde{v}\in\mathbf{V}$, let $\xi_{0}\left(\widetilde{v}\right)=\arg\max_{u\in\Gamma_{\overline{u}}}\mathscr{V}\left(u\right)$.
It turns out that $\xi_{0}\left(\widetilde{v}\right)$ is flat in
$\widetilde{v}$ and thus we write $\xi_{0}=\xi_{0}\left(\widetilde{v}\right)$.
Finally, recall that $u=T\left\Vert \delta_{T}\right\Vert ^{2}\left(\lambda_{b}-\lambda_{b,T}^{0}\left(v\right)\right).$
\begin{lem}
\label{Lemma C.2 LapBai97}Let $\Gamma_{T,\overline{u}}^{c}=\mathbf{U}_{T}-\Gamma_{\overline{u}}$.
Then for any $\epsilon>0$ and $m=0,\,1,$
\begin{align*}
\lim_{\overline{u}\rightarrow\infty}\lim_{T\rightarrow\infty}\mathbb{P}\left(\frac{\sup_{\widetilde{v}\in\mathbf{V}}\int_{\Gamma_{T,\overline{u}}^{c}}\left|u\right|^{m}\exp\left(\widetilde{\gamma}_{T}\left(\widetilde{G}_{T,v}\left(u,\,\widetilde{v}\right)+Q_{T,v}\left(u\right)\right)\right)\pi_{T,v}\left(u\right)du}{\sup_{\widetilde{v}\in\mathbf{V}}\int_{\mathbb{R}}\exp\left(\widetilde{\gamma}_{T}\left(\widetilde{G}_{T,v}\left(u,\,\widetilde{v}\right)+Q_{T,v}\left(u\right)\right)\right)\pi_{T,v}\left(u\right)du}>\epsilon\right) & =0.
\end{align*}
\end{lem}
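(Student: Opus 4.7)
The plan is to bound the numerator from above uniformly in $\widetilde{v}$ and the denominator from below at a convenient single $\widetilde{v}$, exploiting the diverging smoothing factor $\widetilde{\gamma}_T\to\infty$ under Condition \ref{Condition 2 LapBai97}. With this divergence the weighting $\exp(\widetilde{\gamma}_T f_T)$, where $f_T(u,\widetilde{v}) \triangleq \widetilde{G}_{T,v}(u,\widetilde{v})+Q_{T,v}(u)$, concentrates at the argmax of $f_T$, which is $O_{\mathbb{P}}(1)$ by virtue of Lemma \ref{Lemma B.2, LapBai97} and the a.s.\ finiteness of $\arg\max_s \mathscr{V}(s)$. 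Hence any tail window $\Gamma_{T,\overline{u}}^c$ captures asymptotically negligible mass, and the proof reduces to making these heuristics quantitative.

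For the numerator I split $\Gamma_{T,\overline{u}}^c = (\Gamma_{T,\psi}\cap\{|u|>\overline{u}\}) \cup \Gamma_{T,\psi}^c$. On the first piece I combine the expansion $Q_{T,v}(u) = -\varLambda^0(u)+o_{\mathbb{P}}(1)$ (cf.\ Lemmas \ref{Lemma, Area (ia), Bai A.5}--\ref{Lemma ZZ conv prob}) with the envelope bound of Lemma \ref{Lemma B.5, LapBai97} on $\overline{G}_{T,v}(u)$, so that $\sup_{\widetilde{v}}f_T(u,\widetilde{v}) \leq C|u|^{\varpi} - c|u| \leq -c|u|/2$ once $|u|>\overline{u}$ is large enough, yielding an upper bound of order $\int_{\overline{u}}^{\infty} u^m \exp(-\widetilde{\gamma}_T cu/2)\,du = O\bigl(\widetilde{\gamma}_T^{-(m+1)}(1+\overline{u})^m\exp(-\widetilde{\gamma}_T c\overline{u}/2)\bigr)$. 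On $\Gamma_{T,\psi}^c$ the estimate of Lemma \ref{Lemma, Area (ii), Q LapBai97} forces $Q_{T,v}(u) < -C\nu_T$, so this contribution, multiplied by $\widetilde{\gamma}_T$, is super-exponentially small.

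For the denominator I fix $\widetilde{v}=0$ and exploit the weak convergence $f_T(\cdot,0)\Rightarrow \mathscr{V}(\cdot)$ on compact sets to lower-bound the integral over a small neighborhood of the argmax. Writing $\xi_0 \triangleq \arg\max_s \mathscr{V}(s)$, a.s.\ finite, and noting $\mathscr{V}(\xi_0)\geq 0$ (since $\mathscr{V}$ vanishes at the natural zero point $u^{*}=-\psi_T(\lambda_{b,T}^0(v)-\lambda_b^0)=O_{\mathbb{P}}(1)$), an $\eta$-neighborhood of $\xi_0$ furnishes, with probability tending to one,
\[
\sup_{\widetilde{v}} J_T(\widetilde{v}) \;\geq\; J_T(0) \;\geq\; c\eta\,\pi_{\min}\exp\!\bigl(\widetilde{\gamma}_T[\mathscr{V}(\xi_0)-\epsilon]\bigr).
\]
Combining the two bounds, on the event $\{|\xi_0|\leq K\}$ the ratio is controlled by $C(1+\overline{u})^m \widetilde{\gamma}_T^{-(m+1)}\exp\!\bigl(-\widetilde{\gamma}_T[c\overline{u}/2+\mathscr{V}(\xi_0)-\epsilon]\bigr)$, whose exponent is strictly negative as soon as $\overline{u}>2\epsilon/c$, so the inner limit $T\to\infty$ of its probability is $0$ on that event; sending $K\to\infty$ removes the truncation and then taking $\overline{u}\to\infty$ completes the proof. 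The main obstacle is uniformity in $v$ of both the envelope bound on $\widetilde{G}_{T,v}$ in the intermediate regime $\overline{u}<|u|\leq\psi_T$ and the weak-convergence step for the denominator; these follow from the sup-in-$\widetilde{v}$ form of Lemma \ref{Lemma B.5, LapBai97} together with the Jure\v{c}kov\'a monotonicity argument already invoked in the proof of Theorem \ref{Theorem Posterior Mean Lap Estimation in Bai 97}.
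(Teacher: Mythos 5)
Your proof is correct in substance and follows the same mechanism as the paper's: under Condition \ref{Condition 2 LapBai97} the factor $\widetilde{\gamma}_{T}\rightarrow\infty$, so one shows the tail numerator is $O_{\mathbb{P}}\left(\exp\left(-c\overline{u}\widetilde{\gamma}_{T}\right)\right)$ while the denominator is bounded below by $\exp\left(-\overline{a}\widetilde{\gamma}_{T}\right)$ for arbitrarily small $\overline{a}>0$, and the ratio collapses. Two organizational differences are worth noting. First, for the numerator the paper works with the event $\mathbf{H}\left(\overline{u},\,a\right)=\left\{ u:\,\sup_{\widetilde{v}}\left|\widetilde{G}_{T,v}\left(u,\,\widetilde{v}\right)\right|\leq a\left|u\right|\right\} $ and disposes of its complement via Lemma \ref{Lemma B.5, LapBai97}-(ii) with $\varpi=1$, whereas you apply the $\left|u\right|^{\varpi}$ envelope directly on a high-probability event; these are interchangeable. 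Second, for the denominator the paper localizes at $u=0$, using that $\mathscr{V}\left(0\right)=0$ and that $\sup_{\left|u\right|\leq b}\sup_{\widetilde{v}}\left|\widetilde{G}_{T,v}\left(u,\,\widetilde{v}\right)\right|\leq\overline{a}$ with probability tending to one as $b\downarrow0$, giving $J_{2}\geq C_{\pi}b\exp\left(-\overline{a}\widetilde{\gamma}_{T}/2\right)$; you localize at the random argmax $\xi_{0}$ and use $\mathscr{V}\left(\xi_{0}\right)\geq\mathscr{V}\left(0\right)=0$. Your version buys nothing extra (only the sign of the exponent matters) and costs you the extra step of controlling a random neighborhood of a random point, so the paper's origin-anchored small-ball argument is cleaner, but yours is valid.

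One citation needs repair: on the intermediate regime $\overline{u}<\left|u\right|\leq\eta\psi_{T}$ you invoke the expansion $Q_{T,v}\left(u\right)=-\varLambda^{0}\left(u\right)+o_{\mathbb{P}}\left(1\right)$ via Lemmas \ref{Lemma, Area (ia), Bai A.5}--\ref{Lemma ZZ conv prob}, but that expansion is established only on sets of the form $B_{T,K}$, i.e., $\left|u\right|\leq K$ fixed. The uniform bound you actually need there is $Q_{T,v}\left(u\right)\leq-C\left|u\right|$ with probability tending to one, which comes from Lemma \ref{Lemma A.2, LapBai97}-(ii) (equivalently, the bound $Q_{T,v}\left(u\right)\leq-\min\left(\varLambda^{0}\left(u\right)/2,\,\eta\overline{\lambda}\left\Vert \delta_{T}\right\Vert ^{2}T\right)$ used in the paper's own proof). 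With that substitution your inequality $\sup_{\widetilde{v}}f_{T}\left(u,\,\widetilde{v}\right)\leq C\left|u\right|^{\varpi}-c\left|u\right|$ stands and the rest of the argument goes through.
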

\begin{proof}
Let $J_{1}$ and $J_{2}$ denote the numerator and denominator, respectively,
in the display of the lemma. Then, 
\begin{align}
\mathbb{P}\left(J_{1}/J_{2}>\epsilon\right) & \leq\mathbb{P}\left(J_{2}\leq\exp\left(-\overline{a}\widetilde{\gamma}_{T}\right)\right)+\mathbb{P}\left(J_{1}>\epsilon\exp\left(-\overline{a}\widetilde{\gamma}_{T}\right)\right),\label{eq. (36) Lap Bai97}
\end{align}
for any constant $\overline{a}>0.$ Let us consider the second term
term in \eqref{eq. (36) Lap Bai97}. For an arbitrary $a>0$, let
$\mathbf{H}\left(\overline{u},\,a\right)=\left\{ u\in\Gamma_{T,\overline{u}}^{c}:\,\sup_{\widetilde{v}\in\mathbf{V}}\left|\widetilde{G}_{T,v}\left(u,\,\widetilde{v}\right)\right|\leq a\left|u\right|\right\} .$
Let $\overline{\lambda}=2\sup_{\lambda_{b}\in\varGamma^{0}}\left|\lambda_{b}\right|.$
Note that $\overline{\lambda}<2$ and $\sup_{u\in\mathbf{H}\left(\overline{u},\,a\right)}\left|u\right|\leq\overline{\lambda}T\left\Vert \delta_{T}\right\Vert ^{2}$.
By Assumptions \ref{Assumption A4 Bai97} and \ref{A.9a Bai 97},
and Lemma \ref{Lemma, Area (ii), Q LapBai97}, $Q_{T,v}\left(u\right)\leq-\min\left(\varLambda^{0}\left(u\right)/2,\,\eta\overline{\lambda}\left\Vert \delta_{T}\right\Vert ^{2}T\right)$
uniformly for all large $T$ where $\eta>0$. Thus, 
\begin{align}
\sup_{u\in\mathbf{H}\left(\overline{u},\,a\right)} & \sup_{\widetilde{v}\in\mathbf{V}}\exp\left(\widetilde{\gamma}_{T}\left[\widetilde{G}_{T,v}\left(u,\,\widetilde{v}\right)+Q_{T,v}\left(u\right)\right]\right)\label{eq. 37 LapBai97}\\
 & \leq\sup_{u\in\mathbf{H}\left(\overline{u},\,a\right)}\sup_{\widetilde{v}\in\mathbf{V}}\exp\left(\widetilde{\gamma}_{T}\left[a\left|u\right|-\varLambda^{0}\left(u\right)/4+\left[\varLambda^{0}\left(u\right)/2+Q_{T,v}\left(u\right)\right]\right]\right)\nonumber \\
 & \leq\sup_{u\in\mathbf{H}\left(\overline{u},\,a\right)}\exp\left(\widetilde{\gamma}_{T}\left[a\left|u\right|-\varLambda^{0}\left(u\right)-\min\left(\varLambda^{0}\left(u\right)/4,\,\varLambda^{0}\left(u\right)/4+\eta\left\Vert \delta_{T}\right\Vert ^{2}T\right)\right]\right)\nonumber \\
 & \leq\sup_{u\in\mathbf{H}\left(\overline{u},\,c\right)}\exp\left(\widetilde{\gamma}_{T}\left[a\left|u\right|-C_{2}\left|u\right|\right]\right)+\exp\left(\gamma_{T}\left[a\overline{\lambda}-\eta C\right]\right)\nonumber \\
 & \leq\sup_{u\in\mathbf{H}\left(\overline{u},\,c\right)}\exp\left(\gamma_{T}\left[a-C_{2}\right]\right)+\exp\left(\gamma_{T}\left[a\overline{\lambda}-\eta C\right]\right)=o\left(\exp\left(-\gamma_{T}\overline{a}_{1}\right)\right),\nonumber 
\end{align}
when $a>0$ is chosen sufficiently small and for some $\overline{a}_{1}>0.$
Furthermore, by Lemma \ref{Lemma B.5, LapBai97}-(ii) below with
$\varpi=1,$ 
\begin{align}
\lim_{\overline{u}\rightarrow\infty}\lim_{T\rightarrow\infty}\mathbb{P}\left(u\in\left\{ \Gamma_{T,\overline{u}}^{c}-\mathbf{H}\left(\overline{u},\,c\right)\right\} \right) & \leq\lim_{\overline{u}\rightarrow\infty}\lim_{T\rightarrow\infty}\mathbb{P}\left(\sup_{\left|u\right|>\overline{u}}\frac{\widetilde{G}_{T,v}\left(u,\,\widetilde{v}\right)}{\left|u\right|}>a\right)=0.\label{Eq. (38) Lap Bai97-1}
\end{align}
 By combining  \eqref{eq. 37 LapBai97}-\eqref{Eq. (38) Lap Bai97-1},
$\mathbb{P}\left(J_{1}>\epsilon\exp\left(-\overline{a}\widetilde{\gamma}_{T}\right)\right)\rightarrow0$
as $T\rightarrow\infty$. Next, we consider the first right-hand
side term in \eqref{eq. (36) Lap Bai97}. Recall the definition of
$\lambda_{+}$ from Assumption \ref{Assumption A.9b Bai 97, LapBai97}
and let $0<b\leq\overline{a}/4\lambda_{+}$. Note that for $G_{T,v}\left(b\right)\triangleq\sup_{\left|u\right|\leq b}\sup_{\widetilde{v}\in\mathbf{V}}\left|\widetilde{G}_{T,v}\left(u,\,\widetilde{v}\right)\right|,$
\begin{align}
\mathbb{P}\left(J_{2}\leq\exp\left(-\overline{a}\widetilde{\gamma}_{T}\right)\right) & \leq\mathbb{P}\left(G_{T,v}\left(b\right)\leq\overline{a},\,J_{2}\leq\exp\left(-\overline{a}\widetilde{\gamma}_{T}\right)\right)+\mathbb{P}\left(G_{T,v}\left(b\right)>\overline{a}\right).\label{eq. (39) Lap Bai97}
\end{align}
 Under Assumption \ref{Assumption Prior LapBai97} and the second
part of Assumption \ref{Assumption A.9b Bai 97, LapBai97}, using
the definition of $b,$ 
\begin{align*}
\mathbb{P}\left(G_{T,v}\left(b\right)\leq\overline{a},\,J_{2}\leq\exp\left(-\overline{a}\widetilde{\gamma}_{T}\right)\right) & \leq\mathbb{P}\left(C_{\pi}\int_{\left|u\right|\leq b}\exp\left(\widetilde{\gamma}_{T}\left(-\overline{a}/2-\lambda_{+}b\right)\right)du\leq\exp\left(-\overline{a}\widetilde{\gamma}_{T}\right)\right)\\
 & \leq\mathbb{P}\left(C_{\pi}b\exp\left(\overline{a}\widetilde{\gamma}_{T}/2\right)\leq1\right)\rightarrow0,
\end{align*}
as $T\rightarrow\infty$. We shall use the uniform convergence in
Lemma \ref{Lemma B.2, LapBai97} for the second right-hand side term
in \eqref{eq. (39) Lap Bai97} to deduce that (recall that $\overline{a}$
was chosen sufficiently small and $b\leq\overline{a}/4\lambda_{+}$),
\begin{align*}
\lim_{b\rightarrow0}\lim_{T\rightarrow\infty}\mathbb{P}\left(G_{T,v}\left(b\right)>\overline{a}\right) & \leq\lim_{b\rightarrow0}\mathbb{P}\left(\sup_{\left|u\right|\leq b}\left|\mathscr{\mathscr{W}}\left(u\right)\right|>\overline{a}\right)=0.
\end{align*}
\end{proof}
\begin{lem}
\label{Lemma C.1. LapBai97}As $T\rightarrow\infty$, $\xi_{T}\left(\widetilde{v}\right)\overset{}{\Rightarrow}\xi_{0}$
in $\mathbb{D}_{b}\left(\mathbf{V}\right).$ 
\end{lem}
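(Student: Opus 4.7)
The plan is to exploit the fact that Condition \ref{Condition 2 LapBai97} forces $\widetilde{\gamma}_{T}\rightarrow\infty$, so the normalized integrand in the numerator and denominator of $\xi_{T}(\widetilde{v})$ behaves, on the compact set $\Gamma_{\overline{u}}$, like a sharply peaked Laplace-type integral. Thus $\xi_{T}(\widetilde{v})$ should be asymptotically equivalent to $\arg\max_{u\in\Gamma_{\overline{u}}}[\widetilde{G}_{T,v}(u,\widetilde{v})+Q_{T,v}(u)]$, and then the continuous mapping theorem, together with the weak convergence of this stochastic process to $\mathscr{V}(u)=\mathscr{W}(u)-\varLambda^{0}(u)$, will yield the limit $\xi_{0}=\arg\max_{u\in\Gamma_{\overline{u}}}\mathscr{V}(u)$.

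First I would establish joint weak convergence of the exponent on $\Gamma_{\overline{u}}\times\mathbf{V}$. Lemma \ref{Lemma B.2, LapBai97} provides $\widetilde{G}_{T,v}(u,\widetilde{v})\Rightarrow\mathscr{W}(u)$ in $\mathbb{D}_{b}(\Gamma_{\overline{u}}\times\mathbf{V})$, with a limit that is constant in $\widetilde{v}$. Combining the expansion in Lemma \ref{Lemma, Area (ia), Bai A.5} with Lemma \ref{Lemma ZZ conv prob}, one shows $Q_{T,v}(u)=-\varLambda^{0}(u)+o_{\mathbb{P}}(1)$ uniformly on $\Gamma_{\overline{u}}\times\mathbf{V}$. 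Since $\pi_{T,v}(u)\to\pi^{0}$ uniformly on $\Gamma_{\overline{u}}$ by Assumption \ref{Assumption Prior LapBai97}, the prior is harmless: it cancels in the ratio up to a multiplicative $1+o_{\mathbb{P}}(1)$ factor uniform in $\widetilde{v}$.

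Second, I would carry out the Laplace approximation step. Fix $\epsilon>0$ and let $B_{\epsilon}\triangleq\{u\in\Gamma_{\overline{u}}:|u-\xi_{0}|<\epsilon\}$. By a.s.\ uniqueness of the argmax of $\mathscr{V}$ (a consequence of Assumption \ref{Assumption Uniquess loss Fucntion LapBai97} together with the two-sided Wiener-with-drift structure), there exists $\eta=\eta(\epsilon)>0$ such that $\sup_{u\in\Gamma_{\overline{u}}\setminus B_{\epsilon}}\mathscr{V}(u)\leq\mathscr{V}(\xi_{0})-2\eta$ with probability one. Using the uniform convergence from step one, the same inequality holds for $\widetilde{G}_{T,v}(u,\widetilde{v})+Q_{T,v}(u)$ (with $2\eta$ replaced by $\eta$) on an event of probability approaching one, uniformly in $\widetilde{v}\in\mathbf{V}$. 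Exponentiating and using $\widetilde{\gamma}_{T}\to\infty$, the ratio of the integral over $\Gamma_{\overline{u}}\setminus B_{\epsilon}$ to the integral over any neighborhood of $\xi_{0}$ of positive Lebesgue measure (which is bounded below via a quadratic-type lower bound on $\mathscr{V}$ near $\xi_{0}$) is at most of order $\exp(-\widetilde{\gamma}_{T}\eta/2)\to 0$. Consequently, both the numerator and denominator of $\xi_{T}(\widetilde{v})$ are, up to $o_{\mathbb{P}}(1)$, supported in $B_{\epsilon}$, so $|\xi_{T}(\widetilde{v})-\xi_{0}|\leq\epsilon$ with probability tending to one, uniformly in $\widetilde{v}$.

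Finally, uniformity in $\widetilde{v}$ is inherited from Lemma \ref{Lemma B.2, LapBai97} (which gives weak convergence in $\mathbb{D}_{b}(\Gamma_{\overline{u}}\times\mathbf{V})$) and from the fact that $\xi_{0}$ does not depend on $\widetilde{v}$; together with the continuity of $\xi_{T}(\cdot)$ as a functional of $\widetilde{G}_{T,v}(\cdot,\cdot)$, this delivers weak convergence in $\mathbb{D}_{b}(\mathbf{V})$ by the extended continuous mapping theorem (e.g.\ Theorem 1.11.1 in van der Vaart and Wellner). The main obstacle is the second step: verifying the Laplace approximation uniformly in $\widetilde{v}\in\mathbf{V}$ while avoiding any assumption that the pre-limit argmax is unique. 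The cleanest way I see is to work directly with the ratio rather than with the argmax, controlling the tail contributions by combining the uniform gap bound above with the truncation argument already used in Lemma \ref{Lemma C.2 LapBai97} to discard the contribution from $\Gamma_{T,\overline{u}}^{c}$; the compactness of $\Gamma_{\overline{u}}$ removes all remaining integrability issues.
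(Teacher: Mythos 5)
Your proposal is correct and follows essentially the same route as the paper: weak convergence of the exponent $\widetilde{G}_{T,v}(u,\widetilde{v})+Q_{T,v}(u)$ to $\mathscr{V}(u)$ on $\Gamma_{\overline{u}}\times\mathbf{V}$, followed by a Laplace-type concentration argument driven by $\widetilde{\gamma}_{T}\rightarrow\infty$ and the a.s.\ uniqueness of $\arg\max\mathscr{V}$, with the prior cancelling in the ratio. The only presentational difference is that the paper makes the comparison between the pre-limit and limit criteria rigorous via the Skorokhod representation and level sets $\mathbf{S}(\cdot,c)$ (sandwiching $\xi_{T}$ between $\mathrm{ess\,inf}$ and $\mathrm{ess\,sup}$ of those sets and letting $c\downarrow0$), which is exactly the coupling your $\epsilon$-ball argument needs to make the statement ``$|\xi_{T}(\widetilde{v})-\xi_{0}|\leq\epsilon$ with probability tending to one'' well defined.
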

\begin{proof}
Let $\mathbf{B}=\Gamma_{\overline{u}}\times\mathbf{V}$. For any
fixed $\overline{u},$ Lemma \ref{Lemma B.2, LapBai97} and the result
$\sup_{\left(u,\,\widetilde{v}\right)\in\mathbf{B}}\left|Q_{T,v}\left(u\right)-\varLambda^{0}\left(u\right)\right|=o_{\mathbb{P}}\left(1\right)$
(cf. Lemma \ref{Lemma ZZ conv prob}), imply that $\overline{Q}_{T}\Rightarrow\mathscr{V}$
in $\mathbb{D}_{b}\left(\mathbf{B}\right)$. By the Skorokhod representation
theorem {[}cf. Theorem 6.4 in \citet{billingsley:99}{]} we can find
a probability space $\left(\widetilde{\Omega},\,\widetilde{\mathscr{F}},\,\widetilde{\mathbb{P}}\right)$
on which there exist processes $\widetilde{Q}_{T}\left(u,\,\widetilde{v}\right)$
and $\widetilde{\mathscr{V}}\left(u\right)$ which have the same law
as $\overline{Q}_{T}\left(u,\,\widetilde{v}\right)$ and $\mathscr{V}\left(u\right)$,
respectively, and with the property that 
\begin{align}
\sup_{\left(u,\,\widetilde{v}\right)\in\mathbf{B}}\left|\widetilde{Q}_{T}\left(u,\,\widetilde{v}\right)-\widetilde{\mathscr{V}}\left(u\right)\right| & \rightarrow0\qquad\widetilde{\mathbb{P}}-\textrm{a.s.}\label{eq (32), Conv Sko}
\end{align}
Let
\begin{align*}
\widetilde{\xi}_{T}\left(\widetilde{v}\right) & \triangleq\frac{\int_{\Gamma_{\overline{u}}}u\exp\left(\widetilde{\gamma}_{T}\widetilde{Q}_{T,v}\left(u,\,\widetilde{v}\right)\right)\pi_{T,v}\left(u\right)du}{\int_{\Gamma_{\overline{u}}}\exp\left(\widetilde{\gamma}_{T}\widetilde{Q}_{T,v}\left(u,\,\widetilde{v}\right)\right)\pi_{T,v}\left(u\right)du},
\end{align*}
 and $\widetilde{\xi}_{0}\triangleq\arg\max_{u\in\Gamma_{\overline{u}}}\widetilde{\mathscr{V}}\left(u\right)$.
We shall rely on \eqref{eq (32), Conv Sko} to establish that
\begin{align}
\sup_{\widetilde{v}\in\mathbf{V}}\left|\widetilde{\xi}_{T}\left(\widetilde{v}\right)-\widetilde{\xi}_{0}\right| & \rightarrow0\qquad\widetilde{\mathbb{P}}-\textrm{a.s.}.\label{eq. (33) Bai97}
\end{align}
Let us indicate any pair of sample paths of $\widetilde{Q}_{T}\left(u,\,\widetilde{v}\right)$
and $\widetilde{\mathscr{V}}$, for which \eqref{eq (32), Conv Sko}
holds with a superscript $\omega$, by $\widetilde{Q}_{T,v}^{\omega}$
and $\widetilde{\mathscr{V}}^{\omega}$, respectively. For arbitrary
sets $\mathbf{S}_{1},\,\mathbf{S}_{2}\subset\mathbf{B},$ let $\widetilde{\rho}\left(\mathbf{S}_{1},\,\mathbf{S}_{2}\right)\triangleq\mathrm{Leb}\left(\mathbf{S}_{1}-\mathbf{S}_{2}\right)+\mathrm{Leb}\left(\mathbf{S}_{2}-\mathbf{S}_{1}\right)$
where $\mathrm{Leb}\left(\mathbf{A}\right)$ is the Lebesgue measure
of the set $\mathbf{A}$. Further, for an arbitrary scalar $c>0$
and function $\varUpsilon:\,\mathbf{B}\rightarrow\mathbb{R},$ define
$\mathbf{S}\left(\varUpsilon,\,c\right)\triangleq\left\{ \left(u,\,\widetilde{v}\right)\in\mathbf{B}:\,\left|\varUpsilon\left(u,\,\widetilde{v}\right)-\widetilde{\mathscr{V}}_{\mathrm{M}}\right|\leq c\right\} $
where $\widetilde{\mathscr{V}}_{\mathrm{M}}\triangleq\max_{u\in\Gamma_{\overline{u}}}\widetilde{\mathscr{V}}^{\omega}\left(u\right)$.
The first step is to show that
\begin{align}
\widetilde{\rho}\left(\mathbf{S}\left(\widetilde{Q}_{T,v}^{\omega},\,c\right),\,\mathbf{S}\left(\widetilde{\mathscr{V}}^{\omega},\,c\right)\right) & =o\left(1\right).\label{eq. (34)}
\end{align}
Let $\mathbf{S}_{1,T}\left(c\right)=\mathbf{S}\left(\widetilde{Q}_{T,v}^{\omega},\,c\right)-\mathbf{S}\left(\widetilde{\mathscr{V}}^{\omega},\,c\right)$
and $\mathbf{S}_{2,T}\left(c\right)=\mathbf{S}\left(\widetilde{\mathscr{V}}^{\omega},\,c\right)-\mathbf{S}\left(\widetilde{Q}_{T,v}^{\omega},\,c\right)$.
We first establish that $\mathrm{Leb}\left(\mathbf{S}_{2,T}\left(c\right)\right)=o\left(1\right)$.
For an arbitrary $\overline{c}>0,$ define the set $\widetilde{\mathbf{S}}_{T}\left(\overline{c}\right)\triangleq\left\{ \left(u,\,\widetilde{v}\right)\in\mathbf{B}:\,\left|\widetilde{Q}_{T,v}^{\omega}\left(u,\,\widetilde{v}\right)-\widetilde{\mathscr{V}}^{\omega}\left(u\right)\right|\leq\overline{c}\right\} $
and its complement (relative to $\mathbf{B}$) $\widetilde{\mathbf{S}}_{T}^{c}\left(\overline{c}\right)\triangleq\left\{ \left(u,\,\widetilde{v}\right)\in\mathbf{B}:\,\left|\widetilde{Q}_{T,v}^{\omega}\left(u,\,\widetilde{v}\right)-\widetilde{\mathscr{V}}^{\omega}\left(u\right)\right|>\overline{c}\right\} $.
We have 
\begin{align*}
\mathrm{Leb}\left(\mathbf{S}_{2,T}\left(c\right)\right) & =\mathrm{Leb}\left(\mathbf{S}_{2,T}\left(c\right)\cap\widetilde{\mathbf{S}}_{T}\left(\overline{c}\right)\right)+\mathrm{Leb}\left(\mathbf{S}_{2,T}\left(c\right)\cap\widetilde{\mathbf{S}}_{T}^{c}\left(\overline{c}\right)\right)\\
 & \leq\mathrm{Leb}\left(\mathbf{S}_{2,T}\left(c\right)\cap\widetilde{\mathbf{S}}_{T}\left(\overline{c}\right)\right)+\mathrm{Leb}\left(\widetilde{\mathbf{S}}_{T}^{c}\left(\overline{c}\right)\right).
\end{align*}
Note that $\mathrm{Leb}\left(\widetilde{\mathbf{S}}_{T}^{c}\left(\overline{c}\right)\right)=o\left(1\right)$
since the path $\omega$ satisfies \eqref{eq (32), Conv Sko}. Furthermore,
$\mathbf{S}_{2,T}\left(c\right)\cap\widetilde{\mathbf{S}}_{T}\left(\overline{c}\right)\subset\mathbf{C}_{T}\left(c,\,\overline{c}\right)$
where $\mathbf{C}_{T}\left(c,\,\overline{c}\right)\triangleq\left\{ \left(u,\,\widetilde{v}\right)\in\mathbf{B}:\,c\leq\left|\widetilde{Q}_{T,v}^{\omega}\left(u,\,\widetilde{v}\right)-\widetilde{\mathscr{V}}_{\mathrm{M}}\right|\leq c+\overline{c}\right\} .$
In view of \eqref{eq (32), Conv Sko}, 
\begin{align*}
\lim_{\overline{c}\downarrow0}\lim_{T\rightarrow\infty}\mathrm{Leb}\left(\mathbf{C}_{T}\left(c,\,\overline{c}\right)\right) & =\lim_{\overline{c}\downarrow0}\mathrm{Leb}\left\{ \left(u,\,\widetilde{v}\right)\in\mathbf{B}:\,c\leq\left|\widetilde{\mathscr{V}}^{\omega}\left(u\right)-\widetilde{\mathscr{V}}_{\mathrm{M}}\right|\leq c+\overline{c}\right\} \\
 & =\mathrm{Leb}\left\{ \left(u,\,\widetilde{v}\right)\in\mathbf{B}:\,\left|\widetilde{\mathscr{V}}^{\omega}\left(u\right)-\widetilde{\mathscr{V}}_{\mathrm{M}}\right|=c\right\} =0,
\end{align*}
 by the path properties of $\widetilde{\mathscr{V}}^{\omega}$. Since
$\mathrm{Leb}\left(\mathbf{S}_{1,T}\left(c\right)\right)=o\left(1\right)$
can be proven in a similar fashion, \eqref{eq. (34)} holds. For
$m=0,\,1$, $C_{1}<\infty$ and by Assumption \ref{Assumption Prior LapBai97}
we know there exists some $C_{2}<\infty$ such that 
\begin{align*}
\sup_{\widetilde{v}\in\mathbf{V}}\int_{\mathbf{S}^{c}\left(\widetilde{Q}_{T,v}^{\omega}\left(u,\,\widetilde{v}\right),\,c\right)}\left|u\right|^{m}\exp\left(\widetilde{\gamma}_{T}\left(\widetilde{Q}_{T,v}^{\omega}\left(u,\,\widetilde{v}\right)-\widetilde{\mathscr{V}}_{\mathrm{M}}\right)\right)\pi_{T,v}\left(u\right)du & \leq C_{1}\exp\left(-c\widetilde{\gamma}_{T}\right)C_{2}\int_{\Gamma_{\overline{u}}}\left|u\right|^{m}du=o\left(1\right),
\end{align*}
since $\left\{ u\leq\overline{u}\right\} $ on $\Gamma_{\overline{u}}$
and recalling that $\widetilde{\gamma}_{T}\rightarrow\infty.$ This
gives an upper bound to the same function where $u$ replaces $\left|u\right|$.
Then, 
\begin{align*}
\sup_{\widetilde{v}\in\mathbf{V}}\frac{\int_{\Gamma_{\overline{u}}}u\exp\left(\widetilde{\gamma}_{T}\widetilde{Q}_{T,v}^{\omega}\left(u,\,\widetilde{v}\right)\right)\pi_{T,v}\left(u\right)du}{\int_{\Gamma_{\overline{u}}}\exp\left(\widetilde{\gamma}_{T}\widetilde{Q}_{T,v}^{\omega}\left(u,\,\widetilde{v}\right)\right)\pi_{T,v}\left(u\right)du} & \leq\mathrm{ess\,sup}\,\mathbf{S}\left(\widetilde{Q}_{T,v}^{\omega},\,c\right)+o\left(1\right).
\end{align*}
By \eqref{eq (32), Conv Sko} we deduce $\mathrm{ess\,sup}\,\mathbf{S}\left(\widetilde{Q}_{T,v}^{\omega},\,c\right)+o\left(1\right)=\mathrm{ess\,sup}\,\mathbf{S}\left(\widetilde{\mathscr{V}}^{\omega},\,c\right)+o\left(1\right)$.
The same argument yields 
\begin{align*}
\inf_{\widetilde{v}\in\mathbf{V}}\frac{\int_{\Gamma_{\overline{u}}}u\exp\left(\widetilde{\gamma}_{T}\widetilde{Q}_{T,v}^{\omega}\left(u,\,\widetilde{v}\right)\right)\pi_{T,v}\left(u\right)du}{\int_{\Gamma_{\overline{u}}}\exp\left(\widetilde{\gamma}_{T}\widetilde{Q}_{T,v}^{\omega}\left(u,\,\widetilde{v}\right)\right)\pi_{T,v}\left(u\right)du} & \geq\mathrm{ess\,inf}\,\mathbf{S}\left(\widetilde{\mathscr{V}}^{\omega},\,c\right)+o\left(1\right).
\end{align*}
 Since almost every path $\omega$ of the Gaussian process $\widetilde{\mathscr{V}}$
achieves its maximum at a unique point on compact sets {[}cf. \citet{bai:97RES}
and Lemma 2.6 in \citet{kim/pollard:90}{]}, we have 
\begin{align*}
\lim_{c\downarrow0}\mathrm{ess\,inf}\,\mathbf{S}\left(\widetilde{\mathscr{V}}^{\omega},\,c\right) & =\lim_{c\downarrow0}\mathrm{ess\,sup}\,\mathbf{S}\left(\widetilde{\mathscr{V}}^{\omega},\,c\right)=\arg\max_{u\in\Gamma_{\overline{u}}}\widetilde{\mathscr{V}}^{\omega}\left(u\right).
\end{align*}
 Hence, we have proved \eqref{eq. (33) Bai97} which by the dominated
convergence theorem then implies the weak convergence of $\widetilde{\xi}_{T}$
toward $\widetilde{\xi}_{0}.$ Since the law of $\widetilde{\xi}_{T}$
($\widetilde{\xi}_{0}$) under $\widetilde{\mathbb{P}}$ is the same
as the law of $\xi_{T}$ ($\xi_{0}$) under $\mathbb{P},$ the claim
of the Lemma follows.
\end{proof}
We are now in a position to conclude the proof of Proposition \ref{Proposition}.
For a set $\mathbf{T}\subset\mathbb{R}$ and $m=0,\,1$ we define
$J_{m}\left(\mathbf{T}\right)\triangleq\int_{\mathbf{T}}u^{m}\exp\left(\widetilde{\gamma}_{T}\left(\widetilde{G}_{T,v}\left(u,\,\widetilde{v}\right)+Q_{T,v}\left(u\right)\right)\right)\pi_{T,v}\left(u\right)du.$
Hence, with this notation equation \eqref{eq. (17)-1} can be rewritten
as $T\left\Vert \delta_{T}\right\Vert ^{2}\left(\widehat{\lambda}_{b}^{\mathrm{GL,}*}\left(\widetilde{v},\,v\right)-\lambda_{b,T}^{0}\left(v\right)\right)=J_{1}\left(\mathbb{R}\right)/J_{0}\left(\mathbb{R}\right).$
Applying simple manipulations, we obtain, 
\begin{align}
J_{1}\left(\mathbb{R}\right)/J_{0}\left(\mathbb{R}\right) & =\frac{J_{1}\left(\Gamma_{\overline{u}}\right)+J_{1}\left(\Gamma_{\overline{u},T}^{c}\right)}{J_{0}\left(\Gamma_{\overline{u}}\right)+J_{0}\left(\Gamma_{\overline{u},T}^{c}\right)}=\frac{J_{1}\left(\Gamma_{\overline{u}}\right)}{J_{0}\left(\Gamma_{\overline{u}}\right)}\left[1-\frac{J_{0}\left(\Gamma_{\overline{u},T}^{c}\right)}{J_{0}\left(\mathbb{R}\right)}\right]+\frac{J_{1}\left(\Gamma_{\overline{u},T}^{c}\right)}{J_{0}\left(\mathbb{R}\right)}.\label{eq. J1/J0}
\end{align}
By Lemma \ref{Lemma C.2 LapBai97}, $J_{m}\left(\Gamma_{\overline{u},T}^{c}\right)/J_{0}\left(\mathbb{R}\right)=o_{\mathbb{P}}\left(1\right)$
($m=0,\,1$) uniformly in $\widetilde{v}\in\mathbf{V}.$ By Lemma
\ref{Lemma C.1. LapBai97}, with $\xi_{T}\left(\widetilde{v}\right)=J_{1}\left(\Gamma_{\overline{u}}\right)/J_{0}\left(\Gamma_{\overline{u}}\right),$
the first right-hand side term in \eqref{eq. J1/J0} converges weakly
to $\arg\max_{u\in\mathbb{R}}\mathscr{V}\left(u\right)$ in $\mathbb{D}_{b}\left(\mathbf{V}\right)$.

\subsubsection{Proof of Corollary \ref{Corollary part (i) of Theorem LapBai97}}

The proof involves a simple change in variable. We refer to Proposition
3 in \citet{bai:97RES}.

\subsubsection{Proof of Theorem \ref{Theorem Geneal Laplace Estimator LapBai97}}

We begin by introducing some notation. Since $l\in\boldsymbol{L}$,
for all real numbers $B$ sufficiently large and $\vartheta$ sufficiently
small the following relationship holds
\begin{align}
\inf_{\left|u\right|>B}l\left(u\right)-\sup_{\left|u\right|\leq B^{\vartheta}}l\left(u\right) & \geq0.\label{eq. (10.8) I=000026H LapBai97}
\end{align}
 Let $\zeta_{T,v}\left(u,\,\widetilde{v}\right)=\exp\left(G_{T,v}\left(u,\,\widetilde{v}\right)-\varLambda^{0}\left(u\right)\right)$,
$\Gamma_{T}\triangleq\left\{ u\in\mathbb{R}:\,\lambda_{b}\in\varGamma^{0}\right\} $
and 
\begin{align*}
\Gamma_{M} & =\left\{ u\in\mathbb{R}:\,M\leq\left|u\right|<M+1\right\} \cap\Gamma_{T},
\end{align*}
 and define
\begin{align}
J_{1,M}\triangleq\int_{\Gamma_{M}}\zeta_{T,v}\left(u,\,\widetilde{v}\right)\pi_{T,v}\left(u\right)du, & \qquad\qquad J_{2}\triangleq\int_{\Gamma_{T}}\zeta_{T,v}\left(u,\,\widetilde{v}\right)\pi_{T,v}\left(u\right)du.\label{eq. J_1,M}
\end{align}
 In some steps in the proof we shall be working with elements of
the following families of functions. A function $f_{T}:\,\mathbb{R}\rightarrow\mathbb{R}$
is said to belong to the family $\boldsymbol{F}$ if it satisfies
the following properties: (1) For fixed $T,$ $f_{T}\left(x\right)$
increases monotocically to infinity with $x\in[0,\,\infty)$; (2)
For any $b<\infty,$ $x^{b}\exp\left(-f_{T}\left(x\right)\right)\rightarrow0$
as both $T$ and $x$ diverge to infinity.  
\begin{proof}
The random variable $T\left\Vert \delta_{T}\right\Vert ^{2}\left(\widehat{\lambda}_{b}^{\mathrm{GL}}-\lambda_{0}\right)=\widetilde{\tau}_{T}$
is a minimizer of the function 
\begin{align*}
\Psi_{l,T}\left(s\right) & =\int_{\Gamma_{T}}l\left(s-u\right)\frac{\exp\left(\widetilde{G}_{T,v}\left(u,\,\widetilde{v}\right)+Q_{T,v}\left(u\right)\right)\pi_{T,v}\left(u\right)}{\int_{\Gamma_{T}}\exp\left(\widetilde{G}_{T,v}\left(w,\,\widetilde{v}\right)+Q_{T,v}\left(u\right)\right)\pi_{T,v}\left(w\right)dw}du.
\end{align*}
 Observe that Lemma \ref{Lemma B.8 LapBai97}-\ref{Lemma B.12 Posterior Mean}
apply to any polynomial $p\in\boldsymbol{P}$; therefore, they are
still valid for $l\in\boldsymbol{L}$. We then have that the asymptotic
behavior of $\Psi_{l,T}\left(s\right)$ only matters when $u$ (and
thus $s$) varies on $\Gamma_{K}=\left\{ u\in\mathbb{R}:\,u\leq K\right\} $.
By Lemma \ref{Lemma I.5.2.(i) I=000026H LapBai97}-\ref{Lemma 7.5. I.5.2.(ii) },
for any $\vartheta>0$, there exists a $\overline{T}$ such that for
all $T>\overline{T},$
\begin{align}
\mathbb{E}\left[\int_{\Gamma_{K}}\frac{\exp\left(\widetilde{G}_{T,v}\left(u,\,\widetilde{v}\right)+Q_{T,v}\left(u\right)\right)}{\int_{\Gamma_{T}}\exp\left(\widetilde{G}_{T,v}\left(w,\,\widetilde{v}\right)+Q_{T,v}\left(w\right)\right)dw}du\right] & \leq\frac{c_{\vartheta}}{K^{\vartheta}}.\label{eq. AA.4 LapBai97}
\end{align}
Therefore, for all $T>\overline{T},$ 
\begin{align}
\Psi_{l,T}\left(s\right) & =\frac{\int_{\left|u\right|\leq K}l\left(s-u\right)\exp\left(\widetilde{G}_{T,v}\left(u,\,\widetilde{v}\right)+Q_{T,v}\left(u\right)\right)du}{\int_{\left|w\right|\leq K}\exp\left(\widetilde{G}_{T,v}\left(w,\,\widetilde{v}\right)+Q_{T,v}\left(w\right)\right)dw}+o_{\mathbb{P}}\left(1\right),\label{eq. (10.9) I=000026H LapBai97}
\end{align}
 where the $o_{\mathbb{P}}\left(1\right)$ term is uniform in $T>\overline{T}$
as $K$ increases to infinity. By Assumption \eqref{Assumption Prior LapBai97},
 $\left|\pi_{T,v}\left(u\right)-\pi^{0}\right|\leq\left|\pi\left(\lambda_{b,T}^{0}\left(v\right)\right)-\pi^{0}\right|+C\psi_{T}^{-1}\left|u\right|,$
with $C>0$. On $\left\{ \left|u\right|\leq K\right\} $, the first
term on the right-hand side is $o\left(1\right)$ and does not depend
on $u.$ The second term is negligible when $T$ is large.  Thus,
without loss of generality we set $\pi_{T,v}\left(u\right)=1$ for
all $u$ in what follows. 

Next, we show the convergence of the marginal distributions of the
estimate $\Psi_{l,T}\left(s\right)$ to the marginals of the random
function $\Psi_{l}\left(s\right)$, where the region of integration
in the definition of both the numerator and denominator of $\Psi_{l,T}\left(s\right)$
and $\Psi_{l}\left(s\right)$ is restricted to $\left\{ \left|u\right|\leq K\right\} $
only, in view of \eqref{eq. (10.9) I=000026H LapBai97}. For a finite
integer $n,$ choose arbitrary real numbers $a_{j}$ ($j=0,\ldots,\,n$)
and introduce the following estimate: 
\begin{align}
\sum_{j=1}^{n}a_{j}\int_{\left|u\right|\leq K}l\left(s_{j}-u\right)\zeta_{T,v}\left(u,\,\widetilde{v}\right)du+a_{0}\int_{\left|u\right|\leq K}l\left(s_{0}-u\right)\zeta_{T,v}\left(u,\,\widetilde{v}\right)du & .\label{eq. conv AA3 LapBai97}
\end{align}
By Lemma \ref{Lemma VII.Lemma 2.2 I=000026H LapBai97 } and \ref{Lemma: FD Convergence: Condition 2 in I=000026H p.107},
we can invoke Theorem I.A.22 in \citet{ibragimov/has:81} which gives
the convergence in distribution of the estimate in \eqref{eq. conv AA3 LapBai97}
towards the distribution of the following random variable:
\begin{align*}
\sum_{j=1}^{n}a_{j}\int_{\left|u\right|\leq K}l\left(s_{j}-u\right)\exp\left(\mathscr{V}\left(u\right)\right)du+a_{0}\int_{\left|u\right|\leq K}l\left(s_{0}-u\right)\exp\left(\mathscr{V}\left(u\right)\right)du & .
\end{align*}
 By the Cramer-Wold Theorem {[}cf. Theorem 29.4 in \citet{billingsley:95}{]}
this suffices for the convergence in distribution of the vector 
\begin{align*}
\int_{\left|u\right|\leq K}l\left(s_{i}-u\right)\zeta_{T,v}\left(u,\,\widetilde{v}\right)du,\ldots,\,\int_{\left|u\right|\leq K}l\left(s_{n}-u\right)\zeta_{T,v}\left(u,\,\widetilde{v}\right)du, & \qquad\int_{\left|u\right|\leq K}l\left(s_{0}-u\right)\zeta_{T,v}\left(u,\,\widetilde{v}\right)du,
\end{align*}
 to the distribution of the vector
\begin{align*}
\int_{\left|u\right|\leq K}l\left(s_{i}-u\right)\exp\left(\mathscr{V}\left(u\right)\right)du,\ldots,\,\int_{\left|u\right|\leq K}l\left(s_{n}-u\right)\exp\left(\mathscr{V}\left(u\right)\right)du, & \qquad\int_{\left|u\right|\leq K}l\left(s_{0}-u\right)\exp\left(\mathscr{V}\left(u\right)\right)du.
\end{align*}
As a consequence, for any $K_{1},\,K_{2}<\infty$, the marginal distributions
of
\begin{align*}
\frac{\int_{\left|u\right|\leq K_{1}}l\left(s-u\right)\exp\left(\widetilde{G}_{T,v}\left(u,\,\widetilde{v}\right)+Q_{T,v}\left(u\right)\right)du}{\int_{\left|w\right|\leq K_{2}}\exp\left(\widetilde{G}_{T,v}\left(w,\,\widetilde{v}\right)+Q_{T,v}\left(w\right)\right)dw} & ,
\end{align*}
 converge to the marginals of $\int_{\left|u\right|\leq K_{1}}l\left(s-u\right)\exp\left(\mathscr{V}\left(u\right)\right)du/\left(\int_{\left|w\right|\leq K_{2}}\exp\left(\mathscr{V}\left(w\right)\right)dw\right)$.
The same convergence result extends to the distribution of
\begin{align*}
\int_{M\leq\left|u\right|<M+1}\frac{\exp\left(\widetilde{G}_{T,v}\left(u,\,\widetilde{v}\right)+Q_{T,v}\left(u\right)\right)}{\int_{\left|w\right|\leq K_{2}}\exp\left(\widetilde{G}_{T,v}\left(w,\,\widetilde{v}\right)+Q_{T,v}\left(w\right)\right)dw}du & ,
\end{align*}
towards the distribution of $\int_{M\leq\left|u\right|<M+1}(\exp\left(\mathscr{V}\left(u\right)\right)du/\int_{\left|w\right|\leq K_{2}}\exp\left(\mathscr{V}\left(w\right)\right)dw)$.
By choosing $K_{2}>M+1$ we deduce 
\begin{align*}
\mathbb{E}\left[\int_{M\leq\left|u\right|<M+1}\frac{\exp\left(\mathscr{V}\left(u\right)\right)}{\int_{\mathbb{R}}\exp\left(\mathscr{V}\left(w\right)\right)dw}du\right]\leq & \lim_{T\rightarrow\infty}\mathbb{E}\left[\int_{\Gamma_{M}}\frac{\exp\left(\widetilde{G}_{T,v}\left(u,\,\widetilde{v}\right)+Q_{T,v}\left(u\right)\right)}{\int_{\left|w\right|\leq K_{2}}\exp\left(\widetilde{G}_{T,v}\left(w,\,\widetilde{v}\right)+Q_{T,v}\left(w\right)\right)dw}du\right]\leq c_{\vartheta}M^{-\vartheta},
\end{align*}
 in view of \eqref{eq. AA.4 LapBai97}. This leads to 
\begin{align}
\Psi_{l}\left(s\right) & =\int_{\left|u\right|\leq K}l\left(s-u\right)\frac{\exp\left(\mathscr{V}\left(u\right)\right)du}{\int_{\left|w\right|\leq K}\exp\left(\mathscr{V}\left(w\right)\right)dw}+o_{\mathbb{P}}\left(1\right),\label{eq. (10.10) I=000026H LapBai97}
\end{align}
where the $o_{\mathbb{P}}\left(1\right)$ term is uniform as $K$
increases to infinity. We then have the convergence of the finite-dimensional
distributions of $\Psi_{l,T}\left(s\right)$ toward $\Psi_{l}\left(s\right).$
Next, we need to prove the tightness of the sequence $\left\{ \Psi_{l,T}\left(s\right),\,T\geq1\right\} $.
More specifically, we shall show that the family of distributions
on the space of continuous functions $\mathbb{C}_{b}\left(K\right)$
generated by the contractions of $\Psi_{l,T}\left(s\right)$ on $\left\{ \left|s\right|\leq K\right\} $
are dense. For any $l\in\boldsymbol{L}$ the inequality $l\left(u\right)\leq2^{r}\left(1+\left|u\right|^{2}\right)^{r}$
holds for some $r$. Let 
\begin{align*}
\Upsilon_{K}\left(\varpi\right) & \triangleq\int_{\mathbb{R}}\sup_{\left|s\right|\leq K,\,\left|y\right|\leq\varpi}\left|l\left(s+y-u\right)-l\left(s-u\right)\right|\left(1+\left|u\right|^{2}\right)^{-r-1}du.
\end{align*}
Fix $K<\infty$. We show $\lim_{\varpi\downarrow0}\Upsilon_{K}\left(\varpi\right)=0$.
Note that for any $\kappa>0$, we can choose a $M$ such that 
\begin{align*}
\int_{\left|u\right|>M}\sup_{\left|s\right|\leq K,\,\left|y\right|\leq\varpi}\left|l\left(s+y-u\right)-l\left(s-u\right)\right|\left(1+\left|u\right|^{2}\right)^{-r-1}du & <\kappa.
\end{align*}
We now use Lusin's Theorem {[}cf. Section 3.3 in \citet{royden/fitzpatrick:10}{]}.
Since $l\left(\cdot\right)$ is measurable, there exists a continuous
function $g\left(u\right)$ in the interval $\left\{ u\in\mathbb{R}:\,\left|u\right|\leq K+2M\right\} $
which agrees with $l\left(u\right)$ except on a set whose measure
does not exceed $\kappa\left(2\overline{L}\right)^{-1}$, where $\overline{L}$
is the upper bound of $l\left(\cdot\right)$ on $\left\{ u\in\mathbb{R}:\,\left|u\right|\leq K+2M\right\} $.
Denote the modulus of continuity of $g\left(\cdot\right)$ by $w_{g}\left(\varpi\right)$.
Without loss of generality assume $\left|g\left(u\right)\right|\leq\overline{L}$
for all $u$ satisfying $\left|u\right|\leq K+2M$. Then, 
\begin{align*}
\int_{\left|u\right|>M} & \sup_{\left|s\right|\leq K,\,\left|y\right|\leq\varpi}\left|l\left(s+y-u\right)-l\left(s-u\right)\right|\left(1+\left|u\right|^{2}\right)^{-r-1}du\\
 & \leq\int_{\mathbb{R}}\sup_{\left|s\right|\leq K,\,\left|y\right|\leq\varpi}\left|l\left(s+y-u\right)-l\left(s-u\right)\right|\left(1+\left|u\right|^{2}\right)^{-r-1}du\\
 & \leq w_{g}\left(\varpi\right)\int_{\mathbb{R}}\sup_{\left|s\right|\leq K,\,\left|y\right|\leq\varpi}\left(1+\left|u\right|^{2}\right)^{-r-k}du+2\overline{L}\mathrm{Leb}\left\{ u\in\mathbb{R}:\,\left|u\right|\leq K+2M,\,l\neq g\right\} ,
\end{align*}
 and $\overline{L}\leq Cw_{g}\left(\varpi\right)+\kappa$ for some
$C$. Hence, $\Upsilon_{K}\left(\varpi\right)\leq Cw_{g}\left(\varpi\right)+2\kappa$
since $\kappa$ can be chosen arbitrarily small and (for each fixed
$\kappa$) $w_{g}\left(\varpi\right)\rightarrow0$ as $\varpi\downarrow0$
by definition. By Assumption \ref{Assumption Uniquess loss Fucntion LapBai97},
there exists a number $C<\infty$ such that 
\begin{align*}
\mathbb{E} & \left[\sup_{\left|s\right|\leq K,\,\left|y\right|\leq\varpi}\left|\Psi_{l,T}\left(s+y\right)-\Psi_{l,T}\left(s\right)\right|\right]\\
 & \leq\int_{\mathbb{R}}\sup_{\left|s\right|\leq K,\,\left|y\right|\leq\varpi}\left|l\left(s+y-u\right)-l\left(s-u\right)\right|\mathbb{E}\left(\frac{\exp\left(\widetilde{G}_{T,v}\left(u,\,\widetilde{v}\right)+Q_{T,v}\left(u\right)\right)}{\int_{\mathbf{U}_{T}}\exp\left(\widetilde{G}_{T,v}\left(w,\,\widetilde{v}\right)+Q_{T,v}\left(w\right)\right)dw}\right)du\\
 & \leq C\Upsilon_{K}\left(\varpi\right).
\end{align*}
Markov's inequality together with the above bound establish that
the family of distributions generated by the contractions of $\Psi_{T,l}$
is dense in $\mathbb{C}_{b}\left(K\right)$. Since the finite-dimensional
convergence in distribution was demonstrated above, we can deduce
the weak convergence $\Psi_{l,T}\Rightarrow\Psi_{l}$ in $\mathbb{D}_{b}\left(\mathbf{V}\right)$
uniformly in $\lambda_{b}^{0}\in\mathbf{K}$. Finally, we examine
the oscillations of the minimum points of the sample criterion $\Psi_{l,T}.$
Consider an open bounded interval $\mathbf{A}$ that satisfies $\mathbb{P}\left\{ \xi_{l}^{0}\in b\left(\mathbf{A}\right)\right\} =0$,
where $b\left(\mathbf{A}\right)$ denotes the boundary of the set
$\mathbf{A}$. Choose a real number $K$ sufficiently large such that
$\mathbf{A}\subset\left\{ s:\,\left|s\right|\leq K\right\} $ and
define for $\left|s\right|\leq K$ the functionals $H_{\mathbf{A}}\left(\Psi\right)=\inf_{s\in\mathbf{A}}\Psi_{l}\left(s\right)$
and $H_{\mathbf{A}^{c}}\left(\Psi\right)=\inf_{s\in\mathbf{A}^{c}}\Psi_{l}\left(s\right)$.
Let $\mathbf{M}_{T}$ denote the set of minimum points of $\Psi_{l,T}$.
We have 
\begin{align*}
\mathbb{P}\left[\mathbf{M}_{T}\subset\mathbf{A}\right] & =\mathbb{P}\left[H_{\mathbf{A}}\left(\Psi\right)<H_{\mathbf{A}^{c}}\left(\Psi\right),\,\mathbf{M}_{T}\subset\left\{ s:\,\left|s\right|\leq K\right\} \right]\\
 & \geq\mathbb{P}\left[H_{\mathbf{A}}\left(\Psi\right)<H_{\mathbf{A}^{c}}\left(\Psi\right)\right]-\mathbb{P}\left[\mathbf{M}_{T}\nsubseteq\left\{ s:\,\left|s\right|\leq K\right\} \right].
\end{align*}
Therefore, 
\begin{align*}
\liminf_{T\rightarrow\infty}\mathbb{P}\left[\mathbf{M}_{T}\subset\mathbf{A}\right] & \geq\mathbb{P}\left[H_{\mathbf{A}}\left(\Psi\right)<H_{\mathbf{A}^{c}}\left(\Psi\right)\right]-\sup_{T}\mathbb{P}\left[\mathbf{M}_{T}\nsubseteq\left\{ s:\,\left|s\right|\leq K\right\} \right],
\end{align*}
 and $\limsup_{T\rightarrow\infty}\mathbb{P}\left[\mathbf{M}_{T}\subset\mathbf{A}\right]\leq\mathbb{P}\left[H_{\mathbf{A}}\left(\Psi\right)<H_{\mathbf{A}^{c}}\left(\Psi\right)\right].$
Moreover, the minimum of the population criterion $\Psi_{l}\left(\cdot\right)$
satisfies $\mathbb{P}\left[\xi_{l}^{0}\in\mathbf{A}\right]\leq\mathbb{P}\left[H_{\mathbf{A}}\left(\Psi\right)<H_{\mathbf{A}^{c}}\left(\Psi\right)\right]$
and $\mathbb{P}\left[\xi_{l}^{0}\in\mathbf{A}\right]+\mathbb{P}\left[\left|\xi_{l}^{0}\right|>K\right]\geq\mathbb{P}\left[H_{\mathbf{A}}\left(\Psi\right)\leq H_{\mathbf{A}^{c}}\left(\Psi\right)\right].$
Lemma \ref{Theorem 5.2 I=000026H LapBai97} shall be used to deduce
that the following relationship holds, 
\begin{align*}
\limsup_{T\rightarrow\infty}\mathbb{E}\left[l\left(T\left\Vert \delta_{T}\right\Vert ^{2}\left(\widehat{\lambda}_{b}^{\mathrm{GL}}-\lambda_{b}^{0}\right)\right)\right] & <\infty,
\end{align*}
for any loss function $l\in\boldsymbol{L}$. Hence, the set $\mathbf{M}_{T}$
of absolute minimum points of the function $\Psi_{l,T}\left(s\right)$
are uniformly stochastically bounded for all $T$ large enough: $\lim_{K\rightarrow\infty}\mathbb{P}\left[\mathbf{M}_{T}\nsubseteq\left\{ s:\,\left|s\right|\leq K\right\} \right]=0$.
The latter result together with the uniqueness assumption (cf. Assumption
\ref{Assumption Uniquess loss Fucntion LapBai97}) yield 
\begin{align*}
\lim_{K\rightarrow\infty}\left\{ \sup_{T}\mathbb{P}\left[\mathbf{M}_{T}\nsubseteq\left\{ s:\,\left|s\right|\leq K\right\} \right]+\mathbb{P}\left[\left|\xi_{l}^{0}\right|>K\right]\right\}  & =0.
\end{align*}
Hence, we have
\begin{align}
\lim_{T\rightarrow\infty}\mathbb{P}\left[\mathbf{M}_{T}\subset\mathbf{A}\right] & =\mathbb{P}\left[\xi_{l}^{0}\in\mathbf{A}\right].\label{eq. AA.5 LapBai}
\end{align}
 The last step involves showing that the length of the set $\mathbf{M}_{T}$
approaches zero in probability as $T\rightarrow\infty$. Let $\mathbf{A}_{d}$
denote an interval in $\mathbb{R}$ centered at the origin and of
length $d<\infty.$ Equation \eqref{eq. AA.5 LapBai} guarantees that
$\lim_{d\rightarrow\infty}\sup_{T\rightarrow\infty}\mathbb{P}\left[\mathbf{M}_{T}\nsubseteq\mathbf{A}_{d}\right]=0.$
Choose any $\epsilon>0$ and divide $\mathbf{A}_{d}$ into admissible
subintervals whose lengths do not exceed $\epsilon/2$. Then, 
\begin{align*}
\mathbb{P}\left[\sup_{s_{i},s_{j}\in\mathbf{M}_{T}}\left|s_{i}-s_{j}\right|>\epsilon\right] & \leq\mathbb{P}\left[\mathbf{M}_{T}\nsubseteq\mathbf{A}_{d}\right]+\left(1+2d/\epsilon\right)\sup\mathbb{P}\left[H_{\mathbf{A}}\left(\Psi_{l,T}\right)=H_{\mathbf{A}^{c}}\left(\Psi_{l,T}\right)\right],
\end{align*}
where the term $1+2d/\epsilon$ is an upper bound on the admissible
number of subintervals and the supremum in the second term  is over
all possible open bounded subintervals $\mathbf{A}\subset\mathbf{A}_{d}$.
The weak convergence result implies $\mathbb{P}\left[H_{\mathbf{A}}\left(\Psi_{l,T}\right)=H_{\mathbf{A}^{c}}\left(\Psi_{l,T}\right)\right]\rightarrow\mathbb{P}\left[H_{\mathbf{A}}\left(\Psi_{l}\right)=H_{\mathbf{A}^{c}}\left(\Psi_{l}\right)\right]$
as $T\rightarrow\infty.$ Since $\mathbb{P}\left[H_{\mathbf{A}}\left(\Psi_{l}\right)=H_{\mathbf{A}^{c}}\left(\Psi_{l}\right)\right]=0$
and $\mathbb{P}\left[\mathbf{M}_{T}\nsubseteq\mathbf{A}_{d}\right]\rightarrow0$
for large $d,$ then $\mathbb{P}\left[\sup_{s_{i},s_{j}\in\mathbf{M}_{T}}\left|s_{i}-s_{j}\right|>\epsilon\right]=o\left(1\right)$.
Since $\epsilon>0$ can be chosen arbitrarily small we deduce that
the distribution of $T\left\Vert \delta_{T}\right\Vert ^{2}\left(\widehat{\lambda}_{b}^{\mathrm{GL}}-\lambda_{b}^{0}\right)$
converges to the distribution of $\xi_{l}^{0}$. 
\end{proof}
\begin{lem}
\label{Lemma III.5.2 I=000026H LapBai97}Let $u_{1},\,u_{2}\in\mathbb{R}$
be of the same sign with $0<\left|u_{1}\right|<\left|u_{2}\right|$.
For any integer $r>0$ and some constants $c_{r}$ and $C_{r}$ which
depend on $r$ only, we have uniformly in $\widetilde{v}\in\mathbf{V}$,
\begin{align*}
\mathbb{E}\left[\left(\zeta_{T,v}^{1/2r}\left(u_{2},\,\widetilde{v}\right)-\zeta_{T,v}^{1/2r}\left(u_{1},\,\widetilde{v}\right)\right)^{2r}\right] & \leq c_{r}\left|\left(\delta^{0}\right)'\left(\left|u_{2}-u_{1}\right|\Sigma_{i}\right)\delta^{0}\right|^{r}\leq C_{r}\left|u_{2}-u_{1}\right|^{r},
\end{align*}
 where $\Sigma_{i}$ is defined in Assumption \ref{Assumption A.9b Bai 97, LapBai97}
and $i=1$ if $u_{1}<0$ and $i=2$ if $u_{1}>0$.
\end{lem}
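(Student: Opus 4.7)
The plan is to reduce the $2r$-th moment of the increment of $\zeta_{T,v}^{1/2r}$ to a moment bound on the linear increment of $\widetilde{G}_{T,v}$, which is a scaled mixingale partial sum and can be controlled by a Burkholder--Davis--Gundy/Rosenthal-type inequality under Assumption \ref{Assumption A5 Bai97 and A4 BP98}. Writing $\zeta_{T,v}^{1/2r}(u,\widetilde{v}) = \exp\bigl(A_{T,v}(u,\widetilde{v})\bigr)$ with $A_{T,v}(u,\widetilde{v}) = (2r)^{-1}\bigl[\widetilde{G}_{T,v}(u,\widetilde{v}) - \varLambda^0(u)\bigr]$, the elementary mean-value estimate $|e^a - e^b| \leq |a-b|(e^a + e^b)$, followed by the convexity inequality $(x + y)^{2r} \leq 2^{2r-1}(x^{2r} + y^{2r})$, yields
\begin{align*}
\bigl(\zeta_{T,v}^{1/2r}(u_2,\widetilde{v}) - \zeta_{T,v}^{1/2r}(u_1,\widetilde{v})\bigr)^{2r} \leq \frac{2^{2r-1}}{(2r)^{2r}}\,\bigl|\Delta \widetilde{G}_{T,v} - \Delta\varLambda^0\bigr|^{2r}\bigl(\zeta_{T,v}(u_1,\widetilde{v}) + \zeta_{T,v}(u_2,\widetilde{v})\bigr),
\end{align*}
where $\Delta \widetilde{G}_{T,v}$ and $\Delta\varLambda^0$ denote the corresponding increments between $u_1$ and $u_2$.

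Next I take expectations and apply Cauchy--Schwarz to separate the increment moment from the exponential weight. By Lemma \ref{Lemma, Area (ia), Bai A.5} and the local parametrization $u = T\|\delta_T\|^2(\lambda_b - \lambda_b^0)$, the leading stochastic part of $\Delta \widetilde{G}_{T,v}$ equals $2v_T(\delta^0)'\sum_{t\in I(u_1,u_2)} z_t e_t$ with $|I(u_1,u_2)| \asymp |u_2 - u_1|/v_T^2$. Since $\{z_t e_t,\mathscr{F}_t\}$ is an $L^{2+\nu}$-mixingale by Assumption \ref{Assumption A5 Bai97 and A4 BP98}, a Rosenthal/BDG-type inequality for mixingales gives
\begin{align*}
\mathbb{E}\bigl|\Delta \widetilde{G}_{T,v}\bigr|^{4r} \leq C_r\,\bigl(v_T^2 |I(u_1,u_2)|\bigr)^{2r} \bigl((\delta^0)'\Sigma_i\delta^0\bigr)^{2r} = C_r\,\bigl|(\delta^0)'(|u_2-u_1|\Sigma_i)\delta^0\bigr|^{2r},
\end{align*}
and since $|\Delta\varLambda^0|^{4r} = O(|u_2-u_1|^{4r})$ is of lower order on bounded sets, taking square roots gives the factor $c_r^{1/2}\bigl|(\delta^0)'(|u_2-u_1|\Sigma_i)\delta^0\bigr|^{r}$. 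For the second factor it suffices to show that $\mathbb{E}[\zeta_{T,v}(u_i,\widetilde{v})^2]$ is uniformly bounded in $T$ and $\widetilde{v}\in\mathbf{V}$ on the relevant range of $u$; this follows from the invariance principle of Assumption \ref{Assumption A.9b Bai 97, LapBai97} (cf.\ Lemma \ref{Lemma B.2, LapBai97}) which identifies $\widetilde{G}_{T,v}$ with a Gaussian process whose exponential moments are explicit, together with uniform integrability in the passage from the pre-limit to the limit.

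The main obstacle lies in the second factor: establishing a \emph{uniform} $L^2$ bound on $\zeta_{T,v}(u,\widetilde{v})$ requires Gaussian-type exponential control of the mixingale partial sum, whereas Assumption \ref{Assumption A5 Bai97 and A4 BP98} delivers only polynomial moments of order $2+\nu$. I resolve this by restricting attention to $u$ in a bounded set (which is the regime in which the lemma is invoked inside the tightness arguments of Theorem \ref{Theorem Geneal Laplace Estimator LapBai97}) and combining (i) the limit identity $\lim_{T\to\infty}\mathbb{E}[\zeta_{T,v}(u)^2] = \exp\bigl(u\{8(\delta^0)'\Sigma_i\delta^0 - 2(\delta^0)'V_i\delta^0\}\bigr)<\infty$ from the Gaussian approximation, with (ii) a truncation plus Vitali argument guaranteeing uniform integrability of $\zeta_{T,v}(u,\widetilde{v})^2$ in $T$ and $\widetilde{v}\in\mathbf{V}$. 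Absorbing the resulting bounded factor into the constant $c_r$ and then bounding $(\delta^0)'\Sigma_i\delta^0$ by its operator-norm value produces the second (cruder) inequality $C_r|u_2 - u_1|^r$, completing the proof.
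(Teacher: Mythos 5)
Your proposal takes a different route from the paper (mean-value inequality plus Cauchy--Schwarz, rather than the Ibragimov--Has'minskii binomial expansion), but it does not close. The paper's proof expands $\mathbb{E}[(\mathcal{V}^{1/2r}(u_{2})-\mathcal{V}^{1/2r}(u_{1}))^{2r}]$ binomially, evaluates each term $\mathbb{E}_{u_{1}}[\mathcal{V}^{j/2r}(u_{2})]$ exactly from the Gaussian exponential-moment formula (where the drift $-\varLambda^{0}$ offsets the variance of $\mathscr{W}$), shows the resulting polynomial $p_{r}(a)$ has a root of multiplicity $r$ at $a=1$, and finishes with $1-e^{-c}\le c$. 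The whole point of that machinery is that the bound $(1-a)^{r}\widetilde{p}_{r}(a)$ is uniform in $u_{1},u_{2}$ because $a\in[0,1]$: the exponentially growing martingale contribution and the linearly decaying drift cancel \emph{inside} the expectation. Your Cauchy--Schwarz step destroys exactly this cancellation. The factor $\mathbb{E}[\zeta_{T,v}(u,\widetilde{v})^{2}]$ is, even in the Gaussian limit, $\exp\left(|u|\{8(\delta^{0})'\Sigma_{i}\delta^{0}-2(\delta^{0})'V_{i}\delta^{0}\}\right)$, which generally diverges as $|u|\to\infty$; so the constants cannot "depend on $r$ only" as the lemma requires. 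Restricting to bounded $u$ is not an admissible fix: Lemma \ref{Lemma I.5.2.(i) I=000026H LapBai97} applies the bound on the shells $M\le|u|<M+1$ for arbitrarily large $M$ with constants independent of $M$ (that is what produces the decay $\exp(-cf_{T}(M))$), and \eqref{eq. VII 2.8  I=000026H LapBai97} needs it for all $u$.

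Two further steps are asserted rather than proved. First, your Rosenthal-type bound $\mathbb{E}|\Delta\widetilde{G}_{T,v}|^{4r}\le C_{r}(\cdot)^{2r}$ requires $4r$-th moments of $z_{t}e_{t}$ for arbitrary integer $r$, whereas Assumption \ref{Assumption A5 Bai97 and A4 BP98} only supplies moments of order $2(2+\nu)$; the paper avoids this by computing moments of the limit process only. Second, the "truncation plus Vitali argument guaranteeing uniform integrability of $\zeta_{T,v}(u,\widetilde{v})^{2}$" is circular: Vitali's theorem takes uniform integrability as a hypothesis, and uniform integrability of an exponential functional of the partial sums cannot be extracted from weak convergence plus polynomial moment bounds --- it is precisely the exponential-moment control that is unavailable here. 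To repair the argument you would need to keep the increment and the exponential weight together, i.e., reproduce the paper's exact computation of $\mathbb{E}_{u_{1}}[\mathcal{V}^{j/2r}(u_{2})]$ and the multiplicity-$r$ root of $p_{r}(a)$ at $a=1$, rather than separating them by H\"older.
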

\begin{proof}
The proof is given for the case $u_{2}>u_{1}>0$. The other case is
similar and thus omitted. We follow closely the proof of Lemma III.5.2
in \citet{ibragimov/has:81}. Let $\mathcal{V}\left(u_{i}\right)=\exp\left(\mathscr{V}\left(u_{i}\right)\right)$,
$i=1,\,2$. We have $\mathbb{E}\left[\left(\mathcal{V}^{1/2r}\left(u_{2}\right)-\mathcal{V}^{1/2r}\left(u_{1}\right)\right)^{2r}\right]=\sum_{j=0}^{2r}\binom{2r}{j}\left(-1\right)^{j}\mathbb{E}_{u_{1}}\left[\mathcal{V}_{u_{1}}^{j/2r}\left(u_{2}\right)\right],$
where $\mathcal{V}_{u_{1}}\left(u_{2}\right)\triangleq\exp\left(\mathscr{V}\left(u_{2}\right)-\mathscr{V}\left(u_{1}\right)\right).$
Using the Gaussian property of $\mathscr{V}\left(u\right)$, for each
$u\in\mathbb{R},$ we have
\begin{align}
\mathbb{E}_{u_{1}}\left[\mathcal{V}^{j/2r}\left(u_{2}\right)\right] & =\exp\left(\frac{1}{2}\left(\frac{j}{2r}\right)^{2}4\left(\delta^{0}\right)'\left(\left|u_{2}-u_{1}\right|\Sigma_{2}\right)\delta^{0}-\frac{j}{2r}\left|\varLambda^{0}\left(u_{2}\right)-\varLambda^{0}\left(u_{1}\right)\right|\right).\label{eq. (5.9) I=000026H LapBai97}
\end{align}
 Then, $\mathbb{E}\left[\left(\mathcal{V}^{1/2r}\left(u_{2}\right)-\mathcal{V}^{1/2r}\left(u_{1}\right)\right)^{2r}\right]=\sum_{j=0}^{2r}\binom{2r}{j}\left(-1\right)^{j}d^{j/2r}$
with 
\begin{align*}
d & \triangleq\exp\left(\frac{j}{2r}2\left(\delta^{0}\right)'\left(\left|u_{2}-u_{1}\right|\Sigma_{2}\right)\delta^{0}-\left|\varLambda^{0}\left(u_{2}\right)-\varLambda^{0}\left(u_{1}\right)\right|\right).
\end{align*}
 Let $B\triangleq2\left(\delta^{0}\right)'\left(\left|u_{2}-u_{1}\right|\Sigma_{2}\right)\delta^{0}-\left|\varLambda^{0}\left(u_{2}\right)-\varLambda^{0}\left(u_{1}\right)\right|$.
There are different cases to be considered:\\
(1) $B<0$. Note that 
\begin{align*}
d & =\exp\left(\frac{j}{2r}2\left(\delta^{0}\right)'\left(\left|u_{2}-u_{1}\right|\Sigma_{2}\right)\delta^{0}-\left|\left(\delta^{0}\right)'\left(\left|u_{2}-u_{1}\right|\Sigma_{2}\right)\delta^{0}\right|+B\right)\\
 & =\exp\left(-\frac{2r-j}{r}\left(\delta^{0}\right)'\left(\left|u_{2}-u_{1}\right|\Sigma_{2}\right)\delta^{0}\right)e^{B},
\end{align*}
which then results in 
\begin{align}
\mathbb{E}\left[\left(\mathcal{V}^{1/2r}\left(u_{2}\right)-\mathcal{V}^{1/2r}\left(u_{1}\right)\right)^{2r}\right] & \leq p_{r}\left(a\right),\label{eq. 5.10 I=000026H LapBai97}
\end{align}
 where $p_{r}\left(a\right)\triangleq\sum_{j=0}^{2r}\binom{2r}{j}\left(-1\right)^{j}a^{\left(2r-j\right)}$
and $a=e^{B/2r}\exp\left(-r^{-1}\left(\delta^{0}\right)'\left(\left|u_{2}-u_{1}\right|\Sigma_{2}\right)\delta^{0}\right).$
\\
(2) $2\left(\delta^{0}\right)'\left(\left|u_{2}-u_{1}\right|\Sigma_{2}\right)\delta^{0}=\left|\varLambda^{0}\left(u_{2}\right)-\varLambda^{0}\left(u_{1}\right)\right|$.
This case is the same as the previous one but with $a=\exp\left(-r^{-1}\left(\delta^{0}\right)'\left(\left|u_{2}-u_{1}\right|\Sigma_{2}\right)\delta^{0}\right)$.
\\
(3) $B>0$. Upon simple manipulations, $\mathbb{E}\left[\left(\mathcal{V}^{1/2r}\left(u_{2}\right)-\mathcal{V}^{1/2r}\left(u_{1}\right)\right)^{2r}\right]\leq p_{r}\left(a\right),$
where 
\begin{align*}
p_{r}\left(a\right)=e^{-B/2r}\sum_{j=0}^{2r}\binom{2r}{j}\left(-1\right)^{j}a^{\left(2r-j\right)} & ,
\end{align*}
with $a=\exp\left(-r^{-1}\left(\delta^{0}\right)'\left(\left|u_{2}-u_{1}\right|\Sigma_{2}\right)\delta^{0}\right)$.
We can thus proceed with the same proof for all the above cases. Let
us consider the first case. We show that at the point $a=1,$ the
polynomial $p_{r}\left(a\right)$ admits a root of multiplicity $r$.
This can be established by verifying the equalities $p_{r}\left(1\right)=p_{r}^{\left(1\right)}\left(1\right)=\cdots=p_{r}^{\left(r-1\right)}\left(1\right)=0$.
One then recognizes that $p_{r}^{\left(i\right)}\left(a\right)$ is
a linear combination of summations $\mathcal{S}_{k}$ ($k=0,\,1,\ldots,\,2i$)
given by $\mathcal{S}_{k}=e^{B}\sum_{j=0}^{2r}\binom{2r}{j}j^{k}$.
Thus, one only needs to verify that $\mathcal{S}_{k}=0$ for $k=0,\,1,\ldots,\,2r-2$.
This follows because the expression for $\mathcal{S}_{k}$ is found
by applying the operator $e^{B}a\left(d/da\right)$ to the function
$\left(1-a^{2}\right)^{2r}$ and evaluating it at $a=1.$ Consequently,
$\mathcal{S}_{k}=0$ for $k=0,\,1,\ldots,\,2r-1$. Using this result
into \eqref{eq. 5.10 I=000026H LapBai97} we find, with $\widetilde{p}_{r}\left(a\right)$
being a polynomial of degree $r^{2}-r$, 
\begin{align}
\mathbb{E}\left[\left(\mathscr{V}^{1/2r}\left(u_{2}\right)-\mathscr{V}^{1/2r}\left(u_{1}\right)\right)^{2r}\right] & =\left(1-a\right)^{r}\widetilde{p}_{r}\left(a\right)\leq\left(r^{-1}\left(\delta^{0}\right)'\left(\left|u_{2}-u_{1}\right|\Sigma_{2}\right)\delta^{0}\right)^{r}\widetilde{p}_{r}\left(a\right),\label{eq. (5.10b) LapBai97}
\end{align}
where the last inequality follows from $1-e^{-c}\leq c$, for $c>0.$
Next, let $\overline{\zeta}_{T,v}^{1/2r}\left(u_{2},\,u_{1}\right)=\zeta_{T,v}^{1/2r}\left(u_{2}\right)-\zeta_{T,v}^{1/2r}\left(u_{1}\right)$.
By Lemma \ref{Lemma ZZ conv prob}  and \ref{Lemma B.2, LapBai97},
the continuous mapping theorem and \eqref{eq. (5.10b) LapBai97},
$\lim_{T\rightarrow\infty}\mathbb{E}\left[\overline{\zeta}_{T,v}^{1/2r}\left(u_{2},\,u_{1}\right)\right]\leq\left(1-a\right)^{r}\widetilde{p}_{r}\left(a\right),$
uniformly in $\widetilde{v}\in\mathbf{V}$. Noting that $j\leq2r,$
we can set $C_{r}=\max_{0\leq a\leq1}e^{B}\widetilde{p}_{r}\left(a\right)/r^{r}$
to prove the lemma. 
\end{proof}
\begin{lem}
\label{Lemma VII.Lemma 2.2 I=000026H LapBai97 }For $u_{1},\,u_{2}\in\mathbb{R}$
being of the same sign and satisfying $0<\left|u_{1}\right|<\left|u_{2}\right|<K<\infty$.
Then, for all $T$ sufficiently large, we have
\begin{align}
\mathbb{E}\left[\left(\zeta_{T,v}^{1/4}\left(u_{2},\,\widetilde{v}\right)-\zeta_{T,v}^{1/4}\left(u_{1},\,\widetilde{v}\right)\right)^{4}\right] & \leq C_{1}\left|u_{2}-u_{1}\right|^{2},\label{eq. (VII 2.7) I=000026H LapBai97}
\end{align}
 where $0<C_{1}<\infty$. Furthermore, for the constant $C_{1}$ from
Lemma \ref{Lemma III.5.2 I=000026H LapBai97}, we have
\begin{align}
\mathbb{P}\left[\zeta_{T,v}\left(u,\,\widetilde{v}\right)>\exp\left(-3C_{1}\left|u\right|/2\right)\right] & \leq\exp\left(-C_{1}\left|u\right|/4\right).\label{eq. VII 2.8  I=000026H LapBai97}
\end{align}
Both relationships are valid uniformly in $\widetilde{v}\in\mathbf{V}$. 
\end{lem}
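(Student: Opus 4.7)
The strategy is to handle the two claims separately but to link them through the same Gaussian moment generating function calculation that underlies Lemma \ref{Lemma III.5.2 I=000026H LapBai97}. For part (i), the plan is to directly invoke Lemma \ref{Lemma III.5.2 I=000026H LapBai97} with $r=2$, for which the right-hand side is $c_{2}|(\delta^{0})'(|u_{2}-u_{1}|\Sigma_{i})\delta^{0}|^{2}\leq C_{1}|u_{2}-u_{1}|^{2}$ upon absorbing the bounded factor $\left((\delta^{0})'\Sigma_{i}\delta^{0}\right)^{2}$ into the constant. Since the bound in that lemma is established as a limit result through the uniform weak convergence $\widetilde{G}_{T,v}(u,\widetilde{v})\Rightarrow\mathscr{W}(u)$ in $\mathbb{D}_{b}(\mathbf{C}\times\mathbf{V})$ (Lemma \ref{Lemma B.2, LapBai97}) combined with $Q_{T,v}(u)+\varLambda^{0}(u)=o_{\mathbb{P}}(1)$ (Lemma \ref{Lemma ZZ conv prob}), the inequality transfers to the finite-sample $\zeta_{T,v}$ for all $T$ large enough, uniformly in $\widetilde{v}\in\mathbf{V}$ because the dependence of $\widetilde{G}_{T,v}$ on $\widetilde{v}$ enters only through $\widetilde{v}/r_{T}=O(T^{-1/2})\to0$ on the compact set $\mathbf{V}$. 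Uniform integrability of fourth-order differences, needed to lift the weak limit to convergence of moments, follows from the explicit form of $\widetilde{G}_{T,v}$ as a linear functional of $\sum z_{t}e_{t}$ together with Assumption \ref{Assumption A5 Bai97 and A4 BP98}.

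For part (ii), the plan is a standard Chebyshev/Markov bound applied to a fractional power of $\zeta_{T,v}$. For $\alpha\in(0,1/2)$ to be chosen,
\begin{align*}
\mathbb{P}\bigl[\zeta_{T,v}(u,\widetilde{v})>e^{-3C_{1}|u|/2}\bigr] & =\mathbb{P}\bigl[\zeta_{T,v}^{\alpha}(u,\widetilde{v})>e^{-3\alpha C_{1}|u|/2}\bigr]\leq e^{3\alpha C_{1}|u|/2}\,\mathbb{E}\bigl[\zeta_{T,v}^{\alpha}(u,\widetilde{v})\bigr].
\end{align*}
Specializing the Gaussian moment generating function computation in the proof of Lemma \ref{Lemma III.5.2 I=000026H LapBai97} to the ``$u_{1}=0$'' case (using $\widetilde{G}_{T,v}(0,\widetilde{v})=0=\varLambda^{0}(0)$), the limit quantity equals $\mathbb{E}\bigl[e^{\alpha\mathscr{V}(u)}\bigr]=\exp\bigl(|u|\bigl[2\alpha^{2}(\delta^{0})'\Sigma_{i}\delta^{0}-\alpha(\delta^{0})'V_{i}\delta^{0}\bigr]\bigr)$, and passing to the finite-$T$ quantity $\mathbb{E}[\zeta_{T,v}^{\alpha}(u,\widetilde{v})]$ by the same weak-convergence-plus-uniform-integrability route as in part (i). Choosing $\alpha$ sufficiently small makes the coefficient of $|u|$ equal to $-\alpha C$ with $C\triangleq(\delta^{0})'V_{i}\delta^{0}-2\alpha(\delta^{0})'\Sigma_{i}\delta^{0}>0$ (positivity is guaranteed for small $\alpha$ by Assumption \ref{Assumption A.9b Bai 97, LapBai97}). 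The bound then becomes $\exp\bigl(\alpha(3C_{1}/2-C)|u|\bigr)$, which is at most $\exp(-C_{1}|u|/4)$ provided $C_{1}$ and $\alpha$ satisfy $C\geq C_{1}(3/2+1/(4\alpha))$. This inequality can be arranged by taking the constant $C_{1}$ of Lemma \ref{Lemma III.5.2 I=000026H LapBai97} to be small enough relative to $(\delta^{0})'V_{i}\delta^{0}$; since that lemma's constant is only required to satisfy an upper bound, there is freedom to shrink $C_{1}$ without invalidating the first inequality.

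The principal obstacle is the transfer of the moment generating function bound from the Gaussian limit $\mathscr{V}$ to the pre-limit process $\widetilde{G}_{T,v}(u,\widetilde{v})+Q_{T,v}(u)$. Weak convergence alone does not guarantee convergence of exponential moments, so one needs a uniform exponential integrability argument. The plan is to exploit the explicit decomposition \eqref{eq. A.2.6 LapBai97}: $\widetilde{G}_{T,v}(u,\widetilde{v})$ is essentially $2\mathrm{sgn}(\cdot)\delta'_{T}Z'_{\Delta}e$ plus lower-order terms, which is a sum of martingale differences whose cumulants can be bounded term by term using Assumption \ref{Assumption A5 Bai97 and A4 BP98} and the fact that $|u|$ controls the number of summands. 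Combined with the strictly negative drift supplied by $Q_{T,v}(u)\leq-\Lambda^{0}(u)/2$ (Lemma \ref{Lemma, Area (ii), Q LapBai97}) for $|u|$ large, this yields the desired uniform bound on $\mathbb{E}[\zeta_{T,v}^{\alpha}(u,\widetilde{v})]$, uniformly in $\widetilde{v}\in\mathbf{V}$ and for all $T$ sufficiently large. The uniformity in $\widetilde{v}$ follows because the $\widetilde{v}/r_{T}$ perturbation enters only multiplicatively and can be absorbed into the constant for large $T$.
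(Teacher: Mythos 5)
Your route for the first display is the paper's route verbatim: cite Lemma \ref{Lemma III.5.2 I=000026H LapBai97} with $r=2$ and absorb $\left((\delta^{0})'\Sigma_{i}\delta^{0}\right)^{2}$ into the constant. For the second display the paper likewise applies Markov's inequality to a fractional power of $\zeta_{T,v}$ (it fixes the exponent at $1/2$ rather than leaving a free $\alpha$) and then uses the Gaussian moment generating function computation from the proof of Lemma \ref{Lemma III.5.2 I=000026H LapBai97} specialized to $u_{1}=0$. Your concern about passing from weak convergence of $\widetilde{G}_{T,v}+Q_{T,v}$ to convergence of exponential moments is legitimate \textemdash{} the paper leans entirely on the closing limit statement in the proof of Lemma \ref{Lemma III.5.2 I=000026H LapBai97} for this \textemdash{} and the martingale-plus-negative-drift argument you sketch is a reasonable way to supply the missing uniform integrability.

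The step that does not work is the closing one of part (ii). To force $\exp\bigl(\alpha(3C_{1}/2-C)|u|\bigr)\leq\exp(-C_{1}|u|/4)$ you need $C\geq C_{1}\bigl(3/2+1/(4\alpha)\bigr)$, and you propose to achieve this by \emph{shrinking} the constant $C_{1}$ of Lemma \ref{Lemma III.5.2 I=000026H LapBai97}. That is backwards: $C_{1}$ there is an upper-bound constant (explicitly $C_{r}=\max_{0\leq a\leq1}e^{B}\widetilde{p}_{r}(a)/r^{r}$ in that proof), so the only freedom is to \emph{enlarge} it; you cannot make it smaller than whatever value the lemma actually delivers. Note also that enlarging $C_{1}$ works against you in the target inequality, since it enlarges the event $\{\zeta_{T,v}>e^{-3C_{1}|u|/2}\}$ while tightening the bound $e^{-C_{1}|u|/4}$, and taking $\alpha$ small does not help because of the $1/(4\alpha)$ term. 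What is actually required is a verification that the specific constant produced by Lemma \ref{Lemma III.5.2 I=000026H LapBai97} is dominated by the drift, i.e.\ a size relation of the form $C_{1}\leq c\,(\delta^{0})'V_{i}\delta^{0}$ for a suitable $c$ (the paper's own chain $e^{3C_{1}|u|/4}\,\mathbb{E}[\zeta_{T,v}^{1/2}]\leq e^{-C_{1}|u|/4}$ implicitly assumes such a relation). Either supply that verification for the constant the lemma produces, or restate the conclusion with an unspecified pair of positive constants in the two exponents; as written, the ``shrink $C_{1}$'' step is a genuine gap.
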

\begin{proof}
Suppose $u>0.$ The relationship in \eqref{eq. (VII 2.7) I=000026H LapBai97}
follows from Lemma \ref{Lemma III.5.2 I=000026H LapBai97} with $r=2.$
By Markov's inequality and Lemma \ref{Lemma III.5.2 I=000026H LapBai97},
\begin{align*}
\mathbb{P}\left[\zeta_{T,v}\left(u,\,\widetilde{v}\right)>\exp\left(-3C_{1}\left|u\right|/2\right)\right] & \leq\exp\left(3C_{1}\left|u\right|/4\right)\mathbb{E}\left[\zeta_{T,v}^{1/2}\left(u,\,\widetilde{v}\right)\right]\\
 & \leq\exp\left(3C_{1}\left|u\right|/4-\left(\delta^{0}\right)'\left(\left|u\right|\Sigma_{2}\right)\delta^{0}\right)\leq\exp\left(-C_{1}\left|u\right|/4\right).
\end{align*}
\end{proof}
\begin{lem}
\label{Lemma Remark 5.1 I=000026H p. 44 LapBai97}Under the conditions
of Lemma \ref{Lemma VII.Lemma 2.2 I=000026H LapBai97 }, for any $\vartheta>0$
there exists a finite real number $c_{\vartheta}$ and a $\overline{T}$
such that for all $T>\overline{T}$, $\sup_{\widetilde{v}\in\mathbf{V}}\mathbb{P}\left[\sup_{\left|u\right|>M}\zeta_{T,v}\left(u,\,\widetilde{v}\right)>M^{-\vartheta}\right]\leq c_{\vartheta}M^{-\vartheta}.$
\end{lem}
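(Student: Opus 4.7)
The plan is to derive this tail estimate by combining the single-point bound \eqref{eq. VII 2.8  I=000026H LapBai97} with the increment moment bound \eqref{eq. (VII 2.7) I=000026H LapBai97} via a chaining argument in the spirit of Theorems 5.1--5.2 of Chapter III of \citet{ibragimov/has:81}. By symmetry in the sign of $u$, I first reduce to handling $u > M$, then partition $(M,\infty)$ into unit shells $I_k = [k, k+1)$ for integers $k \geq \lfloor M \rfloor$ and apply a union bound to reduce the target to bounding $\mathbb{P}[\sup_{u \in I_k} \zeta_{T,v}(u,\tilde{v}) > M^{-\vartheta}]$ for each $k$ and then summing over $k \geq M$.

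On each shell I would split the supremum via the elementary inequality
\begin{align*}
\sup_{u \in I_k}\zeta_{T,v}^{1/(2r)}(u,\tilde{v}) \;\leq\; \zeta_{T,v}^{1/(2r)}(k,\tilde{v}) \;+\; \sup_{u\in I_k}\big|\zeta_{T,v}^{1/(2r)}(u,\tilde{v})-\zeta_{T,v}^{1/(2r)}(k,\tilde{v})\big|,
\end{align*}
where $r$ is an integer to be chosen large. The first term is controlled using \eqref{eq. VII 2.8  I=000026H LapBai97}: once $k$ is at least of order $(\vartheta/C_1)\log M$, we have $\exp(-3C_1 k/2) \leq \tfrac{1}{2}M^{-\vartheta}$, whence $\mathbb{P}[\zeta_{T,v}(k,\tilde{v}) > \tfrac{1}{2}M^{-\vartheta}] \leq \exp(-C_1 k/4)$ uniformly in $\tilde{v}\in\mathbf{V}$, giving a contribution to the sum over $k$ that is geometrically small.

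The main obstacle is the modulus term, since a naive application of Markov's inequality with the unit-interval $2r$-th moment bound of Lemma \ref{Lemma III.5.2 I=000026H LapBai97} produces only an $O(1)$ bound per shell, which does not sum. To bypass this, I would exploit the fact that Lemma \ref{Lemma III.5.2 I=000026H LapBai97} is available for \emph{every} integer $r$, with constant $C_r$. Applying the Kolmogorov--Chentsov continuity theorem (or the Garsia--Rodemich--Rumsey lemma) to the family $\{\zeta_{T,v}^{1/(2r)}(\cdot,\tilde{v})\}$ yields Hölder continuity on $I_k$ of any exponent $\gamma < (r-1)/(2r)$ with $2r$-th moment of the seminorm bounded by $C_r'$ uniformly in $k$, $\tilde{v}\in\mathbf{V}$, and $T\geq\overline{T}$. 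Choosing $r$ large enough that the resulting quantitative modulus bound, combined with Markov on a finer subdivision of $I_k$ into $\lfloor M^\tau\rfloor$ pieces of length $M^{-\tau}$, gives a probability of order $C_r' M^{\tau - (r-1)\tau\vartheta/(2r)}$ per shell, yields the required $O(M^{-\vartheta})$ rate after summing once $\tau$ and $r$ are tuned to $\vartheta$.

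Combining the two contributions on each shell and summing the resulting bounds over $k\geq\lfloor M\rfloor$, both $\sum_k \exp(-C_1 k/4)$ (from the reference-point bound) and the geometric tail of the modulus bound are dominated by $c_\vartheta M^{-\vartheta}$ for $M$ large and $T>\overline{T}$, with the suprema over $\tilde{v}\in\mathbf{V}$ preserved because Lemma \ref{Lemma VII.Lemma 2.2 I=000026H LapBai97 } holds uniformly in $\tilde{v}$. The main technical point, therefore, is the careful choice of $r$ (and of the sub-discretization length) so that the modulus bound compensates for the $M^\vartheta$ loss incurred when passing from the event $\{\zeta>M^{-\vartheta}\}$ back to the moment inequality via Markov.
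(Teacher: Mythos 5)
Your overall strategy---a shell decomposition of $\{|u|>M\}$, the pointwise tail bound \eqref{eq. VII 2.8  I=000026H LapBai97} at reference points, and a Kolmogorov--Chentsov-type modulus control built from Lemma \ref{Lemma III.5.2 I=000026H LapBai97}---is exactly the argument of Theorem I.5.1 in \citet{ibragimov/has:81} that the paper's one-line proof implicitly invokes, and you correctly identify the crux (the naive per-shell Markov bound does not sum). But your proposed fix does not close the gap. On shell $I_k$ you compare the modulus against the \emph{fixed} threshold $\tfrac12 M^{-\vartheta/(2r)}$ using a mesh of \emph{fixed} width $M^{-\tau}$; both are independent of $k$, and so is the resulting per-shell probability (which, incidentally, should come out as $C_r 2^{2r}M^{-\tau(r-1)+\vartheta}$ rather than your $C_r' M^{\tau-(r-1)\tau\vartheta/(2r)}$). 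There is therefore no ``geometric tail'' to sum: the range of $u$ is $\Gamma_T$, which contains on the order of $T\Vert\delta_T\Vert^2$ unit shells, so summing a $k$-independent bound over $k\geq\lfloor M\rfloor$ either diverges or yields a bound that grows with $T$, destroying the uniform-in-$T$ constant $c_\vartheta$. Your own diagnosis of why naive Markov fails applies verbatim to your remedy.

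The repair is to let both the threshold and the mesh decay exponentially in the shell index, which is precisely what the pointwise bound \eqref{eq. VII 2.8  I=000026H LapBai97} makes possible. On $I_k$ require both $\max_j \zeta_{T,v}^{1/(2r)}(u_{k,j},\widetilde v)$ over a mesh of width $h_k=e^{-\beta k}$ and the corresponding modulus to lie below $e^{-3C_1 k/(4r)}$; on the complement of the two bad events, $\sup_{I_k}\zeta_{T,v}\leq 2^{2r}e^{-3C_1k/2}\leq M^{-\vartheta}$ once $k\geq M$ and $M$ is large, so the target event is covered. The union bound over reference points then costs $h_k^{-1}e^{-C_1k/4}$ by \eqref{eq. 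VII 2.8  I=000026H LapBai97}, while the modulus term costs $C_r h_k^{r-1}e^{3C_1k/2}$ by Lemma \ref{Lemma III.5.2 I=000026H LapBai97} plus Kolmogorov--Chentsov and Markov; both are geometrically summable in $k$ provided $\beta<C_1/4$ and $\beta(r-1)>3C_1/2$, which is feasible once $r\geq 8$. The total is $O(e^{-cM})$ uniformly in $\widetilde v\in\mathbf{V}$ and $T>\overline T$, which dominates $M^{-\vartheta}$ for \emph{every} $\vartheta$; the arbitrariness of $\vartheta$ in the statement comes from this exponential-versus-polynomial comparison, not from tuning $\tau$ and $r$ to $\vartheta$ as you propose.
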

\begin{proof}
It can be shown using Lemma \ref{Lemma III.5.2 I=000026H LapBai97}-\ref{Lemma VII.Lemma 2.2 I=000026H LapBai97 }.
\end{proof}

\begin{lem}
\label{Lemma I.5.1 I=000026H LapBai97}For every sufficiently small
$\epsilon\leq\overline{\epsilon}$, where $\overline{\epsilon}$ depends
on the smoothness of $\pi\left(\cdot\right),$ there exists $0<C<\infty$
such that
\begin{align}
\mathbb{P}\left[\int_{0}^{\epsilon}\zeta_{T,v}\left(u,\,\widetilde{v}\right)\pi\left(\lambda_{b}^{0}+u/\psi_{T}\right)du<\epsilon\pi\left(\lambda_{b}^{0}\right)\right] & <C\epsilon^{1/2}.\label{eq. (5.8, p.45) I=000026H LapBai97}
\end{align}
\end{lem}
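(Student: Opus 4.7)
The plan is to adapt the argument of Lemma~I.5.1 in \citet{ibragimov/has:81}. The key input is the $L^{2}$-H\"older estimate of Lemma~\ref{Lemma III.5.2 I=000026H LapBai97} taken at $(u_{1},u_{2})=(0,u)$ with $r=1$. Since $\widetilde{G}_{T,v}(0,\widetilde{v})=0$ and $\varLambda^{0}(0)=0$ imply $\zeta_{T,v}(0,\widetilde{v})=1$, we obtain
\[
\mathbb{E}\bigl[(\zeta_{T,v}^{1/2}(u,\widetilde{v})-1)^{2}\bigr]\leq C_{1}|u|,\qquad u\in[0,\epsilon],
\]
uniformly in $\widetilde{v}\in\mathbf{V}$. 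The strategy is to use this to show that $\int_{0}^{\epsilon}\zeta_{T,v}^{1/2}(u,\widetilde{v})\,du$ is close to $\epsilon$ with probability at least $1-C\epsilon^{1/2}$, and then to transfer this bound to the full integral via Cauchy--Schwarz and the continuity of $\pi$.

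By Jensen's inequality and the moment bound above,
\[
\mathbb{E}\Bigl[\int_{0}^{\epsilon}|\zeta_{T,v}^{1/2}(u,\widetilde{v})-1|\,du\Bigr]\leq\int_{0}^{\epsilon}\bigl(\mathbb{E}[(\zeta_{T,v}^{1/2}(u,\widetilde{v})-1)^{2}]\bigr)^{1/2}\,du\leq\tfrac{2}{3}\sqrt{C_{1}}\,\epsilon^{3/2}.
\]
A Markov bound then yields, for any fixed $\eta>0$,
\[
\mathbb{P}\Bigl[\int_{0}^{\epsilon}|\zeta_{T,v}^{1/2}(u,\widetilde{v})-1|\,du>\eta\epsilon\Bigr]\leq\frac{2\sqrt{C_{1}}}{3\eta}\,\epsilon^{1/2}.
\]
On the complementary event, $\int_{0}^{\epsilon}\zeta_{T,v}^{1/2}\,du\geq(1-\eta)\epsilon$, and Cauchy--Schwarz in the form $\int_{0}^{\epsilon}\zeta\geq\epsilon^{-1}\bigl(\int_{0}^{\epsilon}\zeta^{1/2}\bigr)^{2}$ upgrades this to $\int_{0}^{\epsilon}\zeta_{T,v}\,du\geq(1-\eta)^{2}\epsilon$.

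By Assumption~\ref{Assumption Prior LapBai97} and the fact that $\psi_{T}\to\infty$, the continuity of $\pi$ at $\lambda_{b}^{0}$ gives $\pi(\lambda_{b}^{0}+u/\psi_{T})\geq(1-\eta')\pi^{0}$ uniformly for $u\in[0,\epsilon]$, provided $\epsilon\leq\overline{\epsilon}$ and $T$ is large, with $\eta'>0$ arbitrarily small (controlled by $\overline{\epsilon}$). Combining the two steps, on the good event,
\[
\int_{0}^{\epsilon}\zeta_{T,v}(u,\widetilde{v})\,\pi(\lambda_{b}^{0}+u/\psi_{T})\,du\geq(1-\eta')(1-\eta)^{2}\pi^{0}\epsilon,
\]
and one selects $\eta,\eta'$ small enough so that $(1-\eta')(1-\eta)^{2}$ dominates the constant on the right-hand side of the stated inequality---as in the Ibragimov--Has'minskii template this harmless constant can be absorbed into the choice of $\overline{\epsilon}$---yielding the bound $C\epsilon^{1/2}$ with $C\propto\sqrt{C_{1}}/\eta$.

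The only delicate point is justifying $\zeta_{T,v}(0,\widetilde{v})=1$ uniformly in $\widetilde{v}\in\mathbf{V}$: strictly, $\widetilde{G}_{T,v}$ is evaluated at $\lambda_{b,T}^{0}(v)+u/\psi_{T}$ rather than at $\lambda_{b}^{0}$, which may differ by $O(1/r_{T})$, and a uniform-in-$\widetilde{v}$ version of Lemma~\ref{Lemma III.5.2 I=000026H LapBai97} is needed. This is handled by standard stochastic equicontinuity arguments paralleling those already employed in Lemma~\ref{Lemma B.8 LapBai97}--\ref{Lemma B.9 LapBai97}; any residual discrepancy is $o_{\mathbb{P}}(1)$ and absorbed by adjusting $\eta$ slightly.
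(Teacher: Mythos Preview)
Your proof is correct and follows essentially the same Ibragimov--Has'minskii template as the paper: both use Lemma~\ref{Lemma III.5.2 I=000026H LapBai97} at $r=1$ together with $\zeta_{T,v}(0,\widetilde{v})=1$ and the continuity of $\pi$ near $\lambda_{b}^{0}$, and finish by Markov's inequality. The only cosmetic difference is the placement of Cauchy--Schwarz: the paper bounds $\mathbb{E}|\zeta_{T,v}(u)-1|\leq(\mathbb{E}|\zeta^{1/2}(u)+1|^{2})^{1/2}(\mathbb{E}|\zeta^{1/2}(u)-1|^{2})^{1/2}\leq C|u|^{1/2}$ and then applies Markov directly to $\int_{0}^{\epsilon}|\zeta-1|\,du$, whereas you apply Markov to $\int_{0}^{\epsilon}|\zeta^{1/2}-1|\,du$ first and upgrade via $\int_{0}^{\epsilon}\zeta\geq\epsilon^{-1}\bigl(\int_{0}^{\epsilon}\zeta^{1/2}\bigr)^{2}$---a harmless reordering of the same two ingredients.
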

\begin{proof}
Since $\mathbb{E}\left(\zeta_{T,v}\left(0,\,\widetilde{v}\right)\right)=1$
and $\mathbb{E}\left(\zeta_{T,v}\left(u,\,\widetilde{v}\right)\right)\leq1$
for sufficiently large $T$, we have
\begin{align}
\mathbb{E}\left|\zeta_{T,v}\left(u,\,\widetilde{v}\right)-\zeta_{T,v}\left(0,\,\widetilde{v}\right)\right| & \leq\left(\mathbb{E}\left|\zeta_{T,v}^{1/2}\left(u,\,\widetilde{v}\right)+\zeta_{T,v}^{1/2}\left(0,\,\widetilde{v}\right)\right|^{2}\mathbb{E}\left|\zeta_{T,v}^{1/2}\left(u,\,\widetilde{v}\right)-\zeta_{T,v}^{1/2}\left(0,\,\widetilde{v}\right)\right|^{2}\right)^{1/2}\leq C\left|u\right|^{1/2},\label{eq. (5.9, p.45) I=000026H LapBai97}
\end{align}
by Lemma \ref{Lemma III.5.2 I=000026H LapBai97} with $r=1$. By Assumption
\ref{Assumption Prior LapBai97}, $\left|\pi_{T,v}\left(u\right)-\pi^{0}\right|\leq\left|\pi\left(\lambda_{b,T}^{0}\left(v\right)\right)-\pi^{0}\right|+C\psi_{T}^{-1}\left|u\right|,$
with $C>0$. The first term on the right-hand side is $o\left(1\right)$
(and independent of $u$) while the second is asymptotically negligible
for small $u$. Thus, for a sufficiently small $\overline{\epsilon}>0,$
\begin{align*}
\int_{0}^{\epsilon}\zeta_{T,v}\left(u,\,\widetilde{v}\right)\pi_{T,v}\left(u\right)du>\frac{\pi^{0}}{2}\int_{0}^{\epsilon}\zeta_{T,v}\left(u,\,\widetilde{v}\right)du & .
\end{align*}
Next, using $\zeta_{T,v}\left(0,\,\widetilde{v}\right)=1$, 
\begin{align*}
\mathbb{P}\left[\int_{0}^{\epsilon}\zeta_{T,v}\left(u,\,\widetilde{v}\right)\pi_{T,v}\left(u\right)du<\epsilon/2\right] & \leq\mathbb{P}\left[\int_{0}^{\epsilon}\left(\zeta_{T,v}\left(u,\,\widetilde{v}\right)-\zeta_{T,v}\left(0,\,\widetilde{v}\right)\right)du<-\epsilon/2\right]\\
 & \leq\mathbb{P}\left[\int_{0}^{\epsilon}\left|\zeta_{T,v}\left(u,\,\widetilde{v}\right)-\zeta_{T,v}\left(0,\,\widetilde{v}\right)\right|du>\epsilon/2\right],
\end{align*}
 and by Markov's inequality together with \eqref{eq. (5.9, p.45) I=000026H LapBai97}
the last expression is less than or equal to 
\begin{align*}
\left(2/\epsilon\right)\int_{0}^{\epsilon}\mathbb{E}\left|\zeta_{T,v}\left(u,\,\widetilde{v}\right)-\zeta_{T,v}\left(0,\,\widetilde{v}\right)\right|du<2C\epsilon^{1/2} & .
\end{align*}
\end{proof}
\begin{lem}
\label{Lemma I.5.2.(i) I=000026H LapBai97}For $f_{T}\in\boldsymbol{F},$
and $M$ sufficiently large, there exist constants $c,\,C>0$ such
that
\begin{align}
\mathbb{P}\left[J_{1,M}>\exp\left(-cf_{T}\left(M\right)\right)\right] & \leq C\left(1+M^{C}\right)\exp\left(-cf_{T}\left(M\right)\right),\label{eq. (5.10) I=000026H LapBai97}
\end{align}
uniformly in $\widetilde{v}\in\mathbf{V}$.
\end{lem}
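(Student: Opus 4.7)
The plan is to reduce the statement to a supremum bound on $\zeta_{T,v}(\cdot,\widetilde{v})$ over $\Gamma_M$ and then to control that supremum by discretization, combining the pointwise tail estimate \eqref{eq. VII 2.8  I=000026H LapBai97} with the $L^{4}$ increment bound \eqref{eq. (VII 2.7) I=000026H LapBai97}. First, since $\Gamma_M\subset\{u:\,M\le|u|<M+1\}$ has Lebesgue measure at most $2$ and $\pi_{T,v}$ is uniformly bounded by some $C_{\pi}<\infty$ (Assumption \ref{Assumption Prior LapBai97}), we have
\[
J_{1,M}\le 2C_{\pi}\sup_{u\in\Gamma_M}\zeta_{T,v}(u,\widetilde{v}),
\]
so it suffices to control the probability that this supremum exceeds $(2C_{\pi})^{-1}\exp(-cf_T(M))$. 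A key observation is that the defining property $x^{b}\exp(-f_T(x))\to 0$ for every $b$ implies $f_T(x)/\log x\to\infty$, but allows $f_T(x)=o(x)$; hence for any sufficiently small $c>0$ one has $\exp(-C_1 M/4)\le\exp(-cf_T(M))$ for all $M$ large, which is what makes the pointwise tail \eqref{eq. VII 2.8  I=000026H LapBai97} strong enough to yield the target exponential rate.

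Next, I would discretize $\Gamma_M$ into $N_M=\lceil M^{\alpha}\rceil$ sub-intervals of length $\delta_M=M^{-\alpha}$ with centres $\{u_k\}_{k=1}^{N_M}$, where $\alpha>0$ is to be chosen. A union bound and \eqref{eq. VII 2.8  I=000026H LapBai97} give
\[
\mathbb{P}\Bigl[\max_{1\le k\le N_M}\zeta_{T,v}(u_k,\widetilde{v})>\exp(-3C_1M/2)\Bigr]\le N_M\exp(-C_1 M/4)\le C(1+M^{\alpha})\exp(-cf_T(M))
\]
for an appropriate $c>0$ and $M$ sufficiently large. To handle oscillations inside each cell I would apply a Kolmogorov/Garsia--Rodemich--Rumsey-type argument to the process $u\mapsto\zeta_{T,v}^{1/4}(u,\widetilde{v})$, for which \eqref{eq. (VII 2.7) I=000026H LapBai97} provides the required $L^{4}$ increment bound $\mathbb{E}[(\zeta_{T,v}^{1/4}(u_2)-\zeta_{T,v}^{1/4}(u_1))^{4}]\le C|u_2-u_1|^{2}$. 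This yields a maximal inequality of the form
\[
\mathbb{E}\Bigl[\sup_{|u-u_k|\le\delta_M/2}\bigl|\zeta_{T,v}^{1/4}(u,\widetilde{v})-\zeta_{T,v}^{1/4}(u_k,\widetilde{v})\bigr|^{4}\Bigr]\le C\,\delta_M^{1+\eta}
\]
for some $\eta>0$, after which Markov's inequality, summation over $k$, and the identity $|a^{4}-b^{4}|\le 4(|a|^{3}+|b|^{3})|a-b|$ (combined with the pointwise bound from Step~2 on the centres) transfer the oscillation control from $\zeta_{T,v}^{1/4}$ back to $\zeta_{T,v}$. Choosing $\alpha$ large enough that the oscillation tail is dominated by $\exp(-cf_T(M))$, but keeping $N_M$ polynomial, I would then combine the two probability bounds to obtain the claimed estimate $C(1+M^{C})\exp(-cf_T(M))$, with all constants uniform in $\widetilde{v}\in\mathbf{V}$ since each ingredient—\eqref{eq. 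VII 2.8  I=000026H LapBai97}, \eqref{eq. (VII 2.7) I=000026H LapBai97}, and the prior bound—is uniform in $\widetilde{v}$.

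The main obstacle will be the passage from the $L^{4}$ increment bound on $\zeta_{T,v}^{1/4}$ to a pathwise supremum bound on $\zeta_{T,v}$ itself at the exponential rate $\exp(-cf_T(M))$: the existing Lemma \ref{Lemma Remark 5.1 I=000026H p. 44 LapBai97} already gives a supremum bound, but only at a polynomial rate $M^{-\vartheta}$, so a naive application is too weak. The refinement consists of exploiting the fact that the relevant event $\{\zeta_{T,v}>\exp(-cf_T(M))\}$ is deep in the tail, where the pointwise exponential bound \eqref{eq. VII 2.8  I=000026H LapBai97} kicks in; one has to balance the number $N_M$ of discretization points (which inflates by a union bound) against the oscillation within each cell (which shrinks as $\delta_M\downarrow 0$), while ensuring both are dominated by $\exp(-cf_T(M))$. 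Making these choices work simultaneously, and tracking the uniformity in $\widetilde{v}$, is the only genuinely delicate part; the remainder is a routine assembly of the preceding lemmas.
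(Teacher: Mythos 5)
Your overall strategy --- discretize $\Gamma_{M}$, control the centres with the pointwise tail bound and the within-cell oscillations with the fourth-moment increment bound --- has the same skeleton as the paper's proof, but your calibration of the mesh breaks down. You insist on a polynomial number $N_{M}=\lceil M^{\alpha}\rceil$ of cells. To reach the target level $\exp(-cf_{T}(M))$ for $\zeta_{T,v}$ you must take the oscillation threshold for $\zeta_{T,v}^{1/4}$ of order $t_{M}\asymp\exp(-cf_{T}(M)/4)$, and then your maximal inequality plus Markov and the union over cells gives an oscillation term of order $N_{M}\,\delta_{M}^{1+\eta}\,t_{M}^{-4}\asymp M^{-\alpha\eta}\exp(cf_{T}(M))$. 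Since property (2) of the class $\boldsymbol{F}$ forces $f_{T}(M)/\log M\rightarrow\infty$ (as you yourself note), no choice of $\alpha$ makes $M^{-\alpha\eta}\exp(cf_{T}(M))\leq C(1+M^{C})\exp(-cf_{T}(M))$: a quantity that is only polynomially small in $M$ can never be absorbed into $\exp(-cf_{T}(M))$. The two requirements you state at the end --- that the oscillation tail be dominated by $\exp(-cf_{T}(M))$ and that $N_{M}$ stay polynomial --- are mutually exclusive; this is precisely the step you flag as ``the only genuinely delicate part,'' and it is not resolved.

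The fix, which is what the paper does, is to let the number of cells grow exponentially in $f_{T}(M)$: the paper chooses $I$ so that $I^{3/2}\exp(-f_{T}(M)/4)\asymp1$. The union bound over centres survives this because the pointwise tail is $\exp(-C_{1}M/4)$ and the argument in any case requires $f_{T}(M)\lesssim M$ (a constraint your step ``for any sufficiently small $c>0$\ldots'' also quietly imposes on $f_{T}$, and which the paper's own constants implicitly require as well). Moreover, the paper never needs a pathwise maximal inequality: because $J_{1,M}$ is an integral rather than a supremum, it is compared with the Riemann sum $J_{1,M}^{\Pi}$, and $\mathbb{E}\left[J_{1,M}-J_{1,M}^{\Pi}\right]\leq C_{1}(1+M^{C_{2}})I^{-1/2}$ follows directly from Fubini and the Cauchy--Schwarz bound on increments of $\zeta_{T,v}^{1/2}$, after which a single application of Markov's inequality suffices. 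If you replace your supremum reduction by this integral comparison and take exponentially many cells, your argument goes through; as written, it does not.
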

\begin{proof}
In view of the smotheness property of $\pi\left(\cdot\right)$, without
loss of generality we consider the case of the uniform prior (i.e.,
$\pi_{T,v}\left(u\right)=1$ for all $u$). We begin by dividing the
open interval $\left\{ u:\,M\leq\left|u\right|<M+1\right\} $ into
$I$ disjoint segments denoting the $i$-th one by $\Pi_{i}$. For
each segment $\Pi_{i}$ choose a point $u_{i}$ and define $J_{1,M}^{\Pi}\triangleq\sup_{\widetilde{v}\in\mathbf{V}}\sum_{i\in I}\zeta_{T,v}\left(u_{i},\,\widetilde{v}\right)\mathrm{Leb}\left(\Pi_{i}\right)=\sup_{\widetilde{v}\in\mathbf{V}}\sum_{i\in I}\int_{\Pi_{i}}\zeta_{T,v}\left(u_{i},\,\widetilde{v}\right)du.$
Then,
\begin{align}
\mathbb{P}\left[J_{1,M}^{\Pi}>\left(1/4\right)\exp\left(-cf_{T}\left(M\right)\right)\right] & \leq\mathbb{P}\left[\max_{i\in I}\sup_{\widetilde{v}\in\mathbf{V}}\zeta_{T,v}^{1/2}\left(u_{i},\,\widetilde{v}\right)\left(\mathrm{Leb}\left(\Gamma_{M}\right)\right)^{1/2}>\left(1/2\right)\exp\left(-f_{T}\left(M\right)/2\right)\right]\nonumber \\
 & \leq\sum_{i\in I}\mathbb{P}\left[\zeta_{T,v}^{1/2}\left(u_{i},\,\widetilde{v}\right)>\left(1/2\right)\left(\mathrm{Leb}\left(\Gamma_{M}\right)\right)^{-1/2}\exp\left(-f_{T}\left(M\right)/2\right)\right]\nonumber \\
 & \leq2I\left(\mathrm{Leb}\left(\Gamma_{M}\right)\right)^{1/2}\exp\left(-f_{T}\left(M\right)/12\right),\label{eq. (5.11, p.47) I=000026H}
\end{align}
where the last inequality follows from applying Lemma \ref{Lemma VII.Lemma 2.2 I=000026H LapBai97 }
to each summand. Upon using the inequality $\exp\left(-f_{T}\left(M\right)/2\right)<1/2$
(which is valid for sufficiently large $M$), we have
\begin{align*}
\mathbb{P}\left[J_{1,M}>\exp\left(-f_{T}\left(M\right)/2\right)\right] & \leq\mathbb{P}\left[\left|J_{1,M}-J_{1,M}^{\Pi}\right|>\left(1/2\right)\exp\left(-f_{T}\left(M\right)/2\right)\right]+\mathbb{P}\left[J_{1,M}^{\Pi}>\exp\left(-f_{T}\left(M\right)\right)\right].
\end{align*}
 Focusing on the first term, 
\begin{align*}
\mathbb{E}\left[J_{1,M}-J_{1,M}^{\Pi}\right] & \leq\sum_{i\in I}\int_{\Pi_{i}}\mathbb{E}\left|\zeta_{T,v}^{1/2}\left(u,\,\widetilde{v}\right)-\zeta_{T,v}^{1/2}\left(u_{i},\,\widetilde{v}\right)\right|du\\
 & \leq\sum_{i\in I}\int_{\Pi_{i}}\left(\mathbb{E}\left|\zeta_{T,v}^{1/2}\left(u,\,\widetilde{v}\right)+\zeta_{T,v}^{1/2}\left(u_{i},\,\widetilde{v}\right)\right|\mathbb{E}\left|\zeta_{T,v}^{1/2}\left(u,\,\widetilde{v}\right)-\zeta_{T,v}^{1/2}\left(u_{i},\,\widetilde{v}\right)\right|\right)^{1/2}du\\
 & \leq C\left(1+M\right)^{C}\sum_{i\in I}\int_{\Pi_{i}}\left|u_{i}-u\right|^{1/2}du,
\end{align*}
 where for the last inequality we have used Lemma \ref{Lemma VII.Lemma 2.2 I=000026H LapBai97 }
since we can always choose the partition of the segments such that
each $\Pi_{i}$ contains either positive or negative $u_{i}$. Since
each summand on the right-hand side above is less than $C\left(MI^{-1}\right)^{3/2}$
there exist numbers $C_{1}$ and $C_{2}$ such that
\begin{align}
\mathbb{E}\left[J_{1,M}-J_{1,M}^{\Pi}\right] & \leq C_{1}\left(1+M^{C_{2}}\right)I^{-1/2}.\label{eq. (5.12, p.47) I=000026H LapBai97}
\end{align}
 Using \eqref{eq. (5.11, p.47) I=000026H} and \eqref{eq. (5.12, p.47) I=000026H LapBai97}
we have 
\begin{align*}
\mathbb{P}\left[J_{1,M}>\exp\left(-f_{T}\left(M\right)/2\right)\right] & \leq C_{1}\left(1+M^{C_{2}}\right)I^{-1/2}+2I\left(\mathrm{Leb}\left(\Gamma_{M}\right)\right)^{1/2}\exp\left(-f_{T}\left(M\right)/12\right).
\end{align*}
The relationship in the last display leads to the claim of the lemma
if we choose $I$ satisfying $1\leq I^{3/2}\exp\left(-f_{T}\left(M\right)/4\right)\leq2.$ 
\end{proof}
\begin{lem}
\label{Lemma 7.5. I.5.2.(ii) }For $f_{T}\in\boldsymbol{F},$ and
$M$ sufficiently large, there exist constants $c,\,C>0$ such that
\begin{align}
\mathbb{E}\left[J_{1,M}/J_{2}\right] & \leq C\left(1+M^{C}\right)\exp\left(-cf_{T}\left(M\right)\right),\label{eq. (5.10, Lemma I.5.2) I=000026H LapBai97}
\end{align}
 uniformly in $\widetilde{v}\in\mathbf{V}.$
\end{lem}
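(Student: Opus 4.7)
\medskip
\noindent\textbf{Proof plan for Lemma \ref{Lemma 7.5. I.5.2.(ii) }.}
The plan is to reduce the bound on $\mathbb{E}[J_{1,M}/J_{2}]$ to a union-bound argument that combines the tail estimate on $J_{1,M}$ already furnished by Lemma \ref{Lemma I.5.2.(i) I=000026H LapBai97} with an anti-concentration estimate on $J_{2}$ derived from Lemma \ref{Lemma I.5.1 I=000026H LapBai97}. First observe that, since $\Gamma_{M}\subset\Gamma_{T}$ and $\zeta_{T,v}\pi_{T,v}\geq 0$, the ratio satisfies $J_{1,M}/J_{2}\leq 1$ almost surely. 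Consequently, for any thresholds $a,b>0$ we have the elementary decomposition
\begin{align*}
\mathbb{E}\bigl[J_{1,M}/J_{2}\bigr] & \leq \frac{b}{a}\,\mathbb{P}\bigl(J_{1,M}\leq b,\,J_{2}\geq a\bigr)+\mathbb{P}\bigl(J_{1,M}>b\bigr)+\mathbb{P}\bigl(J_{2}<a\bigr) \\
& \leq \frac{b}{a}+\mathbb{P}\bigl(J_{1,M}>b\bigr)+\mathbb{P}\bigl(J_{2}<a\bigr).
\end{align*}

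Next, I would lower-bound $J_{2}$ by restricting the integration in its definition to a small neighborhood of zero: for any $\epsilon\in(0,\overline{\epsilon}]$ (with $\overline{\epsilon}$ as in Lemma \ref{Lemma I.5.1 I=000026H LapBai97}), $J_{2}\geq\int_{0}^{\epsilon}\zeta_{T,v}(u,\widetilde{v})\pi_{T,v}(u)du$ pointwise. Then Lemma \ref{Lemma I.5.1 I=000026H LapBai97} yields $\mathbb{P}(J_{2}<\epsilon\pi^{0})\leq C\epsilon^{1/2}$, uniformly in $\widetilde{v}\in\mathbf{V}$. I would then balance the thresholds by choosing, for $c>0$ the constant delivered by Lemma \ref{Lemma I.5.2.(i) I=000026H LapBai97},
\begin{equation*}
b=\exp\bigl(-cf_{T}(M)\bigr),\qquad a=\epsilon\pi^{0}\quad\text{with}\quad\epsilon=\exp\bigl(-cf_{T}(M)/2\bigr).
\end{equation*}
With this choice, $b/a=(\pi^{0})^{-1}\exp(-cf_{T}(M)/2)$, the tail bound on $J_{1,M}$ is $C(1+M^{C})\exp(-cf_{T}(M))$, and the lower deviation bound on $J_{2}$ is $C\exp(-cf_{T}(M)/4)$. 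Summing the three contributions and redefining the constant $c$ to be the minimum of $c/4$ and the original $c$ yields the desired bound $C(1+M^{C})\exp(-cf_{T}(M))$, which by construction is uniform in $\widetilde{v}\in\mathbf{V}$ since each of the three ingredients is so.

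The main obstacle, as I see it, is ensuring that the lower-bound step on $J_{2}$ is genuinely uniform in $\widetilde{v}\in\mathbf{V}$ and compatible with the polynomial slack factor in $M$: Lemma \ref{Lemma I.5.1 I=000026H LapBai97} produces a bound of order $\epsilon^{1/2}$, so the choice $\epsilon=\exp(-cf_{T}(M)/2)$ must be calibrated so that $\epsilon^{1/2}$ still decays faster than $\exp(-cf_{T}(M))$ (up to the $M^{C}$ factor), while $b/a$ also decays fast enough. A secondary technicality is the smoothness of $\pi$ near $\lambda_{b}^{0}$: following the same reduction used in the proof of Lemma \ref{Lemma I.5.2.(i) I=000026H LapBai97}, one may work as if $\pi_{T,v}(u)\equiv 1$ by paying only a constant factor, since $|\pi_{T,v}(u)-\pi^{0}|=o(1)+O(\psi_{T}^{-1}|u|)$ by Assumption \ref{Assumption Prior LapBai97}. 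Once these ingredients are in place the remainder is a routine bookkeeping step, so I would keep the writeup short and direct.
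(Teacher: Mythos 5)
Your proposal is correct and follows essentially the same route as the paper: both start from $J_{1,M}/J_{2}\leq1$, split the expectation into a threshold-ratio term, the upper tail of $J_{1,M}$ controlled by Lemma \ref{Lemma I.5.2.(i) I=000026H LapBai97}, and the lower deviation of $J_{2}$ controlled by Lemma \ref{Lemma I.5.1 I=000026H LapBai97}, then balance the thresholds as functions of $\exp(-cf_{T}(M))$. The only difference is the cosmetic choice of calibration (the paper takes $\epsilon=\exp(-(2c/3)f_{T}(M))$ where you take $\exp(-cf_{T}(M)/2)$), which does not affect the conclusion.
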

\begin{proof}
Note that $J_{1,M}/J_{2}\leq1$. Thus, for any $\epsilon>0,$ 
\begin{align*}
\mathbb{E}\left[J_{1,M}/J_{2}\right] & \leq\mathbb{P}\left[J_{1,M}>\exp\left(-cf_{T}\left(M\right)/2\right)\right]+\left(4/\epsilon\right)\exp\left(-cf_{T}\left(M\right)\right)+\mathbb{P}\left[\int_{\Gamma_{T}}\zeta_{T,v}\left(u,\,\widetilde{v}\right)du<\epsilon/4\right].
\end{align*}
 By Lemma \ref{Lemma I.5.2.(i) I=000026H LapBai97}, the first term
is bounded by $C\left(1+M^{C}\right)\exp\left(-cf_{T}\left(M\right)/4\right)$
while for the last term we can use \eqref{eq. (5.8, p.45) I=000026H LapBai97}
to deduce 
\begin{align*}
\mathbb{E}\left[J_{1,M}/J_{2}\right] & \leq C\left(1+M^{C}\right)\exp\left(-cf_{T}\left(M\right)\right)+\left(4/\epsilon\right)\exp\left(-cf_{T}\left(M\right)\right)+C\epsilon^{1/2}.
\end{align*}
 Finally, choose $\epsilon=\exp\left(\left(-2c/3\right)f_{T}\left(M\right)\right)$
to complete the proof of the lemma. 
\end{proof}
\begin{lem}
\label{Theorem 5.2 I=000026H LapBai97}For $l\in\boldsymbol{L}$ and
and any $\vartheta>0,$ $\lim_{B\rightarrow\infty}\lim_{T\rightarrow\infty}B^{\vartheta}\mathbb{P}\left[\psi_{T}\left(\widehat{\lambda}_{b}^{\mathrm{GL}}-\lambda_{b}^{0}\right)>B\right]=0.$
\end{lem}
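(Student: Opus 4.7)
After the change of variable $u=\psi_T(\lambda_b-\lambda_b^0)$, the rescaled estimator $\widehat{u}_T\triangleq\psi_T(\widehat{\lambda}_b^{\mathrm{GL}}-\lambda_b^0)$ is the minimizer of $\widetilde{\Psi}_{l,T}(s)=\int l(s-u)\,d\mu_T(u)$, where $d\mu_T(u)\propto\zeta_{T,v}(u,\widetilde{v})\pi_{T,v}(u)\,du$ is the normalized local quasi-posterior. The plan is to adapt the tail-bound technique used in the proof of Theorem I.5.2 of \citet{ibragimov/has:81} to this setting: I will in fact show that $\mathbb{P}[|\widehat{u}_T|>B]$ decays \emph{super-polynomially} in $B$, uniformly in large $T$, which is stronger than what the lemma states.

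The first step is to translate the event $\{|\widehat{u}_T|>B\}$ into quantitative information about the tails of $\mu_T$. By the minimizing property $\widetilde{\Psi}_{l,T}(\widehat{u}_T)\leq\widetilde{\Psi}_{l,T}(0)$ and the coercivity of $l\in\boldsymbol{L}$ (a convex nonnegative $l$ with $l(0)=0$ and $l(r)>0$ for $r\neq 0$ satisfies $l(r)\geq\kappa|r|$ for $|r|\geq 1$, by convexity applied through $l(\pm 1)$), on $\{|\widehat{u}_T|>B\}$ one obtains for any $\vartheta_0\in(0,1/m)$ and all large $B$,
\[
\tfrac{\kappa B}{2}\,\mu_T\!\bigl(\{|u|\leq B^{\vartheta_0}\}\bigr) \leq \int_{|u|\leq B^{\vartheta_0}}\!\! l(\widehat{u}_T-u)\,d\mu_T \leq \int l(-u)\,d\mu_T \leq 1+B^{m\vartheta_0}+\int_{|u|>B^{\vartheta_0}}(1+|u|^m)\,d\mu_T,
\]
using $|\widehat{u}_T-u|\geq B/2$ on the integration region of the left-hand integral and $l(r)\leq 1+|r|^m$ from Assumption \ref{Assumption The-loss-function LapBai97}(iii) on the right. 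Since $B^{m\vartheta_0}\ll B$, rearrangement yields
\[
\{|\widehat{u}_T|>B\}\subset \bigl\{\mu_T(\{|u|>B^{\vartheta_0}\})\geq cB^{1-m}\bigr\}\cup\bigl\{\textstyle\int_{|u|>B^{\vartheta_0}}(1+|u|^m)\,d\mu_T\geq cB\bigr\}.
\]

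The second step is to bound each event above via Markov's inequality combined with the shell decomposition $\{|u|>B^{\vartheta_0}\}=\bigcup_{M\geq\lfloor B^{\vartheta_0}\rfloor}\Gamma_M$ and Lemma \ref{Lemma 7.5. I.5.2.(ii) } applied to each $\mathbb{E}[J_{1,M}/J_2]$; pulling $(M+1)^m$ through the shell sum in the second case, this gives, for any $f_T\in\boldsymbol{F}$,
\[
\mathbb{P}[|\widehat{u}_T|>B]\leq C(B^{m-1}+B^{-1})\sum_{M\geq\lfloor B^{\vartheta_0}\rfloor}(1+M^{C'})\exp(-c\,f_T(M)).
\]
Choosing $f_T(M)=(\log M)^2$, which belongs to $\boldsymbol{F}$ since $M^b\exp(-(\log M)^2)\to 0$, makes the sum of order $\exp(-c\vartheta_0^2(\log B)^2)$, so $\mathbb{P}[|\widehat{u}_T|>B]=o(B^{-\vartheta})$ for every $\vartheta>0$, from which the claim follows upon sending $T\to\infty$ first and then $B\to\infty$. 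The hard part will be the first step: forcing a far-out minimizer to yield a quantitatively non-trivial quasi-posterior tail despite the loss function being only upper-bounded by a polynomial of the same degree $m$ as the bound on $l(-u)$ that controls $\widetilde{\Psi}_{l,T}(0)$; the coercivity lower bound $l(r)\geq\kappa|r|$ provides exactly the linear slack needed to dominate $B^{m\vartheta_0}$ once $\vartheta_0<1/m$, and uniformity in $T$ is preserved throughout because $f_T$ can be chosen independent of $T$ within $\boldsymbol{F}$ and the constants in Lemma \ref{Lemma 7.5. I.5.2.(ii) } are uniform in $T$.
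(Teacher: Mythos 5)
Your proof is correct, and the first half takes a genuinely different route from the paper's. Both arguments share the same skeleton for the second half: reduce the tail event to statements about the local quasi-posterior $\mu_T$, decompose the region $\{|u|>B^{\vartheta_0}\}$ into unit shells $\Gamma_M$, and control each shell via $\mathbb{E}[J_{1,M}/J_{2}]$ from Lemma \ref{Lemma 7.5. I.5.2.(ii) }; your choice $f_T(M)=(\log M)^2\in\boldsymbol{F}$ packages the "arbitrary polynomial rate" into a single super-polynomial bound, whereas the paper fixes an arbitrary $\overline{a}$ and obtains $c(B^{-\overline{a}/2}+B^{-\overline{a}})$, with constants depending on $\overline{a}$ --- materially equivalent. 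The real divergence is in how a far-out minimizer is turned into quantitative information about $\mu_T$. The paper separates the loss into two levels $\overline{l}_{1,T}<\overline{l}_{2,T}$ (only a constant gap, not coercivity) and must therefore lower-bound the quasi-posterior mass near the origin, $\int_{\Gamma_{1,T}}p_T(u)\,du$, with high probability; this is the small-ball estimate of Lemma \ref{Lemma I.5.1 I=000026H LapBai97}, which in turn rests on the Hellinger-type continuity bound $\mathbb{E}|\zeta_{T,v}(u,\widetilde{v})-\zeta_{T,v}(0,\widetilde{v})|\leq C|u|^{1/2}$ and contributes the $B^{-\overline{a}/2}$ term to the final bound. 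You instead exploit the linear coercivity $l(r)\geq\kappa|r|$ for $|r|\geq 1$ (valid by convexity through $l(\pm 1)$, as you note) together with the fact that $\mu_T$ is normalized: if the tail mass is small then the central mass is automatically close to one, so no probabilistic lower bound on it is needed, and the entire failure probability is charged to the tail of $\mu_T$. This buys a cleaner argument that dispenses with Lemma \ref{Lemma I.5.1 I=000026H LapBai97} altogether and yields a faster decay rate; the price is that it uses convexity of $l$ in an essential way, whereas the paper's two-level separation would survive for non-convex losses in $\boldsymbol{L}$ that are merely bounded away from zero off a neighborhood of the origin. Two cosmetic points: your threshold $cB^{1-m}$ in the first event only behaves as intended for $m>1$ (for $m=1$, which convexity plus $l(r)\leq 1+|r|^m$ forces as the minimal case, a fixed constant threshold such as $1/2$ is the natural choice and works verbatim); and the lemma's conclusion is stated for $\lim_{T\to\infty}$ taken before $B\to\infty$, so the uniformity in $T$ you invoke is more than is strictly required.
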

\begin{proof}
Let $p_{T}\left(u\right)\triangleq p_{1,T}\left(u\right)/\overline{p}_{T}$
where $p_{1,T}\left(u\right)=\exp\left(\widetilde{G}_{T,v}\left(u,\,\widetilde{v}\right)+Q_{T,v}\left(u\right)\right)$
and $\overline{p}_{T}\triangleq\int_{\mathbf{U}_{T}}p_{1,T}\left(w\right)dw$.
By definition, $\widehat{\lambda}_{b}^{\mathrm{GL}}$ is the minimum
of the function $\int_{\varGamma^{0}}l\left(T\left\Vert \delta_{T}\right\Vert ^{2}\left(s-u\right)\right)p_{1,T}\left(u\right)\pi_{T,v}\left(u\right)du$
with $s\in\varGamma^{0}$. Upon using a change in variables,
\begin{align*}
\int_{\varGamma^{0}} & l\left(T\left\Vert \delta_{T}\right\Vert ^{2}\left(s-u\right)\right)p_{1,T}\left(u\right)\pi_{T,v}\left(u\right)du\\
 & =\left(T\left\Vert \delta_{T}\right\Vert ^{2}\right)^{-1}\overline{p}_{T}\int_{\mathbf{U}_{T}}l\left(T\left\Vert \delta_{T}\right\Vert ^{2}\left(s-\lambda_{b}^{0}\right)-u\right)p_{T}\left(\lambda_{b,T}^{0}\left(v\right)+\left(T\left\Vert \delta_{T}\right\Vert ^{2}\right)^{-1}u\right)\\
 & \quad\times\pi_{T,v}\left(\lambda_{b,T}^{0}\left(v\right)+\left(T\left\Vert \delta_{T}\right\Vert ^{2}\right)^{-1}u\right)du.
\end{align*}
 Thus, $\lambda_{\delta,T}\triangleq T\left\Vert \delta_{T}\right\Vert ^{2}\left(\widehat{\lambda}_{b}^{\mathrm{GL}}-\lambda_{b}^{0}\right)$
is the\textbf{ }minimum of the function 
\begin{align*}
\mathcal{S}_{T}\left(s\right) & \triangleq\int_{\mathbf{U}_{T}}l\left(s-u\right)\frac{p_{T}\left(\lambda_{b}^{0}+\left(T\left\Vert \delta_{T}\right\Vert ^{2}\right)^{-1}u\right)\pi_{T,v}\left(\lambda_{b}^{0}+\left(T\left\Vert \delta_{T}\right\Vert ^{2}\right)^{-1}u\right)}{\int_{\mathbf{U}_{T}}p_{T}\left(\lambda_{b}^{0}+\left(T\left\Vert \delta_{T}\right\Vert ^{2}\right)^{-1}w\right)\pi_{T,v}\left(\lambda_{b}^{0}+\left(T\left\Vert \delta_{T}\right\Vert ^{2}\right)^{-1}w\right)dw}du,
\end{align*}
 where the optimization is over $\mathbf{U}_{T}$. The random function
$\mathcal{S}_{T}\left(\cdot\right)$ converges with probability one
in view of Lemma \ref{Lemma I.5.2.(i) I=000026H LapBai97}-\ref{Lemma 7.5. I.5.2.(ii) }
together with the properties of the loss function $l$ {[}cf. \eqref{eq. (10.9) I=000026H LapBai97}
and the discussion surrounding it{]}. Therefore, we shall show that
the random function $\mathcal{S}_{T}\left(s\right)$ is strictly larger
than $\mathcal{S}_{T}\left(0\right)$ on $\left\{ \left|s\right|>B\right\} $
with high probability as $T\rightarrow\infty$. This  reflects that
\begin{align}
\mathbb{P}\left[\left|T\left\Vert \delta_{T}\right\Vert ^{2}\left(\widehat{\lambda}_{b}^{\mathrm{GL}}-\lambda_{b}^{0}\right)\right|>B\right] & \leq\mathbb{P}\left[\inf_{\left|s\right|>B}\mathcal{S}_{T}\left(s\right)\leq\mathcal{S}_{T}\left(0\right)\right].\label{eq. AA5 LapBai97}
\end{align}
 We present the proof for the case $\pi_{T,v}\left(u\right)=1$ for
all $u$. The general case follows with no additional difficulties
due to the assumptions satisfied by the prior $\pi\left(\cdot\right)$.
By the properties of the family $\boldsymbol{L}$ of loss functions,
we can find $\overline{u}_{1},\,\overline{u}_{2}\in\mathbb{R},$ with
$0<\overline{u}_{1}<\overline{u}_{2}$ such that as $T$ increases,
\begin{align*}
\overline{l}_{1,T}\triangleq\sup\left\{ l\left(u\right):\,u\in\Gamma_{1,T}\right\}  & <\overline{l}_{2,T}\triangleq\inf\left\{ l\left(u\right):\,u\in\Gamma_{2,T}\right\} ,
\end{align*}
 where $\Gamma_{1,T}\triangleq\mathbf{U}_{T}\cap\left(\left|u\right|\leq\overline{u}_{1}\right)$
and $\Gamma_{2,T}\triangleq\mathbf{U}_{T}\cap\left(\left|u\right|>\overline{u}_{2}\right)$.
With this notation, 
\begin{align*}
\mathcal{S}_{T}\left(0\right) & \leq\overline{l}_{1,T}\int_{\Gamma_{1,T}}p_{T}\left(u\right)du+\int_{\mathbf{U}_{T}\cap\left(\left|u\right|>\overline{u}_{1}\right)}l\left(u\right)p_{T}\left(u\right)du.
\end{align*}
 Furthermore, if $l\in\boldsymbol{L}$, then for sufficiently large
$B$ the following relationships hold: (i) $l\left(u\right)-\inf_{\left|v\right|>B/2}l\left(v\right)\leq0$;
(ii) $\left|u\right|\leq\left(B/2\right)^{\vartheta},\,\vartheta>0.$
We shall assume that $B$ is chosen so that $B>2\overline{u}_{2}$
and $\left(B/2\right)^{\vartheta}>\overline{u}_{2}$ hold. Let $\Gamma_{T,B}\triangleq\left\{ u:\,\left(\left|u\right|>B/2\right)\cap\mathbf{U}_{T}\right\} $.
Then, whenever $\left|s\right|>B$ and $\left|u\right|\leq B/2$,
we have, 
\begin{align}
\left|u-s\right|>B/2>\overline{u}_{2} & \qquad\textrm{and}\qquad\inf_{u\in\Gamma_{T,B}}l\left(u\right)\geq\overline{l}_{2,T}.\label{eq. (5.13) I=000026H LapBai97}
\end{align}
 With this notation,
\begin{align*}
\inf_{\left|s\right|>B}\mathcal{S}_{T}\left(s\right) & \geq\inf_{u\in\Gamma_{T,B}}l_{T}\left(u\right)\int_{\left(\left|w\right|\leq B/2\right)\cap\mathbf{U}_{T}}p_{T}\left(w\right)dw\\
 & \geq\overline{l}_{2,T}\int_{\left(\left|w\right|\leq B/2\right)\cap\mathbf{U}_{T}}p_{T}\left(w\right)dw,
\end{align*}
 from which it follows that 
\begin{align*}
\mathcal{S}_{T}\left(0\right)-\inf_{\left|s\right|>B}\mathcal{S}_{T}\left(s\right) & \leq-\varpi\int_{\Gamma_{1,T}}p_{T}\left(u\right)du+\int_{\mathbf{U}_{T}\cap\left(\left(B/2\right)^{\vartheta}\geq\left|u\right|\geq\overline{u}_{1}\right)}\left(l\left(u\right)-\inf_{\left|s\right|>B/2}l_{T}\left(s\right)\right)p_{T}\left(u\right)du\\
 & \quad+\int_{\mathbf{U}_{T}\cap\left(\left|u\right|>\left(B/2\right)^{\vartheta}\right)}l\left(u\right)p_{T}\left(u\right)du,
\end{align*}
where $\varpi\triangleq\overline{l}_{2,T}-\overline{l}_{1,T}$. 
The last inequality can be manipulated further using \eqref{eq. (5.13) I=000026H LapBai97},
so that
\begin{align}
\mathcal{S}_{T}\left(0\right)-\inf_{\left|s\right|>B}\mathcal{S}_{T}\left(s\right) & \leq-\varpi\int_{\Gamma_{1,T}}p_{T}\left(u\right)du+\int_{\mathbf{U}_{T}\cap\left(\left|u\right|>\left(B/2\right)^{\vartheta}\right)}l_{T}\left(u\right)p_{T}\left(u\right)du.\label{eq. (5.14) I=000026H LapBai97}
\end{align}
 Let $B_{\vartheta}\triangleq\left(B/2\right)^{\vartheta}$ and fix
an arbitrary number $\overline{a}>0$. For the first term of \eqref{eq. (5.14) I=000026H LapBai97},
Lemma \ref{Lemma I.5.1 I=000026H LapBai97} implies that for sufficiently
large $T,$ we have
\begin{align}
\mathbb{P}\left[\int_{\Gamma_{1,T}}p_{T}\left(u\right)du<2\left(\varpi B^{\overline{a}}\right)^{-1}\right] & \leq c\left(\varpi B^{\overline{a}}\right)^{-1/2},\label{eq. AA.2 LapBai97}
\end{align}
 where $0<c<\infty$. Next, let us consider the second term of \eqref{eq. (5.14) I=000026H LapBai97}.
We show that for large enough $T$, an arbitrary number $\overline{a}>0,$
\begin{align}
\mathbb{P}\left[\int_{\mathbf{U}_{T}\cap\left\{ \left|u\right|>B_{\vartheta}\right\} }l\left(u\right)p_{T}\left(u\right)du>B^{-\overline{a}}\right] & \leq cB^{-\overline{a}}.\label{eq. LapBai97 AA.1}
\end{align}
Since $l\in\boldsymbol{L}$, we have $l\left(u\right)\leq\left|u\right|^{a},\,a>0$
when $u$ is large enough. Choosing $B$ large leads to
\begin{align*}
\mathbb{E}\left[\int_{\mathbf{U}_{T}\cap\left\{ \left|u\right|>B_{\vartheta}\right\} }l\left(u\right)p_{T}\left(u\right)du\right] & \leq\sum_{i=0}^{\infty}\left(B_{\vartheta}+i+1\right)^{a}\mathbb{E}\left(J_{1,B_{\vartheta}+i}/J_{2}\right),
\end{align*}
where $J_{1,B_{\vartheta}+i},\,J_{2}$ are defined as in \eqref{eq. J_1,M}.
By Lemma \ref{Lemma 7.5. I.5.2.(ii) },
\begin{alignat*}{1}
\mathbb{E}\left(J_{1,B_{\vartheta}+i}/J_{2}\right) & \leq c\left(1+\left(B_{\vartheta}+i\right)^{a}\right)\exp\left(-bf_{T}\left(B_{\vartheta}+i\right)\right),
\end{alignat*}
where $f_{T}\in\boldsymbol{F}$ and thus for some for some $b,\,0<c<\infty,$
\begin{align*}
\mathbb{E}\left[\int_{\mathbf{U}_{T}\cap\left\{ \left|u\right|>B_{\vartheta}\right\} }l\left(u\right)p_{T}\left(u\right)du\right] & \leq c\int_{B_{\vartheta}}^{\infty}\left(1+v^{a}\right)\exp\left(-bf_{T}\left(v\right)\right)dv\leq c\exp\left(-bf_{T}\left(B_{\vartheta}\right)\right).
\end{align*}
 By property (ii) of the function $f_{T}$ in the class $\boldsymbol{F},$
for any $d\in\mathbb{R},$ $\lim_{v\rightarrow\infty}\lim_{T\rightarrow\infty}v^{d}e^{-bf_{T}\left(v\right)}=0.$
Thus, we know that for $T$ large enough and some $0<c<\infty,$ 
\begin{align*}
\mathbb{E}\left[\int_{\mathbf{U}_{T}\cap\left\{ \left|u\right|>B_{\vartheta}\right\} }l\left(u\right)p_{T}\left(u\right)du\right] & \leq cB^{-2\overline{a}},
\end{align*}
 from which we deduce \eqref{eq. LapBai97 AA.1} after applying Markov's
inequality. Therefore, for sufficiently large $T$ and large $B,$
combining equation \eqref{eq. AA5 LapBai97}, and \eqref{eq. AA.2 LapBai97}-\eqref{eq. LapBai97 AA.1},
we have 
\begin{align*}
\mathbb{P} & \left[T\left\Vert \delta_{T}\right\Vert ^{2}\left(\widehat{\lambda}_{b}^{\mathrm{GL}}-\lambda_{b}^{0}\right)>B\right]\\
 & \leq\mathbb{P}\left[-\varpi\int_{\Gamma_{1,T}}p_{T}\left(u\right)du+\int_{\mathbf{U}_{T}\cap\left\{ \left|u\right|>B_{\vartheta}\right\} }l_{T}\left(u\right)p_{T}\left(u\right)du\leq0\right]\\
 & \leq\mathbb{P}\left[\int_{\Gamma_{1,T}}p_{T}\left(u\right)du<2\left(\varpi B^{\overline{a}}\right)^{-1}\right]+\mathbb{P}\left[\int_{\mathbf{U}_{T}\cap\left\{ \left|u\right|>B_{\vartheta}\right\} }l\left(u\right)p_{T}\left(u\right)du>B^{-\overline{a}}\right]\\
 & \leq c\left(B^{-\overline{a}/2}+B^{-\overline{a}}\right),
\end{align*}
which can be made arbitrarily small choosing $B$ large enough. 
\end{proof}
\begin{lem}
\label{Lemma: FD Convergence: Condition 2 in I=000026H p.107}As $T\rightarrow\infty,$
the marginal distributions of $\zeta_{T,v}\left(u,\,\widetilde{v}\right)$
converge to the marginal distributions of $\exp\left(\mathscr{V}\left(u\right)\right)$.
\end{lem}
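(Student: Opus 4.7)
The approach is to reduce the claim directly to Lemma \ref{Lemma B.2, LapBai97} via Slutsky's theorem and the continuous mapping theorem. By construction $\zeta_{T,v}(u,\widetilde{v}) = \exp(\widetilde{G}_{T,v}(u,\widetilde{v}) - \Lambda^0(u))$, so the exponent splits into a stochastic term $\widetilde{G}_{T,v}$ and a purely deterministic continuous drift $-\Lambda^0$. Since $\mathscr{V}(u) = \mathscr{W}(u) - \Lambda^0(u)$ by the definition in \eqref{eq. V(s) Limit Process Theorem Post Mean LapBai97}, the shape of the proof is essentially forced: show that the finite-dimensional marginals of $\widetilde{G}_{T,v}$ converge to those of $\mathscr{W}$, shift by the deterministic vector $-\Lambda^0$, and exponentiate.

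First I would invoke Lemma \ref{Lemma B.2, LapBai97}, which establishes $\widetilde{G}_{T,v}(u,\widetilde{v}) \Rightarrow \mathscr{W}(u)$ in $\mathbb{D}_{b}(\mathbf{C} \times \mathbf{V})$ for any compact $\mathbf{C} \subset \mathbb{R}$. Weak convergence in $\mathbb{D}_{b}$ yields convergence of all finite-dimensional distributions, so for any $k \geq 1$ and any points $(u_1,\widetilde{v}_1),\ldots,(u_k,\widetilde{v}_k) \in \mathbf{C} \times \mathbf{V}$, we obtain
\begin{align*}
\bigl(\widetilde{G}_{T,v}(u_i,\widetilde{v}_i)\bigr)_{i=1}^k \overset{d}{\rightarrow} \bigl(\mathscr{W}(u_i)\bigr)_{i=1}^k.
\end{align*}
Subtracting the deterministic vector $(\Lambda^0(u_i))_{i=1}^k$ preserves this convergence by Slutsky's theorem, and then the continuous mapping theorem applied with the componentwise exponential map on $\mathbb{R}^k$ gives
\begin{align*}
\bigl(\zeta_{T,v}(u_i,\widetilde{v}_i)\bigr)_{i=1}^k \overset{d}{\rightarrow} \bigl(\exp(\mathscr{V}(u_i))\bigr)_{i=1}^k,
\end{align*}
which is precisely the claimed convergence of marginal distributions.

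There is no serious obstacle here: the substantive probabilistic content has already been delivered by Lemma \ref{Lemma B.2, LapBai97}, which itself rests on the expansion in Lemma \ref{Lemma, Area (ia), Bai A.5} combined with the invariance principle in Assumption \ref{Assumption A.9b Bai 97, LapBai97}. The only conceptual point to check is that the deterministic drift $-\Lambda^0(u)$ built into $\zeta_{T,v}$ exactly matches the drift part of $\mathscr{V}$ in \eqref{eq. V(s) Limit Process Theorem Post Mean LapBai97}, which holds by construction. A minor technical remark worth flagging is that the limit depends neither on $v$ nor on $\widetilde{v}$ (consistent with Assumption \ref{Assumption Gaussian Process for Lap LapBai97}-(ii)), so the convergence above holds for each fixed $v \in \mathbf{V}$, which is exactly the form required when this lemma is invoked inside the proof of Theorem \ref{Theorem Geneal Laplace Estimator LapBai97}.
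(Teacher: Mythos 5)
Your argument coincides with the paper's own proof, which is a one-line appeal to Lemma \ref{Lemma ZZ conv prob}, Lemma \ref{Lemma B.2, LapBai97} and the continuous mapping theorem, and it is correct under the literal definition $\zeta_{T,v}(u,\widetilde{v})=\exp(\widetilde{G}_{T,v}(u,\widetilde{v})-\varLambda^{0}(u))$. The one point to flag is that in the proof of Theorem \ref{Theorem Geneal Laplace Estimator LapBai97} this lemma is invoked for $\exp(\widetilde{G}_{T,v}(u,\widetilde{v})+Q_{T,v}(u))$, i.e., with the sample drift $Q_{T,v}(u)$ rather than its deterministic limit $-\varLambda^{0}(u)$; covering that case requires the extra step $Q_{T,v}(u)=-\varLambda^{0}(u)+o_{\mathbb{P}}(1)$, which is precisely what Lemma \ref{Lemma ZZ conv prob} (cited by the paper but omitted by you) supplies, after which Slutsky and the continuous mapping theorem go through as you describe.
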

\begin{proof}
The results follows from Lemma \ref{Lemma ZZ conv prob}, Lemma \ref{Lemma B.2, LapBai97}
and the continuous mapping theorem.
\end{proof}

\subsection{Proofs of Section \ref{Section Inference Methods}}

\subsubsection{Proof of Proposition \ref{Proposition Inference Limi Distr LapBai97}}

The preliminary lemmas below consider the Gaussian process $\mathscr{W}$
on the positive half-line with $s>0.$ The case $s\leq0$ is similar
and omitted. The generic constant $C>0$ used in the proofs of this
section may change from line to line.
\begin{lem}
\label{Lemma: H.1. Lap Bai97}For $\varpi>3/4$, we have $\lim_{T\rightarrow\infty}\limsup_{\left|s\right|\rightarrow\infty}\left|\widehat{\mathscr{W}}_{T}\left(s\right)\right|/\left|s\right|^{\varpi}=0$,
$\mathbb{P}$-a.s.
\end{lem}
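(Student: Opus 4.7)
The plan is to exploit the structural constraints that Assumption \ref{Assumption Laplace Inference LapBai97}-(ii),(iii) impose on the covariation $\widehat{\varSigma}_T$, which force $\widehat{\mathscr{W}}_T$ to behave on each half-line like a time-scaled Brownian motion, and then to control its tail growth through a fourth-moment Borel--Cantelli argument. First, I would identify the covariance of $\widehat{\mathscr{W}}_T$ explicitly. For $0<s\le u$, using $\widehat{\varSigma}_T(s,s)=s\widehat{\varSigma}_T(1,1)$ from homogeneity and plugging into the stationary-increments identity (ii) gives $\widehat{\varSigma}_T(s,u)=\min(s,u)\,\widehat{\varSigma}_T(1,1)$. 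Conditional on the data, $\{\widehat{\mathscr{W}}_T(s)\}_{s\ge 0}$ is therefore a centered continuous Gaussian martingale with variance rate $\widehat{\varSigma}_T(1,1)$.

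Next, I would apply Doob's $L^4$-maximal inequality to this (conditional) martingale over each block $[n,n+1]$, yielding
\[
\mathbb{E}\!\left[\sup_{n\le s\le n+1}|\widehat{\mathscr{W}}_T(s)|^{4}\right]\;\le\;C\,(n+1)^{2}\,\mathbb{E}[\widehat{\varSigma}_T(1,1)^{2}],
\]
which by property (iv) is of order $n^{2}$ uniformly in $T$. A Chebyshev bound then gives $\mathbb{P}(\sup_{n\le s\le n+1}|\widehat{\mathscr{W}}_T(s)|/s^{\varpi}>\epsilon)=O(n^{2-4\varpi})$, and the hypothesis $\varpi>3/4$ is exactly what makes this summable in $n$. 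The Borel--Cantelli lemma then yields $\limsup_{s\to\infty}|\widehat{\mathscr{W}}_T(s)|/s^{\varpi}\le\epsilon$ a.s.\ for each $T$. Sending $\epsilon$ to zero along a countable sequence and intersecting the resulting full-measure events over $T\in\mathbb{N}$ gives $\limsup_{s\to\infty}|\widehat{\mathscr{W}}_T(s)|/s^{\varpi}=0$ a.s.\ for every $T$ simultaneously, from which the iterated-limit statement is immediate. The case $s<0$ is handled symmetrically, invoking the second, independent Brownian component of the two-sided construction.

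The delicate point, and the main obstacle, is that the control has to be uniform in $T$: the process $\widehat{\mathscr{W}}_T$ is itself sample-size dependent through the random scaling $\widehat{\varSigma}_T(1,1)$, and Lemma H.1 is used later with $T$ free. Property (iv) is precisely what secures this uniformity, by bounding $\mathbb{E}[\widehat{\varSigma}_T(1,1)^{2}]$ independently of $T$, and it is also what dictates the exponent $3/4$: a sharper bound via the law of the iterated logarithm would allow $\varpi>1/2$, but that would require stronger moment or path-regularity information on $\widehat{\varSigma}_T$ than (iv) supplies. Keeping the analysis at the level of conditional second moments of the variance rate—and hence unconditional fourth moments of $\widehat{\mathscr{W}}_T$—is therefore the natural route, and the value $\varpi>3/4$ is dictated by the summability threshold of $\sum_n n^{2-4\varpi}$.
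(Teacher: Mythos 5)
Your proposal is correct and follows the same overall strategy as the paper's proof: partition the half-line into unit blocks, bound the fourth moment of the block supremum, apply Markov's inequality to get a tail probability of order $n^{2-4\varpi}$, and invoke Borel--Cantelli, with the threshold $\varpi>3/4$ arising exactly from the summability of $\sum_n n^{2-4\varpi}$ and the uniformity in $T$ secured by Assumption \ref{Assumption Laplace Inference LapBai97}-(iv). The one genuine difference is the maximal inequality you use. You first derive from properties (ii)--(iii) that the conditional covariance is $\min(s,u)\,\widehat{\varSigma}_{T}(1,1)$, so that $\widehat{\mathscr{W}}_{T}$ is, conditionally, a time-changed Brownian motion, and then apply Doob's $L^{4}$ inequality to this martingale on $[0,n+1]$. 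The paper instead uses the scaling property to map the block $\left\{ i-1\leq\left|s\right|<i\right\} $ onto $\left\{ \left|s\right|\leq1\right\} $ (picking up the factor $i^{\varpi-1/2}$) and then controls $\mathbb{E}\bigl(\sup_{\left|s\right|\leq1}\widehat{\mathscr{W}}_{T}^{4}(s)\bigr)$ via Proposition A.2.4 and Theorem 2.2.8 of van der Vaart and Wellner, treating $\widehat{\mathscr{W}}_{T}$ only as a sub-Gaussian process with respect to the semimetric induced by $\widehat{\varSigma}_{T}$. Your route is more elementary once the exact Brownian covariance is established, but it leans on that exact identification (and on path continuity of a version of the conditional process so that Doob applies); the paper's route is marginally more robust in that it never needs the martingale property, only the increment bound $\widehat{\varSigma}_{T}(t-s,t-s)\leq\left|t-s\right|\sup_{\left|s\right|=1}\widehat{\varSigma}_{T}(s,s)$. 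Both deliver the same $O(n^{2})$ fourth-moment bound and hence the same exponent, and your closing treatment of the $\epsilon\downarrow0$ limit, the intersection over $T$, and the negative half-line is sound.
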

\begin{proof}
For any $\epsilon>0,$ if we can show that
\begin{align}
\sum_{i=1}^{\infty}\mathbb{P}\left[\sup_{i-1\leq\left|s\right|<i}\left|\widehat{\mathscr{W}}_{T}\left(s\right)\right|/\left|s\right|^{\varpi}>\epsilon\right] & <\infty,\label{eq. (62)}
\end{align}
 then by the Borel-Cantelli lemma, $\mathbb{P}\left[\limsup_{\left|s\right|\rightarrow\infty}\left|\widehat{\mathscr{W}}_{T}\left(s\right)\right|/\left|s\right|^{\varpi}>\epsilon\right]=0$.
Proceeding as in the proof of Lemma \ref{Lemma B.5, LapBai97}, 
\begin{align*}
\mathbb{P}\left[\sup_{i-1\leq\left|s\right|<i}\left|\widehat{\mathscr{W}}_{T}\left(s\right)\right|/\left|s\right|^{\varpi}>\epsilon\right] & \leq\mathbb{P}\left[\sup_{\left|s\right|\leq1}\left|\widehat{\mathscr{W}}_{T}\left(s\right)\right|>\epsilon i^{\varpi-1/2}\right]\\
 & \leq\frac{1}{\epsilon^{4}}\mathbb{E}\left[\mathbb{E}\left(\sup_{\left|s\right|\leq1}\left(\widehat{\mathscr{W}}_{T}\left(s\right)\right)^{4}|\,\widehat{\varSigma}_{T}\right)\right]\frac{1}{i^{4\varpi-2}}.
\end{align*}
The series $\sum_{i=1}^{\infty}i^{-p}$ is a Riemann's zeta function
and satisfies $\sum_{i=1}^{\infty}i^{-p}<\infty$ if $p>1.$ Then,
\begin{align}
\sum_{i=1}^{\infty}\mathbb{P}\left[\sup_{i-1\leq\left|s\right|<i}\left|\widehat{\mathscr{W}}_{T}\left(s\right)\right|/\left|s\right|^{\varpi}>\epsilon\right] & \leq\left(C/\epsilon^{4}\right)\mathbb{E}\left[\mathbb{E}\left(\sup_{\left|s\right|\leq1}\left(\widehat{\mathscr{W}}_{T}\left(s\right)\right)^{4}|\,\widehat{\varSigma}_{T}\right)\right]\nonumber \\
 & \leq\left(C/\epsilon^{4}\right)\mathbb{E}\left[\mathbb{E}\left(\sup_{\left|s\right|\leq1}\widehat{\mathscr{W}}_{T}\left(s\right)|\,\widehat{\varSigma}_{T}\right)\right]^{4},\label{Eq. (63) Lap Bai97}
\end{align}
where $C>0$ and the last inequality follows from Proposition A.2.4
in \citet{vaart/wellner:96}. The process $\widehat{\mathscr{W}}_{T}$,
conditional on $\widehat{\varSigma}_{T}$, is sub-Gaussian with respect
to the semimetric $d_{VW}^{2}\left(t,\,s\right)=\widehat{\varSigma}_{T}\left(t,\,t\right)+\widehat{\varSigma}_{T}\left(s,\,s\right)$,
which by invoking Assumption \ref{Assumption Laplace Inference LapBai97}-(ii,iii)
is bounded by 
\begin{align*}
\widehat{\varSigma}_{T}\left(t-s,\,t-s\right)\leq\left|t-s\right|\sup_{\left|s\right|=1}\widehat{\varSigma}_{T}\left(s,\,s\right) & .
\end{align*}
Theorem 2.2.8 in \citet{vaart/wellner:96} then implies 
\begin{align*}
\mathbb{E}\left(\sup_{\left|s\right|\leq1}\widehat{\mathscr{W}}_{T}\left(s\right)|\,\widehat{\varSigma}_{T}\right) & \leq C\sup_{\left|s\right|=1}\widehat{\varSigma}_{T}^{1/2}\left(s,\,s\right).
\end{align*}
The above inequality can be used into the right-hand side of \eqref{Eq. (63) Lap Bai97}
to deduce that the latter is bounded by $C\mathbb{E}\left(\sup_{\left|s\right|=1}\widehat{\varSigma}_{T}^{2}\left(s,\,s\right)\right)$.
By Assumption \ref{Assumption Laplace Inference LapBai97}-(iv) $C\mathbb{E}\left(\sup_{\left|s\right|=1}\widehat{\varSigma}_{T}^{2}\left(s,\,s\right)\right)<\infty,$
and the proof is concluded. 
\end{proof}
\begin{lem}
\label{Lemma H.3 Lap Bai97}$\left\{ \widehat{\mathscr{W}}_{T}\right\} $
converges weakly toward $\mathscr{W}$ on compact subsets of $\mathbb{D}_{b}$. 
\end{lem}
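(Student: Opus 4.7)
The plan is to establish weak convergence on $\mathbb{D}_b(\mathbf{K})$ for an arbitrary compact set $\mathbf{K} \subset \mathbb{R}$ by verifying two standard conditions: (i) convergence of finite-dimensional distributions of $\widehat{\mathscr{W}}_T$ to those of $\mathscr{W}$, and (ii) asymptotic tightness of $\{\widehat{\mathscr{W}}_T\}$ on $\mathbf{K}$. Both are by now routine in view of the conditional Gaussian structure of $\widehat{\mathscr{W}}_T$ and the work already done for Lemma \ref{Lemma: H.1. Lap Bai97}.

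For (i), I would exploit that, conditionally on $\widehat{\varSigma}_T$, the process $\widehat{\mathscr{W}}_T$ is a centered Gaussian process whose finite-dimensional distributions are entirely determined by the matrix $(\widehat{\varSigma}_T(s_i,s_j))_{i,j}$. By Assumption \ref{Assumption Laplace Inference LapBai97}-(i), $\widehat{\varSigma}_T(u,s) \overset{\mathbb{P}}{\to} \varSigma^{0}(u,s)$ for every fixed $u,s$, so these matrices converge in probability to the covariance matrix of the corresponding finite-dimensional distribution of $\mathscr{W}$. Writing the characteristic function of $(\widehat{\mathscr{W}}_T(s_1),\ldots,\widehat{\mathscr{W}}_T(s_k))$ as $\mathbb{E}[\exp(-\tfrac{1}{2}\xi'\widehat{\varSigma}_T^{(k)}\xi)]$ with $\widehat{\varSigma}_T^{(k)}$ the $k\times k$ sampled covariance, the dominated convergence theorem then yields convergence to $\exp(-\tfrac{1}{2}\xi'\varSigma^{0,(k)}\xi)$, which is exactly the characteristic function of the analogous vector drawn from $\mathscr{W}$.

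For (ii), I would mimic the sub-Gaussian chaining argument used in Lemma \ref{Lemma: H.1. Lap Bai97}. Conditionally on $\widehat{\varSigma}_T$, the increment $\widehat{\mathscr{W}}_T(t)-\widehat{\mathscr{W}}_T(s)$ is centered Gaussian with variance $\widehat{\varSigma}_T(t,t)+\widehat{\varSigma}_T(s,s)-2\widehat{\varSigma}_T(t,s)=\widehat{\varSigma}_T(t-s,t-s)$ by Assumption \ref{Assumption Laplace Inference LapBai97}-(ii), and by the scaling in (iii) this is bounded by $|t-s|\sup_{|u|=1}\widehat{\varSigma}_T(u,u)$. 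Hence $\widehat{\mathscr{W}}_T|\widehat{\varSigma}_T$ is sub-Gaussian on $\mathbf{K}$ with respect to the Euclidean metric, scaled by $\sup_{|u|=1}\widehat{\varSigma}_T(u,u)^{1/2}$. Corollary 2.2.8 in \citet{vaart/wellner:96} then bounds the conditional modulus of continuity via an entropy integral over $(\mathbf{K},|\cdot|)$, and integrating out with Assumption \ref{Assumption Laplace Inference LapBai97}-(iv) gives, for every $\varepsilon>0$,
\begin{align*}
\lim_{\delta\downarrow 0}\limsup_{T\to\infty}\mathbb{P}\left[\sup_{\substack{t,s\in\mathbf{K}\\|t-s|<\delta}}|\widehat{\mathscr{W}}_T(t)-\widehat{\mathscr{W}}_T(s)|>\varepsilon\right]=0,
\end{align*}
which delivers asymptotic equicontinuity and hence tightness on $\mathbf{K}$. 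The two-sided nature of $\widehat{\mathscr{W}}_T$ causes no difficulty: one simply splits $\mathbf{K}$ into its intersection with $(-\infty,0]$ and $(0,\infty)$ and applies the argument to each piece, since $W_1$ and $W_2$ are independent and the covariance kernels on the two half-lines are separately handled by the assumption.

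The main obstacle is the passage from the conditional (given $\widehat{\varSigma}_T$) sub-Gaussian modulus estimate to an unconditional one that is uniform in $T$: this is where Assumption \ref{Assumption Laplace Inference LapBai97}-(iv) does the crucial work, guaranteeing that the random scaling factor $\sup_{|u|=1}\widehat{\varSigma}_T(u,u)$ has uniformly bounded second moment. Once this is in hand, combining finite-dimensional convergence with tightness yields $\widehat{\mathscr{W}}_T \Rightarrow \mathscr{W}$ on $\mathbb{D}_b(\mathbf{K})$ by the standard theorem (e.g., Theorem 13.1 in \citet{billingsley:99}), completing the proof.
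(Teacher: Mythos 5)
Your proposal is correct and follows essentially the same route as the paper: finite-dimensional convergence from Assumption \ref{Assumption Laplace Inference LapBai97}-(i) plus asymptotic equicontinuity derived from the increment-variance bound $\widehat{\varSigma}_{T}\left(t-s,\,t-s\right)\leq\left|t-s\right|\sup_{\left|u\right|=1}\widehat{\varSigma}_{T}\left(u,\,u\right)$ obtained from parts (ii)--(iii), with part (iv) controlling the random scaling. The only differences are cosmetic: you make the fidi step explicit via conditional characteristic functions and dominated convergence (the paper simply asserts it), and you run the tightness step through the sub-Gaussian maximal inequality of \citet{vaart/wellner:96} as in Lemma \ref{Lemma: H.1. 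Lap Bai97}, whereas the paper uses a direct second-moment computation on the increments; both rest on exactly the same assumptions.
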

\begin{proof}
By the definition of $\widehat{\mathscr{W}}_{T}\left(\cdot\right)$,
we have the finite-dimensional convergence in distribution of $\widehat{\mathscr{W}}_{T}$
toward $\mathscr{W}.$ Hence, it remains to show the (asymptotic)
stochastic equicontinuity of the sequence of processes $\left\{ \widehat{\mathscr{W}}_{T},\,T\geq1\right\} $.
Let $\mathbf{C}\subset\mathbb{R}_{+}$ be any compact set. Fix any
$\eta>0$ and $\epsilon>0$. We show that for any positive sequence
$\left\{ d_{T}\right\} $, with $d_{T}\downarrow0$, and for every
$t,\,s\in\mathbf{C}$,
\begin{align}
\limsup_{T\rightarrow\infty}\mathbb{P}\left(\sup_{\left|t-s\right|<d_{T}}\left|\widehat{\mathscr{W}}_{T}\left(t\right)-\widehat{\mathscr{W}}_{T}\left(s\right)\right|>\eta\right) & <\epsilon.\label{eq. (64) Lap Bai97}
\end{align}
By Markov's inequality, $\mathbb{P}\left(\sup_{\left|t-s\right|<d_{T}}\left|\widehat{\mathscr{W}}_{T}\left(t\right)-\widehat{\mathscr{W}}_{T}\left(s\right)\right|>\eta\right)\leq\mathbb{E}\left(\sup_{\left|t-s\right|<d_{T}}\left|\widehat{\mathscr{W}}_{T}\left(t\right)-\widehat{\mathscr{W}}_{T}\left(s\right)\right|\right)/\eta$.
Let $\widehat{\varUpsilon}_{T}\left(t,\,s\right)$ denote the covariance
matrix of $\left(\widehat{\mathscr{W}}_{T}\left(t\right),\,\widehat{\mathscr{W}}_{T}\left(s\right)\right)'$
and $\mathcal{N}$ be a two-dimensional standard normal vector. Letting
$\imath\triangleq\begin{bmatrix}1 & -1\end{bmatrix}',$ we have 
\begin{align*}
\left[\mathbb{E}\sup_{\left|t-s\right|<d_{T}}\left|\widehat{\mathscr{W}}_{T}\left(t\right)-\widehat{\mathscr{W}}_{T}\left(s\right)\right|\right]^{2} & =\left[\mathbb{E}\sup_{\left|t-s\right|<d_{T}}\left|\imath'\widehat{\varUpsilon}_{T}^{1/2}\left(t,\,s\right)\mathcal{N}\right|\right]^{2}\leq\mathbb{E}\left[\sup_{\left|t-s\right|<d_{T}}\iota'\widehat{\varUpsilon}_{T}\left(t,\,s\right)\iota\right]\\
 & =\mathbb{E}\left[\sup_{\left|t-s\right|<d_{T}}\widehat{\varSigma}_{T}\left(t-s,\,t-s\right)\right]\\
 & \leq d_{T}\mathbb{E}\left[\sup_{\left|s\right|=1}\widehat{\varSigma}_{T}\left(s,\,s\right)\right],
\end{align*}
and so $\mathbb{E}\left[\sup_{\left|t-s\right|<d_{T}}\widehat{\varSigma}_{T}\left(t-s,\,t-s\right)\right]\leq2d_{T}\mathbb{E}\left[\sup_{\left|s\right|=1}\widehat{\varSigma}_{T}\left(s,\,s\right)\right]$
where we have used Assumption \ref{Assumption Laplace Inference LapBai97}-(iii)
in the last step. As $d_{T}\downarrow0$ the right-hand side goes
to zero since $\mathbb{E}\left[\sup_{\left|s\right|=1}\widehat{\varSigma}_{T}\left(s,\,s\right)\right]=O\left(1\right)$
by Assumption \ref{Assumption Laplace Inference LapBai97}-(iv).
\end{proof}
\begin{lem}
\label{Lemma H.4 Lap Bai97}Fix $0<a<\infty$. For any $p\in\boldsymbol{P}$
and for any  positive sequence $\left\{ a_{T}\right\} $ satisfying
$a_{T}\overset{\mathbb{P}}{\rightarrow}a$, 
\begin{align*}
\int_{\mathbb{R}}\left|p\left(s\right)\right|\exp\left(\widehat{\mathscr{W}}_{T}\left(s\right)\right)\exp\left(-a_{T}\left|s\right|\right)ds & \overset{d}{\rightarrow}\int_{\mathbb{R}}\left|p\left(s\right)\right|\exp\left(\mathscr{W}\left(s\right)\right)\exp\left(-a\left|s\right|\right)ds.
\end{align*}
\end{lem}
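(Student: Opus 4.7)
The plan is to decompose the integral into a compact-domain piece plus a tail, handle the compact piece via the Skorokhod representation of Lemma \ref{Lemma H.3 Lap Bai97} combined with Slutsky's theorem for $a_{T}\rightarrow a$, and bound the tail uniformly in $T$ using the subpolynomial growth rate of $\widehat{\mathscr{W}}_{T}$ extracted from Lemma \ref{Lemma: H.1. Lap Bai97}. For fixed $K>0$ I would write $I_{T}=I_{T}^{(K)}+R_{T}^{(K)}$ with $I_{T}^{(K)}\triangleq\int_{|s|\leq K}|p(s)|\exp(\widehat{\mathscr{W}}_{T}(s))\exp(-a_{T}|s|)\,ds$ and $R_{T}^{(K)}$ the complementary integral. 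Since $a_{T}\overset{\mathbb{P}}{\rightarrow}a$ and $\widehat{\mathscr{W}}_{T}\Rightarrow\mathscr{W}$ on $\mathbb{D}_{b}([-K,K])$ by Lemma \ref{Lemma H.3 Lap Bai97}, Slutsky's theorem yields joint weak convergence $(\widehat{\mathscr{W}}_{T}|_{[-K,K]},a_{T})\Rightarrow(\mathscr{W}|_{[-K,K]},a)$. The Skorokhod representation theorem lets us realize these processes on a common probability space with almost sure convergence. On that space the integrand is continuous and uniformly bounded on $[-K,K]$ and converges pointwise almost surely to $|p(s)|\exp(\mathscr{W}(s))\exp(-a|s|)$, so dominated convergence gives $I_{T}^{(K)}\rightarrow I^{(K)}\triangleq\int_{|s|\leq K}|p(s)|\exp(\mathscr{W}(s))\exp(-a|s|)\,ds$ almost surely for every fixed $K$.

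The second step is to verify that $R_{T}^{(K)}$ is uniformly small in probability as $K\rightarrow\infty$. The point is that the Gaussian-type fluctuations of $\widehat{\mathscr{W}}_{T}$ grow only subpolynomially in $|s|$ while $\exp(-a_{T}|s|)$ decays linearly. Fix $\varpi\in(3/4,1)$. Reading off the Borel--Cantelli estimate inside the proof of Lemma \ref{Lemma: H.1. Lap Bai97}, the bound
\begin{equation*}
\mathbb{P}\Bigl[\sup_{i-1\leq|s|<i}|\widehat{\mathscr{W}}_{T}(s)|/|s|^{\varpi}>\epsilon\Bigr]\leq C\epsilon^{-4}i^{-(4\varpi-2)}
\end{equation*}
holds with a constant $C$ depending only on Assumption \ref{Assumption Laplace Inference LapBai97}-(iv), and is hence uniform in $T$. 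Summing over $i>K$ gives $\lim_{K\rightarrow\infty}\sup_{T}\mathbb{P}[\sup_{|s|>K}|\widehat{\mathscr{W}}_{T}(s)|>\epsilon|s|^{\varpi}]=0$. Combined with $\mathbb{P}[a_{T}<a/2]\rightarrow0$, this delivers events $E_{K,T}$ of probability tending to one on which
\begin{equation*}
R_{T}^{(K)}\leq\int_{|s|>K}|p(s)|\exp(\epsilon|s|^{\varpi}-a|s|/2)\,ds,
\end{equation*}
which is finite because $\varpi<1$ and vanishes as $K\rightarrow\infty$. An identical bound applies to the tail of the limit $I$.

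Combining the compact-domain almost sure convergence with the uniformly negligible tails gives $I_{T}\rightarrow I$ in probability on the Skorokhod space, which transfers back to $I_{T}\overset{d}{\rightarrow}I$ on the original space. The main obstacle is the uniform-in-$T$ tail estimate for $R_{T}^{(K)}$: Lemma \ref{Lemma: H.1. Lap Bai97} is stated as a pointwise-in-$\omega$ almost sure limit whose thresholds depend on both $\omega$ and $T$, which is insufficient for the truncation argument. The resolution is to bypass the almost sure statement and work directly with the underlying sum-of-probabilities inequality from the proof of Lemma \ref{Lemma: H.1. Lap Bai97}; its constants depend only on $\sup_{T}\mathbb{E}[\sup_{|s|=1}\widehat{\varSigma}_{T}^{2}(s,s)]<\infty$, which supplies the uniformity in $T$ that makes the truncation rigorous.
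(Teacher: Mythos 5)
Your proof is correct and rests on the same two pillars as the paper's --- the weak convergence of $\widehat{\mathscr{W}}_{T}$ on compacts (Lemma \ref{Lemma H.3 Lap Bai97}) and the subpolynomial tail growth underlying Lemma \ref{Lemma: H.1. Lap Bai97} --- but the packaging is genuinely different. The paper defines the integral as a functional $f$ on a subset of $\mathbb{D}_{b}\times\mathbb{R}_{+}$, asserts its continuity, and invokes the continuous mapping theorem in one line; this leaves unverified precisely the point you isolate, namely that continuity of an integral over all of $\mathbb{R}$ requires tail control that is uniform in $T$, and that Lemma \ref{Lemma: H.1. Lap Bai97} as stated (a pointwise almost-sure limit) does not supply it. Your resolution --- going back to the summed Markov bound $C\epsilon^{-4}i^{-(4\varpi-2)}$ inside that lemma's proof and observing that its constant is controlled uniformly in $T$ by Assumption \ref{Assumption Laplace Inference LapBai97}-(iv) --- is exactly what is needed, and together with $\mathbb{P}[a_{T}<a/2]\rightarrow0$ it gives $\lim_{K}\limsup_{T}\mathbb{P}[R_{T}^{(K)}>\delta]=0$. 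What your route buys is a self-contained, checkable argument; what the paper's buys is brevity at the cost of an unproved continuity claim. One small point of hygiene in your final step: since the Skorokhod representation is constructed separately for each $K$, it is cleaner not to ``combine on the Skorokhod space'' but to conclude at the level of laws via the standard approximation theorem (Theorem 3.2 in Billingsley, 1999): $I_{T}^{(K)}\overset{d}{\rightarrow}I^{(K)}$ for each $K$, $I^{(K)}\rightarrow I$ (using the same tail bound on the limit process), and the uniform negligibility of $R_{T}^{(K)}$ together yield $I_{T}\overset{d}{\rightarrow}I$. With that adjustment the argument is complete.
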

\begin{proof}
Let $\mathbf{B}_{+}$ be a compact subset of $\mathbb{R}_{+}/\left\{ 0\right\} $.
Let 
\begin{align*}
\boldsymbol{G} & =\left\{ \left(W,\,a_{T}\right)\in\mathbb{D}_{b}\left(\mathbb{R},\,\mathscr{B},\,\mathbb{P}\right)\times\mathbf{B}_{+}:\,\limsup_{\left|s\right|\rightarrow\infty}\left|W\left(s\right)\right|/\left|s\right|^{\varpi}=0,\,\varpi>3/4,\,a_{T}=a+o_{\mathbb{P}}\left(1\right)\right\} ,
\end{align*}
 and denote by $f:\,\boldsymbol{G}\rightarrow\mathbb{R}$ the functional
given by $f\left(\boldsymbol{G}\right)=\int\left|p\left(s\right)\right|\exp\left(W\left(s\right)\right)\exp\left(-a_{T}\left|s\right|\right)ds$.
In view of the continuity of $f\left(\cdot\right)$ and $a_{T}\overset{\mathbb{P}}{\rightarrow}a$,
the claim of the lemma follows by Lemma \ref{Lemma: H.1. Lap Bai97}-\ref{Lemma H.3 Lap Bai97}
and the continuous mapping theorem.
\end{proof}
We are now in a position to conclude the proof of Proposition \ref{Proposition Inference Limi Distr LapBai97}.
Suppose $\gamma_{T}=CT\left\Vert \widehat{\delta}_{T}\right\Vert ^{2}$
for some $C>0$. Under mean-squared loss function, $\widehat{\xi}_{T}$
admits a closed form:
\begin{align*}
\widehat{\xi}_{T} & =\frac{\int u\exp\left(\mathscr{\widehat{W}}_{T}\left(u\right)-\widehat{\varLambda}_{T}\left(u\right)\right)du}{\int\exp\left(\mathscr{\widehat{W}}_{T}\left(u\right)-\widehat{\varLambda}_{T}\left(u\right)\right)du}.
\end{align*}
By Lemma \ref{Lemma H.4 Lap Bai97}, we deduce that $\widehat{\xi}_{T}$
converges in law to the distribution stated in \eqref{eq. Asy Dist Laplace Bai97 part (ii)}.
For general loss functions, a result corresponding to Lemma \ref{Lemma H.4 Lap Bai97}
can be shown to hold since $l\left(\cdot\right)$ is assumed to be
continuous.

\subsection{\label{subsec:Proofs-Multi}Proofs of Section \ref{Section Models with Multiple Breaks}}

Rewrite the GL estimator $\widehat{\boldsymbol{\lambda}}_{b}^{\mathrm{GL}}$
as the minimizer of
\begin{align}
\mathcal{R}_{l,T} & \triangleq\int_{\varGamma^{0}}l\left(s-\boldsymbol{\lambda}_{b}\right)\frac{\exp\left(-Q_{T}\left(\delta\left(\boldsymbol{\lambda}_{b}\right),\,\boldsymbol{\lambda}_{b}\right)\right)\pi\left(\boldsymbol{\lambda}_{b}\right)}{\int_{\varGamma^{0}}\exp\left(-Q_{T}\left(\delta\left(\boldsymbol{\lambda}_{b}\right),\,\boldsymbol{\lambda}_{b}\right)\right)\pi\left(\boldsymbol{\lambda}_{b}\right)d\boldsymbol{\lambda}_{b}}d\boldsymbol{\lambda}_{b}.\label{Eq. (14) - Definition of Psi(s,v,v) BP98}
\end{align}
We show with the following lemma that, for each $i$, $\widehat{\lambda}_{i}^{\mathrm{GL}}\overset{\mathbb{P}}{\rightarrow}\lambda_{i}^{0}$
no matter whether the magnitude of the shifts is fixed or not. Then,
the proof of Theorem \ref{Theorem Geneal Laplace Estimator LapBai97}
can be repeated for each $i=1,\ldots,\,m$ separately. We begin with
the proof for the case of fixed shifts.
\begin{lem}
\label{Theorem 5.2 I=000026H LapBai97-1}Under Assumptions \ref{Assumption A1-A6 BP98}-\ref{Ass A7 BP98},
except that $\Delta_{T,i}=\Delta_{i}^{0}$ for all $i$, for $l\in\boldsymbol{L}$
and any $B>0$ and $\varepsilon>0$, we have for all large $T$, $\mathbb{P}\left[\left|\widehat{\lambda}_{i}^{\mathrm{GL}}-\lambda_{i}^{0}\right|>B\right]<\varepsilon$
for each $i$. 
\end{lem}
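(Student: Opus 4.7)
The plan is to extend the argument of Lemma \ref{Lemma Consistency Fixed delta} to the multi-dimensional parameter $\boldsymbol{\lambda}_b=(\lambda_1,\ldots,\lambda_m)$. First I would appeal to the multiple change-point analogue of Bai's (1997) Lemma A.24. Specifically, under Assumption \ref{Assumption A1-A6 BP98}-\ref{Ass A7 BP98} with $\Delta_{T,i}=\Delta_i^0$ fixed, Proposition 3 of \citet{bai/perron:98} (combined with the identification condition in Assumption \ref{Assumption Gaussian Process for Lap LapBai97}-(i), now reading on $\boldsymbol{\lambda}_b$) yields that $\overline{S}_T(\widehat{\delta}(\boldsymbol{\lambda}_b),\boldsymbol{\lambda}_b)\triangleq Q_T(\delta(\boldsymbol{\lambda}_b),\boldsymbol{\lambda}_b)-Q_T(\delta(\boldsymbol{\lambda}_b^0),\boldsymbol{\lambda}_b^0)$ diverges to $-\infty$ at rate $T\max_i|\lambda_i-\lambda_i^0|$ as soon as $\|\boldsymbol{\lambda}_b-\boldsymbol{\lambda}_b^0\|_\infty>B$, while $T^{-1}\sup_{\boldsymbol{\lambda}_b}|g_e(\Delta^0,\boldsymbol{\lambda}_b)|=O_{\mathbb{P}}(T^{-1/2}\log T)$ by the mixingale-type LIL arguments used for the single break case.

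Next, I would mimic the structure of the proof of Lemma \ref{Lemma Consistency Fixed delta}. Define $p_{1,T}(\boldsymbol{u})\triangleq\exp(Q_T(\delta(\boldsymbol{u}),\boldsymbol{u}))$, $\overline{p}_T\triangleq\int_{\mathbf{U}_T}p_{1,T}(\boldsymbol{w})d\boldsymbol{w}$, and $p_T(\boldsymbol{u})=p_{1,T}(\boldsymbol{u})/\overline{p}_T$, where now $\mathbf{U}_T\triangleq\{\boldsymbol{u}\in\mathbb{R}^m:\boldsymbol{\lambda}_b^0+T^{-1}\boldsymbol{u}\in\varGamma^0\}$. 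A change of variables expresses $\lambda_{\delta,T}\triangleq T(\widehat{\boldsymbol{\lambda}}_b^{\mathrm{GL}}-\boldsymbol{\lambda}_b^0)$ as the argmin over $\mathbf{U}_T$ of
\begin{align*}
\mathcal{S}_T(\boldsymbol{s}) & \triangleq\int_{\mathbf{U}_T}l(\boldsymbol{s}-\boldsymbol{u})\frac{p_T(\boldsymbol{\lambda}_b^0+T^{-1}\boldsymbol{u})\pi(\boldsymbol{\lambda}_b^0+T^{-1}\boldsymbol{u})}{\int_{\mathbf{U}_T}p_T(\boldsymbol{\lambda}_b^0+T^{-1}\boldsymbol{w})\pi(\boldsymbol{\lambda}_b^0+T^{-1}\boldsymbol{w})d\boldsymbol{w}}d\boldsymbol{u}.
\end{align*}
It then suffices to show that $\mathbb{P}[\inf_{\|\boldsymbol{s}\|_\infty>TB}\mathcal{S}_T(\boldsymbol{s})\leq\mathcal{S}_T(0)]\rightarrow 0$, which by exactly the reasoning of Lemma \ref{Lemma Consistency Fixed delta} (using the properties of $l\in\boldsymbol{L}$, in particular the subexponential growth and existence of $\overline{u}_1<\overline{u}_2$ separating $\overline{l}_{1,T}<\overline{l}_{2,T}$) reduces to verifying that on a sufficiently thick neighborhood $\Gamma_{1,T}$ of the origin one has $\int_{\Gamma_{1,T}}p_T(\boldsymbol{u})d\boldsymbol{u}$ bounded below, while the tail term $\int_{\|\boldsymbol{u}\|>TB}l(\boldsymbol{u})p_T(\boldsymbol{u})d\boldsymbol{u}\rightarrow 0$. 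Both follow from the divergence of $-\overline{S}_T$ at rate $TB$ and the boundedness of $\pi$.

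The main obstacle, which distinguishes this from the single break case, is showing that $-\overline{S}_T$ actually blows up whenever \emph{some} component $\lambda_i$ is away from $\lambda_i^0$, even if the remaining components are near their true values. Here I would invoke the asymptotic ``separability'' argument of \citet{bai/perron:98}: because the break points are asymptotically distinct, the objective function decomposes (up to negligible terms) into a sum of contributions local to each $\lambda_i^0$, each of which individually satisfies a Bai (1997)-type lower-bound. Specifically, if $|\lambda_{i^*}-\lambda_{i^*}^0|>B$ for some $i^*$ while the other components stay within $B/2$ of their true values, the misaligned regime still produces a deterministic quadratic penalty of order $T B\,(\Delta_{i^*}^0)'V_{i^*}\Delta_{i^*}^0$, which dominates the $O_{\mathbb{P}}(T^{1/2}\log T)$ stochastic fluctuation uniformly. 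The remaining case in which several components are simultaneously displaced is handled similarly by keeping the dominant component.

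Once divergence of $-\overline{S}_T$ at rate $TB$ is established on $\{\|\boldsymbol{\lambda}_b-\boldsymbol{\lambda}_b^0\|_\infty>B\}$, the rest of the argument is mechanical: pick $\vartheta\in(0,1)$ so that for large $T$, $l(\boldsymbol{u})-\inf_{\|\boldsymbol{v}\|>TB/2}l(\boldsymbol{v})\leq 0$ whenever $\|\boldsymbol{u}\|\leq(TB/2)^\vartheta$; split the integral into this region and its complement; and conclude $\mathcal{S}_T(0)-\inf_{\|\boldsymbol{s}\|>TB}\mathcal{S}_T(\boldsymbol{s})<0$ with probability approaching one. Marginalizing over the other coordinates yields the componentwise statement $\mathbb{P}[|\widehat{\lambda}_i^{\mathrm{GL}}-\lambda_i^0|>B]<\varepsilon$ for each $i$, for all $T$ large enough, completing the proof.
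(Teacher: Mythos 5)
Your overall architecture matches the paper's: both proofs reduce the multi-break consistency claim to the single-break machinery of Lemma \ref{Lemma Consistency Fixed delta} (change of variables to $\boldsymbol{u}=T(\boldsymbol{\lambda}_b-\boldsymbol{\lambda}_b^0)$, then showing $\mathcal{S}_T(0)<\inf_{\|\boldsymbol{s}\|>TB}\mathcal{S}_T(\boldsymbol{s})$ with high probability), with the only new ingredient being an order-$T$ lower bound on the criterion gap over configurations where some component is misaligned by a fixed fraction. Where you diverge is in how that bound is obtained. The paper argues by contradiction for one misaligned component at a time and invokes the global decomposition $Q_T=\sum_t e_t^2+\sum_t d_t^2-2\sum_t e_t d_t$ with $T^{-1}\sum_t d_t^2\overset{\mathbb{P}}{\rightarrow}K>0$ and $T^{-1}\sum_t e_t d_t=o_{\mathbb{P}}(1)$ from \citet{bai/perron:98}, which yields $S_T(\delta(\boldsymbol{\lambda}_b),\,\boldsymbol{\lambda}_b)\geq TK$ directly. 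You instead appeal to asymptotic separability of the criterion into per-break local contributions; that argument is the one the paper uses later for the rate and the limiting distribution, where behavior only matters on $O(1/v_T^2)$-neighborhoods of each $T_i^0$, but it is not available at the consistency stage, since a candidate $\lambda_j$ may sit anywhere (including near a different $\lambda_{i}^0$) and the contributions are then not local to distinct true breaks. Your fallback — a deterministic quadratic penalty of order $TB\,(\Delta_{i^*}^0)'V_{i^*}\Delta_{i^*}^0$ on the misaligned stretch — is morally the same as the $\sum_t d_t^2$ bound and rescues the argument, but the assertion that the fitted coefficient on that stretch stays boundedly away from the relevant true coefficient is precisely what requires the design conditions in Assumption \ref{Assumption A1-A6 BP98} and is what Bai and Perron prove; it should be routed through the $d_t$ decomposition rather than asserted. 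Two minor points: in the multiple-break section $Q_T$ is the sum of squared residuals and the quasi-posterior uses $\exp(-Q_T)$, so your $p_{1,T}(\boldsymbol{u})=\exp(Q_T(\delta(\boldsymbol{u}),\,\boldsymbol{u}))$ carries the wrong sign convention (harmless but should be fixed); and the paper needs no marginalization step, since the contradiction is run componentwise.
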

\begin{proof}
Let $S_{T}\left(\delta\left(\boldsymbol{\lambda}_{b}\right),\,\boldsymbol{\lambda}_{b}\right)\triangleq Q_{T}\left(\delta\left(\boldsymbol{\lambda}_{b}\right),\,\boldsymbol{\lambda}_{b}\right)-Q_{T}\left(\delta\left(\boldsymbol{\lambda}_{b}^{0}\right),\,\boldsymbol{\lambda}_{b}^{0}\right)$.
Without loss of generality, we assume there are only three change-points
and provide a proof by contradiction for the consistency result. In
particular, we suppose that all but the second change-point are consistently
estimated. That is, consider the case $T_{2}<T_{2}^{0}$ and for some
finite $C>0$ assume that $\left|\lambda_{2}-\lambda_{2}^{0}\right|>C.$
 $Q_{T}\left(\delta\left(\boldsymbol{\lambda}_{b}\right),\,\boldsymbol{\lambda}_{b}\right)$
can be decomposed as,
\begin{align*}
Q_{T}\left(\delta\left(\boldsymbol{\lambda}_{b}\right),\,\boldsymbol{\lambda}_{b}\right) & =\sum_{t=1}^{T}e_{t}^{2}+\sum_{t=1}^{T}d_{t}^{2}-2\sum_{t=1}^{T}e_{t}d_{t},
\end{align*}
where $d_{t}=w'_{t}\left(\widehat{\phi}-\phi^{0}\right)+z'_{t}\left(\widehat{\delta}_{k}-\delta_{j}^{0}\right),$
for $t\in\left[\widehat{T}_{k-1}+1,\,\widehat{T}_{k}\right]\cap\left[T_{j-1}^{0}+1,\,T_{j}^{0}\right]$
$\left(k,\,j=1,\ldots,\,m+1\right)$ where $\widehat{\phi}$ and $\widehat{\delta}_{k}$
are asymptotically equivalent to the corresponding least-squares estimates.
\citet{bai/perron:98} showed that
\[
T^{-1}\sum_{t=1}^{T}d_{t}^{2}\overset{\mathbb{P}}{\rightarrow}K>0\qquad\mathrm{and}\qquad T^{-1}\sum_{t=1}^{T}e_{t}d_{t}=o_{\mathbb{P}}\left(1\right).
\]
Note that $Q_{T}\left(\delta\left(\boldsymbol{\lambda}_{b}^{0}\right),\,\boldsymbol{\lambda}_{b}^{0}\right)=S_{T}\left(T_{1}^{0},\,T_{2}^{0},\,T_{3}^{0}\right)$,
where $S_{T}\left(T_{1}^{0},\,T_{2}^{0},\,T_{3}^{0}\right)$ denotes
the sum of squared residuals evaluated at $\left(T_{1}^{0},\,T_{2}^{0},\,T_{3}^{0}\right).$
Since $T^{-1}S_{T}\left(T_{1}^{0},\,T_{2}^{0},\,T_{3}^{0}\right)$
is asymptotically equivalent to $T^{-1}\sum_{t=1}^{T}e_{t}^{2},$
this implies that $T^{-1}S_{T}\left(\delta\left(\boldsymbol{\lambda}_{b}\right),\,\boldsymbol{\lambda}_{b}\right)>0$
for all large $T$. For some finite $K>0$, this implies
\begin{align}
S_{T}\left(\delta\left(\boldsymbol{\lambda}_{b}\right),\,\boldsymbol{\lambda}_{b}\right) & \geq TK.\label{Eq: S_T}
\end{align}
Let $\mathbf{U}_{T}\triangleq\left\{ u\in\mathbb{R}:\,\boldsymbol{\lambda}_{b}^{0}+T^{-1}u\in\varGamma^{0}\right\} $.
Define $p_{T}\left(u\right)\triangleq p_{1,T}\left(u\right)/\overline{p}_{T}$
where $p_{1,T}\left(u\right)=\exp\left(-Q_{T}\left(\delta\left(u\right),\,u\right)\right)$
and $\overline{p}_{T}\triangleq\int_{\mathbf{U}_{T}}p_{1,T}\left(w\right)dw$.
By definition, $\widehat{\boldsymbol{\lambda}}_{b}^{\mathrm{GL}}$
is the minimum of the function $\int_{\varGamma^{0}}l\left(s-u\right)p_{1,T}\left(u\right)\pi\left(u\right)du$
with $s\in\varGamma^{0}$. Upon using a change in variables,
\begin{align*}
\int_{\varGamma^{0}} & l\left(s-u\right)p_{1,T}\left(u\right)\pi\left(u\right)du\\
 & =T^{-1}\overline{p}_{T}\int_{\mathbf{U}_{T}}l\left(T\left(s-\boldsymbol{\lambda}_{b}^{0}\right)-u\right)p_{T}\left(\boldsymbol{\lambda}_{b}^{0}+T^{-1}u\right)\pi\left(\boldsymbol{\lambda}_{b}^{0}+T^{-1}u\right)du.
\end{align*}
 Thus, $\boldsymbol{\lambda}_{\delta,T}\triangleq T\left(\widehat{\boldsymbol{\lambda}}_{b}^{\mathrm{GL}}-\boldsymbol{\lambda}_{b}^{0}\right)$
is the\textbf{ }minimum of the function, 
\begin{align*}
\mathcal{S}_{T}\left(s\right) & \triangleq\int_{\mathbf{U}_{T}}l\left(s-u\right)\frac{p_{T}\left(\boldsymbol{\lambda}_{b}^{0}+T^{-1}u\right)\pi\left(\boldsymbol{\lambda}_{b}^{0}+T^{-1}u\right)}{\int_{\mathbf{U}_{T}}p_{T}\left(\boldsymbol{\lambda}_{b}^{0}+T^{-1}w\right)\pi\left(\boldsymbol{\lambda}_{b}^{0}+T^{-1}w\right)dw}du,
\end{align*}
where the optimization is over $\mathbf{U}_{T}$. As in the proof
of Lemma \ref{Lemma Consistency Fixed delta}, we exploit the following
relationship,  
\begin{align}
\mathbb{P}\left[\left|\widehat{\boldsymbol{\lambda}}_{b}^{\mathrm{GL}}-\boldsymbol{\lambda}_{b}^{0}\right|>B\right] & \leq\mathbb{P}\left[\inf_{\left|s\right|>TB}\mathcal{S}_{T}\left(s\right)\leq\mathcal{S}_{T}\left(0\right)\right].\label{eq. AA5 LapBai97-1}
\end{align}
Thus, we need to show that the random function $\mathcal{S}_{T}\left(s\right)$
is strictly larger than $\mathcal{S}_{T}\left(0\right)$ on $\left\{ \left|s\right|>TB\right\} $
with high probability as $T\rightarrow\infty$. The same steps as
in Lemma \ref{Lemma Consistency Fixed delta} lead to,
\begin{align}
\mathcal{S}_{T}\left(0\right) & -\inf_{\left|s\right|>TB}\mathcal{S}_{T}\left(s\right)\label{eq. (5.14) I=000026H LapBai97-1}\\
 & \leq-\varpi\int_{\Gamma_{1,T}}p_{T}\left(u\right)du+\int_{\mathbf{U}_{T}\cap\left(\left|u\right|>\left(TB/2\right)^{\vartheta}\right)}l_{T}\left(u\right)p_{T}\left(u\right)du.\nonumber 
\end{align}
We can use the relationship \eqref{Eq: S_T} in place of \eqref{eq" S_bar goe to - inf}
in Lemma \ref{Lemma Consistency Fixed delta} to show that the second
term above converges to zero. The first term is negative using the
same argument as in Lemma \ref{Lemma Consistency Fixed delta}. Thus,
$\mathcal{S}_{T}\left(0\right)-\inf_{\left|s\right|>TB}\mathcal{S}_{T}\left(s\right)<0$.
This gives a contradiction to the fact that $\widehat{\boldsymbol{\lambda}}_{b}^{\mathrm{GL}}$
minimizes $\int_{\varGamma^{0}}l\left(s-u\right)p_{1,T}\left(u\right)\pi\left(u\right)du$.
Hence, each change-point is consistently estimated. 
\end{proof}
\begin{lem}
\label{Lemma mul Cons Small Break}Under Assumptions \ref{Assumption A1-A6 BP98}-\ref{Ass A7 BP98},
for $l\in\boldsymbol{L}$ and any $B>0$ and $\varepsilon>0$, we
have for all large $T$, $\mathbb{P}\left[\left|\widehat{\lambda}_{i}^{\mathrm{GL}}-\lambda_{i}^{0}\right|>B\right]<\varepsilon$
for each $i$. 
\end{lem}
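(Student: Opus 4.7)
The plan is to mimic the structure of Lemma \ref{Theorem 5.2 I=000026H LapBai97-1} while calibrating the excess sum of squared residuals at the shrinking rate $T\|\Delta_{T,i}\|^{2}$ rather than at $T$, in exact analogy with the way Lemma \ref{Lemma Consistency Small Delta} extends Lemma \ref{Lemma Consistency Fixed delta} in the single-break case. Assume to the contrary that for some $i \in \{1, \ldots, m\}$ there exist $B, \varepsilon > 0$ with $\mathbb{P}[|\widehat{\lambda}_i^{\mathrm{GL}} - \lambda_i^0| > B] \geq \varepsilon$ along a subsequence in $T$. Applying the change of variables $u = T(\boldsymbol{\lambda}_b - \boldsymbol{\lambda}_b^0)$ exactly as in the proof of Lemma \ref{Theorem 5.2 I=000026H LapBai97-1}, the GL estimator $\widehat{\boldsymbol{\lambda}}_b^{\mathrm{GL}}$ is recast as the minimizer of a normalized integral $\mathcal{S}_T(s)$ over $\mathbf{U}_T$, and the claim reduces to showing that $\mathbb{P}[\inf_{|s|>TB}\mathcal{S}_T(s)\leq\mathcal{S}_T(0)]\to 0$.

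The central ingredient is the shrinking-shift analog of the excess SSR lower bound used in Lemma \ref{Theorem 5.2 I=000026H LapBai97-1}: on the event $\{|\lambda_i - \lambda_i^0| > B\}$, uniformly in the remaining coordinates of $\boldsymbol{\lambda}_b$, there exists $C>0$ such that with probability approaching one
\begin{align}
Q_T(\delta(\boldsymbol{\lambda}_b),\boldsymbol{\lambda}_b) - Q_T(\delta(\boldsymbol{\lambda}_b^0),\boldsymbol{\lambda}_b^0) & \geq C\, T\|\Delta_{T,i}\|^{2}. \label{eq: excess SSR shrink mult}
\end{align}
This is the multi-break extension of Lemma \ref{Lemma, Area (ii), Q LapBai97}: the deterministic part $g_d$ of the decomposition \eqref{eq. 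A.2.6 LapBai97} dominates at rate $T\|\Delta_{T,i}\|^2$, whereas the stochastic part $g_e$ is $O_\mathbb{P}(\|\Delta_{T,i}\|\, T^{1/2}\log T)$ by the mixingale conditions of Assumption \ref{Ass A7 BP98}, and the former diverges strictly faster by Assumption \ref{Assumption A1-A6 BP98}-(v). Because the $m$ true fractions are asymptotically distinct, the per-break decompositions interact only through lower-order cross terms and the inequality holds break-by-break; this is the argument underpinning Propositions 2--3 of \citet{bai/perron:98}.

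Once \eqref{eq: excess SSR shrink mult} is in hand, I would conclude along the lines of Lemma \ref{Lemma Consistency Fixed delta}: choose $\overline{u}_1 < \overline{u}_2$ using the properties of $l \in \boldsymbol{L}$, decompose
\begin{align*}
\mathcal{S}_T(0) - \inf_{|s|>TB}\mathcal{S}_T(s) & \leq -\varpi\int_{\Gamma_{1,T}}p_T(u)\,du + \int_{\mathbf{U}_T\cap\{|u|>(TB/2)^{\vartheta}\}} l_T(u)\,p_T(u)\,du,
\end{align*}
and observe that: (i) the first term is strictly negative since $\int_{\Gamma_{1,T}}p_T(u)du$ is bounded away from zero in probability by Lemma \ref{Lemma I.5.1 I=000026H LapBai97}; (ii) by \eqref{eq: excess SSR shrink mult} the Quasi-posterior tail on $\{|u|>(TB/2)^{\vartheta}\}$ decays like $\exp(-C T\|\Delta_{T,i}\|^{2})$, which by Assumption \ref{Assumption A1-A6 BP98}-(v) diverges and dominates the polynomial growth of $l$, forcing the second term to $0$ in probability. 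This yields the strict inequality $\mathcal{S}_T(0) < \inf_{|s|>TB}\mathcal{S}_T(s)$, contradicting the defining property of $\widehat{\boldsymbol{\lambda}}_b^{\mathrm{GL}}$.

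The main obstacle is verifying \eqref{eq: excess SSR shrink mult} \emph{uniformly} over all admissible configurations of the remaining estimated break points when only the $i$-th coordinate is constrained, since the least-squares objective couples all regimes and a single-break argument in isolation does not suffice. The resolution is standard: condition on the partition induced by the nearest true change-point to each estimated one, as in \citet{bai/perron:98}, and use Assumption \ref{Ass A7 BP98}-(b) together with the fast rate of convergence of the remaining consistently estimated components to control cross-regime stochastic remainders. Beyond this, the argument is bookkeeping that follows the single-break template verbatim.
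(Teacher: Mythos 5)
Your proposal is correct and follows essentially the same route as the paper: the paper's own proof likewise reduces the shrinking-shift case to the fixed-shift template of the preceding lemma, the only new ingredient being that the excess sum of squared residuals at a configuration with $|\lambda_i-\lambda_i^0|>B$ stays strictly positive (of order $T\Vert\Delta_{T,i}\Vert^{2}$) because, as shown in \citet{bai/perron:98}, $T^{-1}\sum_t d_t^2$ dominates $2T^{-1}\sum_t e_t d_t$ even though both vanish under shrinking shifts. Your displayed lower bound on the excess SSR is exactly that inequality restated with explicit rates, so no further comparison is needed.
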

\begin{proof}
The structure of the proof is similar to that of Lemma \ref{Theorem 5.2 I=000026H LapBai97-1}.
The difference consists on the fact that now $T^{-1}\sum_{t=1}^{T}d_{t}^{2}\overset{\mathbb{P}}{\rightarrow}0$
even when a break is not consistently estimated. However, \citet{bai/perron:98}
showed that $T^{-1}\sum_{t=1}^{T}d_{t}^{2}>2T^{-1}\sum_{t=1}^{T}e_{t}d_{t}$
and thus one can proceed as in the aforementioned proof to complete
the proof.
\end{proof}
\begin{lem}
\label{Lemma mul Cons Small Break-1}Under Assumptions \ref{Assumption A1-A6 BP98}-\ref{Ass A7 BP98},
for $l\in\boldsymbol{L}$ and for every $\varepsilon>0$ there exists
a $B<\infty$ such that for all large $T$, $\mathbb{P}\left[Tv_{T}^{2}\left|\widehat{\lambda}_{i}^{\mathrm{GL}}-\lambda_{i}^{0}\right|>B\right]<\varepsilon$
for each $i$.
\end{lem}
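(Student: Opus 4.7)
The plan is to adapt the single-break rate-of-convergence argument of Lemma \ref{Theorem 5.2 I=000026H LapBai97} to each coordinate of $\widehat{\boldsymbol{\lambda}}_{b}^{\mathrm{GL}}$, exploiting the fact that under Assumption \ref{Assumption A1-A6 BP98}-\ref{Ass A7 BP98} the change-points are asymptotically distinct and separated at rate $O(1)$, while shrinking neighborhoods of each $\lambda_{i}^{0}$ are of order $1/(Tv_{T}^{2})\to0$. Having already established consistency in Lemma \ref{Lemma mul Cons Small Break}, I would first fix a small $\eta>0$ such that the $\eta$-neighborhoods around the $\lambda_{i}^{0}$ are disjoint, and restrict attention to the event $\{\max_{i}|\widehat{\lambda}_{i}^{\mathrm{GL}}-\lambda_{i}^{0}|\leq\eta\}$, which has probability at least $1-\varepsilon/2$ for large $T$.

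On this event, introduce the $m$-dimensional local parameter $u=(u_{1},\ldots,u_{m})$ with $u_{i}=T\|\Delta_{T,i}\|^{2}(\lambda_{i}-\lambda_{i,T}^{0}(v))$, and rewrite $\widehat{\boldsymbol{\lambda}}_{b}^{\mathrm{GL}}$ as the minimizer of the analogue of $\mathcal{S}_{T}(s)$ from the proof of Lemma \ref{Theorem 5.2 I=000026H LapBai97}, now viewed as a function on $\mathbb{R}^{m}$. Exactly as in that proof, the bound
\begin{align*}
\mathbb{P}\bigl[|Tv_{T}^{2}(\widehat{\lambda}_{i}^{\mathrm{GL}}-\lambda_{i}^{0})|>B\bigr]\;\leq\;\mathbb{P}\bigl[\inf_{|s_{i}|>B}\mathcal{S}_{T}(s)\leq\mathcal{S}_{T}(0)\bigr]
\end{align*}
reduces the problem to controlling $\int_{\Gamma_{1,T}}p_{T}(u)\,du$ from below and $\int_{\mathbf{U}_{T}\cap\{|u_{i}|>(B/2)^{\vartheta}\}}l(u)p_{T}(u)\,du$ from above. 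Proceeding direction by direction, it suffices to prove the multi-break versions of Lemma \ref{Lemma I.5.1 I=000026H LapBai97}, Lemma \ref{Lemma I.5.2.(i) I=000026H LapBai97}, and Lemma \ref{Lemma 7.5. I.5.2.(ii) }, with $\zeta_{T,v}(u,\widetilde{v})$ now depending on $u\in\mathbb{R}^{m}$ and with $|u|$ replaced componentwise by $|u_{i}|$ in the tail bounds.

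The workhorse behind these multi-break tail bounds is the Bai--Perron expansion of $Q_{T}(\delta(\boldsymbol{\lambda}_{b}),\boldsymbol{\lambda}_{b})-Q_{T}(\delta(\boldsymbol{\lambda}_{b}^{0}),\boldsymbol{\lambda}_{b}^{0})$ in a neighborhood of $\boldsymbol{\lambda}_{b}^{0}$: in the multi-break setting the leading term decomposes, up to negligible cross terms, into a sum of $m$ one-dimensional contributions
\begin{align*}
\sum_{i=1}^{m}\bigl\{-|T_{i}-T_{i}^{0}|\overline{g}_{d,i}(\Delta_{T,i},T_{i})+g_{e,i}(\Delta_{T,i},T_{i})\bigr\},
\end{align*}
where each summand satisfies the single-break bounds of Lemma \ref{Lemma, Area (ia), Bai A.5}--\ref{Lemma B.11, Q LapBai97} adapted to regime $i$ under Assumption \ref{Assumption A1-A6 BP98}-(v) and Assumption \ref{Ass A7 BP98}. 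Consequently the analogues of Lemma \ref{Lemma III.5.2 I=000026H LapBai97} and \ref{Lemma VII.Lemma 2.2 I=000026H LapBai97 } hold with $|u_{2}-u_{1}|$ replaced by $\|u_{2}-u_{1}\|$, and the polynomial-times-exponential bounds of Lemma \ref{Lemma I.5.2.(i) I=000026H LapBai97}--\ref{Lemma 7.5. I.5.2.(ii) } carry over directly, with $M$ replaced by $M_{i}\triangleq|u_{i}|$ in the $i$-th coordinate. Combining these, the contradiction argument of Lemma \ref{Theorem 5.2 I=000026H LapBai97} yields the componentwise claim.

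The main obstacle is precisely the multivariate expansion in the previous paragraph: controlling the cross terms between distinct breaks in both the deterministic and stochastic parts of $Q_{T}$. These cross terms are small because, once one localizes inside disjoint $\eta$-neighborhoods, segments corresponding to different $(T_{i-1},T_{i})$ and $(T_{j-1},T_{j})$ do not overlap for $i\neq j$, and under the mixing condition in Assumption \ref{Assumption A5 Bai97 and A4 BP98} the contributions from $\{z_{t}e_{t}\}$ across non-overlapping segments become asymptotically independent. This is exactly the mechanism already invoked in Section \ref{Section Models with Multiple Breaks} to reduce the multi-break analysis to $m$ essentially independent single-break problems; rigorously verifying that the cross terms are $o_{\mathbb{P}}(1)$ uniformly over the relevant product neighborhood (so that the single-break bounds of Section \ref{subsection: Preliminary Lemmas} apply regime by regime) is the only technically new step. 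Once this is in place, the remainder of the proof is a direct transcription of Lemma \ref{Theorem 5.2 I=000026H LapBai97} to each coordinate.
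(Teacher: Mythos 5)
Your overall strategy---restrict to the consistency event from Lemma \ref{Lemma mul Cons Small Break}, displace one break at a time, and transplant the tail-bound machinery of Lemma \ref{Theorem 5.2 I=000026H LapBai97}---is the right one and is where the paper also ends up. But the route you take through the technical core differs from the paper's, and the step you flag as ``the only technically new step'' is in fact the whole difficulty, and your justification for it does not go through as stated. First, at the stage where you bound $\mathbb{P}[|u_{i}|>B]$, the other break dates are only known to satisfy $|T_{j}-T_{j}^{0}|\leq\epsilon T$ for $j\neq i$; they are \emph{not} yet localized at the fast rate $1/v_{T}^{2}$. So the ``relevant product neighborhood'' over which your additive decomposition and the multivariate analogues of the moment bounds must hold uniformly is of order $\epsilon T$ in the $j\neq i$ directions, not $O(1/v_{T}^{2})$; localizing all coordinates at the fast rate simultaneously would be circular. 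Second, in the partial structural change model the common parameter $\phi$ (and the joint projection $M_{X}$) couples all regimes, so the deterministic part of $Q_{T}(\delta(\boldsymbol{\lambda}_{b}),\boldsymbol{\lambda}_{b})-Q_{T}(\delta(\boldsymbol{\lambda}_{b}^{0}),\boldsymbol{\lambda}_{b}^{0})$ does not split into $m$ segmentwise contributions merely because the segments do not overlap; non-overlap plus mixing handles the stochastic cross terms but not the deterministic coupling through $(X'X)^{-1}$.

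The paper avoids both problems by a different decomposition: writing $SSR_{1}-SSR_{2}=[(SSR_{1}-SSR_{3})-(SSR_{2}-SSR_{3})]+(SSR_{2}-S_{T}(T_{1}^{0},T_{2}^{0},T_{3}^{0}))$, where $SSR_{3}$ fits a model with the displaced date \emph{and} the true date both included, and then invoking Proposition 4-(ii) of \citet{bai/perron:98}, which delivers exactly the needed lower bound $[S_{T}(T_{1},T_{2},T_{3})-S_{T}(T_{1},T_{2}^{0},T_{3})]/(T_{2}^{0}-T_{2})>0$ uniformly over the set where the displaced break satisfies $T_{2}^{0}-T_{2}>C/v_{T}^{2}$ and the \emph{other} breaks are anywhere within $\epsilon T$ of their true values. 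That cited result is precisely what absorbs the cross-term and uniformity issues you would otherwise have to re-derive. With that lower bound in hand (giving the analogue of \eqref{Eq: S_T} with $T$ replaced by $v_{T}^{-2}$), the paper then runs the same comparison of $\mathcal{S}_{T}(0)$ with $\inf\mathcal{S}_{T}(s)$ as in Lemma \ref{Theorem 5.2 I=000026H LapBai97-1}, which matches the final step of your sketch. To repair your version, either carry out the Bai--Perron-type uniform lower bound yourself over the $\epsilon T$-neighborhoods (nontrivial), or simply cite their Proposition 4-(ii) as the paper does.
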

\begin{proof}
Let $S_{T}\left(\delta\left(\boldsymbol{\lambda}_{b}\right),\,\boldsymbol{\lambda}_{b}\right)\triangleq Q_{T}\left(\delta\left(\boldsymbol{\lambda}_{b}\right),\,\boldsymbol{\lambda}_{b}\right)-Q_{T}\left(\delta\left(\boldsymbol{\lambda}_{b}^{0}\right),\,\boldsymbol{\lambda}_{b}^{0}\right)$.
Without loss of generality, we assume there are only three change-points
and provide an explicit proof only for $\lambda_{2}^{0}$. We use
the same notation as in \citet{bai/perron:98}, pp. 69-70. Note that
their results concerning the estimates of the regression parameters
can be used in our context because once we have the consistency of
the fractional change-points the estimates of the regression parameters
are asymptotically equivalent to the corresponding least-squares estimates.
For each $\epsilon>0,$ let $V_{\epsilon}=\left\{ \left(T_{1},\,T_{2},\,T_{3}\right);\,\left|\widehat{T}_{i}^{\mathrm{}}-T_{i}^{0}\right|\leq\epsilon T,\,i=1\leq i\leq3\right\} $.
By the consistency result, for each $\epsilon>0$ and $T$ large,
we have $\left|\widehat{T}_{i}^{\mathrm{}}-T_{i}^{0}\right|\leq\epsilon T$,
where $\widehat{T}_{i}=\widehat{T}_{i}^{\mathrm{GL}}=T\widehat{\lambda}_{i}^{\mathrm{GL}}$.
Hence, $\mathbb{P}\left(\left\{ \widehat{T}_{1},\,\widehat{T}_{2},\,\widehat{T}_{3}\right\} \in V_{\epsilon}\right)\rightarrow1$
with high probability. Therefore we only need to examine the behavior
of $S_{T}\left(\delta\left(\boldsymbol{\lambda}_{b}\right),\,\boldsymbol{\lambda}_{b}\right)$
for those $T_{i}$ that are close to the true break dates such that
$\left|T_{i}-T_{i}^{0}\right|<\epsilon T$ for all $i$. By symmetry,
we can, without loss of generality, consider the case $T_{2}<T_{2}^{0}$.
For $C>0$, define 
\begin{align*}
V_{\epsilon}^{*}\left(C\right) & =\left\{ \left(T_{1},\,T_{2},\,T_{3}\right);\,\left|\widehat{T}_{i}-T_{i}^{0}\right|<\epsilon T,\,1\leq i\leq3,\,T_{2}-T_{2}^{0}<-C/v_{T}^{2}\right\} .
\end{align*}
Define the sum of squared residuals evaluated at $\left(T_{1},\,T_{2},\,T_{3}\right)$
by $S_{T}\left(T_{1},\,T_{2},\,T_{3}\right)$. Let $SSR_{1}=S_{T}\left(T_{1},\,T_{2},\,T_{3}\right),$
$SSR_{2}=S_{T}\left(T_{1},\,T_{2}^{0},\,T_{3}\right)$ and $SSR_{3}=S_{T}\left(T_{1},\,T_{2},\,T_{2}^{0},\,T_{3}\right)$.
We have omitted the dependence on $\delta.$ With this notation, we
have $S_{T}\left(\delta\left(\boldsymbol{\lambda}_{b}\right),\,\boldsymbol{\lambda}_{b}\right)=S_{T}\left(T_{1},\,T_{2},\,T_{3}\right)-S_{T}\left(T_{1}^{0},\,T_{2}^{0},\,T_{3}^{0}\right)$
which can be decomposed as 
\begin{align}
S_{T} & \left(\delta\left(\boldsymbol{\lambda}_{b}\right),\,\boldsymbol{\lambda}_{b}\right)\label{Eq. (29) in BP98}\\
 & =\left[\left(SSR_{1}-SSR_{3}\right)-\left(SSR_{2}-SSR_{3}\right)\right]+\left(SSR_{2}-S_{T}\left(T_{1}^{0},\,T_{2}^{0},\,T_{3}^{0}\right)\right).\nonumber 
\end{align}
 In their Proposition 4-(ii), \citet{bai/perron:98} showed that the
first term on the right-hand side above satisfies the following: for
every $\varepsilon>0$, there exists $B>0$ and $\epsilon>0$ such
that for large $T$,
\begin{align*}
\mathbb{P}\left[\min\left\{ \left[S_{T}\left(T_{1},\,T_{2},\,T_{3}\right)-S_{T}\left(T_{1},\,T_{2}^{0},\,T_{3}\right)\right]/\left(T_{2}^{0}-T_{2}\right)\right\} \leq0\right] & <\varepsilon,
\end{align*}
 where the minimum is taken over $V_{\epsilon}^{*}\left(C\right)$.
The second term of \eqref{Eq. (29) in BP98} divided by $T_{2}^{0}-T_{2}$
can be shown to be negligible for $\left\{ T_{1},\,T_{2},\,T_{3}\right\} \in V_{\epsilon}^{*}\left(C\right)$
and $C$ large enough because on $V_{\epsilon}^{*}\left(C\right)$
the consistency result guarantees that $\widehat{\lambda}_{i}$ can
be made arbitrary close to $\lambda_{i}^{0}$. This leads to a result
similar to \eqref{Eq: S_T} where $T$ is replaced by $v_{T}^{-2}$.
Then one can continue with the same argument used in the second part
of the proof of Lemma \ref{Theorem 5.2 I=000026H LapBai97-1}. 
\end{proof}

\subsection{Proofs of Section \ref{Section Theoretical-Properties-of GL Inference}}

\subsubsection{Proof of Proposition \ref{Proposition Bet Proof}}

Let 
\begin{align*}
p_{1,T}\left(y|\,\lambda_{b}^{0}+\psi_{T}^{-1}u\right)\triangleq\exp\left(\left(\widetilde{G}_{T,0}\left(u,\,0\right)+Q_{T,0}\left(u\right)\right)/2\right) & ,
\end{align*}
 where $\widetilde{G}_{T,0}\left(u,\,0\right)$ and $Q_{T,0}\left(u\right)$
were defined in equation \eqref{eq. (17)-1-1}. Let $p_{1}\left(y|\,\lambda_{b}\right)\triangleq\exp\left(\left(L^{2}\left(\lambda_{b}\right)-L^{2}\left(\lambda_{0}\right)\right)/2\right)$
where $L\left(\lambda_{b}\right)=\left(T_{b}\left(T-T_{b}\right)\right)^{1/2}\left(\overline{Y}_{T_{b}}^{*}-\overline{Y}_{T_{b}}\right)$
with $\overline{Y}_{T_{b}}=T_{b}^{-1}\sum_{t=1}^{T_{b}}y_{t}$ and
$\overline{Y}_{T_{b}}^{*}=\left(T-T_{b}\right)^{-1}\sum_{t=T_{b}+1}^{T}y_{t}$.
Following \citet{bhattacharya:94} we use a prior $\check{\pi}\left(\cdot\right)$
on the random variable $\overline{\lambda}_{b}$.  The posterior
distribution of $\overline{\lambda}_{b}=\lambda_{b}$ is given by
$p\left(\lambda_{b}|\,y\right)=h\left(\lambda_{b}\right)/\int_{0}^{1}h\left(s\right)ds$
where  $h\left(\lambda_{b}\right)=p_{1}\left(y|\,\lambda_{b}\right)\check{\pi}\left(\lambda_{b}\right)$.
The total variation distance between two probability measures $\nu_{1}$
and $\nu_{2}$ defined on some probability space $S\in\mathbb{R}$
is denoted as $\left|\nu_{1}-\nu_{2}\right|_{\mathrm{TV}}\triangleq\int_{S}\left|\nu_{1}\left(u\right)-\nu_{2}\left(u\right)\right|du$.
Given the local parameter $\lambda_{b}=\lambda_{b}^{0}+\left(Tv_{T}^{2}\right)^{-1}u$
with $u\in\left[-M,\,M\right]$ for a given $M>0$, the posterior
for $u$ is equal to $p^{*}\left(u|\,y\right)=\left(Tv_{T}^{2}\right)^{-1}p\left(\left(Tv_{T}^{2}\right)^{-1}u+\lambda_{b}^{0}|\,y\right)$
while the quasi-posterior is given by $p_{T}^{*}\left(u|\,y\right)=\left(Tv_{T}^{2}\right)^{-1}p_{T}\left(\left(Tv_{T}^{2}\right)^{-1}u+\lambda_{b}^{0}|\,y\right)$. 
\begin{lem}
\label{Lemma Quasi Posterior =00003D Posterior} Let Assumptions \ref{Assumption Prior LapBai97}-\ref{Assumption Small Shift BP}
and \ref{Assumption Gaussian Process for Lap LapBai97}-(i) hold and
$\check{\pi}\left(\cdot\right)$ satisfy Assumption \ref{Assumption Prior LapBai97}.
Then, 
\[
\left|p_{T}^{*}\left(Tv_{T}^{2}\left(\overline{\lambda}_{b}-\lambda_{b}^{0}\right)|\,y\right)-p^{*}\left(Tv_{T}^{2}\left(\overline{\lambda}_{b}-\lambda_{b}^{0}\right)|\,y\right)\right|_{\mathrm{TV}}\overset{\mathbb{P}}{\rightarrow}0.
\]
\end{lem}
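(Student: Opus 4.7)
The plan is to exploit the fact that, for the i.i.d.\ Gaussian mean-shift model considered in Proposition \ref{Proposition Bet Proof}, the quasi-posterior is, up to a normalization constant, a \emph{bona fide} Bayesian posterior built from the Gaussian likelihood. Once this identification is made, the total variation distance between $p_T^{*}$ and $p^{*}$ reduces to a controlled effect of using two possibly different (but both regular) priors, and can then be driven to zero. First I would verify the exact identity between the local criterion and the Gaussian log-likelihood ratio: under $y_t=\delta_T\mathbf{1}\{t>T_b^0\}+e_t$ with $e_t\sim\mathrm{iid}\,\mathscr{N}(0,1)$ and $\delta^0$ known, the profile log-likelihood satisfies $\log p_1(y|\lambda_b)-\log p_1(y|\lambda_b^0)=\tfrac{1}{2}(L^{2}(\lambda_b)-L^{2}(\lambda_b^0))$ by the usual SSR decomposition. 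Using the definitions of $\widetilde G_{T,0}(\cdot,0)$ and $Q_{T,0}(\cdot)$ in Section \ref{subsection: Preliminary Lemmas}, a direct calculation shows that when $\lambda_b=\lambda_b^0+u/\psi_T$, the right-hand side is exactly $\tfrac{1}{2}(\widetilde G_{T,0}(u,0)+Q_{T,0}(u))$. Hence $p_{1,T}(y|\lambda_b)=c_T(y)\,p_1(y|\lambda_b)$ with $c_T(y)$ independent of $\lambda_b$.

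Next, with $f_T(u,y)\triangleq\exp([\widetilde G_{T,0}(u,0)+Q_{T,0}(u)]/2)$, $\pi_u\triangleq\pi(\lambda_b^0+u/\psi_T)$ and $\check\pi_u\triangleq\check\pi(\lambda_b^0+u/\psi_T)$, the two densities admit the common form
\begin{align*}
p^{*}(u|y) \;=\; \frac{\check\pi_u\,f_T(u,y)}{\int_{\Gamma_T}\check\pi_w\,f_T(w,y)dw},\qquad p_T^{*}(u|y) \;=\; \frac{\pi_u\,f_T(u,y)}{\int_{\Gamma_T}\pi_w\,f_T(w,y)dw},
\end{align*}
where $\Gamma_T\triangleq\{u:\lambda_b^0+u/\psi_T\in\varGamma^{0}\}$. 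The two densities share the same ``likelihood'' $f_T$ and differ only through their priors. Their pointwise difference can be written as
\begin{align*}
p_T^{*}(u|y)-p^{*}(u|y) \;=\; \frac{f_T(u,y)\bigl[\pi_u\!\int\check\pi_w f_T-\check\pi_u\!\int\pi_w f_T\bigr]}{\bigl(\int\pi_w f_T\bigr)\bigl(\int\check\pi_w f_T\bigr)}.
\end{align*}

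To control this object in $L^{1}(du)$ I would combine three ingredients. First, continuity of both priors at $\lambda_b^0$ (Assumption \ref{Assumption Prior LapBai97}) gives $\sup_{|u|\le K}|\pi_u-\pi^{0}|=o(1)$ and $\sup_{|u|\le K}|\check\pi_u-\check\pi^{0}|=o(1)$ for every finite $K$. Second, the integrability bounds established for the quasi-posterior in Lemma \ref{Lemma I.5.2.(i) I=000026H LapBai97}-\ref{Lemma 7.5. I.5.2.(ii) } apply verbatim to $\int_{|u|>K}f_T$ and transfer to both priors, since $\pi$ and $\check\pi$ are uniformly bounded above and bounded away from zero on a neighborhood of $\lambda_b^{0}$; this yields the tail negligibility $\int_{|u|>K}(p_T^{*}+p^{*})du=o_{\mathbb{P}}(1)$ as $K\to\infty$. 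Third, Proposition \ref{Proposition: Consistency and Rate of Convergence} ensures that with probability tending to one the masses of both $p_T^{*}$ and $p^{*}$ lie within a compact set in the local coordinate $u$. On $\{|u|\le K\}$ the bracket in the displayed numerator is $o_{\mathbb{P}}(1)$ uniformly, while the denominator is bounded below in probability by a multiple of $(\pi^{0}\wedge\check\pi^{0})^{2}(\int_{|u|\le K}f_T)^{2}>0$. Integrating yields $\int_{|u|\le K}|p_T^{*}-p^{*}|du=o_{\mathbb{P}}(1)$, and adding the tail bound gives convergence in total variation by letting $T\to\infty$ followed by $K\to\infty$.

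The main obstacle is handling the interchange of limits between the truncation level $K$ and $T\to\infty$: one must verify that the denominator integrals remain bounded away from zero uniformly in $T$ under \emph{both} priors and that the tail mass of $f_T$ relative to its central mass vanishes uniformly in $T$. Both are essentially byproducts of the bounds already in Section \ref{subsection Proofs-of-Section 3} (in particular Lemma \ref{Lemma I.5.1 I=000026H LapBai97} for the lower bound and Lemma \ref{Lemma 7.5. I.5.2.(ii) } for the tails), which were proven for the generic quasi-prior $\pi$ and carry over to $\check\pi$ because $\check\pi$ satisfies the same regularity (Assumption \ref{Assumption Prior LapBai97}). An equivalent, somewhat cleaner route is to apply Scheffé's lemma to the ratios $p_T^{*}/(f_T/\int f_T)\to 1$ and $p^{*}/(f_T/\int f_T)\to 1$ in probability pointwise in $u$, dominated by an integrable function inherited from the exponential decay of $f_T$ established in Lemma \ref{Lemma VII.Lemma 2.2 I=000026H LapBai97 }.
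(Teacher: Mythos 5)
Your overall architecture---relate the quasi-likelihood to the Gaussian likelihood, show the priors contribute only through their values at $\lambda_b^0$, and dispose of the tails---is the same as the paper's, and the prior-continuity and tail steps (via Assumption \ref{Assumption Prior LapBai97}, Lemma \ref{Lemma I.5.1 I=000026H LapBai97} for the lower bound on the normalizing integral, and the tail estimates of Section \ref{subsection Proofs-of-Section 3}) match what the paper does. The place where you diverge, and where the gap sits, is the very first step: you assert an \emph{exact} algebraic identity $p_{1,T}(y|\lambda_b)=c_T(y)\,p_1(y|\lambda_b)$ and then treat the lemma as a pure statement about two priors attached to a common likelihood. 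That identity is the load-bearing claim and it is not verified; with the definition $L(\lambda_b)=(T_b(T-T_b))^{1/2}(\overline{Y}^*_{T_b}-\overline{Y}_{T_b})$ used in the proof of Proposition \ref{Proposition Bet Proof} one computes $L^2(\lambda_b)=T\cdot Q_T(\delta(\lambda_b),\lambda_b)$, so the two exponents differ by a factor of $T$ and the claimed proportionality fails as written. Even granting the natural normalization under which $L^2=Q_T$, the content of the lemma is precisely that the quasi-posterior and the Bayes posterior of the betting framework (taken from Bhattacharya, 1994, with $\delta^0$ known) agree asymptotically, and the paper does not obtain this from an algebraic coincidence.

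What the paper actually does is prove that \emph{both} exponents admit the same local expansion: by Lemma \ref{Lemma, Area (ia), Bai A.5} and Lemma \ref{Lemma ZZ conv prob}, $\exp\left(\left(\widetilde G_{T,0}(u,0)+Q_{T,0}(u)\right)/2\right)=\exp\left(\delta_T\sum_{t=0}^{v_T^{-2}|u|}e_{T_b^0-t}-|u|\delta_0^2/2\right)(1+A_T)$ with $A_T=o_{\mathbb{P}}(1)$ uniformly on $\{u\le\eta Tv_T^2\}$, and by Bai (1994, Theorem 1) the same expansion holds for $\exp\left(\left(L^2(\lambda_b)-L^2(\lambda_b^0)\right)/2\right)$ with an error $B_T=o_{\mathbb{P}}(1)$; outside this region both densities are negligible by Lemma \ref{Lemma, Area (ii), Q LapBai97} and the corresponding results in Bai (1994). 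The total variation distance is then bounded by $|A_T|+|B_T|\overset{\mathbb{P}}{\rightarrow}0$. If you want to keep your route, you must either carry out the ``direct calculation'' and confront the normalization of $L$, or replace the exact identity by these two matching expansions---in which case your argument becomes the paper's. The remaining pieces of your proposal (uniform prior continuity on compacts, denominator bounded away from zero in probability, Scheff\'e-type domination inherited from the exponential tail bounds) are fine but secondary.
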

\begin{proof}
 By assumption \ref{Assumption Prior LapBai97}, $\pi\left(\cdot\right)$
and $\check{\pi}\left(\cdot\right)$ are bounded, and 
\begin{align*}
\sup_{\left|u\right|\leq M}\left|\pi\left(\left(Tv_{T}^{2}\right)^{-1}u+\lambda_{b}^{0}\right)-\pi\left(\lambda_{b}^{0}\right)\right| & \overset{\mathbb{P}}{\rightarrow}0,\\
\sup_{\left|u\right|\leq M}\left|\check{\pi}\left(\left(Tv_{T}^{2}\right)^{-1}u+\lambda_{b}^{0}\right)-\check{\pi}\left(\lambda_{b}^{0}\right)\right| & \overset{\mathbb{P}}{\rightarrow}0.
\end{align*}
Since $\pi\left(\cdot\right)$ {[}$\check{\pi}\left(\cdot\right)${]}
appears in both the numerator and denominator of $p_{T}^{*}\left(\cdot|\,y\right)$
{[}$p^{*}\left(\cdot|\,y\right)${]}, it cancels from that expression
asymptotically. Turning to the Laplace estimator, the results of Section
\ref{Section Asymptotic Results LapBai97} (see Lemma \ref{Lemma, Area (ia), Bai A.5}
and \ref{Lemma, Area (ib) Q}) imply that for $u\leq0$, using $Q\left(\delta\left(\lambda_{b}\right),\,\lambda_{b}\right)/2$
in place of $Q\left(\delta\left(\lambda_{b}\right),\,\lambda_{b}\right)$,
\begin{align}
\exp & \left(\left(\widetilde{G}_{T,0}\left(u,\,0\right)+Q_{T,0}\left(u\right)\right)/2\right)\label{eq (3)}\\
 & =\exp\left(\delta_{T}\sum_{t=0}^{v_{T}^{-2}\left|u\right|}e_{T_{b}^{0}-t}-\left|u\right|\delta_{0}^{2}/2\right)\left(1+A_{T}\right),\nonumber 
\end{align}
 where $A_{T}=o_{\mathbb{P}}\left(1\right)$ is uniform in the region
$u\leq\eta Tv_{T}^{2}$ for small $\eta>0$. By symmetry, the case
$u>0$ results in the same relationship as \eqref{eq (3)} with $e_{T_{b}^{0}-t}$
replaced by $e_{T_{b}^{0}+t}$. The results in the proof of Theorem
1 in \citet{bai:94a} combined with the arguments referenced for the
derivation of \eqref{eq (3)} suggest that for $u\leq0,$
\begin{align}
\exp & \left(\left(L^{2}\left(\left(Tv_{T}^{2}\right)^{-1}u+\lambda_{b}^{0}\right)-L^{2}\left(\lambda_{b}^{0}\right)\right)/2\right)\label{eq (1)}\\
 & =\exp\left(\delta_{T}\sum_{t=0}^{v_{T}^{-2}\left|u\right|}e_{T_{b}^{0}-t}-\left|u\right|\delta_{0}^{2}/2\right)\left(1+B_{T}\right),\nonumber 
\end{align}
where $B_{T}=o_{\mathbb{P}}\left(1\right)$ is uniform in the region
$u\leq\eta Tv_{T}^{2}$ for small $\eta>0$. By symmetry, the case
$u>0$ results in the same relationship as \eqref{eq (1)} with $e_{T_{b}^{0}-t}$
replaced by $e_{T_{b}^{0}+t}$. By Lemma \ref{Lemma, Area (ii), Q LapBai97}
and the results in \citet{bai:94a}, $p_{T}\left(u|\,y\right)$ and
$p\left(u|\,y\right)$ are negligible uniformly in $u$ for $u>\eta Tv_{T}^{2}$
for every $\eta$. Thus, \eqref{eq (3)}-\eqref{eq (1)} yield,
\begin{align*}
\left|p_{T}^{*}\left(Tv_{T}^{2}\left(\overline{\lambda}_{b}-\lambda_{b}^{0}\right),\,y\right)-p^{*}\left(Tv_{T}^{2}\left(\overline{\lambda}_{b}-\lambda_{b}^{0}\right),\,y\right)\right| & _{\mathrm{TV}}\leq\left|A_{T}\right|+\left|B_{T}\right|\overset{\mathbb{P}}{\rightarrow}0.
\end{align*}
\end{proof}
Continuing with the proof of Proposition \ref{Proposition Bet Proof},
we begin with part (i). Note that $\varphi\left(\lambda_{b},\,y\right)$
is defined by 
\[
\int\left(1-\varphi\left(\lambda_{b},\,y\right)\right)p_{T}\left(y|\,\lambda_{b}\right)d\varPi\left(\lambda_{b}\right)\geq1-\alpha
\]
 for all $y,$ where $\varPi\left(\cdot\right)$ is a probability
measure on $\varGamma^{0}$ such that $\varPi\left(\lambda_{b}\right)=\pi\left(\lambda_{b}\right)d\lambda_{b}$.
The fact that $\left|1-\varphi\left(\lambda_{b},\,y\right)\right|\leq1$
and Lemma \ref{Lemma Quasi Posterior =00003D Posterior} lead to,

\begin{align}
\int & \left(1-\varphi\left(\lambda_{b},\,y\right)\right)p_{T}\left(y|\,\lambda_{b}\right)d\varPi\left(\lambda_{b}\right)\label{eq. PT =00003D P}\\
 & =\int\left(1-\varphi\left(\lambda_{b},\,y\right)\right)p\left(y|\,\lambda_{b}\right)d\varPi\left(\lambda_{b}\right)+o_{\mathbb{P}}\left(1\right).\nonumber 
\end{align}
 Given that Definition \ref{def:Highest-Density-Region} of the GL
confidence interval involves an inequality that explicitly allows
for conservativeness, \eqref{eq. PT =00003D P} implies the following
relationship, 
\begin{align*}
\int\varphi\left(\lambda_{b},\,y\right)p_{T}\left(y|\,\lambda_{b}\right)d\varPi\left(\lambda_{b}\right) & =\int\varphi\left(\lambda_{b},\,y\right)p\left(y|\,\lambda_{b}\right)d\varPi\left(\lambda_{b}\right)+\varepsilon_{T}\leq\alpha\int p\left(y|\,\lambda_{b}\right)d\varPi\left(\lambda_{b}\right),
\end{align*}
where $\varepsilon_{T}=\int\varphi\left(\lambda_{b},\,y\right)\left(p_{T}\left(y|\,\lambda_{b}\right)-p\left(y|\,\lambda_{b}\right)\right)d\varPi\left(\lambda_{b}\right)$.
Rearranging, we have, 
\begin{align*}
\int\left(\alpha-\varphi\left(\lambda_{b},\,y\right)\right)p\left(y|\,\lambda_{b}\right)d\varPi\left(\lambda_{b}\right)-\varepsilon_{T} & \geq0,
\end{align*}
 for all $y.$ Now multiply both sides by $\widetilde{b}\left(y\right)\geq0$
and integrating with respect to $\zeta\left(y\right)$ yields,
\begin{align*}
\int\int\left(\alpha-\varphi\left(\lambda_{b},\,y\right)\right)\widetilde{b}\left(y\right)p\left(y|\,\lambda_{b}\right)d\zeta\left(y\right)d\varPi\left(\lambda_{b}\right)-\varepsilon_{T}\int\widetilde{b}\left(y\right)d\zeta\left(y\right) & \geq0,
\end{align*}
 or
\begin{align*}
\left(1-\alpha\right)\int L_{\alpha}\left(\varphi,\,\widetilde{b},\,\lambda_{b}\right)d\varPi\left(\lambda_{b}\right)-\varepsilon_{T}\int\widetilde{b}\left(y\right)d\zeta\left(y\right) & \geq0.
\end{align*}
Taking the limit as $T\rightarrow\infty$,
\begin{align*}
\left(1-\alpha\right)\int L_{\alpha}\left(\varphi,\,\widetilde{b},\,\lambda_{b}\right)d\varPi\left(\lambda_{b}\right) & \geq0.
\end{align*}
The latter implies that $L_{\alpha}\left(\varphi,\,\widetilde{b},\,\lambda_{b}\right)\geq0$
for some $\lambda_{b}$. Thus, $\varphi$ is bet-proof at level $1-\alpha$.

We now prove part (ii). We use a proof by contradiction. If $\int\varphi'\left(\lambda_{b},\,y\right)d\lambda_{b}\geq\int\varphi\left(\lambda_{b},\,y\right)d\lambda_{b}$
for all $y\in\mathcal{Y}$ and $\int\varphi'\left(\lambda_{b},\,y\right)d\lambda_{b}>\int\varphi\left(\lambda_{b},\,y\right)d\lambda_{b}$
for all $y\in\mathcal{Y}_{0}$ with $\zeta\left(\mathcal{Y}_{0}\right)>0$,
then we show that $\int\varphi'\left(\lambda_{b},\,y\right)p\left(y|\,\lambda_{b}\right)d\zeta\left(y\right)>\alpha$
for some $\lambda_{b}\in\varGamma^{0}.$ By Lemma \ref{Lemma Quasi Posterior =00003D Posterior}
and \eqref{eq (def Quasi-credible set)} holding with equality, 
\begin{align*}
\int\varphi\left(\lambda_{b},\,y\right)p_{T}\left(y|\,\lambda_{b}\right)d\varPi\left(\lambda_{b}\right) & =\alpha\int p_{T}\left(y|\,\lambda_{b}\right)d\varPi\left(\lambda_{b}\right)\\
 & =\alpha\int p\left(y|\,\lambda_{b}\right)d\varPi\left(\lambda_{b}\right)+o_{\mathbb{P}}\left(1\right).
\end{align*}
Integrating both sides with respect to $\zeta\left(y\right)$ yields,
\begin{align}
\int\left(\int\varphi\left(\lambda_{b},\,y\right)p\left(y|\,\lambda_{b}\right)d\zeta\left(y\right)\right)d\varPi\left(\lambda_{b}\right) & =\alpha+o_{\mathbb{P}}\left(1\right).\label{eq: (12, M=000026N)}
\end{align}
 By Assumption \ref{Assumption Prior LapBai97}, $\pi\left(\lambda_{b}\right)>0$
for all $\lambda_{b}\in\varGamma^{0}$. Taking the limit as $T\rightarrow\infty$
of both sides of \eqref{eq: (12, M=000026N)} yields $\int\left(\int\varphi\left(\lambda_{b},\,y\right)p\left(y|\,\lambda_{b}\right)d\zeta\left(y\right)\right)d\varPi\left(\lambda_{b}\right)=\alpha.$
The latter holds only if $\int\varphi\left(\lambda_{b},\,y\right)p\left(y|\,\lambda_{b}\right)d\zeta\left(y\right)=\alpha$
for all $\lambda_{b}\in\varGamma^{0}$. This means that $\varphi$
is similar. The definition of HPD confidence set $\varphi\left(\lambda_{b},\,y\right)$
implies that for $\zeta$-almost all $y$, if $\int\varphi\left(\lambda_{b},\,y\right)d\lambda_{b}=\int\varphi'\left(\lambda_{b},\,y\right)d\lambda_{b}$
then $\int\varphi\left(\lambda_{b},\,y\right)p_{T}\left(\lambda_{b}|\,y\right)d\lambda_{b}\leq\int\varphi'\left(\lambda_{b},\,y\right)p_{T}\left(\lambda_{b}|\,y\right)d\lambda_{b}$.
The latter relationship and Lemma \ref{Lemma Quasi Posterior =00003D Posterior}
imply that,
\begin{align*}
\int\varphi\left(\lambda_{b},\,y\right)p\left(y|\,\lambda_{b}\right)d\varPi\left(\lambda_{b}\right) & \leq\int\varphi'\left(\lambda_{b},\,y\right)p\left(y|\,\lambda_{b}\right)d\varPi\left(\lambda_{b}\right),
\end{align*}
for all $y\in\mathcal{Y}$ and 
\begin{align*}
\int\varphi\left(\lambda_{b},\,y\right)p\left(y|\,\lambda_{b}\right)d\varPi\left(\lambda_{b}\right) & <\int\varphi'\left(\lambda_{b},\,y\right)p\left(y|\,\lambda_{b}\right)d\varPi\left(\lambda_{b}\right),
\end{align*}
 for all $y\in\mathcal{Y}_{0}$. Integrating both sides with respect
to $\zeta$ yields 
\begin{align*}
\int & \left(\int\varphi\left(\lambda_{b},\,y\right)p\left(y|\,\lambda_{b}\right)d\zeta\left(y\right)\right)d\varPi\left(\lambda_{b}\right)\\
 & <\int\left(\int\varphi'\left(\lambda_{b},\,y\right)p\left(y|\,\lambda_{b}\right)d\zeta\left(y\right)\right)d\varPi\left(\lambda_{b}\right),
\end{align*}
 or 
\begin{align*}
\int\left(\int\left(\varphi\left(\lambda_{b},\,y\right)-\varphi'\left(\lambda_{b},\,y\right)\right)p\left(y|\,\lambda_{b}\right)d\zeta\left(y\right)\right)d\varPi\left(\lambda_{b}\right) & <0.
\end{align*}
 Since $\varphi\left(\lambda_{b},\,y\right)$ is similar, there exists
a $\lambda_{b}$ such that $\int\varphi'\left(\lambda_{b},\,y\right)p\left(y|\,\lambda_{b}\right)d\zeta\left(y\right)>\alpha.$
Thus, $\varphi'$ is not of level $1-\alpha.$ $\square$

\vfill{}

\pagebreak{}

\section{\label{Section Comparison-to}Comparison to \citet{casini/perron_Lap_CR_Single_Inf}}

In this section we compare the GL-LN method to the GL estimators/confidence
intervals proposed in \citet{casini/perron_Lap_CR_Single_Inf}. Table
\ref{Table M2 Bias-1}-\ref{Table M3b-1} report the results. We have
considered a data-generating mechanism with higher serial dependence
in the errors. In terms of the empirical performance of the estimators,
Table \ref{Table M2 Bias-1} shows that overall the estimator that
does better is $\widehat{\lambda}_{b}^{\mathrm{GL-LN}}$. $\widehat{\lambda}_{b}^{\mathrm{GL-CR-Iter}}$
is the one that does best when $\lambda_{b}^{0}=0.5$ but it does
worse in relative terms when the break is in the tails. The performance
of $\widehat{\lambda}_{b}^{\mathrm{GL-LN}}$ is in general superior
to $\widehat{\lambda}_{b}^{\mathrm{GL-CR}}$ especially for medium
to large breaks both in terms of MAE and RMSE. From other simulations
(not reported), we conclude that GL-LN does in general better for
moderate to large breaks. $\widehat{\lambda}_{b}^{\mathrm{GL-CR-Iter}}$
is the one that does best when the break is in the middle but its
precision deteriorates as the break moves to the tails. In addition,
$\widehat{\lambda}_{b}^{\mathrm{GL-LN}}$ is valid for models with
multiple breaks and models with trending regressors that are not covered
in \citet{casini/perron_Lap_CR_Single_Inf}. So overall we believe
that the estimators $\widehat{\lambda}_{b}^{\mathrm{GL-LN}}$, $\widehat{\lambda}_{b}^{\mathrm{GL-CR}}$
and $\widehat{\lambda}_{b}^{\mathrm{GL-CR-Iter}}$ can be seen as
complementary. 

Turning to the finite-sample performance of the confidence intervals,
Table \ref{Table M3b-1} clearly shows that when there is higher serial
dependence in the errors, the method that dominates is GL-LN. The
gain in terms of coverage accuracy and lengths can be substantial
relative to the GL-CR and GL-CR-Iter. When the serial dependence in
the errors is low (not reported), the difference in performance of
the three confidence intervals becomes smaller. 

Overall, we find that both estimation and confidence intervals based
on GL-LN perform well relative to the continuous record counterparts,
where major gains appear to occur when there is high serial correlation
in the errors. 

\vfill{}

\pagebreak{}

\setcounter{table}{0} \renewcommand{\thetable}{S-\arabic{table}}

\begin{table}[H]
\caption{\label{Table M2 Bias-1}Small-sample accuracy of the estimates of
the break point $T_{b}^{0}$}

\begin{singlespace}
\begin{centering}
{\footnotesize{}}%
\begin{tabular}{ccccccc|ccccc}
\hline 
 &  & {\footnotesize{}MAE} & {\footnotesize{}Std} & {\footnotesize{}$\textrm{RMSE}$} & {\footnotesize{}$Q_{0.25}$} & \multicolumn{1}{c}{{\footnotesize{}$Q_{0.75}$}} & {\footnotesize{}MAE} & {\footnotesize{}Std} & {\footnotesize{}$\textrm{RMSE}$} & {\footnotesize{}$Q_{0.25}$} & {\footnotesize{}$Q_{0.75}$}\tabularnewline
\cline{3-12} \cline{4-12} \cline{5-12} \cline{6-12} \cline{7-12} \cline{8-12} \cline{9-12} \cline{10-12} \cline{11-12} \cline{12-12} 
 &  & \multicolumn{5}{c|}{{\footnotesize{}$\lambda_{0}=0.3$}} & \multicolumn{5}{c}{{\footnotesize{}$\lambda_{0}=0.5$}}\tabularnewline
{\footnotesize{}$\delta^{0}=0.3$} & {\footnotesize{}OLS} & {\footnotesize{}26.84} & {\footnotesize{}28.12} & {\footnotesize{}33.00} & {\footnotesize{}21} & {\footnotesize{}76} & {\footnotesize{}23.02} & {\footnotesize{}26.86} & {\footnotesize{}26.76} & {\footnotesize{}25} & {\footnotesize{}75}\tabularnewline
 & {\footnotesize{}GL-LN} & {\footnotesize{}13.63} & {\footnotesize{}14.07} & {\footnotesize{}17.25} & {\footnotesize{}27} & {\footnotesize{}56} & {\footnotesize{}10.84} & {\footnotesize{}13.03} & {\footnotesize{}14.40} & {\footnotesize{}35} & {\footnotesize{}65}\tabularnewline
 & {\footnotesize{}GL-CR} & {\footnotesize{}12.79} & {\footnotesize{}13.13} & {\footnotesize{}18.46} & {\footnotesize{}29} & {\footnotesize{}57} & {\footnotesize{}11.84} & {\footnotesize{}13.17} & {\footnotesize{}13.12} & {\footnotesize{}35} & {\footnotesize{}65}\tabularnewline
 & {\footnotesize{}GL-CR-Iter} & {\footnotesize{}14.47} & {\footnotesize{}10.29} & {\footnotesize{}20.21} & {\footnotesize{}28} & {\footnotesize{}58} & {\footnotesize{}8.76} & {\footnotesize{}10.01} & {\footnotesize{}10.24} & {\footnotesize{}41} & {\footnotesize{}59}\tabularnewline
 & {\footnotesize{}GL-Uni} & {\footnotesize{}21.78} & {\footnotesize{}21.73} & {\footnotesize{}27.71} & {\footnotesize{}28} & {\footnotesize{}66} & {\footnotesize{}17.84} & {\footnotesize{}20.90} & {\footnotesize{}20.98} & {\footnotesize{}32} & {\footnotesize{}68}\tabularnewline
{\footnotesize{}$\delta^{0}=0.4$} & {\footnotesize{}OLS} & {\footnotesize{}23.62} & {\footnotesize{}26.99} & {\footnotesize{}30.23} & {\footnotesize{}21} & {\footnotesize{}70} & {\footnotesize{}21.23} & {\footnotesize{}25.43} & {\footnotesize{}25.44} & {\footnotesize{}25} & {\footnotesize{}75}\tabularnewline
 & {\footnotesize{}GL-LN} & {\footnotesize{}11.53} & {\footnotesize{}13.66} & {\footnotesize{}15.44} & {\footnotesize{}27} & {\footnotesize{}51} & {\footnotesize{}10.11} & {\footnotesize{}12.15} & {\footnotesize{}13.37} & {\footnotesize{}37} & {\footnotesize{}63}\tabularnewline
 & {\footnotesize{}GL-CR} & {\footnotesize{}16.36} & {\footnotesize{}13.86} & {\footnotesize{}21.49} & {\footnotesize{}29} & {\footnotesize{}61} & {\footnotesize{}11.56} & {\footnotesize{}11.97} & {\footnotesize{}12.25} & {\footnotesize{}36} & {\footnotesize{}64}\tabularnewline
 & {\footnotesize{}GL-CR-Iter} & {\footnotesize{}17.19} & {\footnotesize{}10.81} & {\footnotesize{}20.35} & {\footnotesize{}28} & {\footnotesize{}57} & {\footnotesize{}8.30} & {\footnotesize{}9.95} & {\footnotesize{}10.01} & {\footnotesize{}43} & {\footnotesize{}57}\tabularnewline
 & {\footnotesize{}GL-Uni} & {\footnotesize{}20.18} & {\footnotesize{}21.25} & {\footnotesize{}26.30} & {\footnotesize{}28} & {\footnotesize{}64} & {\footnotesize{}16.53} & {\footnotesize{}19.97} & {\footnotesize{}19.98} & {\footnotesize{}34} & {\footnotesize{}64}\tabularnewline
{\footnotesize{}$\delta^{0}=0.6$} & {\footnotesize{}OLS} & {\footnotesize{}19.80} & {\footnotesize{}24.62} & {\footnotesize{}26.25} & {\footnotesize{}21} & {\footnotesize{}57} & {\footnotesize{}17.34} & {\footnotesize{}22.39} & {\footnotesize{}22.34} & {\footnotesize{}37} & {\footnotesize{}65}\tabularnewline
 & {\footnotesize{}GL-LN} & {\footnotesize{}8.86} & {\footnotesize{}11.63} & {\footnotesize{}12.77} & {\footnotesize{}29} & {\footnotesize{}42} & {\footnotesize{}8.05} & {\footnotesize{}10.29} & {\footnotesize{}11.18} & {\footnotesize{}41} & {\footnotesize{}59}\tabularnewline
 & {\footnotesize{}GL-CR} & {\footnotesize{}12.84} & {\footnotesize{}13.66} & {\footnotesize{}18.23} & {\footnotesize{}30} & {\footnotesize{}56} & {\footnotesize{}9.96} & {\footnotesize{}11.93} & {\footnotesize{}11.99} & {\footnotesize{}38} & {\footnotesize{}58}\tabularnewline
 & {\footnotesize{}GL-CR-Iter} & {\footnotesize{}14.85} & {\footnotesize{}11.52} & {\footnotesize{}17.56} & {\footnotesize{}29} & {\footnotesize{}52} & {\footnotesize{}7.26} & {\footnotesize{}9.20} & {\footnotesize{}9.22} & {\footnotesize{}44} & {\footnotesize{}55}\tabularnewline
 & {\footnotesize{}GL-Uni} & {\footnotesize{}16.04} & {\footnotesize{}20.05} & {\footnotesize{}22.77} & {\footnotesize{}26} & {\footnotesize{}56} & {\footnotesize{}13.85} & {\footnotesize{}17.81} & {\footnotesize{}17.94} & {\footnotesize{}38} & {\footnotesize{}60}\tabularnewline
{\footnotesize{}$\delta^{0}=1$} & {\footnotesize{}OLS} & {\footnotesize{}11.69} & {\footnotesize{}18.43} & {\footnotesize{}19.26} & {\footnotesize{}27} & {\footnotesize{}40} & {\footnotesize{}9.38} & {\footnotesize{}14.40} & {\footnotesize{}14.40} & {\footnotesize{}46} & {\footnotesize{}54}\tabularnewline
 & {\footnotesize{}GL-LN} & {\footnotesize{}5.63} & {\footnotesize{}9.56} & {\footnotesize{}9.57} & {\footnotesize{}27} & {\footnotesize{}31} & {\footnotesize{}5.40} & {\footnotesize{}8.21} & {\footnotesize{}8.59} & {\footnotesize{}49} & {\footnotesize{}51}\tabularnewline
 & {\footnotesize{}GL-CR} & {\footnotesize{}6.82} & {\footnotesize{}10.85} & {\footnotesize{}12.81} & {\footnotesize{}27} & {\footnotesize{}38} & {\footnotesize{}6.96} & {\footnotesize{}9.43} & {\footnotesize{}9.52} & {\footnotesize{}44} & {\footnotesize{}53}\tabularnewline
 & {\footnotesize{}GL-CR-Iter} & {\footnotesize{}10.67} & {\footnotesize{}7.54} & {\footnotesize{}13.02} & {\footnotesize{}30} & {\footnotesize{}39} & {\footnotesize{}4.44} & {\footnotesize{}6.71} & {\footnotesize{}6.85} & {\footnotesize{}47} & {\footnotesize{}53}\tabularnewline
 & {\footnotesize{}GL-Uni} & {\footnotesize{}9.44} & {\footnotesize{}14.60} & {\footnotesize{}15.15} & {\footnotesize{}27} & {\footnotesize{}37} & {\footnotesize{}8.17} & {\footnotesize{}12.34} & {\footnotesize{}12.34} & {\footnotesize{}45} & {\footnotesize{}54}\tabularnewline
\hline 
\end{tabular}{\footnotesize\par}
\par\end{centering}
\end{singlespace}
\noindent\begin{minipage}[t]{1\columnwidth}%
{\scriptsize{}The model is $y_{t}=\delta_{1}^{0}+\delta^{0}\mathbf{1}_{\left\{ t>\left\lfloor T\lambda_{0}\right\rfloor \right\} }+e_{t},\,e_{t}=0.6e_{t-1}+u_{t},\,u_{t}\sim i.i.d.\,\mathscr{N}\left(0,\,0.49\right),\,T=100$.}%
\end{minipage}
\end{table}

\begin{table}[H]
\caption{\label{Table M3b-1}Small-sample coverage rates and lengths of the
confidence sets}

\begin{centering}
{\footnotesize{}}%
\begin{tabular}{cccccccc}
\hline 
 &  & \multicolumn{2}{c}{{\footnotesize{}$\delta^{0}=0.4$}} & \multicolumn{2}{c}{{\footnotesize{}$\delta^{0}=0.8$}} & \multicolumn{2}{c}{{\footnotesize{}$\delta^{0}=1.6$}}\tabularnewline
 &  & {\footnotesize{}$\textrm{Cov.}$} & {\footnotesize{}$\textrm{Lgth.}$} & {\footnotesize{}$\textrm{Cov.}$} & {\footnotesize{}$\textrm{Lgth.}$} & {\footnotesize{}$\textrm{Cov.}$} & {\footnotesize{}$\textrm{Lgth.}$}\tabularnewline
\cline{3-8} \cline{4-8} \cline{5-8} \cline{6-8} \cline{7-8} \cline{8-8} 
{\footnotesize{}$\lambda_{0}=0.5$} & {\footnotesize{}OLS-CR} & {\footnotesize{}0.910} & {\footnotesize{}67.57} & {\footnotesize{}0.911} & {\footnotesize{}68.87} & {\footnotesize{}0.945} & {\footnotesize{}42.30}\tabularnewline
 & {\footnotesize{}Bai (1997)} & {\footnotesize{}0.808} & {\footnotesize{}67.57} & {\footnotesize{}0.811} & {\footnotesize{}50.22} & {\footnotesize{}0.894} & {\footnotesize{}20.74}\tabularnewline
 & {\footnotesize{}GL-LN} & {\footnotesize{}0.925} & {\footnotesize{}57.43} & {\footnotesize{}0.965} & {\footnotesize{}37.35} & {\footnotesize{}0.985} & {\footnotesize{}9.30}\tabularnewline
 & {\footnotesize{}GL-CR} & {\footnotesize{}0.885} & {\footnotesize{}60.05} & {\footnotesize{}0.884} & {\footnotesize{}52.63} & {\footnotesize{}0.926} & {\footnotesize{}32.61}\tabularnewline
 & {\footnotesize{}GL-CR-Iter} & {\footnotesize{}0.911} & {\footnotesize{}76.72} & {\footnotesize{}0.911} & {\footnotesize{}69.06} & {\footnotesize{}0.944} & {\footnotesize{}42.20}\tabularnewline
\cline{3-8} \cline{4-8} \cline{5-8} \cline{6-8} \cline{7-8} \cline{8-8} 
{\footnotesize{}$\lambda_{0}=0.35$} & {\footnotesize{}OLS-CR} & {\footnotesize{}0.927} & {\footnotesize{}75.58} & {\footnotesize{}0.910} & {\footnotesize{}66.20} & {\footnotesize{}0.944} & {\footnotesize{}39.15}\tabularnewline
 & {\footnotesize{}Bai (1997)} & {\footnotesize{}0.838} & {\footnotesize{}66.86} & {\footnotesize{}0.821} & {\footnotesize{}49.34} & {\footnotesize{}0.893} & {\footnotesize{}20.77}\tabularnewline
 & {\footnotesize{}GL-LN} & {\footnotesize{}0.965} & {\footnotesize{}54.57} & {\footnotesize{}0.974} & {\footnotesize{}32.88} & {\footnotesize{}0.984} & {\footnotesize{}9.39}\tabularnewline
 & {\footnotesize{}GL-CR} & {\footnotesize{}0.898} & {\footnotesize{}57.32} & {\footnotesize{}0.888} & {\footnotesize{}50.29} & {\footnotesize{}0.924} & {\footnotesize{}29.06}\tabularnewline
 & {\footnotesize{}GL-CR-Iter} & {\footnotesize{}0.930} & {\footnotesize{}75.87} & {\footnotesize{}0.913} & {\footnotesize{}66.13} & {\footnotesize{}0.944} & {\footnotesize{}38.71}\tabularnewline
\cline{3-8} \cline{4-8} \cline{5-8} \cline{6-8} \cline{7-8} \cline{8-8} 
{\footnotesize{}$\lambda_{0}=0.2$} & {\footnotesize{}OLS-CR} & {\footnotesize{}0.910} & {\footnotesize{}75.24} & {\footnotesize{}0.917} & {\footnotesize{}64.17} & {\footnotesize{}0.953} & {\footnotesize{}34.26}\tabularnewline
 & {\footnotesize{}Bai (1997)} & {\footnotesize{}0.808} & {\footnotesize{}67.03} & {\footnotesize{}0.852} & {\footnotesize{}50.40} & {\footnotesize{}0.937} & {\footnotesize{}21.76}\tabularnewline
 & {\footnotesize{}GL-LN} & {\footnotesize{}0.921} & {\footnotesize{}57.96} & {\footnotesize{}0.962} & {\footnotesize{}39.63} & {\footnotesize{}0.969} & {\footnotesize{}10.86}\tabularnewline
 & {\footnotesize{}GL-CR} & {\footnotesize{}0.912} & {\footnotesize{}56.87} & {\footnotesize{}0.909} & {\footnotesize{}48.68} & {\footnotesize{}0.932} & {\footnotesize{}23.91}\tabularnewline
 & {\footnotesize{}GL-CR-Iter} & {\footnotesize{}0.894} & {\footnotesize{}75.15} & {\footnotesize{}0.923} & {\footnotesize{}64.14} & {\footnotesize{}0.953} & {\footnotesize{}34.06}\tabularnewline
\hline 
\end{tabular}{\footnotesize\par}
\par\end{centering}
~~~~~~~~~~~~~~~~~~~~~~~~~~~~~~~%
\noindent\begin{minipage}[t]{1\columnwidth}%
{\scriptsize{}The model is $y_{t}=\delta_{1}^{0}+\delta^{0}\mathbf{1}_{\left\{ t>\left\lfloor T\lambda_{0}\right\rfloor \right\} }+e_{t},\,e_{t}=0.6e_{t-1}+u_{t},\,u_{t}\sim i.i.d.\,\mathscr{N}\left(0,\,0.49\right),\,T=100$.}%
\end{minipage}
\end{table}

\end{singlespace}

\setcounter{page}{1} \renewcommand{\thepage}{S-T-\arabic{page}}

\end{document}